\newtheorem{theorem}{Theorem}
\newtheorem{corollary}{Corollary}[theorem]
\newtheorem{definition}{Definition}
\newtheorem{lemma}[theorem]{Lemma}
\newtheorem{example}{Example}
\newtheorem{remark}{Remark}
\providecommand{\keywords}[1]
{
  {\textit{Keywords---} #1}
}
\renewcommand{\longrightarrow}{\to}
\title{Honest Inference for Stochastic Optimization}
\author{Kenta Takatsu}
\author{Arun Kumar Kuchibhotla}
\affil{Department of Statistics and Data Science, Carnegie Mellon University}
\date{}
\DeclareMathOperator{\E}{\mathbb{E}}
\DeclareMathOperator{\M}{\mathbb{M}}
\DeclareMathOperator*{\argmax}{arg\,max}
\DeclareMathOperator*{\argmin}{arg\,min}
\newcommand{\ratio}{\widehat{\Delta}_2}
\begin{document}
\maketitle

\begin{abstract}
This manuscript studies a general approach to construct confidence sets for the solution of stochastic optimization, rendering empirical risk minimization as special cases. Statistical inference for stochastic optimization poses significant challenges due to the non-standard limiting behaviors of the corresponding estimator, which arise in settings with increasing dimension of parameters, non-smooth objectives, or constraints. We propose a simple and unified method that guarantees validity in both regular and irregular cases. We provide a unified treatment of validity, conservativeness, and the size of the resulting confidence sets. In particular, the presented width analysis demonstrates the adaptive behavior of the confidence set to the unknown degree of instance-specific regularity. We apply the proposed method to several high-dimensional and irregular statistical problems. Numerical results for all statistical applications are provided.
\end{abstract}
\keywords{Honest inference,  Adaptive inference, Irregular M-estimation, Non-standard asymptotics, Extremum estimators.}

\tableofcontents
\section{Introduction}\label{sec:introduction}
The present study examines inference for parameters defined as solutions to stochastic minimization (or maximization) problems, which arise in broad statistical applications. Let $(Z_1, \ldots, Z_N)$ be random variables defined on a common probability space, and let $\mathcal{L}(\cdot)$ denote the law of the corresponding random variables. Define 
\begin{equation*}
    P^N := \mathcal{L}(Z_1, \ldots, Z_N) \quad \textrm{and} \quad P_i := \mathcal{L}(Z_i) \quad \textrm{for}\quad 1\le i \le N.
\end{equation*}
Let $\mathcal{P}^N$ be a collection of joint distributions $P^N$. Given a metric space $(\Theta, \|\cdot\|)$ and a criterion function $\M: \Theta \times \mathcal{P}^N \mapsto \mathbb{R}$, the parameter of interest is
\begin{equation}\label{eq:def-m-functional}
    \theta(P^N):= \argmin_{\theta \in \Theta}\, \M(\theta, P^N).
\end{equation}
We assume that the population minimizer is common across all marginals in the sense that for any $P^N \in \mathcal{P}^N$, we have $\theta(P^N) = \theta(P_i)$ for all $1 \le i \le N$. This formulation allows observations $Z_1, \ldots, Z_N$ to be neither independent nor identically distributed. The main statistical challenge is that the criterion $\M(\theta, P^N)$ must be estimated from data.

The primary objective of this manuscript is the construction of an \emph{honest} confidence set \citep{Li1989, Ptscher2002Lower} for the $P^N$-dependent minimizer such that 
\begin{equation}\label{eq:asymptotic-validity}
    \liminf_{N\to \infty}\,\inf_{P^N \in \mathcal{P}^N}\,\inf_{\theta^*\in\theta(P^N)}\, \mathbb{P}_{P^N} (\theta^* \in \widehat{\mathrm{CI}}_{N,\alpha}) \ge 1-\alpha
\end{equation}
where $\mathbb{P}_{P^N}(\cdot)$ denotes probability under $P^N$. It is known that a finite-sample guarantee for fixed $N$ is generally impossible without imposing strong restrictions on the class of distributions $\mathcal{P}^N$ \citep{bahadur1956nonexistence}.
Although the asymptotic validity guarantee in \eqref{eq:asymptotic-validity} may appear less informative in the settings with high-dimensional parameters, the methods studied in this manuscript satisfies~\eqref{eq:asymptotic-validity} and retains validity regardless of the dimension/complexity of the space $\Theta$, including the cases where the dimension is comparable to the sample size. This property has recently been termed \emph{dimension-agnostic} \citep{kim2020dimension}, though closely related ideas also appear in \citet{Robins2006}.

Finally, the guarantee \eqref{eq:asymptotic-validity} does not require uniqueness of the minimizer. To simplify notation, the following convention will be used:
\begin{equation}\label{eq:miscoverage-over-set}
    \mathbb{P}_{P^N}(\theta(P^N)\in\widehat{\mathrm{CI}}_{N,\alpha}) ~:=~ \inf_{\theta^*\in\theta(P^N)}\,\mathbb{P}_P(\theta^*\in\widehat{\mathrm{CI}}_{N,\alpha}).
\end{equation}

\paragraph{Motivation: Irregular and High-dimensional Inference}
Common approaches for constructing confidence sets include (i) Wald methods based on the limiting distribution of an estimator, and (ii) resampling methods. Both require an estimator $\widehat{\theta}_N$ such that, for a diverging sequence $r_N$, the scaled quantity $r_N(\widehat{\theta}_N - \theta(P^N))$ converges in distribution. Wald methods posit a parametric form for this limit and construct confidence sets from its quantiles, whereas resampling methods estimate the limiting distribution nonparametrically. In many settings, the honest validity guarantee~\eqref{eq:asymptotic-validity} can fail if the weak convergence of $r_N(\widehat{\theta}_N - \theta(P^N))$ is not continuous in $P^N$~\citep{Andrews2000, andrews2010asymptotic, cattaneo2020bootstrap, cattaneo2023bootstrap}. A classical example of such ``continuity'' is the regularity of the estimator~\citep[Sec. 8.5]{van2000asymptotic}. We refer to the settings where such condition fails (or equivalently, the estimator is ill-behaved) as irregular problems.



Inference for stochastic optimization under non-standard or irregular conditions has been extensively studied in statistics, econometrics, operations research, and other related fields \citep{geyer1994asymptotics, Ketz2018,Horowitz2019, Hsieh2022, Li2024}. Existing inference methods are typically tailored to specific regularity conditions that are required for validity. For instance, the general approach by \citet{vogel2008universal} is valid under regularity conditions on the objective function and parameter space. The framework by \citet{li2024inference} requires knowledge of the estimator’s convergence rate. Procedures based on sample-splitting by \citet{dey2024anytime} and \citet{park2023robust} are proved under stronger            distributional assumptions to ensure valid coverage.

This manuscript proposes a simple, general-purpose approach to inference for stochastic optimization problems, building on robust procedures based on sample-splitting~\citep{Robins2006, chakravarti2019gaussian, wasserman2020universal, park2023robust, kim2020dimension, dey2024anytime}. The resulting confidence set is valid under weak distributional assumptions, applicable to irregular or high-dimensional settings, and accommodates constraints or regularization. The corresponding inference tasks have been difficult without knowledge of the estimator’s convergence rate, the existence of a limiting distribution, or specific structure in the parameter space. In this regard, we provide a significant improvement, as the convergence rate in irregular problems can be impossible to estimate uniformly, and the corresponding limiting distributions can be highly complex~\citep{wang1996asymptotics}.
 
The following statistical problems are a few examples in which inference remains difficult to date and the proposed confidence set provides a simple solution.
\begin{enumerate}
    \item \textbf{High-dimensional Linear Regression}: Inference for ordinary least squares (OLS) remains challenging when the dimension $d$ increases with the sample size $N$, due to bias of $d/N^{1/2}$  \citep{mammen1993bootstrap, cattaneo2018inference}. Existing bias-corrected methods regain validity in some regimes $d \gg N^{1/2}$~\citep{cattaneo2019two, chang2023inference}, but typically require growth restrictions on $d$. The proposed method is valid regardless of the dimension (Sections~\ref{sec:mean} and \ref{sec:ols}). 
    \item \textbf{Cube-root Estimators}: \cite{kim1990cube} identify 
    a class of problems exhibiting \emph{cube-root asymptotics}, where the minimizer converges at $N^{-1/3}$. Examples include Manski’s maximum score estimator \citep{manski1975maximum, manski1985semiparametric,horowitz1992smoothed,delgado2001subsampling}, the Grenander estimator \citep{grenander1956theory, sen2010inconsistency, westling2020unified, cattaneo2023bootstrap}, and classification in machine learning \citep{mohammadi2005asymptotics}. Empirical bootstrap is known to be inconsistent \citep{sen2010inconsistency, patra2018consistent}, and modified resampling procedures have been proposed \citep{cattaneo2020bootstrap, cattaneo2023bootstrap}. We provide new inferential results for a prototypical example (Section~\ref{sec:manski}).
    \item \textbf{Non-smooth Objective}: Many criterion functions can be written as $\M(\theta, P^N) \equiv \E_{P^N}[m_{\theta}(Z)]$, where $m_{\theta}(Z)$ is a ``loss" function. When $\theta \mapsto m_{\theta}$ is non-smooth, the limiting distribution of an estimator can be non-standard~\citep{smirnov1952limit, knight1998limiting}. A canonical example is quantile estimation whose limiting distribution depends on the smoothness of the cumulative distribution function (CDF). While distribution-free finite sample valid confidence intervals exist \citep{lanke1974interval}, we study the behavior of the proposed confidence set in this setting (Section~\ref{sec:quantile}).  
    \item \textbf{Constrained Optimization}: The parameter space $\Theta$ can incorporate structural constraints, such as sparsity or shape \citep{ wang1996asymptotics, candes2007dantzig, li2015geometric, royset2020variational}. Confidence sets under such constraints have been studied \citep{geyer1994asymptotics,vogel2008confidence, vogel2008universal, vogel2017confidence, vogel2019universal}. The proposed confidence set remains valid under such structural constraints. 
\end{enumerate}

\paragraph{Summary of Methods and Contributions}

A key conceptual idea is that, instead of relying on the behavior of an estimator, we can exploit the defining property of the functional \eqref{eq:def-m-functional}. In particular, the following holds from the definition \eqref{eq:def-m-functional}:
\begin{equation}\label{eq:zeroth-order}
    \M(\theta(P^N), P^N) \le \M(\theta, P^N) \quad \text{for any non-random} \quad \theta \in \Theta.
\end{equation}
At an intuitive level, the proposed confidence set can be motivated in two steps: (1) Since $\theta(P^N)$ minimizes $\theta\mapsto \mathbb{M}(\theta, P^N)$, it must belong to the set $\{\theta:\, \mathbb{M}(\theta, P^N) \le \mathbb{M}(\theta', P^N)\}$ for any $\theta' \in \Theta$; (2) if $\theta\mapsto\widehat{\mathbb{M}}_N(\theta)$ is an estimator of $\mathbb{M}(\theta, P^N)$, then it is natural to expect that $\theta(P^N)$ will also belong to $\{\theta:\, \widehat{\mathbb{M}}_N(\theta) \le \widehat{\mathbb{M}}_N(\theta') + \gamma_N\}$ for an appropriate tolerance level $\gamma_N$. In particular, the reference point $\theta'$ can be replaced by a data-dependent estimator, provided that $\widehat{\M}_N(\cdot)$ and $\theta'$ are (approximately) independent; one way to achieve this independence is through sample-splitting. This idea is not new and can be found as early as \citet{Stein1981}, and more recently in \cite{Robins2006} and \cite{vogel2008universal}; see Section~\ref{appsec:references-history} for further historical discussion.

The main contributions of this manuscript are as follows: (1) we provide a systematic way to obtain a dimension- and complexity-agnostic validity guarantee; (2) we analyze the width (or diameter) of the proposed confidence set under mild conditions; and (3) we use the general result to establish rate adaptivity in several examples. The conditions used in the width analysis are comparable to those commonly employed in studying the convergence rates of M-estimators. These results contribute to the development of honest and adaptive inference procedures for stochastic optimization problems.

This flexibility and generality come with a price. First, although we provide conditions under which the proposed confidence set shrinks to a singleton at the optimal rate adaptively, it can be larger than traditional confidence sets due to the use of sample splitting. In practice, data efficiency can be partially recovered by swapping the roles of the splits and aggregating the resulting sets via a majority-vote procedure~\citep{gasparin2024merging}, though such efficiency improvements are not the focus of this work. Importantly, the role of sample splitting here is fundamentally different from that in double machine learning (DML) ~\citep{chernozhukov2018double, foster2023orthogonal}: at its core, DML relies on influence-function expansions and regularity conditions to establish asymptotic normality, whereas the proposed framework is precisely designed to avoid this type of assumptions. Second, unlike traditional methods that control the shape of the confidence set by considering an appropriate statistic, the proposed confidence set can be non-convex or even disconnected depending on the (estimated) objective function. It might be worth pointing out that, in regular cases, the proposed confidence set will approximately be an ellipsoid in similarity to the likelihood ratio confidence set. Furthermore, we note that the universal inference procedure of~\cite{wasserman2020universal} also shares the same drawbacks.

\paragraph{Organization.} The remainder of this manuscript is organized as follows. \Cref{sec:general} provides the most general construction for stochastic optimization and  \Cref{sec:general-M-estimation} discusses more refined results for M-estimation; \Cref{tab:base-confidence-sets} at the end of \Cref{sec:general-M-estimation} summarizes the validity results from both sections. \Cref{sec:coverage-slpha} develops methods for any prescribed significance level with corresponding validity guarantees. \Cref{sec:convergence-rates} establishes non-asymptotic diameter bounds for the proposed confidence sets. \Cref{sec:improvements,sec:computation} address practical considerations, covering power improvements and computational aspects respectively. \Cref{sec:application} provides an analysis of the confidence set proposed in statistical applications whose inference has been considered challenging. \Cref{sec:num} presents numerical results, with additional experiments available in \Cref{supp:num}. \Cref{sec:conclusions} concludes with remarks on open problems and future directions.

\paragraph{Notation.} We adopt the following convention. For $x \in \mathbb{R}^d$, we write $\|x\|_2 = \sqrt{x^\top x}$. In particular, we define the unit sphere with respect to $\|\cdot\|_2$ such that $\mathbb{S}^{d-1} = \{u \in \mathbb{R}^d \, : \, \|u\|_2 =1\}$. Given a square matrix $A \in \mathbb{R}^{d\times d}$, its trace, the smallest and the largest eigenvalues are denoted by $\mathrm{tr}(A)$, $\lambda_{\min}(A)$ and $\lambda_{\max}(A)$ respectively. A standard indicator function is denoted by $\mathbf{1}\{\cdot\}$, i.e., $\mathbf{1}\{x\in A\} = 1$ if $x\in A$ and $0$ if $x\notin A$. For any deterministic sequences $\{x_n\}_{n \ge 1}$ and $\{r_n\}_{n \ge 1}$, we denote $x_n = O(r_n)$ if there exists a universal constant $C>0$ such that $|x_n| \le C|r_n|$ for all $n$ larger than some $N$. Similarly, we denote $x_n = O_P(r_n)$ if, for any $\varepsilon>0$, there exists a constant $C_\varepsilon>0$ such that $\mathbb{P}(|x_n| \le C_\varepsilon|r_n|) \le \varepsilon$ for all $n$ larger than some $N_\varepsilon$.
We denote $x_n = o(r_n)$ if $x_n/r_n \to 0$ and $x_n = o_p(r_n)$ if $x_n/r_n \overset{p}{\to} 0$ where $\overset{p}{\to}$ denotes convergence in probability.

\section{Construction for General Stochastic Optimization}\label{sec:general}
We begin with the construction for general stochastic optimization problems. Let $N \ge 1$ denote the total sample size, and partition the index set as: 
\begin{equation}\label{eq:k-partition}
    I_1 = \{1, \ldots, n_1\} \quad \textrm{and} \quad I_2 = \{n_1+r+1, \ldots, N\},
\end{equation}
where $n_1, r \ge 0$ and $n_2 := |I_2|\ge1$ such that $N = n_1 + n_2 + r$. Write $D_\ell = \{Z_i : i \in I_\ell\}$ for $\ell=1,2$, with induced marginal laws $P^{\ell} := \mathcal L(D_\ell)$. From $D_1$, we construct an estimator $\widehat \theta_1 := \widehat \theta_1(D_1) \in \Theta$ of $\theta(P^N)$, without imposing any restrictions on its choice. The population objective evaluated at $P^2$ and its estimator based on $D_2$ as
\begin{equation}
    \M_2(\theta) := \M(\theta, P^2) \quad \textrm{and} \quad\widehat\M_2(\theta) := \widehat\M_2(\theta; D_2).
\end{equation}
As discussed in \Cref{sec:introduction}, an ideal (albeit unactionable) confidence set is 
\begin{equation}\label{eq:oracle}
    \widetilde{\mathrm{CI}} := \left\{\theta\in\Theta:\, \mathbb{M}_2(\theta) - \mathbb{M}_2(\widehat{\theta}_1) \le 0\right\}.
\end{equation}
A natural approximation is
\begin{equation}\label{eq:anti-conservative-confidence-set}
\widehat{\mathrm{CI}}^\dagger_{N} := \left\{\theta\in\Theta:\, \widehat{\mathbb{M}}_2(\theta) - \widehat{\mathbb{M}}_2(\widehat{\theta}_1) \le 0\right\}.
\end{equation}
We now state the first validity result for $\widehat{\mathrm{CI}}^\dagger_{N}$. To this end, we introduce several key objects, beginning with the $\beta$-mixing coefficient following \citet{bradley2005basic}.
\begin{definition}[$\beta$-mixing coefficient]Given the data splits $D_1$ and $D_2$ with induced marginals $P^1, P^2$ and joint law $P^{1,2} := \mathcal{L}(D_1,D_2)$, the $\beta$-mixing coefficient is
\begin{equation}\label{eq:beta-mixing}
\beta(n_1, r) = d_{\mathrm{TV}}(P^{1,2}, P^{1}\otimes P^{2})
\end{equation}
where $d_{\mathrm{TV}}(\cdot, \cdot)$ denotes total variation distance and $r$ corresponds to the gap size in \eqref{eq:k-partition}. We omit the dependence on $n_1$ and write $\beta(r) := \beta(n_1, r)$.
\end{definition}
The coefficient $\beta(r)$ quantifies the dependence between $D_1$ and $D_2$. Under independence, $\beta(r) = 0$ for all $r \ge 0$. Under $m$-dependence \citep{hoeffding1994central}, $\beta(r) = 0$ for all $r \ge m$. Many practically relevant processes satisfy $\beta(r) \to 0$ as $r \to \infty$, including certain Markov chains \citep{blum1963strong}, linear AR models with absolutely continuous innovations \citep{chanda1974strong} (Bernoulli innovations notably fail to be strong mixing \citep{andrews1984non}), near epoch sequences \citep{ibragimov1962some}, and weakly physically dependent processes \citep{wu2005nonlinear, heinrichs2026note}. See \cite{bradley2005basic, kiessler2009weak} and reference therein for further examples interacting interacting particle systems. 

Next, for $\theta\in\Theta$, we define
\begin{equation}\label{eq:notation-MSE-curvature}
    \begin{split}
        \mathbb{V}_{2}(\theta) &:= \mathbb{E}_{P^2}[|(\widehat{\mathbb{M}}_2 - \mathbb{M}_2)(\theta) - (\widehat{\mathbb{M}}_2 - \mathbb{M}_2)(\theta(P^N))|^2] \quad\textrm{and}\\\mathbb{C}_{2}(\theta) &:= \mathbb{M}_2(\theta) - \mathbb{M}_2(\theta(P^N)).
    \end{split}
\end{equation}
The quantity $\mathbb{V}_{2}(\theta)$ is the mean squared error of the estimated optimization objective at two points $\theta, \theta(P^N)$. The estimator $\widehat{\mathbb{M}}_2$ is permitted to be biased. The quantity $\mathbb{C}_{2}(\theta)$ is known as the curvature, which quantifies the ``difficulty'' of estimating $\theta(P^N)$. Both are defined for non-random $\theta \in\Theta$; when evaluated at the random point $\widehat{\theta}_1\in\Theta$, they become random variables. For brevity, write
\begin{equation}\label{eq:variance-curvature-ratio}
    \begin{split}
\widehat{\mathbb{V}}_{2}=\mathbb{V}_{2}( \widehat\theta_1), \quad \widehat{\mathbb{C}}_{2}=\mathbb{C}_{2}(\widehat\theta_1) \quad \textrm{and} \quad \widehat\Delta_2 = \widehat{\mathbb{C}}_{2}/\widehat{\mathbb{V}}_{2}^{1/2}.
    \end{split}
\end{equation}
\begin{remark}
   The evaluation of $\M(\theta, P^{2})$ at $P^{2} \not \in \mathcal{P}^N$ is an abuse of notation. This is justified under the assumption that $\theta(P^N) = \theta(P_i)$ for all $i$, so $\mathbb{C}_{2}(\theta)$ continues to reflect the curvature relative to the population target $\theta(P^N)$. 
\end{remark}
The first validity result is as follows:
\begin{theorem}\label{thm:coverage-anti-conservative-confidence-set}
The confidence set $\widehat{\mathrm{CI}}_N^{\dagger}$ in \eqref{eq:anti-conservative-confidence-set} satisfies
\[
\mathbb{P}_{P^N}\!\left(\theta(P^N) \notin \widehat{\mathrm{CI}}_{N}^{\dagger}\right)
\;\le\;
\mathbb{E}_{P^{1}}\!\left[\min\left\{\frac{1}{\widehat\Delta_2^2}, 1\right\}
\right]
+
\beta(r).
\]
Moreover, if $\ratio^2 \overset{p}{\to} \infty$ and $\beta(r) = o(1)$ uniformly over all $P^N\in\mathcal{P}^N$, then $\widehat{\mathrm{CI}}_N^{\dagger}$ is asymptotically uniformly valid at confidence level $1$.
\end{theorem}
\begin{theorem}\label{thm:coverage-anti-conservative-confidence-set-unbiased}
Suppose $ \widehat{\mathbb{M}}_2(\widehat{\theta}_1)-\widehat{\mathbb{M}}_2(\theta(P^N)) $ is an unbiased estimator of $\widehat{\mathbb{C}}_{2}$, in the sense that  $\E_{P^{2}}[\widehat{\mathbb{M}}_2(\theta)]=\mathbb{M}_2(\theta)$ for all $\theta \in \Theta$. Then, the confidence set $\widehat{\mathrm{CI}}_N^{\dagger}$ in \eqref{eq:anti-conservative-confidence-set} satisfies
\[
\mathbb{P}_{P^N}\!\left(\theta(P^N) \notin \widehat{\mathrm{CI}}_{N}^{\dagger}\right)
\;\le\;
\mathbb{E}_{P^1}\left[\frac{1}{1+\widehat\Delta^2_2}
\right]
+
\beta(r).
\]
\end{theorem}

The proofs of both results appear in \Cref{appsec:proof-of-coverage-anti-conservative}. Both results are stated under following generality: uniqueness of $\theta(P^N)$ is not assumed, the observations need not be independent, and no specific structure is imposed on the optimization objective.

The miscoverage bound depends on the ratio $\ratio^2 = \widehat{\mathbb{C}}_{2}^2/\widehat{\mathbb{V}}_{2}$, which increases with curvature $\mathbb{C}_{2}^2$ and decreases with estimation error $\widehat{\mathbb{V}}_{2}$. Because $\widehat{\mathrm{CI}}_N^{\dagger}$ is constructed without reference to a nominal level, it provides an agnostic bound. When $\ratio^2 \to \infty$ in probability, the set is valid at confidence level 1. Two illustrative examples follow.
\begin{example}[U-statistics]\label{example-u-statistics}
    Let $Z_1, \ldots, Z_{N}$ be IID observations from $\mathcal{N}(\mu, \sigma^2)$ with unknown $\sigma^2$. Consider inference for $\mu^2$, which admits the representation
    \begin{equation}
        \mu^2 = \argmin_{\theta \in \mathbb{R}}\, \E_{P^2}[(Z_1Z_2-\theta)^2].\nonumber
    \end{equation}
    Set $D_1 = \{Z_{1}, \ldots, Z_{n_1}\}$ and $D_2 =\{Z_{n_1+1}, \ldots, Z_{N}\}$ with $|D_2|=n_2$. The optimization objective can be estimated unbiasedly from $D_2$ via the U-statistic
    \begin{equation}
    \widehat{\mathbb{M}}_2(\theta) = \binom{n_2}{2}^{-1}  \sum_{n_1+1 \le i < j \le N} (Z_i Z_j - \theta)^2.\nonumber
    \end{equation}
    Direct calculation yields 
    \begin{equation*}
        \ratio^2 = (\widehat{\theta}_1 - \mu^2)^2\left(\frac{8\sigma^4}{n_2(n_2-1)} + \frac{16\mu^2\sigma^2}{n_2}\right)^{-1},
    \end{equation*}
    which reflects different behaviors depending on whether $\mu = 0$ or $\mu \neq 0$. In particular, $\ratio \overset{p}{\to}\infty$ whenever 
    \begin{equation*}
        \min\left\{\frac{n_2|\widehat{\theta}_1 - \mu^2|}{\sigma^2}, \frac{n_2^{1/2}|\widehat{\theta}_1 - \mu^2|}{|\mu| \sigma}\right\} \overset{p}{\to} \infty.
    \end{equation*}
    This is satisfied, for instance, $(\widehat{\theta}_1 - \mu^2)^2 \ge c > 0$ for some constant $c$, and $n_2 \to \infty$ with $\mu$ and $\sigma$ fixed. \Cref{supp:example1} provides the explicit distribution of $\ratio$ for the constant and the U-statistic estimators, from which the upper bound of \Cref{thm:coverage-anti-conservative-confidence-set-unbiased} can be evaluated analytically.
\end{example}

\begin{example}[Super-efficient initial estimator]\label{example:hodges-estimator}
Let $Z_1, \ldots, Z_{N}$ be IID observations from $\mathcal{N}(\mu, \sigma^2)$ with unknown $\sigma^2$. Consider inference for $\mu$, which corresponds to
    \begin{equation}
        \mu = \argmin_{\theta \in \mathbb{R}}\, \E_{P^2}[(Z_1-\theta)^2].\nonumber
    \end{equation}
    Using $D_2 = \{Z_{n_1+1}, \ldots, Z_{N}\}$, the objective is estimated unbiasedly by
    \begin{equation}
        \widehat{\mathbb{M}}_2(\theta) = \frac{1}{n_2} \sum_{n_1+1 \le i \le N} (Z_i - \theta)^2.\nonumber
    \end{equation}
    Direct calculation gives $\ratio^2 = n_2(\widehat{\theta}_1 - \mu)^2/(4\sigma^2)$. As the initial estimator $\widehat{\theta}_1$ based on $D_1$, take Hodges' estimator $\widehat \theta_1 := \bar{Z}_{n_1} \mathbf{1}\{|\bar{Z}_{n_1} |\ge {n_1}^{-1/4}\}$ where $ \bar{Z}_{n_1} = {n_1}^{-1}\sum_{i=1}^{n_1} Z_i$. 
    When $\mu=0$, Hodges' estimator satisfies ${n_1}(\widehat{\theta}_1 - \mu)^2 \overset{p}{\to} 0$, exhibiting super-efficiency. In this case, $\ratio^2 \overset{p}{\to} 0$. When $\mu = {n_1}^{-1/4}/2$, one can verify that ${n_1}(\widehat{\theta}_1 - \mu)^2 \overset{p}{\to} \infty$, and consequently  $\ratio^2 \overset{p}{\to} \infty$. Hence, $\widehat{\mathrm{CI}}_N^{\dagger}$ is asymptotically valid at confidence level 1. The same conclusion holds when $\widehat \theta_1$ is inconsistent. Then we also have $\ratio^2 \overset{p}{\to} \infty$ regardless of $\mu$. Again, \Cref{supp:example2} provides the explicit distribution of $\ratio$ for the constant estimator, sample mean and Hodges' estimator, from which the upper bound of \Cref{thm:coverage-anti-conservative-confidence-set-unbiased} can be evaluated analytically.
\end{example}
The two examples illustrate when \Cref{thm:coverage-anti-conservative-confidence-set-unbiased} yields an informative bound. The confidence set achieves zero miscoverage whenever $\ratio \to \infty$ in probability. This happens, for instance, when the initial estimator is inconsistent (the constant estimators in \Cref{example-u-statistics} and \Cref{example:hodges-estimator}), or when it converges sufficiently slowly (Hodges' estimator in a certain neighborhood of $\mu = 0$ in \Cref{example:hodges-estimator}). The bounds become uninformative when $\ratio \to 0$ in probability, as with the Constant or Hodges' estimator at $\mu =0$. Crucially, however, an uninformative upper bound does not imply that the confidence set itself is uninformative.

\Cref{fig:example1} and \Cref{fig:example3} display the empirical miscoverage of $\widehat{\mathrm{CI}}_{N}^{\dagger}$ as well as the analytical upper bound of \Cref{thm:coverage-anti-conservative-confidence-set-unbiased} for both examples. The details of the numerical experiments can be found in \Cref{supp:example1-sim} and \Cref{sup:numerical-example3}. A visible gap between the bound and the empirical performance confirms that the bound is conservative. This is because \Cref{thm:coverage-anti-conservative-confidence-set} and \Cref{thm:coverage-anti-conservative-confidence-set-unbiased} are proved for general stochastic optimization. Once additional structure is imposed on the objective, the same confidence set \eqref{eq:anti-conservative-confidence-set} remains non-trivial for any choice of $\widehat{\theta}_1$. The bound from the forthcoming \Cref{thm:coverage-anti-conservative-confidence-set-empirical-risk}, visible in \Cref{fig:example3}, tracks the empirical miscoverage far more closely. 
\begin{figure}
\centering
\begin{subfigure}{0.8\textwidth}
  \centering
  \includegraphics[width=0.7\linewidth]{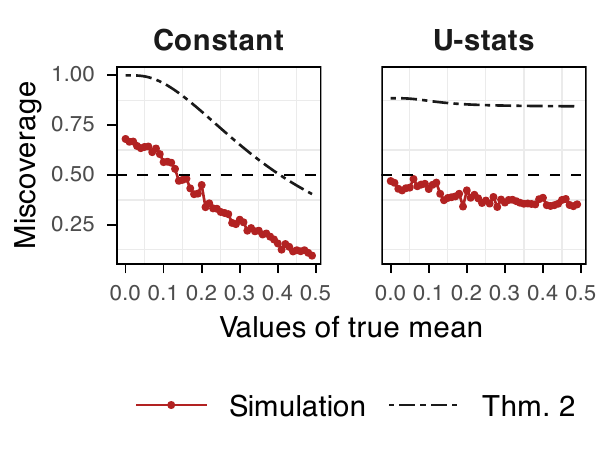}
\captionsetup{width=0.9\textwidth}
  \caption{Estimated miscoverage of $\widehat{\mathrm{CI}}_N^\dagger$ in \eqref{eq:anti-conservative-confidence-set} for the U-statistics problem of \Cref{example-u-statistics}. The $X$-axis displays the true mean $\mu$ and the $Y$-axis displays the empirical miscoverage over $1000$ replications. The performances of two estimators, the constant estimator at zero and the U-statistics estimator, are shown in red. The analytical upper bound of \Cref{thm:coverage-anti-conservative-confidence-set-unbiased} is shown as a dashed line.}  \label{fig:example1}
\end{subfigure}  
\begin{subfigure}{0.8\textwidth}
  \centering
  \includegraphics[width=\linewidth]{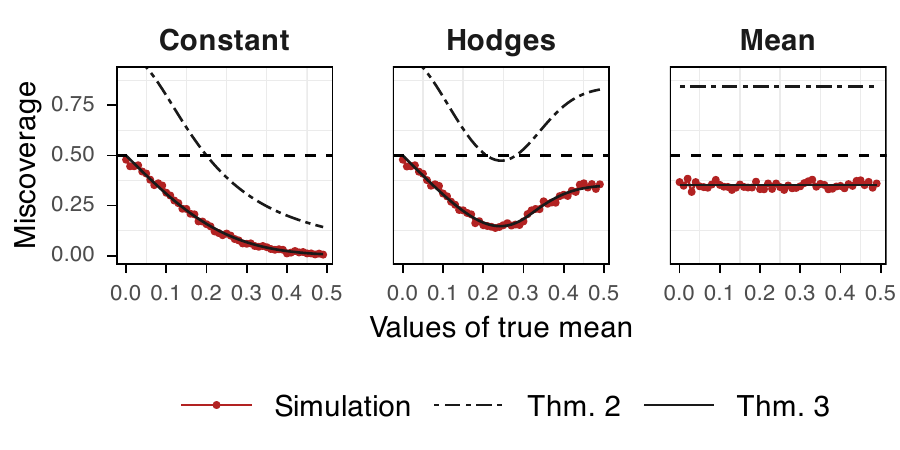}
  \captionsetup{width=0.9\textwidth}
  \caption{Estimated miscoverage probability of the confidence set $\widehat{\mathrm{CI}}_N^\dagger$ in \eqref{eq:anti-conservative-confidence-set} for the normal mean problem of \Cref{example:hodges-estimator} and \Cref{example:hodge-revisited}.  The $X$-axis displays the true mean $\mu$ and the $Y$-axis displays the empirical miscoverage over $1000$ replications. The performances of three estimators, the constant estimator at zero, Hodges' estimator and the sample mean, are shown in red. The analytical upper bound of \Cref{thm:coverage-anti-conservative-confidence-set-unbiased} is shown as a dashed line; the upper bound of \Cref{thm:coverage-anti-conservative-confidence-set-empirical-risk} is shown as a solid line.}\label{fig:example3}
\end{subfigure}
\caption{The empirical and theoretical miscoverage of the confidence set.}
\end{figure}
\section{Confidence Sets for M-estimation}\label{sec:general-M-estimation}
More refined results become available when the objective takes the form of an expected loss function, a setting commonly referred to as \emph{M-estimation} or \emph{empirical risk minimization}. Let $m_\theta : \mathcal{Z} \mapsto \mathbb{R}$ be a measurable function, indexed by $\theta \in \Theta$, and define
\begin{equation}\label{eq:m-estimation-definition}
    \M_2(\theta) := \frac{1}{n_2}\sum_{i\in I_2} \mathbb{E}_{P_i}[m_\theta(Z_i)] \quad \textrm{and} \quad \widehat{\mathbb{M}}_2(\theta) := \frac{1}{n_2}\sum_{i \in I_2} m_\theta(Z_i).
\end{equation}
For instance, taking $Z=(X^\top,Y)$ and $m_{\theta}(Y,X) := (Y-\theta^\top X)^2$ with $\Theta = \mathbb{R}^d$ corresponds to linear regression (without assuming linearity), while $m_{\theta}(Z) := -\log p(Z; \theta)$ for a (possibly misspecified) parametrized family of likelihood $p(Z; \theta)$ yields maximum likelihood estimation. This framework also includes more general nonparametric or constrained problems.

\subsection{Validity under Independence}\label{sec:validity-under-ind}
Assume that $Z_1, \ldots, Z_N$ are independent. For $i \in I_2$, define the centered differences 
\begin{equation}\label{eq:centered-xi}
    \begin{split}
 \widehat\xi_i = m_{\widehat\theta_1}(Z_i)-m_{\theta(P^N)}(Z_i) -\mathbb{E}_{P_i}[m_{\widehat\theta_1}(Z)-m_{\theta(P^N)}(Z)\mid D_1].
    \end{split}
\end{equation}
\begin{theorem}\label{thm:coverage-anti-conservative-confidence-set-empirical-risk}
The confidence set $\widehat{\mathrm{CI}}_N^{\dagger}$ in \eqref{eq:anti-conservative-confidence-set} satisfies
\begin{equation}\label{eq:standard-non-uniform-BE}
    \begin{split}
&\mathbb{P}_{P^N}\!\left(\theta(P^N) \notin \widehat{\mathrm{CI}}_{N}^{\dagger}\right)
\;\le\;
\mathbb{E}_{P^{1}}[1-\Phi(\ratio)] \\
&\quad  +\E_{P^1}\left[\min\left\{1, C\,\sum_{i\in I_2} \mathbb{E}_{P_i}\left[\frac{|\widehat\xi_i|^2}{n_2^2\widehat{\mathbb{V}}_{2}(1 + \ratio)^2}\min\left\{1,\,\frac{|\widehat \xi_i|}{n_2\widehat{\mathbb{V}}_{2}^{1/2}(1 + \ratio)}\right\}\bigg| D_1\right]\right\}\right],
    \end{split}
\end{equation}
where $C > 0$ is a universal constant. Whenever the second term vanishes uniformly over $P^N \in \mathcal{P}^N$, the set $\widehat{\mathrm{CI}}_{N}^{\dagger}$ is asymptotically uniformly valid at level $1/2$ since $\ratio \ge 0$ almost surely.
\end{theorem}
The proof appears in \Cref{appsec:proof-of-M-estimation} and uses a non-uniform Berry-Esseen bound (e.g., Theorem 2.1 of \citet{chen2001non}). The ratio $\ratio$ quantifies the degree of conservativeness. When $\ratio \to 0$  in probability, the miscoverage approaches 1/2 exactly; when $\ratio \to \infty$ in probability, the set becomes increasingly conservative and the miscoverage tends to zero. The conditions under which the remainder term vanishes also depend on $\ratio$. The least favorable case is when $\ratio \to 0$ in probability. In such case, the assumption on $\widehat\xi_i$ reduces to the classical Lindeberg-Feller conditions (conditional on $D_1$). When $\ratio \to \infty$ in probability, finite $\widehat{\mathbb{V}}_2$ suffices and the remainder decays faster than in standard Berry-Esseen bounds.

\begin{example}[Super-efficient initial estimator, Revisited]\label{example:hodge-revisited}
    Consider the same setting as \Cref{example:hodges-estimator}. \Cref{thm:coverage-anti-conservative-confidence-set-empirical-risk} implies that 
    \begin{equation}
    \begin{split}
&\mathbb{P}_{P^N}\!\left(\theta(P^N) \notin \widehat{\mathrm{CI}}_{N}^{\dagger}\right)
\;\le\;
\E_{P^1}\left[1-\Phi(\ratio) + \frac{C\sigma^{3/2}}{n_2^{1/2}(1+\ratio)^3}\right].
    \end{split}
\end{equation}
Taking $\widehat{\theta}_1$ to be Hodges' estimator and combining with the analysis of \Cref{example:hodges-estimator}, the miscoverage of $\widehat{\mathrm{CI}}_{N}^{\dagger}$ exhibits three regimes: it approaches $1/2$ when $\mu \approx 0$, where super-efficiency causes 
$\ratio \overset{p}{\to} 0$; it tends to zero when $\mu \approx n_1^{-1/4}$ where $\ratio \overset{p}{\to} \infty$; for for large $\mu$, where Hodges' estimator becomes comparable with the sample mean, the miscoverage becomes similar to that based on the sample mean as $\widehat{\theta}_1$. 
\end{example}

\Cref{fig:example3} compares the analytic upper bound of \Cref{thm:coverage-anti-conservative-confidence-set-empirical-risk} with the empirical miscoverage. The bound exactly tracks the empirical performance with visibly negligible approximation error. This confirms that $1-\Phi(\ratio)$ characterizes the miscoverage with high precision. Taken together, $\widehat{\mathrm{CI}}_{N}^{\dagger}$ is asymptotically valid at level $1/2$ for any $\widehat{\theta}_1$ and any $P^N$ under which the remainder term vanishes. The miscoverage approaches $1/2$ when the estimator converges ``too fast", and tends to zero when the estimator converges slowly or is inconsistent. For estimators satisfying $\ratio = O_P(1)$, the confidence set is valid at level $1/2$ but may in practice be conservative. The bound of \Cref{thm:coverage-anti-conservative-confidence-set-unbiased} does not capture this conservativeness precisely, whereas \Cref{thm:coverage-anti-conservative-confidence-set-empirical-risk} aligns with the observed miscoverage almost exactly.

\begin{remark}[Extensions beyond M-estimation]
The result of this section extends to optimization objectives of a more complex form than \eqref{eq:m-estimation-definition}. Notable examples include criteria defined as U-statistics or higher-order U-statistics \citep{bose2018u, diciccio2022clt} and U-quantile functionals \citep{choudhury1988generalized}. A special case was already studied in \Cref{example-u-statistics}. More refined validity results in these settings follow from the same proof strategy, with the Berry-Esseen bound replaced by the appropriate analogue for the statistic of interest; see, for instance, \citet{zhao1983non,bentkus1997edgeworth, wang2002non} and \citet{chen2007normal}. As evident in \Cref{fig:example1}, the miscoverage can exceed $1/2$ for U-statistics. This occurs because the normal approximation is valid only under non-degeneracy; the degenerate case requires separate treatment.
\end{remark}

\subsection{Validity under Dependence}
The proof of \Cref{thm:coverage-anti-conservative-confidence-set-empirical-risk} relies crucially on a non-uniform Berry--Esseen bound for independent observations. An extension to dependent data is possible through a martingale approximation argument \citep[Section 2]{Wu20014martingale}, without imposing any specific dependence structure on the observations.

Suppose the dependent observations $\widehat \xi_1, \ldots, \widehat \xi_N$ are split according to
\begin{equation}\label{eq:martingale-split}
    I_1 := \{1, \ldots, n_1\} \quad \textrm{and} \quad I_2 := \{n_1+1, \ldots, N\} \quad \textrm{with} \quad |I_2| = n_2.
\end{equation}
Let $\mathcal{H}_0$ be the $\sigma$-algebra generated by $\{\widehat\xi_i\}_{i \in I_1}$, and for each $k \in I_2$, let $\mathcal{H}_k$ be the $\sigma$-algebra generated by $\mathcal{H}_0$ and $\{\widehat \xi_i\}_{i = n_1 + 1}^k$, so that $\mathcal{H}_0 \subseteq \ldots \subseteq \mathcal{H}_N$ is a filtration supporting the full sequence. The estimator $\widehat \theta_1$ is $\mathcal{H}_0$-measurable by construction.
\begin{theorem}\label{thm:coverage-martingale}
Recall $\widehat\xi_i$ defined in \eqref{eq:centered-xi}. Define the martingale approximation 
    \begin{equation}\label{eq:martingale-approx}
        \widetilde{\xi}_i = \sum_{r\in I_2}(\E[\widehat\xi_r|\mathcal{H}_i]-\E[\widehat\xi_r|\mathcal{H}_{i-1}]) \quad \textrm{for} \quad i \in I_2.
    \end{equation}
    For $\delta \in (0, \infty)$, define 
    \begin{equation}
        \begin{split}
        L_{2\delta}&:= \sum_{i \in I_2} \E_{P_i}\left[\left|\frac{\widetilde\xi_i}{n_2\widehat{\mathbb{V}}_2^{1/2}}\right|^{2+2\delta} \,|\, D_1\right] \quad \textrm{and} \\
        M_{2\delta}&:= \E_{P^1}\left(\left|\sum_{i \in I_2} \E_{P_i}\left[\frac{\widetilde\xi_i^2}{n_2^2\widehat{\mathbb{V}}_2} \mid \mathcal{H}_{i-1}\right]-1\right|^{1+\delta} \mid D_1 \right)\quad.
    \end{split}
    \end{equation}
    The confidence set $\widehat{\mathrm{CI}}_N^{\dagger}$ in \eqref{eq:anti-conservative-confidence-set} satisfies
    \begin{equation}\label{eq:martingale-coverage-bound-general}
        \begin{split}
           &\mathbb{P}_{P^N}\!\left(\theta(P^N) \notin \widehat{\mathrm{CI}}_{N}^{\dagger}\right) \\
           &\quad \le \mathbb{E}_{P^1}[1-\Phi(\ratio)] + \mathbb{E}_{P^1}\left[\min\left\{1,  C_\delta \frac{(L_{2\delta} +  M_{2\delta})^{1/(3+2\delta)}}{1+|\ratio|^{2+2\delta}}\right\}\right],
        \end{split}
    \end{equation}
    where $C_\delta > 0$ is a constant depending only on $\delta$.
\end{theorem}
The proof of \Cref{thm:coverage-martingale} appears in \Cref{appsec:proof-of-coverage-martingale}. The original sequence $\{\widehat \xi_i\}_{i \in I_2}$ is replaced by the martingale difference sequence $\{\widetilde \xi_i\}_{i \in I_2}$ defined in \eqref{eq:martingale-approx}, to which the non-uniform Berry-Esseen bound of \citet{haeusler1988nonuniform} is applied. No specific dependence structure is imposed on the original sequence. The term $L_{2\delta}$ controls the $(2+2\delta)$-th moment of the normalized martingale increments; $M_{2\delta}$ quantifies the $L^{1+\delta}$ deviation of the quadratic variation from the marginal variance. 

Compared with \Cref{thm:coverage-anti-conservative-confidence-set-empirical-risk}, which is its counterpart under independence, \Cref{thm:coverage-martingale} also shows that the miscoverage probability remains governed by $\ratio \ge 0$ once the remainder terms are negligible under suitable moment conditions. In particular, the confidence set is asymptotically valid at level $1/2$ whenever the remainder terms vanish, extending the conclusions of \Cref{sec:validity-under-ind} to general dependent observations.
\begin{remark}[Martingale approximation] For general dependent observations,
\begin{equation}
        \widetilde{\xi}_i = \widehat\xi_i - \E[\widehat\xi_i|\mathcal{H}_{i-1}] +\sum_{r=i+1}^N(\E[\widehat\xi_r|\mathcal{H}_i]-\E[\widehat\xi_r|\mathcal{H}_{i-1}]) \quad \textrm{for} \quad i \in I_2.
    \end{equation}
    The second term vanishes, for instance, when $\{\widehat \xi_i\}_{i \in I_2}$ is itself a martingale difference sequence. This follows since for $r > i$,
    \begin{equation*}
        \E[\widehat\xi_r|\mathcal{H}_{i}] = \E[\E[\widehat\xi_r|\mathcal{H}_{r-1}] \mid \mathcal{H}_{i}] = 0 \quad \textrm{whenever} \quad \E[\widehat\xi_r|\mathcal{H}_{r-1}] = 0.
    \end{equation*}
\end{remark}
\begin{remark}[Sharpness and faster rates]
The bound \eqref{eq:martingale-coverage-bound-general} is sharp in its dependence on $L_{2\delta}$ and $M_{2\delta}$; see Section 3 of \citet{haeusler1988rate} for a matching lower bound based on an example. Recovering a $\sqrt{n_2}$-rate comparable to the independent case in  \eqref{eq:standard-non-uniform-BE} requires additional assumptions, under which sharper non-uniform Berry-Esseen bounds are available; see, for instance, \citet{fan2017non}
\end{remark}

\begin{remark}[Other dependence structures]
The approach underlying \Cref{thm:coverage-martingale} extends to other dependence structures by substituting the martingale Berry--Esseen bound with an analogous result tailored to the structure of interest; see, for instance, \citet{chen2004normal, hormann2009berry, hafouta2022non, liu2023wasserstein}.
\end{remark}
\subsection{Summary of Validity Results}
The validity results for $\widehat{\mathrm{CI}}_N^\dagger$ established thus far are summarized in \Cref{tab:base-confidence-sets}.

\begin{table}[ht]
\centering
\renewcommand{\arraystretch}{1.4}
\begin{tabular}{llcccp{4.5cm}}
\toprule
Optimization & Dependence & Theorem & Sig. Level & Assumption \\
\midrule
General
  & $\beta$-mixing
  & Thm.~\ref{thm:coverage-anti-conservative-confidence-set}
  & Conservative
  & Finite variance\\
General
  & $\beta$-mixing
  & Thm.~\ref{thm:coverage-anti-conservative-confidence-set-unbiased}
  & Conservative
  &\thead{Finite variances; \\Unbiased $\M_2(\cdot)$}\\
M-estimation
  & Indep.
  & Thm.~\ref{thm:coverage-anti-conservative-confidence-set-empirical-risk}
  & $1/2$
  & Lindeberg--Feller \\
M-estimation
  & General
  & Thm.~\ref{thm:coverage-martingale}
  & $1/2$
  & \thead{$(2+\delta)$-th moment; \\Consistent quadratic variation} \\
\bottomrule
\end{tabular}
\caption{Validity results for the confidence set $\widehat{\mathrm{CI}}_N^\dagger$
defined in \eqref{eq:anti-conservative-confidence-set}. ``Conservative''
indicates that the miscoverage tends to zero as
$\ratio \to \infty$ in probability; ``$1/2$'' indicates
asymptotic validity at level $1/2$ whenever the remainder terms vanish.
The Lindeberg--Feller condition in the third row is sufficient but not
necessary; see the discussion following \Cref{thm:coverage-anti-conservative-confidence-set-empirical-risk}.}
\label{tab:base-confidence-sets}
\end{table}

For general stochastic optimization under $\beta$-mixing, the set $\widehat{\mathrm{CI}}_N^\dagger$ is asymptotically conservative as long as $\ratio
\to \infty$ in probability. This is achieved when $\widehat \theta_1$ is inconsistent or converges at a sub-optimal rate (\Cref{example-u-statistics} and \Cref{example:hodges-estimator}). For M-estimation problems of the form \eqref{eq:m-estimation-definition} under non-identical distributions, the same set $\widehat{\mathrm{CI}}_N^\dagger$ achieves level $1/2$ for any estimator, including super-efficient ones (\Cref{example:hodge-revisited}),  provided the remainder term vanishes. The dependent extension in \Cref{thm:coverage-martingale} accommodates arbitrary dependence structures with no further restriction. Notably, \Cref{thm:coverage-anti-conservative-confidence-set,thm:coverage-anti-conservative-confidence-set-unbiased,thm:coverage-anti-conservative-confidence-set-empirical-risk,thm:coverage-martingale} are all dimension-free and proved without any reference to the dimension or complexity of $\Theta$.
\section{Finer Control over the Coverage Level}\label{sec:coverage-slpha}
The results of \Cref{sec:general-M-estimation} establish that the set $\widehat{\mathrm{CI}}_N^\dagger$ defined in \eqref{eq:anti-conservative-confidence-set} satisfies the honest validity guarantee \eqref{eq:asymptotic-validity} under both independent and dependent observations. In particular, miscoverage is bounded by $1/2$ under M-estimation, and tends to zero when $\ratio \to \infty$ in probability. While the set is thus valid across a range of levels, it does not allow the practitioner to prescribe a desired significance level $\alpha$. This section describes how to construct confidence sets at any prespecified level.
\subsection{Data-splitting Approach}\label{sec:data-splitting}
Suppose a miscoverage bound of the following form:
\begin{equation}\label{eq:median-valid}
\mathbb{P}_{P^N} \left(\theta(P^N) \notin \widehat{\mathrm{CI}}_{N}^{\dagger}\right) \le p + \mathfrak{R}_{N, P^N},
\end{equation}
is available for some $p \in (0,1)$ and  $\mathfrak{R}_{N, P^N} \ge 0$. Under M-estimation, \Cref{thm:coverage-anti-conservative-confidence-set-empirical-risk,thm:coverage-martingale} establish \eqref{eq:median-valid} with $p=1/2$ for both independent and dependent observations.

Towards  achieving a target level $\alpha < p$, consider the following construction. For integers $B \ge 1$ and $r \ge 0$, let $S_1, G_1, S_2 \ldots, G_{B-1}, S_B$ be a partition of $\{1,2,\ldots,N\}$ with $|G_\ell| = r$ for $1\le \ell \le B-1$, where each $G_\ell$ serves as a gap of $r$ indices separating consecutive bins $S_\ell$ and $S_{\ell + 1}$. Assume that $Z_1,\ldots, Z_N$ satisfies
\begin{equation*}
    \beta(n, r) \le \beta^\dagger(r)  \quad \textrm{for all} \quad n\ge 1,
\end{equation*}
that is, the $\beta$-mixing coefficient as defined in \eqref{eq:beta-mixing} only depends on the gap size $r$. 
\begin{theorem}\label{thm:hulc-like}
Suppose each set $\widehat{\mathrm{CI}}^\dagger_{\ell}$, constructed from $\{Z_i \,:\, i \in S_\ell\}$ as in \eqref{eq:anti-conservative-confidence-set}, satisfies \eqref{eq:median-valid} with same $p \in (0,1)$ and remainder $\mathfrak{R}_{N_0, P^N}$ where $N_0 = \min_{\ell}|S_\ell|$. Then for any $\alpha \in (0, p)$, setting $B \ge \lceil \log_{p}(\alpha) \rceil$, the union $\widehat{\mathrm{CI}}^{\mathtt{DS}}_{N, \alpha} = \bigcup_{\ell=1}^B \widehat{\mathrm{CI}}^\dagger_{\ell}$ satisfies
    \begin{equation}
    \begin{split}
        &\mathbb{P}_{P^N}\left(\theta(P^N) \notin \widehat{\mathrm{CI}}^{\mathtt{DS}}_{N, \alpha}\right) \le \alpha (1+ p^{-1}\mathfrak{R}_{N_0, P^N})^{B} + (B-1)\beta^\dagger(r).
    \end{split}
    \end{equation}
    Whenever $\mathfrak{R}_{N_0, P^N} \to 0$ and $\beta^\dagger(r) \to 0$ uniformly over $P^N \in \mathcal{P}^N$, the set $\widehat{\mathrm{CI}}^{\texttt{DS}}_{N, \alpha}$ is asymptotically uniformly valid at level $\alpha$.
\end{theorem}
The proof appears in \Cref{supsec:coverage-proofs}. For $p = 1/2$ and common significance levels $\alpha \in \{0.1, 0.05, 0.01\}$, this procedure requires $B \in \{4, 5, 7\}$ bins respectively. The admissible range of $\alpha$ is implicitly restricted by the sample size, since we need $N \ge B \ge \lceil \log_{1/2}(\alpha)\rceil$. A closely related procedure was proposed by \citet{kuchibhotla2024hulc}. 

\Cref{thm:hulc-like} is not limited to M-estimation: the result holds for general stochastic optimization whenever \eqref{eq:median-valid} is available with an appropriate $p$. As \Cref{example-u-statistics} illustrates, the constant estimator in the U-statistics problem can yield miscoverage exceeding $1/2$ due to degeneracy, so a larger $p$ would be required there.


\subsection{Lower Confidence Bounds Approach}\label{sec:LCB-basedCI}
Returning to the general stochastic optimization problem \eqref{eq:def-m-functional}, suppose one can construct a data-dependent function $\widehat t_{\alpha}: \Theta \times \Theta \mapsto \mathbb{R}$, measurable with respect to $D_2$, such that
\begin{equation}\label{eq:lower-bound}
	\mathbb{P}_{P^{2}} \left(\M_2(\theta(P^N)) - \M_2(\widehat{\theta}_1) \ge \widehat{\M}_2(\theta(P^N)) - \widehat{\M}_2(\widehat{\theta}_1) - \widehat{t}_\alpha(\theta(P^N), \widehat{\theta}_1) | D_1\right)\ge 1-\alpha_N,
\end{equation} 
where in many practical situations  $\alpha_N = \alpha + o(1)$. This yields the lower confidence bound (LCB)-based confidence set:
\begin{equation}\label{eq:CI-without-upper-bound}
\widehat{\mathrm{CI}}^{\mathtt{LCB}}_{N, \alpha} := \left\{\theta\in\Theta:\, \widehat{\mathbb{M}}_2(\theta) - \widehat{\mathbb{M}}_2(\widehat{\theta}_1) -\widehat t_{\alpha}(\theta, \widehat{\theta}_1)\le 0\right\}.
\end{equation}
\begin{theorem}\label{thm:general_LCB}
Suppose $\widehat t_{\alpha}$ satisfies \eqref{eq:lower-bound}. Then 
\begin{equation*}
    \mathbb{P}_{P^N}\left(\theta(P^N) \not\in \widehat{\mathrm{CI}}^{\mathtt{LCB}}_{N, \alpha} \right) \le \alpha_N + \beta(r). 
\end{equation*}
\end{theorem}
The proof appears in \Cref{supsec:coverage-proofs}. This result holds at the same level of generality as \Cref{thm:coverage-anti-conservative-confidence-set}: no structure is imposed on the optimization problem and the observations need not be independent. The only requirements are the basic inequality \eqref{eq:zeroth-order} and the lower bound condition \eqref{eq:lower-bound}.

It may initially appear puzzling that constructing a confidence set for $\theta(P^N)$ requires building a lower confidence bound for $\M_2(\theta(P^N)) - \M_2(\widehat{\theta}_1)$, which itself depends on $\theta(P^N)$. A stronger but more transparent restatement of \eqref{eq:lower-bound} clarifies what is actually needed:
\begin{equation}\label{eq:reinterpret-lower-upper}
    \begin{split}
\inf_{P\in\mathcal{P}}\,\inf_{\theta,\theta'\in\Theta}\,\mathbb{P}_{P^{2}} \left(\M_2(\theta) - \M_2(\theta') \ge \widehat{\M}_2(\theta) - \widehat{\M}_2(\theta') - \widehat{t}_\alpha(\theta, \theta') | D_1\right)\ge 1-\alpha_N.
    \end{split}
\end{equation}
That is, one needs a lower confidence bound for $\mathbb{M}_2(\theta) - \mathbb{M}_2(\theta')$ for every (non-stochastic) pair $\theta, \theta'\in\Theta$. Under M-estimation with $\mathbb{M}_2(\theta) =n_2^{-1}\sum_{i\in I_2}\mathbb{E}_{P_i}[m_{\theta}(Z_i)]$, the difference reduces to $\mathbb{M}_2(\theta) - \mathbb{M}_2(\theta') = n_2^{-1}\sum_{i\in I_2}\mathbb{E}_{P_i}[m_{\theta}(Z_i) - m_{\theta'}(Z_i)]$, for which such bounds follow from concentration inequalities or the central limit theorem.

Since the map $(\theta,\theta') \mapsto \M_2(\theta) - \M_2(\theta')$ is always real-valued, a lower confidence bound can in principle be constructed without reference to the complexity of $\Theta$. This contrasts sharply with ``classical'' approaches based on the weak convergence of $r_N(\widehat \theta_1-\theta(P^N))$ to a limit process, which often depends heavily on the complexity of $\Theta$.

\subsection{Studentized Confidence Sets under M-estimation}\label{sec:stutendized-cs}
This subsection develops a concrete choice of $\widehat t_\alpha(\cdot, \cdot)$ satisfying \eqref{eq:lower-bound} under M-estimation \eqref{eq:m-estimation-definition} with independent but not necessarily identically distributed observations. The requirement reduces to a lower confidence bound for the expected difference of loss functions based on a sample mean, for which a natural construction uses the central limit theorem for t-statistics \citep{Bentkus1996, bentkus1996berry}. For $\theta \in \Theta$, define the sample variance of $m_{\theta}(Z_i) - m_{\widehat\theta_1}(Z_i) = (m_{\theta} - m_{\widehat\theta_1})(Z_i)$ as 
\begin{equation}
\label{eq:sample-variance}
		\widehat \sigma_{\theta, \widehat{\theta}_1}^2 :=     \frac{1}{n_2-1}\sum_{i \in I_2} \left\{(m_{\theta}-m_{ \widehat{\theta}_1})(Z_i) - \frac{1}{n_2}\sum_{j\in I_2}(m_{\theta}-m_{ \widehat{\theta}_1})(Z_j)\right\}^2.
\end{equation}
The studentized confidence set is\begin{equation}\label{eq:CI-CLT}
    \widehat{\mathrm{CI}}^{\mathtt{CLT}}_{N, \alpha} := \left\{\theta\in\Theta:\, \widehat{\mathbb{M}}_2(\theta) - \widehat{\mathbb{M}}_2(\widehat{\theta}_1)\le n_2^{-1/2}z_\alpha \widehat \sigma_{\theta, \widehat\theta_1}\right\},
\end{equation}
where $z_{\alpha}$ denotes the $(1-\alpha)$-th quantile of the standard normal. This is the special case of $\widehat{\mathrm{CI}}^{\mathtt{LCB}}_{N, \alpha}$ defined in \eqref{eq:CI-without-upper-bound} with $\widehat t_{\alpha}(\theta, \widehat\theta_1)= n_2^{-1/2}z_\alpha \widehat \sigma_{\theta, \widehat\theta_1}$. Several validity results follow; all proofs appear in \Cref{supp:studentized-proofs}.

\begin{theorem}\label{thm:studentized-nonIID}
Recall $\widehat\xi_i$ defined in \eqref{eq:centered-xi}. For any $V > 0$, define the truncated random variables, 
\begin{equation}
    \overline{\xi}_i = V^{-1}\widehat{\xi}_i\mathbf{1}\{|\widehat{\xi}_i| \le V\}, \quad \textrm{and} \quad M^2 = \sum_{i \in I_2} \mathrm{Var}_{P_i}[\overline{\xi}_i| D_1].
\end{equation}
There exists an absolute constant $C > 0$ such that for any $\alpha \in (0,1)$,
\begin{equation}\label{eq:studentized-nonIID}
\begin{split}
    &\mathbb{P}_{P^N}\left(\theta(P^N) \not\in \widehat{\mathrm{CI}}^{\mathtt{CLT}}_{N, \alpha}\right) \le \alpha \\
    &\quad + \mathbb{E}_{P^1}\left[\min\left\{1,C\left(\sum_{i\in I_2}\mathbb{P}_{P_i}(\widehat{\xi}^2_i > V^2 | D_1) + \sum_{i\in I_2}\frac{|\mathbb{E}_{P_i}[\overline{\xi}_i| D_1]|}{M} + \sum_{i \in I_2} \frac{\mathbb{E}_{P_i}[|\overline{\xi}_i|^3| D_1]}{M^3}\right)\right\}\right].
\end{split}
\end{equation}
\end{theorem}
\begin{theorem}\label{thm:studentized-katz}
    Under \eqref{eq:m-estimation-definition}, one has $n_2^2\widehat{\mathbb{V}}_2 = \sum_{i\in I_2}\mathrm{Var}_{P_i}[\widehat \xi_i | D_1]$ by definition \eqref{eq:variance-curvature-ratio}. Setting $V^2 = n_2^2\widehat{\mathbb{V}}_2$ in \Cref{thm:studentized-nonIID} gives  \begin{equation}\label{eq:studentized-katz}
    \begin{split}
            &\mathbb{P}_{P^N}\left(\theta(P^N) \not\in \widehat{\mathrm{CI}}^{\mathtt{CLT}}_{N, \alpha}\right) \le \alpha \\
    &\quad + \E_{P^1}\left[\min\left\{1, C\sum_{i\in I_2} \mathbb{E}_{P_i}\left[\frac{|\widehat\xi_i|^2}{n_2^2\widehat{\mathbb{V}}_{2}}\min\left\{1,\,\frac{|\widehat \xi_i|}{n_2\widehat{\mathbb{V}}_{2}^{1/2}}\right\}\bigg|D_1\right]\right\}\right],
    \end{split}
\end{equation}
for any $\alpha \in (0,1)$ where $C > 0$ is an absolute constant.
\end{theorem}
\begin{theorem}\label{cor:studentized-IID}
    Assume $\{Z_i\, : \, i \in I_2\}$ is IID. Set $V_*$ as the largest solution that satisfies:
    \begin{equation}
        V_*^2  = \E_{P^2}[\widehat \xi_1^2\mathbf{1}\{\widehat \xi_1^2 \le V_*^2 n_2\}| D_1],
    \end{equation}
    and define 
    \begin{equation}\label{eq:DAN-remainder}
        R_* = n_2\mathbb{P}_{P^2}(\widehat{\xi}^2_1 > n_2V_*^2| D_1) + n_2|\mathbb{E}_{P^2}[\overline{\xi}_1| D_1]| + n_2\mathbb{E}_{P^2}[|\overline{\xi}_1|^3| D_1]
    \end{equation}
    where $\overline{\xi}_1 = (V_*^2 n_2)^{-1/2}\widehat \xi_1\mathbf{1}\{\widehat \xi_1^2 \le V_*^2 n_2\}$.
    Then, \eqref{eq:studentized-nonIID} becomes 
    \begin{equation}\label{eq:studentized-IID}
        \mathbb{P}_{P^N}\left(\theta(P^N) \not\in \widehat{\mathrm{CI}}^{\mathtt{CLT}}_{N, \alpha}\right) \le \alpha + \mathbb{E}_{P^1}[\min\{1, CR_*\}],
    \end{equation}
   for any $\alpha \in (0,1)$.
\end{theorem}
The remainder in \eqref{eq:studentized-nonIID} vanishes under the Lindeberg-Feller condition, though this is sufficient but not necessary. See a counterexample in Example 1.1 of \citet{bentkus1996berry}. \Cref{thm:studentized-katz} and \Cref{cor:studentized-IID} are direct consequences of \Cref{thm:studentized-nonIID}. \Cref{thm:studentized-katz} restates the result under more familiar moment conditions. The remainder in \eqref{eq:studentized-katz} should be compared with that in \Cref{thm:coverage-anti-conservative-confidence-set-empirical-risk}: beyond vanishing under the Lindeberg--Feller condition, it yields an explicit convergence rate of $n_2^{-\delta/2}$ whenever $\E_{P_i}[|\widehat \xi_i|^{2+\delta} | D_1]$ is finite for some $\delta \in (0,1]$. When $\{Z_i\, : \, i \in I_2\}$ is IID, the condition weakens significantly.  In particular, the second term of \eqref{eq:studentized-IID} tends to zero when $\widehat \xi_1$ belongs to the domain of attraction of the normal law (DAN). Also under IID observations, the folllowing multiplicative error bound becomes available.
\begin{theorem}\label{thm:studentized-IID-multi} Assume that $\{Z_i\, : \, i \in I_2\}$ is IID. Define 
\begin{equation*}
    z_{\alpha, n_2} = z_\alpha\sqrt{\frac{n_2}{n_2-1+z_\alpha^2}},\end{equation*}
and let $V_{*}$ be the largest solution satisfying
    \begin{equation}
        V_{*}^2(1+z_{\alpha, n_2}^2) = \E_{P^2}[\widehat \xi_1^2\mathbf{1}\{\widehat \xi_1^2 \le V_*^2 n_2\} |  D_1].
    \end{equation}
   Then, for any $\alpha \in (0,1)$, there exist absolute constants $C, C', C'' > 0$ such that 
    \begin{equation}\label{eq:studentized-IID-multi}
    \begin{split}
        &\mathbb{P}_{P^N}\left(\theta(P^N) \not\in \widehat{\mathrm{CI}}^{\mathtt{CLT}}_{N, \alpha}\right)\le \min\{1,C\min \left\{R_1, R_2\right\}\},
    \end{split}
    \end{equation}
    where 
    \begin{equation*}
    \begin{split}
            R_1 &= \left(1-\Phi(z_{\alpha, n_2})\right)  \mathbb{E}_{P^1}[\exp(R_*)\mathbf{1}\{R_*\le C' (1+|z_{\alpha, n_2}|)^2\}]\quad \text{and}\\
        R_2 &= \exp(-z_{\alpha,n_2}^2)\mathbb{P}_{P^1}(R_* \le C''),
    \end{split}
    \end{equation*}
     with $R_*$ defined as \eqref{eq:DAN-remainder}.
\end{theorem}
The proof follows from Theorem 2 of \citet{robinson2005self} and sub-Gaussianity result of t-statistics in \citet{gine1997student}, yielding a tighter bound whenever $R_*$ is small. This result also only requires $\widehat \xi_1$ to belong to the domain of attraction of the normal law

\Cref{thm:studentized-nonIID,thm:studentized-katz,cor:studentized-IID,thm:studentized-IID-multi} fail to capture the refined conservativeness governed by the ratio $\ratio$ as in \Cref{thm:coverage-anti-conservative-confidence-set-empirical-risk} or \Cref{thm:coverage-martingale}. Existing non-uniform Berry-Esseen bounds for t-statistics \citep{shao1999cramer, jing2003self, wang2005limit} are not directly applicable since we require results for non-central t-statistics whose limiting distributions are non-standard \citep{Bentkus2007Limiting}. Under additional assumptions, the following sharper result is available.

\begin{theorem}\label{thm:large-deviation}
    Assume that $\{Z_i\, : \, i \in I_2\}$ is IID and let $\sigma^2 = \mathrm{Var}[\widehat \xi_1 | D_1]$. Define
        \begin{gather}
              \Xi_{n_2, t} = \E_{P^2}\left[\frac{(1+t)^3 |\widehat\xi_1|^3}{\sqrt{n_2}\sigma^3} \min\left\{1,\frac{(1+t) |\widehat\xi_1|}{\sqrt{n_2}\sigma} \right\} | D_1\right], \quad \rho_N =n_2^{-1/2}\E_{P^2}[|\widehat\xi_1|^3/\sigma^3| D_1],\quad \textrm{and}\nonumber\\
    \Psi_{n_2, t} = \exp\left(\frac{(t+\ratio)^2t}{4}\left(\frac{2(t+\ratio)}{3t}-2\right)\frac{\E_{P^2}[\widehat \xi_1^3 | D_1]}{\sqrt{n_2}\sigma^3}\right).\nonumber
        \end{gather}
    Then, there exist finite absolute constants $C, C', C''$ such that 
    \begin{equation}\label{eq:moderate-deviation}
    \begin{split}
        &\mathbb{P}_{P^N}\left(\theta(P^N)\not\in \widehat{\mathrm{CI}}^{\mathtt{CLT}}_{N, \alpha} \right) \\
        &\quad = \E_{P^1}\left[\left(1-\Phi(z_\alpha+\ratio)\right)\Psi_{n_2, z_\alpha}\exp(C' \Xi_{n_2, z_\alpha})\{1 + C''(1+z_\alpha)\rho_{n_2}\}\right],
    \end{split}
    \end{equation}
    for all $\ratio\le z_\alpha/5$ and $0 \le z_\alpha \le \rho_{n_2}^{-1} C$.
\end{theorem}

The proof follows from Theorem 1 of \citet{wang2009relative}; see also Theorem 2.1 of \citet{shao2016cramer} for a relevant result. Crucially, \eqref{eq:moderate-deviation} is an equality, not an inequality, and recovers the precise dependence on $\ratio$ in the tail. When $\E_{P^2}[|\widehat{\xi}^3| | D_1] < \infty$ and $z_\alpha = o(N^{1/6})$ \eqref{eq:moderate-deviation} implies
\begin{equation*}
    \mathbb{P}_{P^N}\left(\theta(P^N)\not\in \widehat{\mathrm{CI}}^{\mathtt{CLT}}_{N, \alpha} \right)/\E_{P^1}[1-\Phi(z_\alpha+\ratio)] \to 1.
\end{equation*}
Two natural extensions remain open: relaxing the finite third moment assumption to the domain of attraction of a stable law, and extending the range of validity beyond $\ratio\le z_\alpha/5$. Both directions are non-trivial problems in their own right and merit dedicated treatment; they are left for future work.

\subsection{Summary of Prescribed-level Methods}
The constructions proposed in this section extend the based confidence set $\widehat{\mathrm{CI}}^\dagger_{N}$ as in \eqref{eq:anti-conservative-confidence-set} to a prescribed significance level $\alpha$. the associated conditions are summarized in \Cref{tab:prescribed-level-confidence-sets}.

\begin{table}[ht]
\centering

\renewcommand{\arraystretch}{1.4}
\begin{tabular}{llccc}
\toprule
 & Optimization & Dependence & Theorem & Assumption \\
\midrule
Data-split
  & M-estimation
  & $\beta$-mixing
  & Thm.~\ref{thm:hulc-like}
  & Conditions of Thm.~\ref{thm:coverage-anti-conservative-confidence-set-empirical-risk}--\ref{thm:coverage-martingale} \\
LCB \eqref{eq:CI-without-upper-bound}
  & General
  & $\beta$-mixing
  & Thm.~\ref{thm:general_LCB}
  & Lower bound \eqref{eq:lower-bound} \\
CLT \eqref{eq:CI-CLT}
  & M-estimation
  & Indep.
  & Thm.~\ref{thm:studentized-nonIID}--\ref{thm:studentized-katz}
  & Lindeberg--Feller \\
CLT \eqref{eq:CI-CLT}
  & M-estimation
  & IID\
  & Thm.~\ref{cor:studentized-IID}--\ref{thm:large-deviation}
  & DAN \\
\bottomrule
\end{tabular}
\caption{Validity results for the prescribed-level confidence sets proposed in
this section. All constructions achieve the nominal level $\alpha$ either in finite samples
(empirical Bernstein) or asymptotically (CLT-based and data-splitting); the data-splitting method restricts the admissible range of $\alpha$, depending on the sample size.
``Indep.'' allows for non-identical distributions. The Lindeberg-Feller condition is sufficient but not necessary. DAN denotes the domain of attraction of the normal law.}
\label{tab:prescribed-level-confidence-sets}
\end{table}

Most results in this section are stated for M-estimation, with the exception of \Cref{thm:general_LCB}; the concrete construction satisfying \eqref{eq:lower-bound} is nevertheless studied for M-estimation. The data-splitting approach is based on \Cref{thm:coverage-anti-conservative-confidence-set-empirical-risk}--\Cref{thm:coverage-martingale} and therefore extends to the observations with general dependence. Among the distributional assumptions presented, the weakest is the DAN condition of \Cref{cor:studentized-IID,thm:studentized-IID-multi,thm:large-deviation}; Two directions are deferred to future work: extending the CLT-based confidence set \eqref{eq:CI-CLT} to self-normalized martingales, for which \citet{fan2018berry} provides a relevant Berry--Esseen bound; and obtaining a precise characterization of miscoverage analogous to \Cref{thm:large-deviation} for non-central t-statistics, which requires moderate and large deviation results not yet available in the literature.
\begin{remark}[Comparison with existing methods]
\citet{vogel2008universal} study a confidence set essentially equivalent to \eqref{eq:anti-conservative-confidence-set} without sample-splitting. Their condition CI2, which enters directly into their validity guarantee, typically requires control of the covering numbers or metric entropy of $\Theta$, whereas the proposed methods require no such complexity conditions. 
\cite{dey2024anytime} study a confidence set for the same target but impose a ``\emph{strong central condition}'', requiring a finite moment-generating function for $\widehat{\xi}_i$, or some sub-exponential tails. By contrast, the present results require only moment conditions. In the IID case, \Cref{cor:studentized-IID} requires no finite moments as long as $\widehat{\xi}_i \in \mathrm{DAN}$. The confidence set \eqref{eq:CI-CLT} was previously studied by \citet{chakravarti2019gaussian} and \citet{park2023robust}. \citet{chakravarti2019gaussian} test against a null that is a set of multivariate Gaussians. \cite{park2023robust} introduce synthetic noise to prevent the variance from vanishing, thereby inflating the confidence set; this inflation is unnecessary under the Lindeberg--Feller condition. Finally, while this manuscript focuses on inference for $\theta(P^N)$ itself, one may instead be interested in low-dimensional summary such as a univariate projection $\ell ^\top \theta(P^N)$ for (non-random) $\ell \in \mathbb{R}^d$. Honest inference for such projections was studied for Z-estimation framework by \citet{chang2024confidence}, whose validity results impose smoothness conditions on $\M(\theta, P^N)$ and growth restrictions of the form  $\mathrm{polylog}(d) = o(n)$. 
\end{remark}
\begin{remark}[Lower Confidence Bounds based on Concentration Inequalities]
An alterative route to constructing $\widehat t_\alpha(\cdot, \cdot)$ satisfying \eqref{eq:lower-bound} is through concentration inequalities. 
There is a wide range of tools for this purpose; see \citet{Boucheron2013Concentration} for classical results and \citet{hao2019bootstrapping, ramdas2023randomized, waudby2024estimating, bates2021distribution} for more recent developments. This approach may yield finite-sample valid confidence sets under stronger tail assumptions. Results based on the one-sided empirical Bernstein inequality are developed in \Cref{suppsec:empirical-bernstein}. 
\end{remark}
\section{Convergence Rates of the Confidence Sets}\label{sec:convergence-rates}

\subsection{Diameter Bounds under General Optimization}
This section establishes non-asymptotic bounds on the diameter of the proposed confidence sets, illustrating adaptive rates of convergence that depend on the unknown curvature of the objective. Throughout, $\theta(P^N)$ is assumed to be the unique minimizer of \eqref{eq:def-m-functional}, and the following conditions are imposed.

	\begin{enumerate}[label=\textbf{(A\arabic*)},leftmargin=2cm]
    \item \label{as:margin} There exist constants $c_0, \gamma \ge 0$ such that 
    \begin{equation*}
        \M_2(\theta)-\M_2(\theta(P^N))\ge c_0\|\theta-\theta(P^N)\|^{1+\gamma}
    \end{equation*}
    for all $\theta \in \Theta$.
    \item \label{as:local-entropy}There exists a function $\phi_{n_2} : \mathbb{R}_+ \mapsto \mathbb{R}_+$ such that 
    \begin{equation}
    	\E^*_{P^2} \left[\sup_{\|\theta-\theta(P^N)\| < \delta}|(\widehat{\mathbb{M}}_2 - \mathbb{M}_2)(\theta) - (\widehat{\mathbb{M}}_2 - \mathbb{M}_2)(\theta(P^N))| \right] \le  \phi_{n_2}(\delta)\label{eq:maximal-inequality}
    \end{equation}
    for every $n_2\ge 1$ and $\delta > 0$, where $\E^*_{P^2}[\cdot]$ denotes outer expectation. Furthermore, $\phi_{n_2}(x)/x^q$ is assumed non-increasing for some $q < 1+\gamma$.
    \item \label{as:rate-initial-estimator} 
    For every $n_1, n_2 \ge 1$ and $\varepsilon_{\mathtt{init}} > 0$, the initial estimator 
$\widehat\theta_1$ based on $D_1$ satisfies
\begin{equation}
    \mathbb{P}_{P^1}\!\left(
    \mathbb{E}_{P^2}[
    |\widehat{\M}_2(\widehat\theta_1) - \widehat{\M}_2(\theta(P^N))
    | \,|\, D_1
    ] \geq C_{\mathtt{init}} \cdot s_{n_1, n_2}
    \right) \leq \varepsilon_{\mathtt{init}},
\end{equation}
where $s_{n_1, n_2}, C_{\mathtt{init}}$ are non-negative constants. 
\end{enumerate}

The parameter $\gamma$ in \ref{as:margin} links the optimization problem at hand to the curvature within the parameter space. This condition implies that $\theta(P^N)$ is a strong global minimizer of $\theta\mapsto\mathbb{M}_2(\theta)$~\citep{drusvyatskiy2013tilt}. The inequality \ref{as:local-entropy} is known as a maximal inequality \citep[Section 2.3.1]{van1996weak} with the modulus  $\phi_{n_2}(\cdot)$ reflecting the local complexity of $\Theta$. To handle possible measurability issues, the outer expectation  $\E^*_{P^{2}}[\cdot]$ is adopted. See Section 1.2 of \cite{van1996weak}. Finally, \ref{as:rate-initial-estimator} pertains to the convergence rate of the initial estimator. As shown below, diameter of the proposed confidence set depends on all three quantities. \ref{as:margin} and \ref{as:local-entropy} are standard in the analysis of M-estimators, as discussed in Theorem 3.2.5 of~\cite{van1996weak}. They also appear in \citet{kim1990cube} to explain the differences between regular and irregular M-estimators in the estimation context.

For a set $A$ equipped with a metric $\|\cdot\|$, define $\mathrm{Diam}_{\|\cdot\|}(A) := \sup\{\|a-b\|\, :\, a, b\in A\}$. 
\begin{theorem}\label{thm:hulc-width}Assume \ref{as:margin}--\ref{as:rate-initial-estimator}. Define $r_{n_2}$ as any value satisfying
\begin{equation}
    r_{n_2}^{-2} \phi_{n_2}(c_0^{-1/(1+\gamma)}r_{n_2}^{2/(1+\gamma)}) \le 1, \label{rq:define-rn}
\end{equation}
and set \begin{equation*}
    \mathrm{R}^{\dagger}_{N}:= c_0^{-1/(1+\gamma)}(r_{n_2}^{2/(1+\gamma)}  + s_{n_1, n_2}^{1/(1+\gamma)}).
\end{equation*}
Then, for any $n_1, n_2\ge 1$ and $\varepsilon > 0$, writing $\mathbb{P}^*_{P^N}(\cdot)$ for outer probability over $P^N$,
\begin{equation*}
\mathbb{P}^*_{P^N}\left(\mathrm{Diam}_{\|\cdot\|}\big(\widehat{\mathrm{CI}}^{\dagger}_{N}\big)\le C\varepsilon^{-1/(1+\gamma-q)} \mathrm{R}^{\dagger}_{N}\right) \ge 1-\varepsilon-\varepsilon_{\mathtt{init}}-\beta(r), 
\end{equation*}
and $C$ is a constant depending on $\gamma, q$ and $C_{\mathtt{init}}$.  
\end{theorem}
The proof appears in \Cref{sec:hulc-width}. The proof requires no independence structure on the observations and no particular form of the optimization problem, so the result extends beyond the M-estimation framework. The diameter shows dependence on the curvature parameter $\gamma$ in \ref{as:margin} despite the confidence set being constructed without prior knowledge of $\gamma$, reflecting adaptation to the local geometry of the problem. Diameter bounds for the data-splitting construction of \Cref{sec:data-splitting} follow as an immediate corollary. A companion bound for the LCB-based set \eqref{eq:CI-without-upper-bound} is developed in \Cref{suppsec:addition-diameter}.

\subsection{Diameter Bounds under M-estimation}
\Cref{sec:stutendized-cs} developed a confidence set for M-estimation at any prescribed significance level $\alpha$ using $\widehat t_{\alpha}(\theta, \widehat\theta_1)  =n_2^{-1/2}z_\alpha \widehat \sigma_{\theta, \widehat\theta_1}$. This subsection establishes a non-asymptotic diameter bound for this set. The conditions below are stated in the M-estimation notation of \eqref{eq:m-estimation-definition} under independent but not necessarily identically distributed observations.
\begin{enumerate}[label=\textbf{(A\arabic*)},leftmargin=2cm]\setcounter{enumi}{3}
\item \label{as:square-process} There exist functions $\omega_{n_2,\mathtt{emp}}, \omega_{\mathtt{pop}}: \mathbb{R}_+ \mapsto \mathbb{R}_+$ such that 
    \begin{equation}
    	\E_{P^2}^* \left[\sup_{\|\theta-\theta(P^N)\| < \delta}\left|\frac{1}{n_2}\sum_{i\in I_2}(m_\theta - m_{\theta(P^N)})^2 - \mathbb{E}_{P_i}[(m_\theta - m_{\theta(P^N)})^2]\right| \right]\le  \omega^2_{n_2,\mathtt{emp}}(\delta)\nonumber
    \end{equation}
    and 
    \begin{equation}
        \sup_{\|\theta-\theta(P^N)\| < \delta}\frac{1}{n_2}\sum_{i\in I_2}\mathbb{E}_{P_i}[(m_\theta - m_{\theta(P^N)})^2(Z_i)]\le  \omega^2_{\mathtt{pop}}(\delta)\nonumber\label{eq:square-process-pop}
    \end{equation}
    for every $n_2\ge 2$ and $\delta > 0$. The combined modulus is $\omega^2_{n_2}(\delta) = \omega^2_{n_2,\mathtt{emp}}(\delta)   +\omega^2_{\mathtt{pop}}(\delta)$, and $\omega_{n_2}(x)/x^q$ is assumed non-increasing for some $q < 1+\gamma$.
    \item \label{as:rate-initial-estimator3} For every $n_1\ge 1$ and $n_2 \ge 2$, and $\widetilde\varepsilon_{\mathtt{init}} > 0$, the initial estimator based on $D_1$ satisfies
    \begin{equation}
        \mathbb{P}_{P^1}\left(\frac{1}{n_2}\mathbb{E}_{P^2| P^1}\left[\frac{1}{n_2}\sum_{i \in I_2} \widehat \xi_i^2\right] + \widehat{\mathbb{C}}_2^2 > \widetilde C_{\mathtt{init}}\widetilde {s}^2_{n_1, n_2}\right) \le \widetilde \varepsilon_{\mathtt{init}}.\nonumber
    \end{equation}
\end{enumerate}
Condition \ref{as:square-process} resembles \ref{as:local-entropy}, but controls the growth rate of the localized \emph{squared} empirical process rather than the process itself. Condition \ref{as:rate-initial-estimator3} plays the role of \ref{as:rate-initial-estimator}; the proof shows that ${s}^2_{n_1, n_2} \le \widetilde {s}^2_{n_1, n_2}$ under independence. The following result is proved in \Cref{sup:proof-clt-width}.
\begin{theorem}\label{thm:clt-width}Assume $Z_1,\ldots, Z_N$ are independent and \ref{as:margin}, \ref{as:local-entropy}, \ref{as:square-process} and \ref{as:rate-initial-estimator3}. Define $r_{n_2}$ as in \eqref{rq:define-rn}, $u_{n_2}$ as any value satisfying
\begin{equation}
    u_{n_2}^{-2} \omega_{n_2}(c_0^{-1/(1+\gamma)}u_{n_2}^{2/(1+\gamma)}) \le n_2^{1/2}.\label{rq:define-un-clt}
\end{equation}
Set 
\begin{equation*}
    \mathrm{R}_N^{\mathtt{CLT}} = c_0^{-1/(1+\gamma)}(r_{n_2}^{2/(1+\gamma)}  + u_{n_2}^{2/(1+\gamma)}+\widetilde s_{n_1, n_2}^{1/(1+\gamma)}).
\end{equation*}
Then, for any $n_1\ge 1$, $n_2 \ge 2$ and $\varepsilon > 0$, 
\begin{equation*}
\mathbb{P}^*_{P^N}\left(\mathrm{Diam}_{\|\cdot\|}\big(\widehat{\mathrm{CI}}^{\mathtt{CLT}}_{N, \alpha}\big)\le C \left(\frac{1+|z_\alpha|}{\varepsilon}\right)^{1/(1+\gamma-q)} \mathrm{R}_N^{\mathtt{CLT}}\right) \ge 1-\varepsilon - \widetilde \varepsilon_{\mathtt{init}},
\end{equation*}
where $C$ is a constant depending only on $\gamma, q$, and $\widetilde C_{\mathtt{init}}$. 
\end{theorem}


\begin{remark}[Controlling \ref{as:square-process}]
Light-tail assumptions on
$m_{\theta}-m_{\theta(P^N)}$, including sub-Gaussian and sub-exponential conditions, are commonly imposed to obtain $\omega_{n_2}$ for unbounded processes. This assumption can be relaxed substantially. \Cref{prop:squared-Gn} in \Cref{sec:technical-lemma} provides general construction of $\omega_{n_2}$ via a truncation argument under two settings: finite moment conditions, and sub-Weibull tails \citep{kuchibhotla2022moving}, the latter yielding sharper rates. See also Proposition~3.1 of \citet{gine2000exponential} and Proposition~B.1 of \citet{kuchibhotla2022least} for related results.
\end{remark}

\subsection{Localizing the Diameter Analysis}
In many statistical applications, \ref{as:margin}, \ref{as:local-entropy}, and \ref{as:square-process} hold only locally in a neighborhood of $\theta(P^N)$. For convergence rate analysis of M-estimators, this is  addressed by assuming consistency of the estimator first and then restricting attention to a neighborhood of $\theta(P^N)$ \citep[Theorem 3.2.5]{van1996weak}. This approach does not extend directly to confidence set analysis, where the defining inequality must be evaluated at every $\theta \in \Theta$. The following two conditions provide the additional global structure needed to localize the diameter analysis.
\begin{enumerate}[label=\textbf{(A\arabic*-\texttt{global})},itemindent=2cm]
    \item \label{as:margin-global} For $\rho > 0$, there exist a function $C_\rho : [\rho, \infty)\mapsto \mathbb{R}_+$ with generalized inverse $C^{-1}_\rho(s) = \inf\{r \ge \rho\, :\, C_\rho(r) \ge s\}$ and a function $g: [1,\infty) \mapsto [1, \infty)$, such that
    \begin{equation*}
        \M_2(\theta) - \M_2(\theta(P^N)) \geq C_\rho(\|\theta - \theta(P^N)\|) \quad \text{for all}\quad \|\theta-\theta(P^N)\| \geq \rho
    \end{equation*}
    and 
    \begin{equation*}
        C^{-1}_\rho(\lambda r) \leq g(\lambda) C^{-1}_\rho(r)\quad \text{for all} \quad r \geq \rho,\, \lambda \geq 1.
    \end{equation*}
\end{enumerate}
\begin{enumerate}[label=\textbf{(A\arabic*-\texttt{ratio})},itemindent=2cm]
\setcounter{enumi}{1}
    \item \label{as:ratio-process} For $\rho > 0$, there exists $R(n_2, \rho)$ with $\lim_{t\to \infty} R(t, \rho) = 0$, such that
    \begin{equation*}
        \mathbb{P}_{P^2}^*\left(\sup_{\|\theta-\theta(P^N)\| > \rho} \left|\frac{(\widehat\M_2-\M_2)(\theta) -(\widehat\M_2-\M_2)(\theta(P^N))}{\M_2(\theta)-\M_2(\theta(P^N))}\right| \ge C_{\mathtt{ratio}} R(n_2, \rho)\right) \le \varepsilon_{\mathtt{ratio}},
    \end{equation*}
    where $C_{\mathtt{ratio}} > 0$ is a constant depending on $\varepsilon_{\mathtt{ratio}}$.
\end{enumerate}
Condition \ref{as:margin-global} requires the population objective to grow globally away from $\theta(P^N)$, but imposes no curvature condition on the growth. It is satisfied whenever $C_\rho(r) = cr^p$ for some $c, p > 0$, corresponding to polynomial growth of any order with $g(\lambda) = \lambda^{1/p}$. Condition \ref{as:margin-global} allows for more exotic choices of growth functions. Condition \ref{as:ratio-process} is a ratio-type empirical process condition as in \cite{gine2006concentration}, which can be shown to concentrate without local curvature. 
\begin{theorem}
    \label{thm:hulc-width-local}Assume \ref{as:margin} and \ref{as:local-entropy} hold for all $\|\theta - \theta(P^N)\| \le \rho$, and that \ref{as:rate-initial-estimator}, \ref{as:margin-global}, \ref{as:ratio-process} and $\beta(r)=0$ hold. Then for $n_2$ sufficiently large,
\begin{align*}
\mathrm{Diam}_{\|\cdot\|}\big(\widehat{\mathrm{CI}}^{\dagger}_{N}) = O_{\mathbb{P}^*}\left(\max\{\mathrm{R}^{\dagger}_{N}, \mathrm{Q}^{\dagger}_{N}\mathbf{1}\{\mathrm{Q}^{\dagger}_{N} \ge \rho\}\}\right)
\end{align*}
where $\mathrm{Q}^{\dagger}_{N}:= C_\rho
^{-1}\left(s_{n_1, n_2}\right)$ and $\mathrm{R}^{\dagger}_{N}$ is as defined in \Cref{thm:hulc-width}. 
\end{theorem}
The full non-asymptotic statement without assumption $\beta(r) = 0$ is provided in \Cref{suppsec:hulc-width-local-proof}. The diameter bound comprises two terms. The first term $\mathrm{R}^\dagger_N$ is the local rate from \Cref{thm:hulc-width} depending on the curvature and the modulus. The second term $\mathrm{Q}^\dagger_N$ is a conservative global radius that eventually vanishes when $s_{n_1, n_2}$ is sufficiently small. Once $\mathrm{Q}^\dagger_N$ falls below $\rho$, the bound reduces to the local rate $\mathrm{R}^\dagger_N$ alone.

An analogous localization applies to the CLT-based set $\widehat{\mathrm{CI}}^{\mathtt{CLT}}_{N, \alpha}$ as in \eqref{eq:CI-CLT}. We place one additional condition controlling the ratio-limit process of the squared empirical process. 
\begin{enumerate}[label=\textbf{(A\arabic*-\texttt{ratio})},itemindent=2cm]
\setcounter{enumi}{3}
    \item \label{as:ratio-square-process} For $\rho > 0$, there exist functions $S_{\mathtt{emp}}(n_2, \rho, \alpha)$ and $S_{\mathtt{pop}}(n_2, \rho, \alpha)$ with \[\lim_{t \to\infty} S_{\mathtt{emp}}(t, \rho, \alpha) = 0 \quad \textrm{and}\quad \lim_{t \to\infty} S_{\mathtt{pop}}(t, \rho, \alpha) = 0,\] such that
    \begin{equation*}
        \begin{split}
        &\mathbb{P}_{P^2}^*\left(\sup_{\|\theta-\theta(P^N)\| > \rho} \frac{z_\alpha^2}{n_2^2}\left|\frac{\sum_{i\in I_2}(m_\theta - m_{\theta(P^N)})^2 - \mathbb{E}_{P_i}[(m_\theta - m_{\theta(P^N)})^2]}{\{\M_2(\theta)-\M_2(\theta(P^N))\}^2}\right| \ge \widetilde C_{\mathtt{emp}} S_{\mathtt{emp}}\right) \\
        &\qquad \le \varepsilon_{\mathtt{\mathtt{emp}}}
        \end{split}
    \end{equation*}
    and
    \begin{equation*}
        \sup_{\|\theta-\theta(P^N)\| > \rho}\frac{z_\alpha^2\sum_{i\in I_2}\mathbb{E}_{P_i}[(m_\theta - m_{\theta(P^N)})^2(Z_i)]}{n_2^2\{\M_2(\theta)-\M_2(\theta(P^N))\}^2} \le S_{\mathtt{pop}},
    \end{equation*}
    where $\widetilde C_{\mathtt{emp}} > 0$ is a constant depending on $\varepsilon_{\mathtt{\mathtt{emp}}}$, and $S_{\mathtt{emp}} = S_{\mathtt{emp}}(n_2, \rho, \alpha)$, $S_{\mathtt{pop}} = S_{\mathtt{pop}}(n_2, \rho, \alpha)$ for brevity.
\end{enumerate}
\begin{theorem}\label{thm:clt-width-local}
    Assume $Z_1,\ldots, Z_N$ are independent, \ref{as:margin}, \ref{as:local-entropy}, \ref{as:square-process} hold for all $\|\theta - \theta(P^N)\| \le \rho$ and that \ref{as:rate-initial-estimator3},  \ref{as:margin-global}, \ref{as:ratio-process}, and \ref{as:ratio-square-process} hold. Then for $n_2$ sufficiently large, 
\begin{equation*}
\mathrm{Diam}_{\|\cdot\|}\big(\widehat{\mathrm{CI}}^{\mathtt{CLT}}_{N, \alpha}\big) = O_{\mathbb{P}^*}\bigg(\max\{(1+|z_\alpha|)^{1/(1+\gamma-q)}\mathrm{R}^{\mathtt{CLT}}_{N}, \mathrm{Q}^{\mathtt{CLT}}_{N,\alpha}\mathbf{1}\{\mathrm{Q}^{\mathtt{CLT}}_{N,\alpha} \ge \rho\}\}\bigg),
\end{equation*}
where $ \mathrm{Q}^{\mathtt{CLT}}_{N,\alpha}:= C_\rho
^{-1}\left((1+|z_\alpha|)\widetilde s_{n_1, n_2}\right)$ and $\mathrm{R}^{\mathtt{CLT}}_{N}$ is as defined in \Cref{thm:clt-width}. 
\end{theorem}
The full non-asymptotic statement is provided in \Cref{suppsec:clt-width-local-proof}. 

\section{Some Improvements for Less Conservative Sets}\label{sec:improvements}
The confidence sets, $\widehat{\mathrm{CI}}^{\dagger}_{N}$ and $\widehat{\mathrm{CI}}^{\mathtt{CLT}}_{N, \alpha}$ tends to be conservative. The miscoverage probability often lies strictly below the prescribed level, depending on the behavior of $\ratio$ in \Cref{thm:coverage-anti-conservative-confidence-set-empirical-risk}. This section describes two potential remedies.

\subsection{Contraction of the Initial Estimator}
The proposed construction is agnostic to the choice of $\widehat{\theta}_1$, provided it is obtained approximately independently of $\widehat \M_2(\cdot)$. This flexibility allows to contract $\ratio$ towards zero. One possible device is the convex combination
\begin{equation}
    \widehat\theta'_1(\theta, \lambda) = (1-\lambda) \theta + \lambda \widehat\theta_1 , \quad \lambda \in [0, 1], 
\end{equation}
When $\theta = \theta(P^N)$ and $\lambda$ is close to zero, this shrinks the initial estimator toward the population minimizer.

\begin{example}
    Consider the normal mean setting of \Cref{example:hodges-estimator}. The derivation there gives
    \begin{equation*}
        \frac{\mathbb{C}_2^2(\widehat\theta'_1(\theta, \lambda))}{\mathbb{V}_2(\widehat\theta'_1(\theta, \lambda))} = \frac{n_2((1-\lambda) \mu + \lambda\widehat\theta_1 - \mu)^2}{4\sigma^2} = \lambda^2 \frac{\widehat{\mathbb{C}}_2^2}{\widehat{\mathbb{V}}_2} = \lambda^2 \ratio^2.
    \end{equation*}
   Any $\lambda \in [0,1)$ strictly reduces this ratio.
\end{example}
In view of \Cref{thm:coverage-anti-conservative-confidence-set-empirical-risk}, reducing $\ratio$ brings the miscoverage closer to $1/2$, preventing the confidence set from becoming overly conservative. Taking $\lambda = 0$ leads to a non-informative set as it flattens the curvature and corresponding confidence set \eqref{eq:anti-conservative-confidence-set} will simply be an entire parameter space $\Theta$. Thus $\lambda$ should tend to zero slowly, for instance, at a logarithmic rate. A systematic methodological and theoretical treatment is deferred to future work.

\subsection{Curvature Estimation}
The basic inequality \eqref{eq:zeroth-order} underlying the construction uses the trivial upper bound of zero for $\M_2(\theta)- \M_2(\theta(P^N))$. This may be too loose for parameters far from the minimizer. A sharper confidence set can be obtained by replacing this bound with a upper confidence bound for the curvature. Specifically, suppose one has access to $\widehat u(\theta, \widehat \theta_1)$, satisfying
\begin{equation}\label{eq:curvature-ucb}
    \mathbb{M}_2(\theta(P^N)) - \mathbb{M}_2(\widehat{\theta}_1) \le \min\{\widehat u(\theta(P^N), \widehat \theta_1), 0\}
\end{equation}
with high probability. The corresponding confidence set is 
\begin{equation}\label{eq:anti-conservative-confidence-set-UCB}
\widehat{\mathrm{CI}}^{\mathtt{UCB}}_{N} := \left\{\theta\in\Theta:\, \widehat{\mathbb{M}}_2(\theta) - \widehat{\mathbb{M}}_2(\widehat{\theta}_1) \le \min\{\widehat u(\theta, \widehat \theta_1), 0\}\right\},
\end{equation}
which satisfies $\widehat{\mathrm{CI}}^{\mathtt{UCB}}_{N} \subseteq \widehat{\mathrm{CI}}^{\dagger}_{N}$ by construction. This approach can also be combined with the lower confidence bound methods of \Cref{sec:LCB-basedCI}; for instance, \eqref{eq:CI-CLT} becomes
\begin{equation}\label{eq:LCB-set-UCB}
    \widehat{\mathrm{CI}}^{\mathtt{CLT}-\mathtt{UCB}}_{N, \alpha} := \left\{\theta\in\Theta:\, \widehat{\mathbb{M}}_2(\theta) - \widehat{\mathbb{M}}_2(\widehat{\theta}_1) -n_2^{-1/2}z_\alpha \widehat \sigma_{\theta, \widehat\theta_1}\le \min\{\widehat u(\theta, \widehat \theta_1), 0\}\right\}.
\end{equation}
This technique is employed by \citet{takatsu2025precise} to correct the bias of universal inference \citep{wasserman2020universal} under misspecification. \citet{takatsu2025precise} also establish that the resulting confidence set achieves asymptotically exact coverage under a product rate condition on the estimation errors of $\widehat{\theta}_1$ and $\widehat u(\theta, \widehat\theta_1)$, reminiscent of double robustness in the semiparametric literature \citep{bickel1982adaptive, pfanzagl1985contributions, klaassen1987consistent}.

\begin{example}\label{example:mean-UCB}
    In the normal mean setting of Example~\ref{example:hodges-estimator}, direct calculation gives 
    \begin{equation*}
       \mathbb{M}_2(\mu) - \mathbb{M}_2(\widehat{\theta}_1) = -(\mu -  \widehat\theta_1)^2.
    \end{equation*}
    Taking $\widehat u(\theta, \widehat{\theta}_1) = -(\theta-\widehat{\theta}_1)^2$ satisfies \eqref{eq:curvature-ucb} almost surely. 
\end{example}
\begin{example}[Misspecified linear regression; \citet{takatsu2025precise}]\label{example:LR-UCB}
    Let $(X_1, Y_1), \ldots, (X_N, Y_N) \in \mathbb{R}^d \times \mathbb{R}$ and consider the best linear projection:
    \begin{equation*}
        \theta(P^N) = \argmin_{\theta \in \mathbb{R}^d} \E_{P^2}[(Y-\theta^\top X)^2].
    \end{equation*}
Direct calculation gives   
\begin{equation*}
        \mathbb{M}_2(\theta(P^N)) - \mathbb{M}_2(\widehat{\theta}_1) = -(\theta(P^N) - \widehat{\theta}_1)^\top \E_{P^2}[XX^\top](\theta(P^N) - \widehat{\theta}_1).
    \end{equation*}
    \citet{takatsu2025precise} considers a simple estimator 
    \begin{equation*}
        \widehat u(\theta, \widehat\theta_1) = -(\theta-\widehat\theta_1)^\top \left(\frac{1}{n_2}\sum_{i \in I_2}X_iX_i^\top \right) (\theta-\widehat\theta_1),
    \end{equation*}
    which yields a substantially smaller confidence set in practice.
\end{example}
For many irregular problems, constructing a valid $\widehat u(\theta, \widehat{\theta}_1)$ is itself a difficult or impossible task, without introducing structural assumptions that the proposed framework is designed to avoid. Two examples above are instances of the quadratic curvature regime, i.e., \cref{as:margin} with $\gamma = 1$, which permits estimation of the curvature. See \Cref{fig:demo} for an illustration.
\section{On Computation}\label{sec:computation}
Since the proposed confidence sets are constructed via test inversion, explicit computation may appear challenging in general. Moreover, the sets involving sample variance estimation, namely \eqref{eq:CI-CLT}, can be non-convex due to the dependence of $\widehat\sigma^2_{\theta, \widehat\theta_1}$ on $\theta$ in both the left and right-hand sides of the defining inequality. This non-convexity is visible in \Cref{fig:demo}.

Despite these challenges, the geometry is tractable in some high-dimensional settings. First, the set \eqref{eq:anti-conservative-confidence-set} is convex if and only if the mapping $\theta \mapsto \widehat\M_2(\theta)$ is quasi-convex, that is, 
\begin{equation}\label{eq:quasi-convex}
    \widehat\M_2(t_1) \le 0 \quad \mathrm{and} \quad \widehat\M_2(t_2) \le 0\implies \widehat\M_2(
    \lambda t_1 +(1-\lambda)t_2) \le 0 \quad \textrm{for all}\quad  \lambda \in [0,1].
\end{equation}
The complex dependence in the variance term can be removed by evaluating it at a fixed reference point. Introduce a third data split $D_3$, independent of $D_2$ up to the $\beta$-mixing, and replace $\widehat\sigma^2_{\theta, \widehat\theta_1}$ with 
\begin{equation}\label{eq:variance-estimator-D3}
    \widehat \sigma_{\widehat{\theta}_3, \widehat{\theta}_1}^2 :=     \frac{1}{n_2-1}\sum_{i \in I_2} \left\{(m_{\widehat{\theta}_3}-m_{ \widehat{\theta}_1})(Z_i) - \frac{1}{n_2}\sum_{j\in I_2}(m_{\widehat{\theta}_3}-m_{ \widehat{\theta}_1})(Z_j)\right\}^2,
\end{equation}
where $\widehat{\theta}_3 = \widehat\theta(D_3)$. 
With this substitution, the resulting confidence sets are convex under \eqref{eq:quasi-convex}. Two concrete high-dimensional examples where the geometry can be characterized exactly are presented below.
\subsection{High-dimensional Mean Inference}
Consider normal mean setting of Example~\ref{example:hodges-estimator} with observations partitioned into three parts via $I_1 \cup I_2 \cup I_3 \subseteq \{1, \ldots, N\}$. The population parameter is 
\begin{equation*}
    \theta(P^N) = \argmin_{\theta \in \mathbb{R}^d}\frac{1}{n_2}\sum_{i\in I_2} \E_{P_i}\|Z_i - \theta\|_2^2. 
\end{equation*}
Let $\widehat\theta_1$ and $\widehat\theta_3$ be estimators, obtained from $\{Z_i \, :\, i \in I_1\}$ and $\{Z_i \, :\, i \in I_3\}$ respectively. Consider the confidence sets
\begin{align*}
    \widehat{\mathrm{CI}}_{N, \alpha}^{\mathtt{CLT}, 1} &
    =\left\{\theta\in\mathbb{R}^d:\, \frac{1}{n_2}\sum_{i\in I_2} \|Z_i - \theta\|_2^2 - \|Z_i - \widehat\theta_1\|_2^2\le n_2^{-1/2}z_\alpha  \widehat \sigma_{\widehat{\theta}_3, \widehat\theta_1}\right\}, \quad \mathrm{and}\\\widehat{\mathrm{CI}}_{N,\alpha}^{\mathtt{CLT}, 2} &= \left\{\theta\in\mathbb{R}^d:\, \frac{1}{n_2}\sum_{i\in I_2} \|Z_i - \theta\|_2^2 - \|Z_i - \widehat\theta_1\|_2^2\le n_2^{-1/2}z_\alpha  \widehat \sigma_{\widehat{\theta}_3, \widehat\theta_1}-\|\theta-\widehat\theta\|_2^2\right\}.
\end{align*}
where $\widehat{\mathrm{CI}}_{N, \alpha}^{\mathtt{CLT}, 1}$ is the set \eqref{eq:CI-CLT} with variance estimated at $\widehat \theta_3$, and $\widehat{\mathrm{CI}}_{N,\alpha}^{\mathtt{CLT}, 2}$ further combines the upper confidence bound of \Cref{example:mean-UCB} as in \eqref{eq:LCB-set-UCB}. At $\alpha = 1/2$ where $z_\alpha=0$, both sets become equivalent to \eqref{eq:anti-conservative-confidence-set} and \eqref{eq:anti-conservative-confidence-set-UCB} respectively.
\begin{theorem}\label{thm:geometry-mean}
    Both confidence sets are Euclidean balls. Denoting 
    \begin{equation*}
        \overline Z_{2} = n_2^{-1}\sum_{i\in I_2} Z_i \quad \textrm{and} \quad \widehat{H} = \overline Z_2  - \widehat\theta_1,
    \end{equation*}
    the centers and squared radii are as follows:
    \begin{enumerate}
        \item[\normalfont(1)] $\widehat{\mathrm{CI}}_{N, \alpha}^{\mathtt{CLT}, 1}$: center $\overline Z_2$, squared radius $\|\widehat H\|^2_2 + n_2^{-1/2}z_\alpha  \widehat \sigma_{\widehat{\theta}_3, \widehat\theta_1}$, and 
        \item[\normalfont(2)] $\widehat{\mathrm{CI}}_{N, \alpha}^{\mathtt{CLT}, 2}$: center $(\overline Z_2+\widehat\theta_1)/2$, squared radius $\|\widehat H\|^2_2/4 + n_2^{-1/2}z_\alpha  \widehat \sigma_{\widehat{\theta}_3, \widehat\theta_1}/2$.
    \end{enumerate}
\end{theorem}

\subsection{High-dimensional Misspecified Linear Regression}
Consider $(X_1^\top, Y_1)^\top, \ldots, (X_{N}^\top, Y_{N})^\top \in \mathbb{R}^d \times \mathbb{R}$ with three-way partition $I_1 \cup I_2 \cup I_3 \subseteq \{1, \ldots, N\}$. The population parameter is the best linear projection:
\begin{equation*}
    \theta(P^N) = \argmin_{\theta \in \mathbb{R}^d}\frac{1}{n_2} \sum_{i\in I_2} \E_{P_i}[(Y_i - \theta^\top X_i)^2].
\end{equation*}
Let $\widehat\theta_1$ and $\widehat\theta_3$ be arbitrary estimators in from $\{Z_i \, :\, i \in I_1\}$ and $\{Z_i \, :\, i \in I_3\}$ respectively, and write $\widehat\Gamma = {n_2}^{-1}\sum_{i\in I_2} X_i X_i^\top$. Consider the confidence sets
\begin{equation}\label{eq:linear-regression-cs}
    \begin{split}
    \widehat{\mathrm{CI}}_{N, \alpha}^{\mathtt{CLT}, 1} &= \left\{\theta\in\mathbb{R}^d:\, \frac{1}{n_2}\sum_{i\in I_2} (Y_i - \theta^\top X_i)^2 - (Y_i - \widehat\theta_1^\top X_i)^2\le {n_2}^{-1/2}z_\alpha  \widehat \sigma_{\widehat{\theta}_3, \widehat\theta_1}\right\}, \quad \mathrm{and}\\\widehat{\mathrm{CI}}_{N,\alpha}^{\mathtt{CLT}, 2} &= \left\{\theta\in\mathbb{R}^d:\, \frac{1}{n_2}\sum_{i\in I_2} (Y_i - \theta^\top X_i)^2 - (Y_i - \widehat\theta_1^\top X_i)^2\le {n_2}^{-1/2}z_\alpha  \widehat \sigma_{\widehat{\theta}_3, \widehat\theta_1}-\|\theta-\widehat\theta_1\|_{\widehat\Gamma}^2\right\}.
    \end{split}
\end{equation}
\begin{theorem}\label{thm:geometry-LR}
    Both confidence sets are ellipsoids with respect to the $\widehat\Gamma$ matrix norm. Denoting
    \begin{equation*}
        \theta_{\mathrm{OLS}} = \left(\sum_{i\in I_2} X_iX_i^\top\right)^{-1}\sum_{i\in I_2} X_iY_i\quad \text{and} \quad \widehat H = \widehat\theta_1 - \theta_{\mathrm{OLS}},
    \end{equation*}
    both confidence sets take the form $\{\theta\in\mathbb{R}^d:\, \|\theta-\mathrm{center}\|_{\widehat \Gamma}^2 \le \mathrm{radius}^2\}$ with
    \begin{enumerate}
        \item[\normalfont(1)] $\widehat{\mathrm{CI}}_{N, \alpha}^{\mathtt{CLT}, 1}$: center $\theta_{\mathrm{OLS}}$, squared radius $\|\widehat H\|^2_{\widehat \Gamma} +{n_2}^{-1/2}z_\alpha  \widehat \sigma_{\widehat{\theta}_3, \widehat\theta_1}$, and 
        \item[\normalfont(2)] $\widehat{\mathrm{CI}}_{N, \alpha}^{\mathtt{CLT}, 2}$: center $(\theta_{\mathrm{OLS}}+\widehat\theta_1)/2$, squared radius $\|\widehat H\|^2_{\widehat \Gamma}/4 + {n_2}^{-1/2}z_\alpha  \widehat \sigma_{\widehat{\theta}_3, \widehat\theta_1}/2$.
    \end{enumerate}
\end{theorem}

\subsection{Further Remarks on Computation}
Other high-dimensional problems may exhibit additional parameter space structure that can be exploited computationally. For instance, Manski's maximum score estimator studied in the forthcoming \Cref{sec:application}, has $\theta \in \mathbb{S}^{d-1}$. 
This constraint allows diameter approximations, outlined in \Cref{supp:manski-algorithm} and employed in the numerical study of \Cref{sec:num}. More generally, the computational aspects are problem-specific and require case-by-case analysis. An alternative that applies broadly is to estimate critical values via nonparametric regression on repeated parameter draws, as proposed by \citet[Section 7]{park2023robust}.

When the confidence set is made convex through the variance estimator \eqref{eq:variance-estimator-D3} or when $\alpha = 1/2$, its volume can be approximated efficiently. Evaluating set membership reduces to a single function evaluation and $\widehat\theta_1$ always belongs to the confidence set. These facts together make the extensive methods for volume approximation of convex bodies directly applicable. See \citet{cousins2018gaussian} and references therein.

Finally, even when computing the full set \eqref{eq:CI-CLT} is difficult, it remains useful for testing: evaluating the defining inequality at a single null value reduces the problem to one function evaluation, and the diameter convergence rate provides an explicit uniform critical radius. More broadly, the existence of the set \eqref{eq:CI-CLT} itself has theoretical value as a concrete procedure that is dimension-agnostic, adaptive to the unknown curvature of the objective, whose validity requires no assumptions beyond the domain of attraction of the normal law.

\section{Statistical Applications}\label{sec:application}
This section establishes theoretical guarantees for the proposed confidence sets in several concrete statistical problems. For each application, the confidence sets $\widehat{\mathrm{CI}}^{\mathtt{CLT}}_{N, \alpha}$ are constructed directly from the general formulae \eqref{eq:CI-CLT}. At $\alpha = 1/2$, the set reduces to $\widehat{\mathrm{CI}}^{\dagger}_{N}$ as in \eqref{eq:anti-conservative-confidence-set} since $z_\alpha = 0$. Two sets of theoretical guarantees are provided for each application: sufficient conditions for validity and convergence rates for the diameter. To this end, we can observe
\begin{align*}
    \widehat{\mathrm{CI}}^{\mathtt{CLT}}_{N, \alpha_1} \subseteq \widehat{\mathrm{CI}}^{\dagger}_{N}\subseteq \widehat{\mathrm{CI}}^{\mathtt{CLT}}_{N, \alpha_2} \quad \text{when} \quad \alpha_2 \le 1/2 \le \alpha_1.
\end{align*}
Hence 
\begin{align*}
    \mathbb{P}_{P^N}\!\left(\theta(P^N) \notin \widehat{\mathrm{CI}}^{\mathtt{CLT}}_{N,\alpha_2}\right) \le \mathbb{P}_{P^N}\!\left(\theta(P^N) \notin \widehat{\mathrm{CI}}^{\dagger}_{N}\right) \quad \text{for any} \quad \alpha_2 \le 1/2,
\end{align*}
and 
\begin{align*}
    \mathrm{Diam}_{\|\cdot\|}(\widehat{\mathrm{CI}}^{\mathtt{CLT}}_{N,\alpha_1}) \le \mathrm{Diam}_{\|\cdot\|}(\widehat{\mathrm{CI}}^{\dagger}_{N}) \quad \text{for any} \quad 1/2 \le \alpha_1.
\end{align*}
Thus the validity result for $\widehat{\mathrm{CI}}^{\dagger}_{N}$ also implies the validity for $\widehat{\mathrm{CI}}^{\mathtt{CLT}}_{N,\alpha}$ whenever $\alpha \le 1/2$. Similarly, the diameter bound for $\widehat{\mathrm{CI}}^{\dagger}_{N}$ also implies the same diameter bound for $\widehat{\mathrm{CI}}^{\mathtt{CLT}}_{N,\alpha}$ with $1/2 \le \alpha$. All results follow from the general theory of \Cref{sec:general-M-estimation} and \Cref{sec:convergence-rates} applied to the specific scenarios. 

Throughout, $Z_1, \ldots, Z_N$ are assumed independent but not necessarily identically distributed. The observations are split with $|I_1| = n_1$, $|I_2|=n_2$ and $r=0$ so that $N=n_1 + n_2$. 

\subsection{High-dimensional Mean Inference}\label{sec:mean}
Consider independent observations $X_1, \ldots, X_N \in \mathbb{R}^d$ with a common mean. The inference of interest is the expectation of $X$ under $P^2$:
\begin{equation*}
    \theta(P^N) := \argmin_{\theta \in \mathbb{R}^d}\, \frac{1}{n_2}\sum_{i\in I_2}\E_{P_i}\|X_i-\theta\|_2^2.
\end{equation*}
The covariance matrix of $X_i$ is allowed to vary across $i$, denoted $\Sigma_i := \E_{P_i}[(X_i-\theta(P^N))(X_i-\theta(P^N))^\top]$. Although mean estimation may appear elementary, inference for the mean in growing dimensions under weak distributional assumptions remains an active area of research \citep{lugosi2019mean}. Denote the average covariance matrices within each split by
\begin{equation*}
    \bar \Sigma_{k } = n_k^{-1} \sum_{i \in I_k} \Sigma_i \quad \textrm{for} \quad k \in \{1, 2\}. 
\end{equation*} 
\paragraph{Validity.} 
\begin{theorem}\label{thm:mean-ci-validity}
Let $X_i^\circ =\bar\Sigma_2^{-1/2}(X_i - \theta(P^N))$ and define 
\begin{equation*}
        R_{n_2} = \sup_{u \in \mathbb{S}^{d-1}}\sum_{i\in I_2} \mathbb{E}_{P_i}\left[\frac{\langle X_i^\circ, u\rangle^2}{n_2}\min\left\{1,\,\frac{|\langle X_i^\circ, u\rangle|}{\sqrt{n_2}}\right\}\right].
    \end{equation*}
    There exists a universal constant $C > 0$ such that the following hold. 
    \begin{enumerate}
    \item For $\alpha \in (0,1)$ and $n_2 \ge 2$,
    \begin{equation*}
    \mathbb{P}_{P^N}\!\left(\theta(P^N) \notin \widehat{\mathrm{CI}}^{\mathtt{CLT}}_{N,\alpha}\right)
\;\le\; \alpha + \min\{1, CR_{n_2}\}.
    \end{equation*}
    \item For $\alpha \in (0, 1/2]$ and $n_2 \ge 1$, setting $\ratio = \sqrt{n_2}\|\bar\Sigma_2^{-1/2}(\widehat\theta_1 - \theta(P^N))\|/2$,
    \begin{equation*}
        \mathbb{P}_{P^N}\left(\theta(P^N) \notin \widehat{\mathrm{CI}}^{\mathtt{CLT}}_{N,\alpha}\right)
\le \min \left\{\E_{P^1}\left[1-\Phi(\ratio) + \frac{C}{(1+\ratio)^2}\right],\; \alpha + \min\{1, CR_{n_2}\}\right\}.
    \end{equation*}
\end{enumerate}
\end{theorem}
The remainder term $R_{n_2}$ satisfies
\begin{equation*}
    R_{n_2} \le C n_2^{-(1+\delta/2)}\sup_{u \in \mathbb{S}^{d-1}}\sum_{i\in I_2} \mathbb{E}_{P_i}[|\langle X_i^\circ, u\rangle|^{2+\delta}].
\end{equation*}
The condition $\sup_{u \in \mathbb{S}^{d-1}}\mathbb{E}_{P_i}[|\langle X_i^\circ, u\rangle|^{2+\delta}] \le K$ for some constant $K \ge 1$ is the so-called $L_{2+\delta}$-$L_2$ norm equivalence, under which $R_{n_2} \le CKn_2^{-\delta/2}$ without any dependence on $d$. This condition, with particular emphasis on $\delta=1$, is widely employed in high-dimensional covariance matrix estimation \citep{minsker2018sub, mendelson2020robust} and high-dimensional least squares \citep{oliveira2016lower, catoni2016pac, mourtada2022distribution}. 
This assumption is considerably less restrictive than imposing the sub-Gaussianity of $X$ since any such $X$ satisfies the $L_{2+\delta}$-$L_2$ norm equivalence with $\delta\ge2$. See Remarks 2.19, 2.20 and Figure S.7 of \cite{patil2022mitigating} for useful visual comparison. Note that \Cref{thm:mean-ci-validity} establishes the validity under weaker conditions.

When $\alpha \le 1/2$, validity is further controlled by the first argument of the outer minimum, which decays whenever $\ratio$ is large. This occurs when the initial estimator $\widehat\theta_1$ converges to $\theta(P^N)$ slower than $n_2^{-1/2}$. To illustrate this in high-dimensional problems, if $\widehat \theta_1$ is the sample mean on $D_1$, then
\begin{equation*}
    \|\widehat\theta_1 - \theta(P^N)\| \asymp \sqrt{d/n_1} \implies  \ratio \asymp \sqrt{dn_2/n_1}/\lambda^{1/2}_{\max}(\bar\Sigma_2).
\end{equation*}
Remarkably, when $n_1 \asymp n_2$ and $\lambda_{\max}(\bar\Sigma_2) = O(1)$, the validity condition reduces to $d \to \infty$ alone, with no dependence on the proportion $d/n_1$ or $d/n_2$. This is substantially weaker than any moment equivalent condition.  

\paragraph{Width Analysis.} 

\begin{theorem}\label{thm:mean-ci-width}
There exists a universal constant $C > 0$ such that the following hold.
\begin{enumerate}
    \item For $\alpha \in [1/2,1)$, $n_1, n_2 \ge 1$, and any $\varepsilon \in (0,1)$, with probability at least $1-\varepsilon$, 
    \begin{equation*}
        \mathrm{Diam}_{\|\cdot\|_2}\bigl(\widehat{\mathrm{CI}}_{N,\alpha}^{\mathtt{CLT}}\bigr)
    \leq C\varepsilon^{-1/2}
    \left\{\sqrt{\frac{\mathrm{tr}(\bar\Sigma_2)}{n_2}} 
    + \|\widehat{\theta}_1 - \theta(P^N)\|_2\right\}.
    \end{equation*}
    \item For $\alpha \in ( 0, 1)$, $n_1 \ge 1$, and any $\varepsilon \in (0,1)$, let 
$\widetilde{s}_{n_1,n_2}$ be as in \ref{as:rate-initial-estimator3}. With probability at least $1-\varepsilon$, provided $\max\{2,z_\alpha^2 C'_{\varepsilon}\}\le n_2$ 
    \begin{equation*}
        \mathrm{Diam}_{\|\cdot\|_2}\bigl(\widehat{\mathrm{CI}}_{N,\alpha}^{\mathtt{CLT}}\bigr)
    \leq C_{\varepsilon}\left(1+|z_{\alpha}|\right)
    \left\{\sqrt{\frac{\mathrm{tr}(\bar\Sigma_2)}{n_2}} 
    + \widetilde{s}_{n_1,n_2}^{1/2}\right\},
    \end{equation*}
where $C_{\varepsilon}$ and $C'_{\varepsilon}$ depend on $\varepsilon$, but not on $d$ or $\alpha$.
\end{enumerate}
\end{theorem}
\begin{corollary}\label{cor:mean-plugin}
     Suppose the initial estimator satisfies, for all $n_1 \geq N_1$, \begin{align}\label{eq:mean-init-requirement}
        \|\widehat\theta_1 - \theta(P^N)\|_2^2 = O_{P^1}\left(\frac{\mathrm{tr}(\bar \Sigma_1)}{n_1}\right).
    \end{align}
    For any $\varepsilon \in (0,1)$, and $n_1 \ge N_1$, with probability at least $1-\varepsilon$,
    \begin{align*}
            \mathrm{Diam}_{\|\cdot\|_2}\bigl(\widehat{\mathrm{CI}}_{N,\alpha}^{\mathtt{CLT}}\bigr)
    \leq C_{\varepsilon}\left(1+|z_{\alpha}|\right)
    \left\{\sqrt{\frac{\mathrm{tr}(\bar\Sigma_2)}{n_2}} 
    + \sqrt{\frac{\mathrm{tr}(\bar\Sigma_1)}{n_1}}\right\},
        \end{align*}
provided $n_2 \ge 1$ when $\alpha \ge 1/2$, and $n_2 \ge  \max\{2,z_\alpha^2 C'_{\varepsilon}\}$ when $\alpha < 1/2$, 
    where $C_{\varepsilon}$ and $C'_{\varepsilon}$ depend on $\varepsilon$, but not on $d$ or $\alpha$.
\end{corollary}

The minimum sample size requirements reflect a genuine distinction the two cases. For $\alpha \ge 1/2$, the rate holds under $n_2 \ge 1$ by directly analyzing the analytical expression for the diameter given in \Cref{thm:geometry-mean}, bypassing sample variance estimation entirely.  For $\alpha < 1/2$, the sample size must be large enough depending on $\varepsilon$, as the result relies on \Cref{thm:clt-width-local} which requires concentration of the sample variance.

Condition \eqref{eq:mean-init-requirement} is satisfied when $\widehat{\theta}_1$ is the sample mean based on $D_1$. When $\mathrm{tr}(\bar\Sigma_1)=\mathrm{tr}(\bar\Sigma_2)$, \Cref{cor:mean-plugin} implies that the balanced split $n_1 = n_2 =N/2$ minimizes the diameter, yielding the rate $\sqrt{2\mathrm{tr}(\bar\Sigma_1)/N}$, which is minimax optimal: it matches the exact risk of mean estimation under a multivariate Gaussian, as established by the formal lower bound argument of \citet[Section~5]{lee2022optimal}. Both \Cref{thm:mean-ci-validity} and \Cref{thm:mean-ci-width} impose no restriction on $d$, hence both the validity guarantee and the convergence rate are dimension-agnostic. 

The results in this subsection extend to inference for the Fr\'{e}chet mean on a general metric space $\Theta$, where \ref{as:margin} is referred to as a growth condition, or variance inequality. The quadruple condition studied in \citet{Schotz2019convergence} can be used to establish \ref{as:local-entropy}. A detailed treatment is deferred to future work.

\subsection{High-dimensional Misspecified Linear Regression}\label{sec:ols} 
Consider independent observations $(X_1^\top, Y_1)^\top, \ldots, (X_N^\top,Y_N)^\top \in \mathbb{R}^d\times\mathbb{R}$ generated from
\begin{align*}
    Y_i = \theta(P^N)^\top X_i + \varepsilon_i \quad\text{where}\quad \sum_{i \in I_2}\mathbb{E}_{P_i}[\varepsilon_i X_i] = 0 \quad\text{and}\quad\mathbb{E}_{P_i}[\varepsilon_i^2 | X_i] = \sigma_i^2.
\end{align*}
The inference of interest is the best linear projection under $P^2$: 
\begin{equation*}
    \theta(P^N) := \argmin_{\theta \in \mathbb{R}^d}\, \frac{1}{n_2}\sum_{i \in I_2}\E_{P_i}[(Y-\theta^\top X)^2].
\end{equation*}
Denote the average Gram matrices within each split by 
\begin{align*}
    \bar \Gamma_k = n_k^{-1}\sum_{i \in I_k}\E_{P_i}[X_i X_i^\top] \quad \textrm{for} \quad k \in \{1,2\}.
\end{align*}
We assume that $\bar \Gamma_2$ is invertible such that $\theta(P^N)$ exists even when the regression function $\E_{P^2}[Y_i| X_i]$ is not linear. 
We introduce the following assumptions:
\begin{enumerate}[label=\textbf{(B\arabic*)},leftmargin=2cm]
\item \label{as:eigen_value} 
There exists a constant $\bar{\sigma} > 0$ such that $\sigma_i \le \bar{\sigma}$ for all $i \in I_2$.
\item \label{as:linreg-moment-covariate} 
There exist constants $q_x \ge 2$ and $L \ge 1$ such that 
\begin{align*}
   \E_{P_i}[(u^\top \bar\Gamma_{2}^{-1/2}X_i)^{q_x}]\le L^{q_x},
\end{align*}
for all $u \in \mathbb{S}^{d-1}$ and $i \in I_2$.

\item \label{as:error_moment}
There exists constants $q_y \ge 2$ and $K > 0$ such that
\begin{align*}
    \E_{P_i}[|Y_i - \theta(P^N)^\top X_i|^{q_y} \mid X_i]\le K^{q_y}
\end{align*}
for all $ i \in I_2$.
\end{enumerate}
Condition \ref{as:eigen_value} bounds the conditional variance of the population residuals. Condition \ref{as:linreg-moment-covariate} imposes an $L_{q_x}$-$L_2$ norm equivalence on covariates, allowing for heavy-tailed distributions and forging sub-Gaussian or sub-exponential assumptions. Condition \ref{as:error_moment} requires finite conditional moments of the population residuals.
\paragraph{Validity.}
\begin{theorem}\label{thm:LR-valid}
Assume \ref{as:eigen_value} and \ref{as:linreg-moment-covariate} with $q_x = 4$. Let $\Sigma_i = \mathrm{Cov}_{P_i}(X_i)$, and suppose there exits constants $\underline{\sigma},\underline{\lambda} > 0$ such that $\underline{\sigma} \le \sigma_i$, and $\underline{\lambda} \le \lambda_{\min}(\bar\Gamma_2^{-1/2}\Sigma_i \bar\Gamma_2^{-1/2})$ for all $i \in I_2$. Define the sandwich covariance matrix and normalized score
    \begin{equation*}
        \bar H_2 = \frac{1}{n_2}\sum_{i \in I_2} \mathrm{Cov}_{P_i}(X_i \epsilon_i) \quad \textrm{and} \quad W_i^\circ = \bar H_2^{-1/2}(X_i\epsilon_i - \E_{P_i}[X_i\epsilon_i]),
    \end{equation*}
    and define
    \begin{equation*}
    \begin{split}
        R_{n_2} &= \inf_{\delta > 0}\left\{\frac{2L^2\delta}{\underline{\sigma} \sqrt{\underline{\lambda}}} + \mathbb{P}_{P^1}(\|\widehat\theta_1 - \theta(P^N)\|_{\bar\Gamma_2} > \delta)\right\}\\
        &\quad +\sup_{u \in \mathbb{S}^{d-1}}\sum_{i\in I_2} \mathbb{E}_{P_i}\left[\frac{\langle W_i^\circ, u\rangle^2}{n_2}\min\left\{1,\,\frac{|\langle W_i^\circ, u\rangle|}{\sqrt{n_2}}\right\}\right].
    \end{split}
    \end{equation*}
    There exists a universal constant $C > 0$ such that the following hold.
        \begin{enumerate}
    \item For $\alpha \in (0,1)$ and $n_2 \ge 2$,
    \begin{equation*}
    \mathbb{P}_{P^N}\!\left(\theta(P^N) \notin \widehat{\mathrm{CI}}^{\mathtt{CLT}}_{N,\alpha}\right)
\;\le\; \alpha + \min\{1, CR_{n_2}\}.
    \end{equation*}
    \item For $\alpha \in (0, 1/2]$ and $n_2 \ge 1$, setting 
    \begin{equation*}
        \widetilde\Delta_2^2 = \frac{n_2\|\widehat{\theta}_1 -\theta(P^N)\|_{\bar\Gamma_2}^2}{4\bar\sigma^2 + 2L^4\|\widehat{\theta}_1 -\theta(P^N)\|_{\bar\Gamma_2}^2},
    \end{equation*}
    it holds 
    \begin{equation*}
        \mathbb{P}_{P^N}\left(\theta(P^N) \notin \widehat{\mathrm{CI}}^{\mathtt{CLT}}_{N,\alpha}\right)
\le \min \left\{\E_{P^1}\left[1-\Phi(\widetilde\Delta_2) + \frac{C}{(1+\widetilde\Delta_2)^2}\right],\; \alpha + \min\{1, CR_{n_2}\}\right\}.
    \end{equation*}
\end{enumerate}
\end{theorem}
As in \Cref{thm:mean-ci-validity}, the remainder $R_{n_2}$ becomes negligible under a finite $(2+\delta)$-th moment condition $\sup_{u \in \mathbb{S}^{d-1}}\mathbb{E}_{P_i}[|\langle W_i^\circ, u\rangle|^{2+\delta}] \le K$. The first term in $R_{n_2}$ also requires consistency of $\widehat\theta_1$ in the $\bar\Gamma_2$-norm, which implicitly places a dimension requirement, such that $d = o(n_1)$. 

When $\alpha \le 1/2$, validity holds under weaker conditions when $\widetilde\Delta_2$ is large. In particular, 
\begin{equation*}
    \widetilde\Delta_2 \to \infty \quad\text{when} \quad \min\left\{\frac{n_2 \|\widehat{\theta}_1 -\theta(P^N)\|_{\bar\Gamma_2}^2}{\bar\sigma^2}, \frac{n_2}{L^4}\right\} \to \infty.
\end{equation*}
This does not require consistency of $\widehat{\theta}_1$. If the initial estimator is inconsistent or converges slowly, for instance, a penalized estimator with a large regularization parameter, then $\widetilde\Delta_2$ diverges as $n_2 \to \infty$. In this regime, $n_1$ may be much smaller than $d$. As an illustration, consider the case when $\widehat{\theta}_1$ is the OLS estimator on $D_1$. Then 
\begin{equation*}
    n_2\|\widehat\theta_1 - \theta(P^N)\|^2_{\bar\Gamma_2} \gtrsim \frac{dn_2}{n_1}\cdot\lambda_{\min}(\bar\Gamma_1^{-1/2}\bar\Gamma_2\bar\Gamma_1^{-1/2}).
\end{equation*}
Therefore when $n_1 \asymp n_2$ and $\lambda_{\max}(\bar\Gamma_1^{-1/2}\bar\Gamma_2\bar\Gamma_1^{-1/2})$ bounded away from zero, validity at any level $\alpha \le 1/2$ holds as long as $d,n_2 \to \infty$ and does not require any proportional behavior of $n_2$ and $d$. 

\paragraph{Width Analysis.}
\begin{theorem}\label{thm:LR-ci-width}
    There exists a universal constant $C > 0$ such that the following hold.
\begin{enumerate}
    \item Assume \ref{as:eigen_value} and \ref{as:linreg-moment-covariate} with $q_x>2$. For $\alpha \in [1/2,1)$, $n_1 \ge 1$, and any $\varepsilon \in (0,1)$, with probability at least $1-\varepsilon - \exp(-Cn_2)$,
    \begin{equation*}
        \mathrm{Diam}_{\|\cdot\|_{ \bar\Gamma_2}}\bigl(\widehat{\mathrm{CI}}_{N,\alpha}^{\mathtt{CLT}}\bigr)
    \leq C\varepsilon^{-1/2}
    \left\{\sqrt{\frac{\bar\sigma^2 d}{n_2}} 
    + \|\widehat{\theta}_1 - \theta(P^N)\|_{\bar\Gamma_2}\right\}
    \end{equation*}
    provided that $n_2$ satisfies $n_2 \ge \mathfrak{C}d$ where $\mathfrak{C}$ depends only on $q_x$.
    \item Assume \ref{as:eigen_value}, \ref{as:linreg-moment-covariate} with $q_x\ge 4$, \ref{as:error_moment}, and let 
$\widetilde{s}_{n_1,n_2}$ be as in \ref{as:rate-initial-estimator3}. For $\alpha \in (0, 1)$, $n_1 \ge 1$, and any $\varepsilon \in (0,1)$, with probability at least $1-\varepsilon$, 
    \begin{equation*}
        \mathrm{Diam}_{\|\cdot\|_{\bar\Gamma_2}}\bigl(\widehat{\mathrm{CI}}_{N,\alpha}^{\mathtt{CLT}}\bigr)
    \leq C_{\varepsilon}(1+|z_{\alpha}|)\left\{\sqrt{\frac{\overline{\sigma}^2 d}{n_2}} + \widetilde s_{n_1, n_2}^{1/2}\right\}
    \end{equation*}
provided that $n_2$ satisfies 
\begin{equation}\label{eq:lr-sample-req-main}
    C'_{\varepsilon^\circ} \max\bigg\{((1+|z_\alpha|) d\log (2d) L^4)^{q_x/(q_x-2)}, ((1+|z_\alpha|)(1+K) dL^2)^{p/(p-1)}\bigg\} \le n_2,
\end{equation}
where $p = \min\{q_y, q_x/2\}$, and $C_{\varepsilon}, C'_{\varepsilon}$ depend on $\varepsilon$, but not on $d$ or $\alpha$.
\end{enumerate}
\end{theorem}

\begin{corollary}\label{cor:lr-plugin}
    Suppose the initial estimator satisfies, for all $n_1 \geq N_1$, \begin{align}\|\widehat \theta_1-\theta(P^N)\|^2_{\bar\Gamma_1} = O_{P^1}\left(\frac{d \bar\sigma^2}{n_1}\right).\label{eq:requirement-ols}
    \end{align}
    Assume \ref{as:eigen_value} and \ref{as:linreg-moment-covariate} with $q_x > 2$ when $\alpha \ge 1/2$ and $q_x \ge 4$ when $\alpha < 1/2$. Additionally assume \ref{as:error_moment} when $\alpha < 1/2$. For any $\varepsilon \in (0, 1)$, $n_1 \ge N_1$, 
    \begin{align*}
\mathrm{Diam}_{\|\cdot\|_{\bar\Gamma_2}}\bigl(\widehat{\mathrm{CI}}_{N,\alpha}^{\mathtt{CLT}}\bigr)
    \leq C_{\varepsilon}(1+|z_{\alpha}|)
    \left\{\sqrt{\frac{\bar\sigma^2 d}{n_2}} 
    + \sqrt{\frac{\bar\sigma^2 d}{n_1}}\lambda^{1/2}_{\max}(\bar\Gamma_1^{-1/2}\bar\Gamma_2\bar\Gamma_1^{-1/2})\right\}
        \end{align*}
        with probability at least $1-\varepsilon - \exp(-Cn_2)$, 
        provided $n_2 \ge \mathfrak{C}d$ when $\alpha \ge 1/2$, and with probability $1-\varepsilon$ provided $n_2$ satisfies \eqref{eq:lr-sample-req-main} when $\alpha < 1/2$, where $C$ is a universal constant, $p = \min\{q_y, q_x/2\}$, $\mathfrak{C}$ depends on $q_x > 2$, $C_{\varepsilon}$ and $C'_{\varepsilon}$ depend on $\varepsilon$, but not on $d$ or $\alpha$.
\end{corollary}
\Cref{thm:LR-ci-width} reveals a salient distinction between $\alpha \ge 1/2$ and $\alpha < 1/2$. For $\alpha \ge1/2$, the width guarantee requires the weak moment assumptions \ref{as:eigen_value} and \ref{as:linreg-moment-covariate} with $q_x > 2$, and a sample size $n_2 \ge \mathfrak{C}d$ for $\mathfrak{C}$ depending only on $q_x$. This is obtained by directly analyzing the analytical expression for the diameter in \Cref{thm:geometry-LR}, bypassing sample variance estimation, as in the mean inference case. For $\alpha < 1/2$, the width guarantee is obtained as an application of \Cref{thm:clt-width-local} under stronger assumptions to control concentration of the sample variance, namely \ref{as:linreg-moment-covariate} with $q_x \ge 4$ and \ref{as:error_moment}. Under the weakest moment conditions implied by these assumptions, the sample size requirement becomes $d = o(\sqrt{n_2})$, reflecting a well-known quadratic barrier. Throughout, the assumptions permit heavy-tailed covariates without any sub-Gaussian condition.

Condition \eqref{eq:requirement-ols} is satisfied when $\widehat{\theta}_1$ is the OLS estimator based on $D_1$. When $\bar\Gamma_2 = \bar\Gamma_1$, \Cref{cor:lr-plugin} implies that the balances split $n_1 = n_2 = N/2$ minimizes the diameter, yielding the rate $\sqrt{2\bar\sigma^2 d/N}$, which is minimax optimal as established by Theorem 1 of \citet{mourtada2022exact}. The combined guarantees of \Cref{thm:LR-valid} and \Cref{thm:LR-ci-width} at $\alpha = 1/2$ yield dimension-agnostic validity with diameter rate $\sqrt{d\bar\sigma^2/n_2}$ under $\mathfrak{C} d \le n$, which appears to be new. The same guarantee extends to any fixed $\alpha \le 1/2$ by employing the data-splitting method of \Cref{sec:data-splitting}.

Recently, \citet{chang2024confidence} proposed confidence sets for high-dimensional OLS based on $Z$-estimation. Theorem~8 of \citet{chang2024confidence} establishes a similar diameter bound but requires $d = o(n^{1-2/q_x})$, matching the first term of \eqref{eq:eqreuiment_for_n2_simplified} in \Cref{thm:LR-ci-width} with $\alpha < 1/2$. \citet{chang2023inference} provides a one-step bias-corrected method, improving the requirement to $d = o(n^{2/3})$. 

\begin{remark}
    Since $\mathbb{C}_{P^2}(\theta) = \|\theta - \theta(P^N)\|^2_{\bar\Gamma_{2}} \ge \lambda_{\min}(\bar\Gamma_{2})\|\theta - \theta(P^N)\|^2_2$,
    \ref{as:margin} holds with $c_0 =  \lambda_{\min}(\bar\Gamma_{2})$ and $\gamma=1$. Therefore, \Cref{thm:LR-ci-width} can be restated in terms of the $\|\cdot\|_2$-norm with an additional assumption that $\lambda_{\min}(\bar\Gamma_{2}) > \underbar{$\lambda$}$ for some constant $\underbar{$\lambda$} > 0$.
\end{remark}

\begin{remark}
    The proposed framework can be easily extended to penalized least squares, where the minimizer is defined as\begin{equation}
    \theta(P^N) := \argmin_{\theta \in \mathbb{R}^d}\, \frac{1}{n_2}\sum_{i \in I_2}\E_{P_i}[(Y-\theta^\top X)^2] + \lambda(\theta),\label{eq:pen-objective}
\end{equation}
and $\lambda : \Theta \mapsto \mathbb{R}_+\cup\{+\infty\}$ is a convex function of $\theta$, which may depend on $n$ (but not on data). We obtain the following validity result.
\begin{theorem}\label{thm:LR-valid-pen}
    Assume $\lambda(\cdot)$ is a convex function. Then, the same conclusion as \Cref{thm:LR-valid} holds for the confidence set $\widehat{\mathrm{CI}}^{\mathtt{CLT}}_{N,\alpha}$ for the penalized objective \eqref{eq:pen-objective}. 
\end{theorem}
\end{remark}
\subsection{Manski's Discrete Choice Model}\label{sec:manski}
Consider independent observations $(X_1^\top,Y_1)^\top, \ldots (X_N^\top,Y_N)^\top \in \mathbb{R}^d \times \{-1,1\}$ generated from the binary response model:
\begin{equation}
    Y_i := \mathrm{sgn}(\theta(P^N)^\top X_i + \varepsilon_i) \quad \text{where} \quad \mathrm{sgn}(t) = 2\mathbf{1}\{t \ge 0\}-1,
\end{equation}
where the error $\varepsilon_i$ has zero conditional median given $X_i$, i.e., $\mathrm{med}(\varepsilon_i | X_i)=0$, but is otherwise allowed to depend on $X_i$. The inference of interest is
\begin{align*}
    \theta(P^N) := \argmax_{\theta \in \mathbb{S}^{d-1}}\, \sum_{i \in I_2} \E_{P_i}[Y_i\, \mathrm{sgn}(\theta^\top X_i)].
\end{align*}
A natural estimator of $\theta(P^N)$ is maximum score estimator \citep{manski1975maximum}, defined as
\begin{align}\label{eq:maximum-score-estimator}
    \widehat\theta_1 := \argmax_{\theta \in \mathbb{S}^{d-1}}\, \sum_{i\in I_1} Y_i\, \mathrm{sgn}(\theta^\top X_i).
\end{align}
The asymptotic behavior of $\widehat \theta_1$ is non-standard \citep{manski1985semiparametric, kim1990cube}. The inference for this problem is challenging, and \cite{cattaneo2020bootstrap} proposes a bootstrap-based approach. The following assumptions on the joint distribution of $(X_i, \varepsilon_i)$ are standard in the literature \citep{mukherjee2019nonstandard, mukherjee2021optimal}
\begin{enumerate}[label=\textbf{(B\arabic*)},leftmargin=2cm]
\setcounter{enumi}{3}
\item \label{as:tsybakov} Let $\eta_{P_i}(x) := \mathbb{P}_{P_i}(Y=1 \mid X=x)$. There exist constants $C_0 > 0$, $0 < t^* < 1/2$, and $\gamma > 0$, such that 
\begin{align*}
    \mathbb{P}_{P_i}\left(\left|\eta_{P_i}(X)- \frac{1}{2}\right| < t\right) \le C_0t^{1/\gamma},
\end{align*}
for all $0 < t < t^*$ and $i \in I_2$.
\item \label{as:covariate-manski}
There exists a constant $c_1 > 0$, not depending on $n_2$ or $d$, such that 
\begin{align*}
    c_1\|\theta-\theta(P^N)\|_2 \le \mathbb{P}_{P_i}\left(\mathrm{sgn}(\theta^\top  X_i) \neq \mathrm{sgn}(\theta(P^N)^\top  X_i)\right)
\end{align*}
for all $\theta \in \mathbb{S}^{d-1}$ and $i \in I_2$.
\end{enumerate}
Assumption~\ref{as:tsybakov} is the \textit{low noise (margin) assumption} in the classification literature \citep{mammen1999smooth, tsybakov2004optimal}. It quantifies the deviation of the conditional class probability from $1/2$ near the decision boundary; as $\gamma \longrightarrow 0$, the decision boundary becomes well-separated, representing the most favorable situation for estimation. Assumption \ref{as:covariate-manski} relates the distribution of covariates $X_i$ to the geometry in the parameter space $\mathbb{S}^{d-1}$. See \cite{Audibert2007Fast, mukherjee2021optimal} for further discussion. 
\paragraph{Validity.}
\begin{theorem}\label{thm:manski-validity-consistent}
Assume \ref{as:covariate-manski} and define $R_{n_2} = (n_2c_1\|\widehat{\theta}_1 -\theta(P^N)\|)^{-1/2}$. There exists a universal constant $C > 0$ such that the following hold.
        \begin{enumerate}
    \item For $\alpha \in (0, 1)$ and $n_2 \ge 2$,
    \begin{equation*}
    \mathbb{P}_{P^N}\!\left(\theta(P^N) \notin \widehat{\mathrm{CI}}^{\mathtt{CLT}}_{N,\alpha}\right)
\;\le\; \alpha + \E_{P^1}[\min\{1, CR_{n_2}\}].
    \end{equation*}
    \item Additionally assume \ref{as:tsybakov}. For $\alpha \in (0, 1/2]$ and $n_2 \ge 1$, setting 
    \begin{equation*}
        \widetilde\Delta_2^2 = \mathfrak{C}n_2\|\widehat\theta_1- \theta(P^N)\|\min\{\|\widehat\theta_1- \theta(P^N)\|^{2\gamma}, (t^*)^2\},
    \end{equation*}
    with $\mathfrak{C}$ depending on $C_0$ and $c_1$,
    it holds 
    \begin{equation*}
    \begin{split}
    &\mathbb{P}_{P^N}\left(\theta(P^N) \notin \widehat{\mathrm{CI}}^{\mathtt{CLT}}_{N,1/2}\right)
\\
&\quad \le \min \left\{\E_{P^1}\left[1-\Phi(\widetilde\Delta_2) + \frac{C}{(1+\widetilde\Delta_2)^2}\right],\; \alpha + \E_{P^1}[\min\{1, CR_{n_2}\}]\right\}.
    \end{split}
    \end{equation*}
\end{enumerate}
\end{theorem}
The validity requires $n_2 \|\widehat{\theta}_1 -\theta(P^N)\| \to \infty$ in probability. Theorem 3.2 of \cite{mukherjee2019nonstandard} establishes that for Manski's estimator,
\begin{align*}
    n_2\|\widehat{\theta}_1 -\theta(P^N)\| = O_{P^1}\left(n_2 \cdot \left(\frac{d \log(n_1/d)}{n_1}\right)^{1/(1+2\gamma)}\right).
\end{align*}
With an even split $n_1 \asymp n_2$, this diverges for all $\gamma$ bounded away from zero. For small $d/n_1$ and $\gamma \to 0$ (the well-separated, low dimensional case), $\widehat{\theta}_1$ converges nearly at rate $n_1^{-1}$ and $n_2\|\widehat{\theta}_1 -\theta(P^N)\|$ remains bounded, so validity may fail. The condition is satisfied for growing $d$ or any $\gamma$. Similarly, when $n_1 \gg n_2$, the initial estimator converges too fast and validity may fail. For $\alpha = 1/2$, the bound decays at the faster rate.

Validity holds trivially for both levels when $\widehat\theta_1$ is inconsistent for $\theta(P^N)$ since $\|\widehat\theta_1 - \theta(P^N)\|$ is bounded away from zero. Natural examples include penalized logistic regression \citep{cessie1992ridge}, support vector machines, and the smoothed maximum score estimator of \citet{horowitz1992smoothed} when the bandwidth is set to a fixed constant rather than optimally tuned.

\paragraph{Width Analysis.}
In the following analysis, we assume that $X_i$ for $i \in I_2$ are identically distributed while still allowing for $\varepsilon_i$ to be heterogeneous conditioning on $X_i$. This simplifies the analysis as $\mathbb{P}_{P_i}\left(\mathrm{sgn}(\theta^\top  X_i) \neq \mathrm{sgn}(\theta(P^N)^\top  X_i)\right)$ no longer depends on $i \in I_2$. 
\begin{theorem}\label{thm:manski-width}
Assume \ref{as:tsybakov}, \ref{as:covariate-manski} and let $\widetilde s_{n_1, n_2}$ be as in \ref{as:rate-initial-estimator3}. For $n_1 \ge 1$, $\alpha \in (0,1)$ and any $\varepsilon \in (0,1)$, with probability at least $1-\varepsilon$,
\begin{equation*}
\begin{split}
    &\mathrm{Diam}_{\|\cdot\|_2}\big(\widehat{\mathrm{CI}}^{\mathtt{CLT}}_{N, \alpha}\big) \\
&\quad\le C_\varepsilon\max\bigg\{\left(1+|z_\alpha|\right)^{2/(1+2\gamma)}\left(\left(\frac{d\log(n_1/d)}{n_1}\right)^{1/(1+2\gamma)} + \widetilde s_{n_1, n_2}^{1/(1+\gamma)}\right), \\
&\quad\quad\mathrm{Q}^{\mathtt{CLT}}_{N,\alpha}\mathbf{1}\{\mathrm{Q}^{\mathtt{CLT}}_{N,\alpha} \ge (t^*)^{1/\gamma}\}\bigg\},
\end{split}
\end{equation*} 
provided $\max\{2, C_\varepsilon'(1+|z_\alpha|)^2d\} \le n_2$, where $\mathrm{Q}_{N, \alpha}^{\mathtt{CLT}}= (1+|z_\alpha|)\widetilde s_{n_1, n_2}/t^*$, $C_\varepsilon$ depends on $\varepsilon$, $C_0$, $c_1$ and $\gamma$, while $C_\varepsilon'$ depends on $\varepsilon$, $t^*$ and $\gamma$. 
\end{theorem}

For the specific case where $\widehat{\theta}_1$ is Manski's maximum score estimator, we introduce the following additional assumption.
\begin{enumerate}[label=\textbf{(B\arabic*)},leftmargin=2cm]
\setcounter{enumi}{5}
\item \label{as:covariate-manski-2}
There exists constants $C_1, C_2 > 0$, not depending on $n_2$ or $d$, such that for all $\theta \in \mathbb{S}^{d-1}$ and $i \in I_2$,
\begin{align*}
    \mathbb{P}_{P_i}\left(\mathrm{sgn}(\theta^\top  X_i) \neq \mathrm{sgn}(\theta(P^N)^\top  X_i)\right) \le C_1\|\theta-\theta(P^N)\|_2,
\end{align*}
and when $\|\theta-\theta(P^N)\|_2 \le \delta$ for some $\delta \in (0, (t^*)^{1+1/\gamma})$, 
\begin{align*}
    \mathbb{P}_{P_i}\left(\mathrm{sgn}(\theta^\top  X_i) \neq \mathrm{sgn}(\theta(P^N)^\top  X_i)\right) \le C_2\|\theta-\theta(P^N)\|_2^{1+\gamma}.
\end{align*}
\end{enumerate}
Together, Assumptions \ref{as:tsybakov} and \ref{as:covariate-manski} establish a lower bound on the curvature $\mathbb{C}_{2}(\theta) \gtrsim \|\theta - \theta(P^N)\|^{1+\gamma}$ locally, while \ref{as:covariate-manski-2} provides the matching local upper bound $\mathbb{C}_{2}(\theta) \lesssim \|\theta - \theta(P^N)\|^{1+\gamma}$. the first part of \ref{as:covariate-manski-2} has appeared in the literature, for instance, Assumption 2.8 of \cite{mukherjee2021optimal}. The second part is considerably stronger, and the margin condition \ref{as:tsybakov} must be sharp. This is not implied by any of the standard assumptions in the maximum score literature. The data-generating distribution in \Cref{sec:num} satisfies this condition. Note that \Cref{thm:manski-validity-consistent} and \Cref{thm:manski-width} require only the lower bound. 
\begin{corollary}\label{cor:manski-plugin}
    Assume \ref{as:tsybakov}, \ref{as:covariate-manski}, \ref{as:covariate-manski-2} and suppose the initial estimator satisfies for all $n_1 \geq N_1$, \begin{align}\label{eq:manski-requirement}\|\widehat\theta_1 - \theta(P^N)\|_2 = O_{P^1}\left(\frac{d\log(n_1/d)}{n_1}\right)^{1/(1+2\gamma)}.
    \end{align}
     For any $\varepsilon \in (0, 1)$ and $n_1 \ge N_1$, with probability at least $1-\varepsilon$, 
\begin{equation*}
\begin{split}
    &\mathrm{Diam}_{\|\cdot\|_2}\big(\widehat{\mathrm{CI}}^{\mathtt{CLT}}_{N, \alpha}\big) \\
&\quad\le C_\varepsilon\max\bigg\{\left(1+|z_\alpha|\right)^{2/(1+2\gamma)}\left(\left(\frac{d\log(n_1/d)}{n_1}\right)^{1/(1+2\gamma)} + \left(\frac{d\log(n_2/d)}{n_2}\right)^{1/(1+2\gamma)} \right), \\
&\quad\quad\mathrm{Q}^{\mathtt{CLT}}_{N,\alpha}\mathbf{1}\{\mathrm{Q}^{\mathtt{CLT}}_{N,\alpha} \ge (t^*)^{1/\gamma}\}\bigg\},
\end{split}
\end{equation*} 
provided $\max\{2, C_\varepsilon'(1+|z_\alpha|)^2d\} \le n_2$, where $\mathrm{Q}_{N, \alpha}^{\mathtt{CLT}}= (1+|z_\alpha|)\|\widehat{\theta}_1 - \theta(P^N)\|_2/t^*$ and $C_\varepsilon$ depends on $\varepsilon$, $C_0$, $c_1$ $\gamma$, $C_1$ and $C_2$, while $C_\varepsilon'$ depend on $\varepsilon$, $t^*$ and $\gamma$.
\end{corollary}

Under \ref{as:tsybakov} and \ref{as:covariate-manski}, Assumption \ref{as:margin} only holds locally, and the width guarantees for both $\alpha = 1/2$ and $\alpha \neq 1/2$ are obtained through \Cref{thm:clt-width-local}, with no distinction between the two cases. When the initial estimator is consistent and $\mathrm{Q}^{\mathtt{CLT}}_{N,\alpha}$ is eventually bounded by $(t^*)^{1/\gamma}$, with high probability, the diameter is determined by the first term, scaling as $\left(d\log(n_2/d)/n_2\right)^{1/(1+2\gamma)}$, and exhibiting adaptive behavior to the unknown curvature $\gamma$. 

Theorem 3.2 of \citet{mukherjee2019nonstandard} establishes that \eqref{eq:manski-requirement} is satisfied by the standard maximum score estimator, and the rate of convergence matches that of the minimax lower bound up to a logarithmic factor (See Theorem 3.4 of \citet{mukherjee2019nonstandard}). Whenever $n_1$ is large enough such that $\|\widehat{\theta}_1 - \theta(P^N)\|_2 \le (t^*)^{1+1/\gamma}/(1+|z_\alpha|)$, with high probability, the diameter is dominated by the first term of \Cref{cor:manski-plugin}. The balanced split $n_1 = n_2 = N/2$ minimizes the diameter, yielding the rate $\left(d\log(N/d)/N\right)^{1/(1+2\gamma)}$.

\subsection{Quantile without Positive Densities}\label{sec:quantile}
Consider an IID observation $X_1, \ldots, X_N \in \mathbb{R}$ where the inference of interest is the $\eta$-quantile defined as
\begin{equation*}
\theta(P^N) := \inf \left\{t \, : F_{P^N}(t) \ge \eta \right\} \quad \text{for} \quad \eta \in (0,1)
\end{equation*}
and $F_{P^N}(t) := \mathbb{P}_{P^N}(X\le t)$. It is well-known that $\theta(P^N)$ minimizes the following ``quantile" loss:
\begin{equation*}
    \theta(P^N) := \argmin_{\theta \in \mathbb{R}}\, \E_{P^N}[\eta(X-\theta)_+ + (1-\eta)(\theta-X)_+].
\end{equation*}
We work under IID observations as the setting and the forthcoming assumption become unnatural under non-identical observations. The method and the theoretical results do not require IID observations. The sample quantile centered at $\theta(P^N)$ converges to a Gaussian distribution when scaled by $N^{1/2}$ if the distribution of $X$ has a strictly positive density at $\theta(P^N)$. If the density at $\theta(P^N)$ is zero or non-existent, however, the sample quantile converges at a rate depending on the H\"{o}lder smoothness of the $F_{P^N}(t)$ in the neighborhood of $\theta(P^N)$. In this case, the limiting distribution is no longer Gaussian and also depends on the H\"{o}lder smoothness of the $F_{P^N}(t)$ in the neighborhood of $\theta(P^N)$ \citep{smirnov1952limit}. Although finite-sample valid, distribution-free confidence intervals for quantiles already exist \citep{scheffe1945non}, we present this result to illustrate the behavior of the proposed method in irregular settings. We quantity the smoothness of the $F_{P^N}(t)$ near $\theta(P^N)$ as follows:
\begin{enumerate}[label=\textbf{(B\arabic*)},leftmargin=2cm]
\setcounter{enumi}{6}
\item \label{as:cdf-Holder} 
There exist $\delta_0 >0$, $M_0,M_1 \in (0,\infty)$ and $M_0 >M_1$ such that 
\begin{equation*}
    |F_{P^N}(\theta) - F_{P^N}(\theta(P^N)) - M_0|\theta - \theta(P^N)|^{\gamma} \mathrm{sgn}(\theta-\theta(P^N))| \le M_1|\theta-\theta(P^N)|^\gamma
\end{equation*}
for all $\theta$ such that $|\theta-\theta(P^N)| \le \delta_0$.
\end{enumerate}
The H\"{o}lder smoothness as described in \ref{as:cdf-Holder} should be compared to \citet[Equation (6)]{knight1998limiting}. When $\gamma=1$, this assumption becomes equivalent to requiring that the density at the true $\eta$-quantile is bounded away from zero. 

\paragraph{Validity.}
\begin{theorem}\label{thm:quantile-validity} Assume \ref{as:cdf-Holder} and define 
\begin{equation*}
    \begin{split}
        R_{n_2} &= \inf_{\delta_0 \ge \rho > 0}\left\{ 2\sqrt{\frac{2M_0\rho^\gamma }{\eta(1-\eta)}}+ \mathbb{P}_{P^1}(|\widehat\theta_1 - \theta(P^N)| > \rho)\right\} + \frac{1}{\sqrt{n_2\eta(1-\eta)}}.
    \end{split}
\end{equation*}
    There exists a universal constant $C > 0$ such that the following hold.
        \begin{enumerate}
    \item For $\alpha \in (0, 1)$ and $n_2 \ge 2$,
    \begin{equation*}
    \mathbb{P}_{P^N}\!\left(\theta(P^N) \notin \widehat{\mathrm{CI}}^{\mathtt{CLT}}_{N,\alpha}\right)
\;\le\; \alpha + \min\{1, CR_{n_2}\}.
    \end{equation*}
    \item For $\alpha \in (0, 1/2]$ and $n_2 \ge 1$, setting $\widetilde\Delta_2^2 = \mathfrak{C} n_2 \min\{|\widehat{\theta}_1 - \theta(P^N)|^{2\gamma}, \delta_0^{2\gamma}\}$ with $\mathfrak{C}$ depending on $M_0$ and $M_1$,
    \begin{equation*}
        \mathbb{P}_{P^N}\left(\theta(P^N) \notin \widehat{\mathrm{CI}}^{\mathtt{CLT}}_{N,1/2}\right)
\le \min \left\{\E_{P^1}\left[1-\Phi(\widetilde\Delta_2) + \frac{C}{(1+\widetilde\Delta_2)^2}\right],\; \alpha + \min\{1, CR_{n_2}\}\right\}.
    \end{equation*}
\end{enumerate}
\end{theorem}
\Cref{thm:quantile-validity} reveals a phase transition in the validity conditions, depending on the data split. Suppose $\widehat\theta_1$ is the sample quantile based on $D_1$, satisfying
\begin{align*}
    \mathbb{P}_{P^1}(|\widehat\theta_1 - \theta(P^N)| > n_1^{-1/(2\gamma)}) \le \varepsilon.
\end{align*}
As an illustration, we set $\rho = n_1^{-1/(2\gamma)}$ in the infimum, assuming that $n_1$ is large enough so that $\rho \le \delta_0$. Then, the remainder $R_{n_2}$ is negligible whenever
\begin{equation*}
n_1^{1/2} \eta(1-\eta) \to \infty \quad \textrm{and} \quad n_2 \eta(1-\eta) \to \infty.
\end{equation*}
The validity requirement is similar to that of the CLT, which precludes the extreme quantile levels $(1-\eta), \eta \to 0$ at rate faster than $n_2$ and $n_1^{1/2}$. 

For $\alpha \le 1/2$, validity holds under weaker condition of $\widetilde\Delta_2 \to \infty$, which reduces to 
\begin{equation*}
    \frac{n_2}{n_1}\cdot (n_1|\widehat{\theta}_1 - \theta(P^N)|^{2\gamma}) \to \infty.
\end{equation*}
This condition is free of $\eta$. Hence, when $n_2 \gg n_1$, the asymptotic validity holds for any value of $\eta$, including extreme quantile levels. For fixed $\eta$, validity conditions are agnostic to the H\"{o}lder smoothness $\gamma$.


\paragraph{Width Analysis.}
\begin{theorem}\label{thm:quantile-ci-width}
Assume \ref{as:cdf-Holder} and let $\widetilde s_{n_1, n_2}$ be as in \ref{as:rate-initial-estimator3}. For $n_1 \ge 1$, $\alpha \in (0,1)$ and any $\varepsilon \in (0,1)$, with probability at least $1-\varepsilon$,
\begin{equation*}
\mathrm{Diam}_{|\cdot|}\big(\widehat{\mathrm{CI}}^{\mathtt{CLT}}_{N, \alpha}\big) \le C_\varepsilon\max\bigg\{\left(1+|z_\alpha|\right)^{1/\gamma}(n_2^{-1/(2\gamma)} + \widetilde s_{n_1, n_2}^{1/(1+\gamma)}), \mathrm{Q}^{\mathtt{CLT}}_{N,\alpha}\mathbf{1}\{\mathrm{Q}^{\mathtt{CLT}}_{N,\alpha} \ge \delta_0\}\bigg\},
\end{equation*} 
provided $\max\{2, C_\varepsilon'(1+|z_\alpha|)^2\delta_0^{2\gamma}\} \le n_2$, where $\mathrm{Q}_{N, \alpha}^{\mathtt{CLT}}= \delta_0^{-\gamma}(1+|z_\alpha|)\widetilde s_{n_1, n_2}$ and $C_\varepsilon$ depends on $M_0, M_1$ and $\gamma$, while $C_\varepsilon'$ only depend on $\varepsilon$.

\end{theorem}
\begin{corollary}\label{cor:quantile-plugin}
Assume \ref{as:cdf-Holder} and suppose the initial estimator satisfies for all $n_1 \geq N_1$, \begin{align}\label{eq:quantile-initial}|\widehat\theta_1 - \theta(P^N)| \ = O_{P^1}(n_1^{-1/(2\gamma)}).
    \end{align}
     For any $\varepsilon \in (0, 1)$ and $n_1 \ge N_1$, with probability at least $1-\varepsilon$, 
\begin{equation*}
\mathrm{Diam}_{|\cdot|}\big(\widehat{\mathrm{CI}}^{\mathtt{CLT}}_{N, \alpha}\big) \le C_\varepsilon\max\bigg\{\left(1+|z_\alpha|\right)^{1/\gamma}(n_2^{-1/(2\gamma)} + n_1^{-1/(2\gamma)}), \mathrm{Q}^{\mathtt{CLT}}_{N,\alpha}\mathbf{1}\{\mathrm{Q}^{\mathtt{CLT}}_{N,\alpha} \ge \delta_0\}\bigg\},
\end{equation*} 
provided $\max\{2, C_\varepsilon'(1+|z_\alpha|)^2\delta_0^{2\gamma}\} \le n_2$, where $\mathrm{Q}_{N, \alpha}^{\mathtt{CLT}}= \delta_0^{-\gamma}(1+|z_\alpha|)|\widehat\theta_1-\theta(P^N)|$ and $C_\varepsilon$ depends on $M_0, M_1$ and $\gamma$, while $C_\varepsilon'$ only depends on $\varepsilon$.
\end{corollary}
Under \ref{as:cdf-Holder}, Assumption \ref{as:margin} only holds locally. The width guarantees are obtained through \Cref{thm:clt-width-local}, and there is no distinction between the two cases: $\alpha \ge 1/2$ and $\alpha < 1/2$. When the initial estimator is consistent and $\mathrm{Q}^{\mathtt{CLT}}_{N,\alpha}$ is eventually bounded by $\delta_0$, with high probability, the diameter is determined by the first term, scaling as $n_2^{-1/(2\gamma)}$, and exhibiting adaptive behavior to the unknown curvature $\gamma$. 

Condition \eqref{eq:quantile-initial} is satisfied when $\widehat{\theta}_1$ is the sample quantile based on $D_1$. Whenever $n_1$ is large enough such that $|\widehat{\theta}_1 - \theta(P^N)| \le \delta_0^{1+\gamma}/(1+|z_\alpha|)$, with high probability, the diameter is dominated by the first term of \Cref{cor:quantile-plugin}. The balanced split $n_1 = n_2 = N/2$ minimizes the diameter, yielding the rate $(N/2)^{-1/(2\gamma)}$, which matches the estimation rate of the sample quantile under \ref{as:cdf-Holder} as given by Example 1 of \cite{knight1998limiting}. In the special case $\gamma=1$, i.e., where the density is bounded away from zero at the $\eta$-quantile, the confidence shrinks at the parametric rate $N^{-1/2}$. 


\subsection{Discrete Argmin Inference}\label{sec:argmin-inference}
This application is motivated by \citet{zhang2024winners}, who study a prototypical problem in general model selection. The framework extends naturally to constructing confidence sets that contain the best predictor minimizing the population risk among  $\{f_1,\ldots, f_d\}$ where $f_i$ may correspond to a machine learning method trained on the same data; consequently $f_i$ and $f_j$ for $i\neq j$ can be highly correlated.

Consider independent observations $X_1, \ldots, X_N \in \mathbb{R}^d$. Observations share a common mean vector $\mu = \E_{P_i}[X_i]$ but have heterogeneous covariance matrices $\Sigma_i = \mathrm{Cov}_{P_i}(X_i)$ and we write $\bar \Sigma_2 = n_2^{-1}\sum_{i \in I_2}\Sigma_i$. In this problem, the inference of interest is the index set corresponding to the minimum marginal mean:
\begin{align*}
    \mathcal{S}^* := \theta(P^N) = \argmin_{j \in \{1, \ldots, d\}}\, \frac{1}{n_2}\sum_{i \in I_2}\E_{P_i}[e_j^\top X].
\end{align*}
Its complement $\mathcal{S}^c = \{1, \ldots, d\} \setminus \mathcal{S}^*$ denotes the non-argmin index set. Following \citet{zhang2024winners}, we do not assume that $\mathcal{S}^*$ is a singleton set, allowing the proposed confidence set to accommodate ties. 
\paragraph{Validity.}
\begin{theorem}\label{thm:arginf-valid}
Denote $\delta_{j,k} = (e_j - e_k)^\top \mu$ and $D_i^{j,k} =(e_j - e_k)^\top (X_i -\mu)$, and 
\begin{equation*}
    \sigma^2_{j,k} = (e_j-e_k)^\top \bar\Sigma_2 (e_j-e_k) = [\bar\Sigma_2]_{jj} - 2[\bar\Sigma_2]_{jk} + [\bar\Sigma_2]_{kk},
\end{equation*}
where $[A]_{jk}$ denotes the $(j,k)$th entry of the matrix $A$. Define 
\begin{equation}
    R_{n_2}  = \max_{(j, k)\in \mathcal{S}^c\times \mathcal{S}^*}\sum_{i\in I_2} \mathbb{E}_{P_i}\left[\frac{|D_i^{j, k}|^2}{n_2\sigma_{j, k}^2}\min\left\{1,\,\frac{|D_i^{j, k}|}{n_2^{1/2}\sigma_{j, k}}\right\}\right].\label{as:pairwise}
\end{equation}
    There exists a universal constant $C > 0$ such that the following hold. 
    \begin{enumerate}
    \item For $\alpha \in (0, 1)$ and $n_2 \ge 2$,
    \begin{equation*}
    \mathbb{P}_{P^N}\!\left(\theta(P^N) \notin \widehat{\mathrm{CI}}^{\mathtt{CLT}}_{N,\alpha}\right)
\;\le\; \alpha + \min\{1, CR_{n_2}\}.
    \end{equation*}
    \item For $\alpha \in (0, 1/2]$ and $n_2 \ge 1$, setting $\widetilde\Delta_2^2 = n_2\min_{(j, k)\in \mathcal{S}^c\times \mathcal{S}^*}\delta_{j,k}^2/\sigma_{j, k}^2$, it holds 
    \begin{equation*}
\mathbb{P}_{P^N}\left(\theta(P^N) \notin \widehat{\mathrm{CI}}^{\mathtt{CLT}}_{N,1/2}\right)
\le \min \left\{\E_{P^1}\left[1-\Phi(\widetilde\Delta_2) + \frac{C}{(1+\widetilde\Delta_2)^2}\right],\; \alpha + \min\{1, CR_{n_2}\}\right\}.
    \end{equation*}
\end{enumerate}
\end{theorem}

\cite{zhang2024winners} establish validity of their method under the assumption that the smallest eigenvalue of the covariance matrix of $X$ is bounded away from zero (See Theorem 3.1 of \cite{zhang2024winners}). As pointed out in their work, this assumption may be violated in practice when the components of $X$ are highly correlated, such as in model selection for LASSO (see Section 6.2 of \citet{zhang2024winners}). 

For any $\alpha \in (0,1)$, validity holds whenever the $L_{n+\delta}$-$L_2$ moment equivalence holds; see also Theorem 2.1 of \citet{kim2025locally}, which became available after the initial version of this manuscript. For $\alpha \le 1/2$, validity holds additionally whenever $\widetilde\Delta_2 \to \infty$, requiring no moment assumptions beyond finite variance. When $\sigma_{j,k}$ is bounded away from zero, $\widetilde\Delta_2 \to \infty$ holds as long as $\delta_{j,k} \gg n_2^{-1/2}$, allowing for shrinking mean gaps. On the other hand, $\sigma_{j,k} \to 0$ is favorable for establishing $\widetilde\Delta_2 \to \infty$. This happens, for instance, when $[\bar\Sigma_2]_{jj} \approx[\bar\Sigma_2]_{kk}$ and the $j$th and $k$th components are strongly positively correlated. In this case the, validity can hold even when $\delta_{j,k}$ vanishes faster than $n_2^{-1/2}$. 

The convergence rate of the diameter does not translate directly to the discrete parameter space. We refer to \citet{kim2025locally}, which studies the proposed method in the context of testing and provides minimax power analysis.
\section{Numerical Illustration}\label{sec:num}
This section provides an empirical illustration of the proposed method for inference in high-dimensional and irregular settings. Manski's maximum score estimator is used as the primary example, as it captures both dimension-agnostic validity and curvature-adaptivity in a single framework. Additional results for high-dimensional mean estimation, misspecified linear regression, and median estimation without positive density are presented in Section~\ref{supp:num}. 

\subsection{Data-generating Distributions and Experimental Setup}
 For given sample size $N$ and dimension $d$, we generate 
\begin{equation}
    X_i \sim \mathcal{N}(0, \Sigma) \quad \textrm{where} \quad \Sigma_{i,j} = 0.1^{|i-j|}/d.
\end{equation}
Set $\beta_0 = (1/\sqrt{d}, \ldots, 1/\sqrt{d})^\top$ so $\beta_0 \in \mathbb{S}^{d-1}$. Conditional on $X_i$, the binary response is generated as
\begin{equation}
    \mathbb{P}(Y_i=1 \mid X_i) = \Phi(\mathrm{sgn}(\beta_0^\top X_i) \cdot  |\beta_0^\top X_i|^{\gamma-1}).
\end{equation}
This data-generating distribution approximately satisfies \ref{as:tsybakov} with parameter $\gamma$. Numerical studies are conducted for $\gamma \in \{1/2, 1, 2\}$, under which the maximum score estimator is expected to converge at rates, $(d/N)^{1/2}, (d/N)^{1/3}$ and $(d/N)^{1/5}$ respectively, up to logarithmic factors. The proposed CLT-based confidence set is compared against subsampling with estimated rate of convergence \citep{bertail1999subsampling} and the nonparametric bootstrap. Although the bootstrap is known to be inconsistent for this problem \citep{sen2010inconsistency}, it is included as a reference.

Two experimental settings are considered.
\begin{enumerate}
    \item[\normalfont(1)] \textbf{Experiment 1 (low dimension).} Set $d=2$ and $N \in \{100, 200, \ldots, 1000\}$. Two base estimators are compared: Manski's maximum score estimator (\texttt{Manski}) and logistic regression (\texttt{Logistic}).   The latter is inconsistent due to model misspecification and serves to illustrate the robustness of the proposed method to the choice of initial estimator. Data are split evenly between $D_1$ and $D_2$.
    \item[\normalfont(2)] \textbf{Experiment 2 (high dimension).} Set $N=200$, and $d \in \{10, \ldots, 50\}$. The maximum score estimator is computationally infeasible in this regime; since the proposed method remains valid for any initial estimator, two alternatives are considered: the smoothed maximum score estimator (\texttt{SmoothManski}) \citep{horowitz1992smoothed} and penalized logistic regression (\texttt{PenLogistic}) \citep{cessie1992ridge}. Neither estimator has known limiting distributions under misspecification, to the best of our knowledge. Data are split evenly between $D_1$ and $D_2$.
\end{enumerate}
See \Cref{supp:num} for omitted implementation details. 

\subsection{Validity for Irregular and High-dimensional Settings}
\paragraph{Experiment 1:}\Cref{fig:manski-2d-coverage} displays the estimated coverage of all confidence sets constructed at the $90\%$ nominal level, based on $500$ replications for the proposed method and $300$ replications for the resampling methods. The $X$-axis shows sample size $N$ and the $Y$-axis shows estimated coverage. From left to right, the panels correspond to data-generating distributions with $\gamma = 1/2, 1$, and $2$.

The proposed CLT-based confidence set achieves coverage near or above $90\%$ across all values of $\gamma$ and for both initial estimators. In particular, when $\gamma = 2$, coverage is closest to the nominal level, consistent with the regime where $\ratio$ is numerically close to zero. See \Cref{thm:large-deviation}. On the other hands, the performance of the resampling methods vary. Subsampling with estimated rate achieves near-nominal coverage for $\gamma=1/2$, but undercovers significantly as $\gamma$ increases. The bootstrap undercovers for all values of $\gamma$, particularly when a consistent estimator is used. Its reported coverage falls well below $60\%$ lies outside the plotted range.

\paragraph{Experiment 2:} \Cref{fig:manski-high-dim-coverage} displays the estimated coverage of all confidence sets constructed at the $90\%$ nominal level. The $X$-axis shows sample dimension $d$, with $N=200$ fixed. The proposed CLT-based confidence sets remain valid for $90\%$ nominal level, yet it becomes more conservative as dimension of the sample grows. For a fixed sample size and dimension, the coverage becomes closer as $\gamma$ increases (from left panel to the right). These are expected behaviors characterized by the behavior of $\ratio$. As for the comparative methods, subsampling with estimated rate of convergence produces extremly conservative sets with $100\%$ coverage and bootstrap fails for large dimension. We emphasize that both base estimators, \texttt{SmoothManski} or \texttt{{PenLogistic}}, are inconsistent estimators as the maximum score estimator is computational infeasible. Only the proposed methods establish validity completely agnostic to the choice of the estimator. 

\subsection{Diameter for Irregular and High-dimensional Settings}
\paragraph{Experiment 1:} \Cref{fig:manski-2d-width} displays the average diameter of each confidence set over $500$ replications ($300$ for resampling methods) with the $X$-axis displaying the sample size on a log scale and with the $Y$-axis displaying the average diameter on a log scale. The slope of each line corresponds to the exponent in the rate of convergence, with theoretical values $-1/2, -1/3$ and $-1/5$ for $\gamma = 1/2, 1, 2$ respectively. The slope for the proposed method is estimated via linear regression and overlaid on each panel. The diameter of the proposed confidence set converges at a rate closely matching the theoretical value in each case, demonstrating curvature-adaptive convergence without prior knowledge of $\gamma$.
\paragraph{Experiment 2:} \Cref{fig:manski-multi-width} displays the average diameter of each confidence set. For the proposed method, the diameter is estimated by the sampling method proposed in \Cref{supp:manski-algorithm}. The $X$-axis shows sample dimension $d$, with $N=200$ fixed. The slope of each line corresponds to the exponent in the rate of convergence, with theoretical values $1/2, 1/3$ and $1/5$ for $\gamma = 1/2, 1, 2$ respectively. The slope for the proposed method is estimated via linear regression and overlaid on each panel. We observe that the observed rate of the diameter scales similarly to the theoretical rates, but not as closely matching as \Cref{fig:manski-2d-width}. This is expected as the size of the confidence sets depend on the convergence rate of the initial estimator, in this case, \texttt{SmoothManski} or \texttt{{PenLogistic}}. Neither of them is expected to achieve the same rate as \texttt{Manski} is computationally infeasible. 
\begin{figure}
\centering
\begin{subfigure}{0.8\textwidth}
  \centering
  \includegraphics[width=\linewidth]{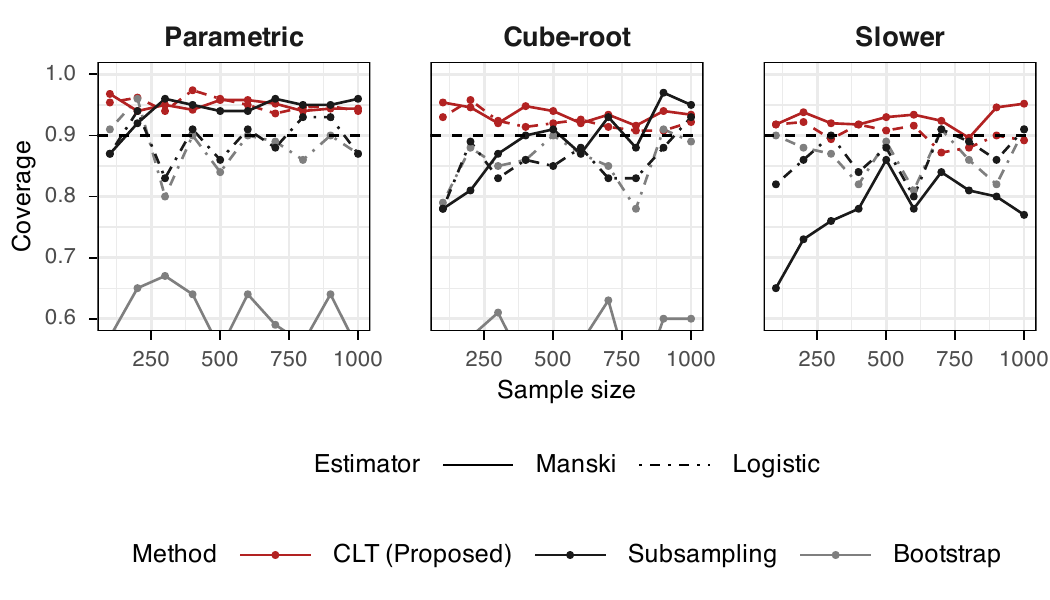}
\end{subfigure}
\caption{Estimated coverage of the proposed confidence set and two sampling methods, targeted at the $90\%$ nominal level. The $X$-axis displays the total sample size $N$ and the $Y$-axis displays the estimated coverage over $500$ replications. From left to right, the panels correspond to $\gamma=1/2,1,2$. Two base estimators are considered: Manski's maximum score estimator and logistic regression. The proposed method achieves coverage above $90\%$ across all settings with a certain conservativeness agreeing with the theoretical result. Subsampling with estimated rate achieves nominal coverage only for $\gamma=0$ and large $N$. The performance of the resampling methods vary, and it generally deteriorates as $\gamma$ increases.}
\label{fig:manski-2d-coverage}
\end{figure}
\begin{figure}
\centering
\begin{subfigure}{0.8\textwidth}
  \centering
  \includegraphics[width=\linewidth]{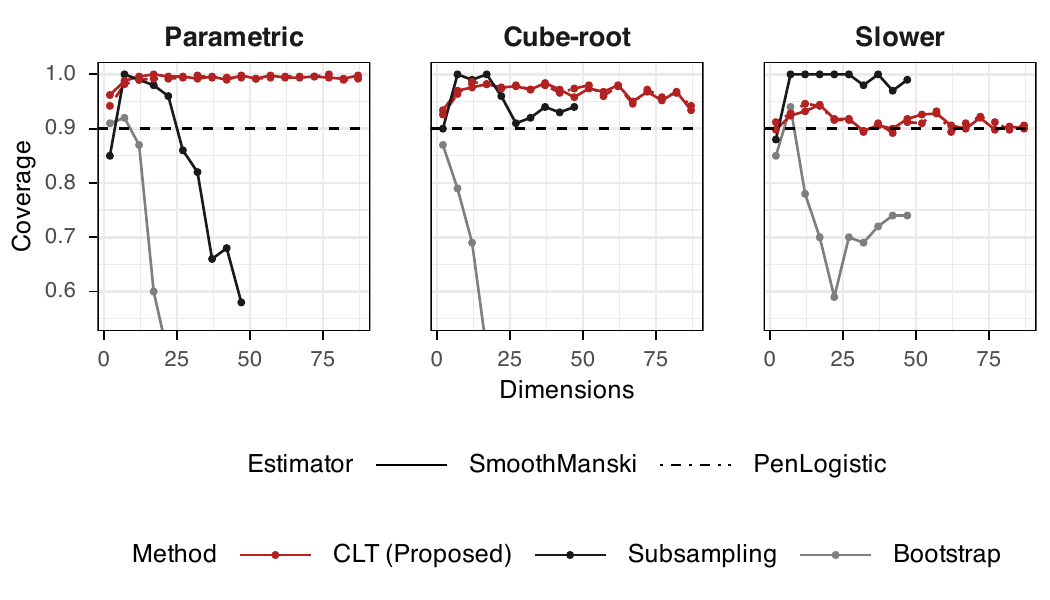}
\end{subfigure}
\caption{Estimated coverage of the proposed confidence set and two sampling methods, targeted at the $90\%$ nominal level. The $X$-axis displays the total sample size $N$ and the $Y$-axis displays the estimated coverage over $500$ replications. From left to right, the panels correspond to $\gamma=1/2,1,2$. Two base estimators are considered: Manski's maximum score estimator and logistic regression. The proposed method achieves coverage above $90\%$ across all settings with a certain conservativeness agreeing with the theoretical result. Subsampling with estimated rate achieves nominal coverage only for $\gamma=0$ and large $N$. The performance of the resampling methods vary, and it generally deteriorates as $\gamma$ increases.}
\label{fig:manski-high-dim-coverage}
\end{figure}

\begin{figure}
\centering
\begin{subfigure}{0.8\textwidth}
  \centering
  \includegraphics[width=\linewidth]{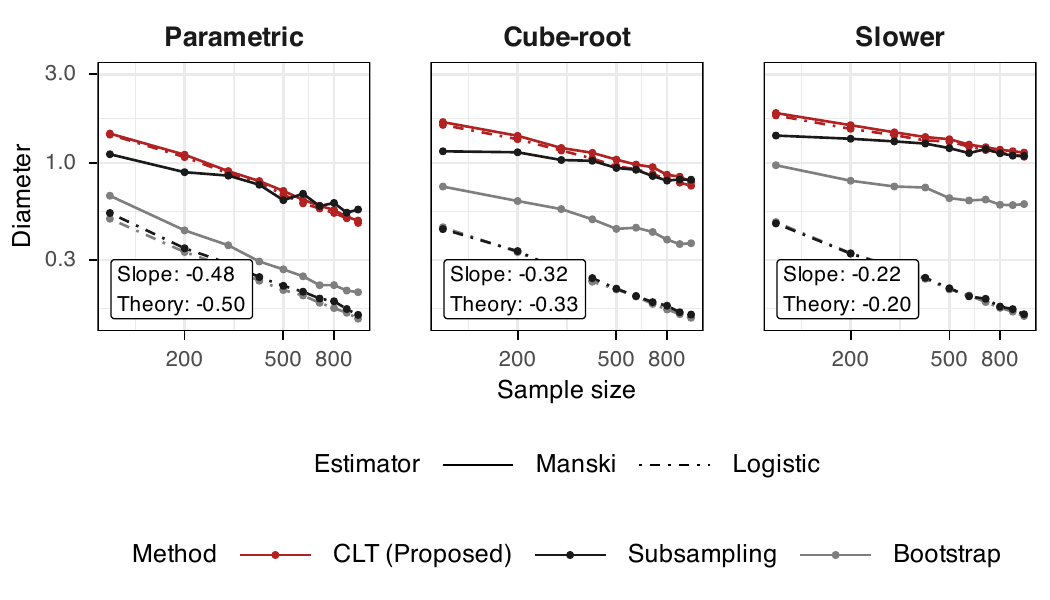}
\end{subfigure}
\caption{Average diameter of the proposed confidence set on a log--log scale. The $X$-axis displays sample size on a log scale the $Y$-axis displays the average diameter of the confidence sets on a log scale over $500$ replications. From left to right, the panels correspond to $\gamma = 1/2, 1$ and $2$, with theoretical rates correspond to $N^{-1/2}, N^{-1/3}$ and $N^{-1/5}$. The slope for the proposed method estimated by linear regression is reported in the figure. The observed slopes closely match the theoretical rates, demonstrating that the proposed confidence set adapts to the unknown smoothness parameter $\gamma$ without requiring prior knowledge of the convergence rate.}
\label{fig:manski-2d-width}
\end{figure}

\begin{figure}
\centering
\begin{subfigure}{0.8\textwidth}
  \centering
  \includegraphics[width=\linewidth]{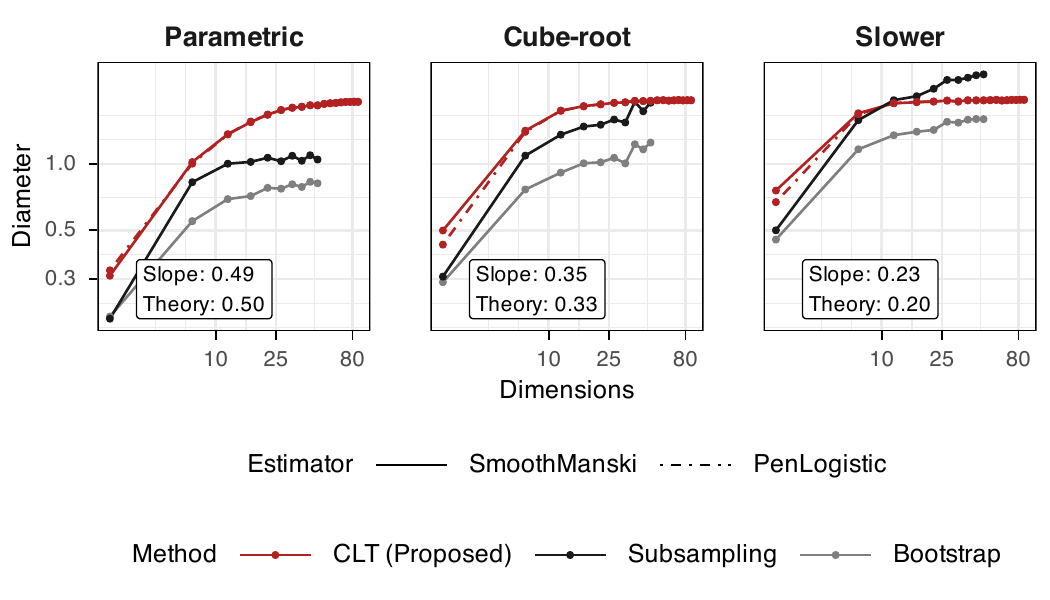}
\end{subfigure}
\caption{Average diameter of the proposed confidence set on a log--log scale. The $X$-axis displays sample size on a log scale the $Y$-axis displays the average diameter of the confidence sets on a log scale over $500$ replications. From left to right, the panels correspond to $\gamma = 1/2, 1$ and $2$, with theoretical rates correspond to $N^{-1/2}, N^{-1/3}$ and $N^{-1/5}$. The slope for the proposed method estimated by linear regression is reported in the figure. The observed slopes closely match the theoretical rates, demonstrating that the proposed confidence set adapts to the unknown smoothness parameter $\gamma$ without requiring prior knowledge of the convergence rate.}
\label{fig:manski-multi-width}
\end{figure}
\section{Concluding Remarks}\label{sec:conclusions}
This manuscript introduces a general framework for constructing confidence sets for solutions of stochastic optimization problems, rendering empirical risk minimization as special cases. The proposed method employs sample splitting, which facilitates validity across both regular and irregular settings. In particular, the method offers a dimension-agnostic solution, which becomes applicable in high-dimensional problems where standard asymptotic theory breaks down. The manuscript provides a unified treatment of validity, conservativeness, and the diameter of the resulting confidence sets.

The theoretical properties are illustrated through several challenging statistical applications. For high-dimensional and misspecified linear regression, the proposed confidence set achieves dimension-agnostic validity and recovers the $\sqrt{d/N}$ rate under weak moment conditions, with a dimensional requirement that appears to be new. For Manski's maximum score estimator, the confidence set adapts to the unknown margin condition, yielding rates that depend on the Tsybakov noise parameter; a confidence set with this adaptive behavior in high-dimension also appears to be new.

The following are problems where honest inference under weak distributional assumptions is limited or largely absent, and which are closely related to the themes of this manuscript.
\begin{itemize}
    \item \textbf{Generalized linear models.} Extending the framework to high-dimensional generalized linear models, including logistic and Poisson regression.
    \item \textbf{Constrained optimization.} Quantifying convergence rates under general constraints, which likely requires handling the geometry of the feasible set near the solution using tools from variational analysis.
    \item \textbf{Irregular parameter spaces.} Analyzing the problem where the parameter space poses structural challenges, including the space of probability distributions equipped with optimal transport metrics, discrete parameter spaces, and Hadamard spaces for Fr\'{e}chet mean inference.
    \item \textbf{Nuisance parameters.} Extending the framework to problems where the target is a functional of a higher-dimensional object estimated in a first stage, and where it is of interest to understand whether the elbow effect between parametric and functional rates of convergence can be recovered.
    \item \textbf{Dependence beyond mixing.} Studying dependence structures beyond $\beta$-mixing and martingale differences, such as graphical dependence arising in Ising models or other Markov random fields.
    \item \textbf{Removing sample splitting.} Investigating whether sample splitting can be avoided using tools from algorithmic stability or differential privacy, which provide alternative mechanisms for decoupling estimation and inference.
    \item \textbf{Probabilistic tools.} Developing new probabilistic tools including non-uniform Berry-Esseen bounds, moderate and large deviation inequalities for degenerate U-statistics, and non-central
t-statistics under weak moment conditions.
\end{itemize}
Each of these directions is independently motivated by the limitations and extensions identified in the present work. Any one of them in isolation, or any combination thereof, represents a research program of both theoretical and practical interest.

\section*{Acknowledgements}
The first author gratefully acknowledges Woonyoung Chang for the series of helpful discussions. We also thank Christof Sch\"{o}tz for providing us comments on the proof of Theorem~\ref{thm:mean-ci-width} in the initial manuscript and informing us of the application to Fr\'{e}chet means. 
\bibliographystyle{apalike}
\bibliography{ref.bib}

@article{wasserman2020universal,
   author = {Larry Wasserman and Aaditya Ramdas and Sivaraman Balakrishnan},
   doi = {10.1073/pnas.1922664117/-/DCSupplemental.y},
   number = {29},
   journal = {Proceedings of the National Academy of Sciences},
   pages = {16880-16890},
   title = {Universal inference},
   volume = {117},
   year = {2020},
}

@article{bahadur1956nonexistence,
  title={The nonexistence of certain statistical procedures in nonparametric problems},
  author={Bahadur, Raghu R and Savage, Leonard J},
  journal={The Annals of Mathematical Statistics},
  volume={27},
  number={4},
  pages={1115--1122},
  year={1956},
  publisher={JSTOR}
}

@inproceedings{tropp2016expected,
  title={The expected norm of a sum of independent random matrices: An elementary approach},
  author={Tropp, Joel A},
  booktitle={High Dimensional Probability VII: The Cargese Volume},
  pages={173--202},
  year={2016},
  organization={Springer}
}

@book{van2000asymptotic,
  title={Asymptotic statistics},
  author={van der Vaart, Aad W},
  volume={3},
  year={2000},
  publisher={Cambridge university press}
}

@article{drusvyatskiy2013tilt,
  title={Tilt stability, uniform quadratic growth, and strong metric regularity of the subdifferential},
  author={Drusvyatskiy, Dmitriy and Lewis, Adrian S},
  journal={SIAM Journal on Optimization},
  volume={23},
  number={1},
  pages={256--267},
  year={2013},
  publisher={SIAM}
}

@article{pinelis2010between,
  title={Between Chebyshev and Cantelli},
  author={Pinelis, Iosif},
  journal={arXiv preprint arXiv:1011.6065},
  year={2010}
}

@article{bentkus1996berry,
   author = {V Bentkus and M Bloznelis and F Götze},
   number = {3},
   journal = {Journal of Theoretical Probability},
   keywords = {Berry-Ess6en bound,Central Limit Theorem,Student's statistic,convergence rate,non-identically dis-tributed random variables,self-normalized sufias},
   pages = {765-796},
   title = {A Berry-Esséen Bound for Student's Statistic in the Non-I.l.D. Case},
   volume = {9},
   year = {1996},
}

@article{Bentkus1996,
   author = {V Bentkus and F Götze},
   number = {1},
   journal = {The Annals of Probability},
   month = {1},
   pages = {491-503},
   title = {The Berry-Esseen Bound for Student's Statistic},
   volume = {24},
   year = {1996},
}

@article{mendelson2020robust,
   author = {Shahar Mendelson and Nikita Zhivotovskiy},
   doi = {10.1214/19-AOS1862},
   issn = {21688966},
   number = {3},
   journal = {The Annals of Statistics},
   keywords = {Covariance estimation,Median of means,Robust estimation},
   month = {6},
   pages = {1648-1664},
   publisher = {Institute of Mathematical Statistics},
   title = {Robust covariance estimation under L4 - L2 norm equivalence},
   volume = {48},
   year = {2020},
}

@article{kim2020dimension,
   author = {Ilmun Kim and Aaditya Ramdas},
   number = {1},
   journal = {Bernoulli},
   month = {2},
   pages = {683-711},
   title = {Dimension-agnostic inference using cross U-statistics},
   volume = {30},
   url = {http://arxiv.org/abs/2011.05068},
   year = {2024},
}

@article{oliveira2016lower,
  title={The lower tail of random quadratic forms with applications to ordinary least squares},
  author={Oliveira, Roberto Imbuzeiro},
  journal={Probability Theory and Related Fields},
  volume={166},
  pages={1175--1194},
  year={2016},
  publisher={Springer}
}

@article{vogel2008universal,
  title={Universal confidence sets for solutions of optimization problems},
  author={Vogel, Silvia},
  journal={SIAM Journal on Optimization},
  volume={19},
  number={3},
  pages={1467--1488},
  year={2008},
  publisher={SIAM}
}

@book{ledoux2013probability,
  title={Probability in Banach Spaces: isoperimetry and processes},
  author={Ledoux, Michel and Talagrand, Michel},
  year={2013},
  publisher={Springer Science \& Business Media}
}

@book{gine2021mathematical,
  title={Mathematical foundations of infinite-dimensional statistical models},
  author={Gin{\'e}, Evarist and Nickl, Richard},
  year={2021},
  publisher={Cambridge university press}
}

@article{kim1990cube,
  title={Cube root asymptotics},
  author={Kim, Jeankyung and Pollard, David},
  journal={The Annals of Statistics},
  pages={191--219},
  year={1990},
  publisher={JSTOR},
    volume={18},
    number={1}
}

@article{cattaneo2020bootstrap,
  title={Bootstrap-based inference for cube root asymptotics},
  author={Cattaneo, Matias D and Jansson, Michael and Nagasawa, Kenichi},
  journal={Econometrica},
  volume={88},
  number={5},
  pages={2203--2219},
  year={2020},
  publisher={Wiley Online Library}
}

@article{mohammadi2005asymptotics,
   author = {Leila Mohammadi and Sara van de Geer},
   journal = {Journal of Machine Learning Research},
   keywords = {asymptotic distribution,classification theory,estimation error,nonparametric models,threshold-based classifiers},
   pages = {2027-2047},
   title = {Asymptotics in Empirical Risk Minimization Sara van de Geer},
   volume = {6},
   year = {2005},
}

@article{mukherjee2021optimal,
  title={Optimal linear discriminators for the discrete choice model in growing dimensions},
  author={Mukherjee, Debarghya and Banerjee, Moulinath and Ritov, Ya’acov},
  journal={The Annals of Statistics},
  volume={49},
  number={6},
  pages={3324--3357},
  year={2021},
  publisher={Institute of Mathematical Statistics}
}

@article{mukherjee2019nonstandard,
     author = {Mukherjee, Debarghya and Banerjee, Moulinath and Ritov, Ya'acov},
     title = {Non-Standard Asymptotics in High Dimensions: Manski's Maximum Score
  Estimator Revisited},
     journal={arXiv preprint arXiv:1903.10063},
year={2019}
}

@article{mammen1999smooth,
  title={Smooth discrimination analysis},
  author={Mammen, Enno and Tsybakov, Alexandre B},
  journal={The Annals of Statistics},
  volume={27},
  number={6},
  pages={1808--1829},
  year={1999},
  publisher={Institute of Mathematical Statistics}
}

@article{tsybakov2004optimal,
  title={Optimal aggregation of classifiers in statistical learning},
  author={Tsybakov, Alexander B},
  journal={The Annals of Statistics},
  volume={32},
  number={1},
  pages={135--166},
  year={2004},
  publisher={Institute of Mathematical Statistics}
}

@article{maurer2009empirical,
  title={Empirical bernstein bounds and sample variance penalization},
  author={Maurer, Andreas and Pontil, Massimiliano},
  journal={arXiv preprint arXiv:0907.3740},
  year={2009}
}

@article{bates2021distribution,
  title={Distribution-free, risk-controlling prediction sets},
  author={Bates, Stephen and Angelopoulos, Anastasios and Lei, Lihua and Malik, Jitendra and Jordan, Michael},
  journal={Journal of the ACM (JACM)},
  volume={68},
  number={6},
  pages={1--34},
  year={2021},
  publisher={ACM New York, NY}
}

@article{ramdas2023randomized,
  title={Randomized and Exchangeable Improvements of Markov's, Chebyshev's and Chernoff's Inequalities},
  author={Ramdas, Aaditya and Manole, Tudor},
  journal={arXiv preprint arXiv:2304.02611},
  year={2023}
}

@book{Boucheron2013Concentration,
    author = {Boucheron, Stéphane and Lugosi, Gábor and Massart, Pascal},
    title = "{Concentration Inequalities: A Nonasymptotic Theory of Independence}",
    publisher = {Oxford University Press},
    year = {2013},
    month = {02},
    isbn = {9780199535255},
    doi = {10.1093/acprof:oso/9780199535255.001.0001},
    url = {https://doi.org/10.1093/acprof:oso/9780199535255.001.0001},
}

@article{waudby2024estimating,
  title={Estimating means of bounded random variables by betting},
  author={Waudby-Smith, Ian and Ramdas, Aaditya},
  journal={Journal of the Royal Statistical Society Series B: Statistical Methodology},
  volume={86},
  number={1},
  pages={1--27},
  year={2024},
  publisher={Oxford University Press US}
}

@book{bose2018u,
  title={U-statistics, Mm-estimators and Resampling},
  author={Bose, Arup and Chatterjee, Snigdhansu},
  year={2018},
  publisher={Springer}
}

@article{choudhury1988generalized,
  title={Generalized order statistics, Bahadur representations, and sequential nonparametric fixed-width confidence intervals},
  author={Choudhury, J and Serfling, RJ},
  journal={Journal of Statistical Planning and Inference},
  volume={19},
  number={3},
  pages={269--282},
  year={1988},
  publisher={Elsevier}
}

@article{diciccio2022clt,
  title={CLT for U-statistics with growing dimension},
  author={DiCiccio, Cyrus and Romano, Joseph},
  journal={Statistica Sinica},
  volume={32},
  number={1},
  pages={323--344},
  year={2022},
  publisher={JSTOR}
}

@article{manski1975maximum,
  title={Maximum score estimation of the stochastic utility model of choice},
  author={Manski, Charles F},
  journal={Journal of econometrics},
  volume={3},
  number={3},
  pages={205--228},
  year={1975},
  publisher={Elsevier}
}

@article{manski1985semiparametric,
  title={Semiparametric analysis of discrete response: Asymptotic properties of the maximum score estimator},
  author={Manski, Charles F},
  journal={Journal of econometrics},
  volume={27},
  number={3},
  pages={313--333},
  year={1985},
  publisher={Elsevier}
}

@article{westling2020unified,
  title={A unified study of nonparametric inference for monotone functions},
  author={Westling, Ted and Carone, Marco},
  journal={The Annals of Statistics},
  volume={48},
  number={2},
  pages={1001},
  year={2020},
  publisher={NIH Public Access}
}

@article{grenander1956theory,
  title={On the theory of mortality measurement: part ii},
  author={Grenander, Ulf},
  journal={Scandinavian Actuarial Journal},
  volume={1956},
  number={2},
  pages={125--153},
  year={1956},
  publisher={Taylor \& Francis}
}

@article{cattaneo2023bootstrap,
   author = {Matias D. Cattaneo and Michael Jansson and Kenichi Nagasawa},
   number = {4},
   journal = {The Annals of Statistics},
   month = {8},
   pages = {1509-1533},
   title = {Bootstrap-Assisted Inference for Generalized Grenander-type Estimators},
   volume = {52},
   url = {http://arxiv.org/abs/2303.13598},
   year = {2024},
}

@article{dey2024anytime,
  title={Generalized universal inference on risk minimizers},
  author={Dey, Neil and Martin, Ryan and Williams, Jonathan P},
  journal={Journal of the Royal Statistical Society Series B: Statistical Methodology},
  pages={qkaf065},
  year={2025},
  publisher={Oxford University Press UK}
}

@article{van2011local,
  title={A local maximal inequality under uniform entropy},
  author={van der Vaart, Aad and Wellner, Jon A},
  journal={Electronic Journal of Statistics},
  volume={5},
  number={2011},
  pages={192},
  year={2011},
  publisher={NIH Public Access}
}

@book{van1996weak,
  title={Weak convergence and empirical processes},
  author={van der Vaart, Aad and Wellner, Jon A},
  year={1996},
  publisher={Springer}
}

@article{knight1998limiting,
   author = {Keith Knight},
   number = {2},
   journal = {The Annals of Statistics},
   pages = {755-770},
   title = {LIMITING DISTRIBUTIONS FOR L 1 REGRESSION ESTIMATORS UNDER GENERAL CONDITIONS},
   volume = {26},
   year = {1998},
}

@article{katz1963note,
  title={Note on the Berry-Esseen theorem},
  author={Katz, Melvin L},
  journal={The Annals of Mathematical Statistics},
  volume={34},
  number={3},
  pages={1107--1108},
  year={1963},
  publisher={Institute of Mathematical Statistics}
}

@article{gine2006concentration,
   author = {Evarist Giné and Vladimir Koltchinskii},
   doi = {10.1214/009117906000000070},
   issn = {00911798},
   number ={3},
   journal = {The Annals of Probability},
   keywords = {Classification,Concentration inequalities,Localized sup-norms,Moment bounds for empirical processes,Nonparametric regression,Ratio limit theorems,Ratio type empirical processes,VC type classes,Weighted central limit theorems},
   month = {5},
   pages = {1143-1216},
   title = {Concentration inequalities and asymptotic results for ratio type empirical processes},
   volume = {34},
   year = {2006},
}

@article{patil2022mitigating,
  title={Mitigating multiple descents: A model-agnostic framework for risk monotonization},
  author={Patil, Pratik and Kuchibhotla, Arun Kumar and Wei, Yuting and Rinaldo, Alessandro},
  journal={arXiv preprint arXiv:2205.12937},
  year={2022}
}

@article{smirnov1952limit,
  title={Limit distributions for the terms of a variational series},
  author={Smirnov, Nikolai Vasil'evich},
  journal={American Mathematical Society Translations},
  volume={6},
  year={1952}
}

@article{delgado2001subsampling,
  title={Subsampling inference in cube root asymptotics with an application to Manski’s maximum score estimator},
  author={Delgado, Miguel A and Rodr{\i}guez-Poo, Juan M and Wolf, Michael},
  journal={Economics Letters},
  volume={73},
  number={2},
  pages={241--250},
  year={2001},
  publisher={Elsevier}
}

@article{horowitz1992smoothed,
   author = {Joel L Horowitz},
   number = {3},
   journal = {Econometrica},
   pages = {505-531},
   title = {A Smoothed Maximum Score Estimator for the Binary Response Model},
   volume = {60},
   year = {1992},
}

@article{sen2010inconsistency,
   author = {Bodhisattva Sen and Moulinath Banerjee and Michael Woodroofe},
   doi = {10.1214/09-AOS777},
   issn = {00905364},
   number = {4},
   journal = {The Annals of Statistics},
   keywords = {Decreasing density,Empirical distribution function,Least concave majorant,M out of n bootstrap,Nonparametric maximum likelihood estimate,Smoothed bootstrap},
   month = {8},
   pages = {1953-1977},
   title = {Inconsistency of bootstrap: The Grenander estimator},
   volume = {38},
   year = {2010},
}

@article{chang2023inference,
  title={Inference for Projection Parameters in Linear Regression: beyond {$d=o(n^{1/2})$}},
  author={Chang, Woonyoung and Kuchibhotla, Arun Kumar and Rinaldo, Alessandro},
  journal={arXiv preprint arXiv:2307.00795},
  year={2023}
}

@article{cattaneo2019two,
  title={Two-step estimation and inference with possibly many included covariates},
  author={Cattaneo, Matias D and Jansson, Michael and Ma, Xinwei},
  journal={The Review of Economic Studies},
  volume={86},
  number={3},
  pages={1095--1122},
  year={2019},
  publisher={Oxford University Press}
}

@article{mammen1993bootstrap,
  title={Bootstrap and wild bootstrap for high dimensional linear models},
  author={Mammen, Enno},
  journal={The Annals of Statistics},
  volume={21},
  number={1},
  pages={255--285},
  year={1993},
  publisher={Institute of Mathematical Statistics}
}

@article{pflug1995asymptotic,
  title={Asymptotic stochastic programs},
  author={Pflug, Georg Ch},
  journal={Mathematics of Operations Research},
  volume={20},
  number={4},
  pages={769--789},
  year={1995},
  publisher={INFORMS}
}

@article{pflug2003stochastic,
  title={Stochastic optimization and statistical inference},
  author={Pflug, G Ch},
  journal={Handbooks in operations research and management science},
  volume={10},
  pages={427--482},
  year={2003},
  publisher={Elsevier}
}

@article{wang1996asymptotics,
  title={Asymptotics of least-squares estimators for constrained nonlinear regression},
  author={Wang, Jinde},
  journal={The Annals of Statistics},
  volume={24},
  number={3},
  pages={1316--1326},
  year={1996},
  publisher={Institute of Mathematical Statistics}
}

@article{geyer1994asymptotics,
   author = {Charles J Geyer},
   number = {4},
   journal = {The Annals of Statistics},
   pages = {1993-2010},
   title = {On the Asymptotics of Constrained 
M-Estimation},
   volume = {22},
   year = {1994},
}

@article{royset2020variational,
  title={Variational analysis of constrained M-estimators},
  author={Royset, Johannes O and Wets, Roger JB},
  journal={The Annals of Statistics},
  volume={48},
  number={5},
  pages={2759--2790},
  year={2020},
  publisher={JSTOR}
}

@article{li2015geometric,
  title={A geometric view on constrained M-estimators},
  author={Li, Yen-Huan and Hsieh, Ya-Ping and Zerbib, Nissim and Cevher, Volkan},
  journal={arXiv preprint arXiv:1506.08163},
  year={2015}
}

@article{candes2007dantzig,
   author = {Emmanuel Candes and Terence Tao},
   doi = {10.1214/009053606000001523},
   issn = {00905364},
   number = {6},
   journal = {The Annals of Statistics},
   keywords = {Geometry in high dimensions,Ideal estimation,Linear programming,Model selection,Oracle inequalities,Random matrices,Restricted orthonormality,Sparse solutions to underdetermined systems,Statistical linear model,ℓ1- minimization},
   month = {12},
   pages = {2313-2351},
   title = {The Dantzig selector: Statistical estimation when p is much larger than n},
   volume = {35},
   year = {2007},
}

@article{vogel2008confidence,
  title={Confidence sets and convergence of random functions},
  author={Vogel, Silvia},
  journal={Festschrift in Celebration of Prof. Dr. Wilfried Grecksch’s 60th Birthday},
  year={2008},
  publisher={Shaker Aachen}
}

@inproceedings{vogel2019universal,
  title={Universal confidence sets for solutions of stochastic optimization problems—a contribution to quantification of uncertainty},
  author={Vogel, Silvia},
  booktitle={Stochastic Models, Statistics and Their Applications: Dresden, Germany, March 2019 14},
  pages={207--218},
  year={2019},
  organization={Springer}
}

@book{vogel2017confidence,
  title={Confidence sets in decision problems with kernel density estimators},
  author={Vogel, Silvia and Seeger, Stefan},
  year={2017},
  publisher={Universit{\"a}tsbibliothek Ilmenau}
}

@article{patra2018consistent,
  title={A consistent bootstrap procedure for the maximum score estimator},
  author={Patra, Rohit Kumar and Seijo, Emilio and Sen, Bodhisattva},
  journal={Journal of Econometrics},
  volume={205},
  number={2},
  pages={488--507},
  year={2018},
  publisher={Elsevier}
}

@incollection{gine2000exponential,
  title={Exponential and moment inequalities for U-statistics},
  author={Gin{\'e}, Evarist and Lata{\l}a, Rafa{\l} and Zinn, Joel},
  booktitle={High Dimensional Probability II},
  pages={13--38},
  year={2000},
  publisher={Springer}
}

@article{kuchibhotla2022least,
  title={On least squares estimation under heteroscedastic and heavy-tailed errors},
  author={Kuchibhotla, Arun K and Patra, Rohit K},
  journal={The Annals of Statistics},
  volume={50},
  number={1},
  pages={277--302},
  year={2022},
  publisher={Institute of Mathematical Statistics}
}

@article{mourtada2022distribution,
  title={Distribution-free robust linear regression},
  author={Mourtada, Jaouad and Va{\v{s}}kevi{\v{c}}ius, Tomas and Zhivotovskiy, Nikita},
  journal={Mathematical Statistics and Learning},
  volume={4},
  number={3},
  pages={253--292},
  year={2022}
}

@article{minsker2018sub,
  title={Sub-Gaussian estimators of the mean of a random matrix with heavy-tailed entries},
  author={Minsker, Stanislav},
  journal={The Annals of Statistics},
  volume={46},
  number={6A},
  pages={2871--2903},
  year={2018},
  publisher={JSTOR}
}

@article{catoni2016pac,
  title={PAC-Bayesian bounds for the Gram matrix and least squares regression with a random design},
  author={Catoni, Olivier},
  journal={arXiv preprint arXiv:1603.05229},
  year={2016}
}

@inproceedings{lee2022optimal,
  title={Optimal sub-gaussian mean estimation in very high dimensions},
  author={Lee, Jasper CH and Valiant, Paul},
  booktitle={13th Innovations in Theoretical Computer Science Conference (ITCS 2022)},
  year={2022},
  organization={Schloss-Dagstuhl-Leibniz Zentrum f{\"u}r Informatik}
}

@article{mourtada2022exact,
  title={Exact minimax risk for linear least squares, and the lower tail of sample covariance matrices},
  author={Mourtada, Jaouad},
  journal={The Annals of Statistics},
  volume={50},
  number={4},
  pages={2157--2178},
  year={2022},
  publisher={Institute of Mathematical Statistics}
}

@article{lugosi2019mean,
  title={Mean estimation and regression under heavy-tailed distributions: A survey},
  author={Lugosi, G{\'a}bor and Mendelson, Shahar},
  journal={Foundations of Computational Mathematics},
  volume={19},
  number={5},
  pages={1145--1190},
  year={2019},
  publisher={Springer}
}

@article{chang2024confidence,
  title={Assumption-Lean Honest Inference for $ Z $-functionals},
  author={Chang, Woonyoung and Kuchibhotla, Arun Kumar},
  journal={arXiv preprint arXiv:2407.12278},
  year={2024}
}

@article{li2024inference,
  title={Inference for Constrained Extremum Estimators},
  author={Li, Jessie},
  year={2024}
}

@article{andrews2010asymptotic,
  title={Asymptotic size and a problem with subsampling and with the m out of n bootstrap},
  author={Andrews, Donald WK and Guggenberger, Patrik},
  journal={Econometric Theory},
  volume={26},
  number={2},
  pages={426--468},
  year={2010},
  publisher={Cambridge University Press}
}

@article{Shapiro1989,
   author = {Alexander Shapiro},
   number = {2},
   journal = {The Annals of Statistics},
   pages = {841-858},
   title = {Asymptotic Properties of Statistical Estimators in Stochastic Programming},
   volume = {17},
   year = {1989},
}

@article{Andrews2000,
   author = {Donald W Andrews},
   isbn = {202422:18:07},
   number = {2},
   journal = {The Econometric Society},
   pages = {399-405},
   title = {Inconsistency of the Bootstrap When a Parameter is on the Boundary of the Parameter Space},
   volume = {68},
   url = {https://www.jstor.org/stable/2999432?seq=1&cid=pdf-},
   year = {2000},
}

@article{Li2024,
   author = {Jessie Li},
   keywords = {Constrained GMM,Constrained M-estimator,Non-standard asymptotics},
   title = {Inference for Constrained Extremum Estimators},
   year = {2024},
   journal={Unpublished manuscript}
}

@article{Hsieh2022,
   author = {Yu Wei Hsieh and Xiaoxia Shi and Matthew Shum},
   doi = {10.1016/j.jeconom.2021.06.001},
   issn = {18726895},
   number = {2},
   journal = {Journal of Econometrics},
   keywords = {Linear complementarity constraints,Moment inequalities,Portfolio selection,Sub-vector inference},
   month = {2},
   pages = {248-268},
   publisher = {Elsevier Ltd},
   title = {Inference on estimators defined by mathematical programming},
   volume = {226},
   year = {2022},
}

@article{Ketz2018,
   author = {Philipp Ketz},
   doi = {10.1016/j.jeconom.2018.08.003},
   issn = {18726895},
   number = {2},
   journal = {Journal of Econometrics},
   keywords = {Admissibility,Asymptotic normality,Boundary,Random coefficients},
   month = {12},
   pages = {285-306},
   publisher = {Elsevier Ltd},
   title = {Subvector inference when the true parameter vector may be near or at the boundary},
   volume = {207},
   year = {2018},
}

@article{Horowitz2019,
   author = {Joel L Horowitz and Sokbae Lee},
   keywords = {C61,finite-sample bounds JEL Classification: C12,normal approximation,partial identification},
   title = {NON-ASYMPTOTIC INFERENCE IN A CLASS OF OPTIMIZATION PROBLEMS},
   year = {2019},
journal={Unpublished manuscript}
}

@article{Robins2006,
   author = {James Robins and Aad van der Vaart},
   doi = {10.1214/009053605000000877},
   issn = {00905364},
   number = {1},
   journal = {The Annals of Statistics},
   keywords = {Adaptation,Density estimation,Regression,Testing rate,White noise model},
   month = {2},
   pages = {229-253},
   title = {Adaptive nonparametric confidence sets},
   volume = {34},
   year = {2006},
}

@article{park2023robust,
    author = {Park, Beomjo and Balakrishnan, Sivaraman and Wasserman, Larry},
    title = {Robust Universal Inference For Misspecified Models},
    journal = {Biometrika},
    pages = {asaf070},
    year = {2025},
    month = {11},
    issn = {1464-3510},
    doi = {10.1093/biomet/asaf070},
    url = {https://doi.org/10.1093/biomet/asaf070},
    eprint = {https://academic.oup.com/biomet/advance-article-pdf/doi/10.1093/biomet/asaf070/65277436/asaf070.pdf},
}

@article{hao2019bootstrapping,
  title={Bootstrapping upper confidence bound},
  author={Hao, Botao and Abbasi Yadkori, Yasin and Wen, Zheng and Cheng, Guang},
  journal={Advances in neural information processing systems},
  volume={32},
  year={2019}
}

@article{zhang2024winners,
  title={Winners with confidence: Discrete argmin inference with an application to model selection},
  author={Zhang, Tianyu and Lee, Hao and Lei, Jing},
  journal={arXiv preprint arXiv:2408.02060},
  year={2024}
}

@article{Schotz2019convergence,
   author = {Christof Schötz},
   doi = {10.1214/19-EJS1618},
   issn = {19357524},
   issue = {2},
   journal = {Electronic Journal of Statistics},
   pages = {4280-4345},
   publisher = {Institute of Mathematical Statistics},
   title = {Convergence rates for the generalized Fréchet mean via the quadruple inequality},
   volume = {13},
   year = {2019},
}

@article{Bentkus2007Limiting,
   author = {Vidmantas Bentkus and Bing Yi Jing and Qi Man Shao and Wang Zhou},
   doi = {10.3150/07-BEJ5073},
   issn = {13507265},
   issue = {2},
   journal = {Bernoulli},
   keywords = {Domain of attraction,Limit theorems,Non-central t-statistic,Power of t-test},
   pages = {346-364},
   title = {Limiting distributions of the non-central t-statistic and their applications to the power of t-tests under non-normality},
   volume = {13},
   year = {2007},
}

@article{scheffe1945non,
  title={Non-parametric estimation. I. Validation of order statistics},
  author={Scheff\'{e}, Henry and Tukey, John W},
  journal={The Annals of Mathematical Statistics},
  volume={16},
  number={2},
  pages={187--192},
  year={1945},
  publisher={Institute of Mathematical Statistics}
}

@article {Li1989,
    AUTHOR = {Li, Ker-Chau},
     TITLE = {Honest confidence regions for nonparametric regression},
   JOURNAL = {Ann. Statist.},
  FJOURNAL = {The Annals of Statistics},
    VOLUME = {17},
      YEAR = {1989},
    NUMBER = {3},
     PAGES = {1001--1008},
      ISSN = {0090-5364,2168-8966},
   MRCLASS = {62G15 (62E20)},
  MRNUMBER = {1015135},
MRREVIEWER = {M.\ Hu\v skov\'a},
       DOI = {10.1214/aos/1176347253},
       URL = {https://doi.org/10.1214/aos/1176347253},
}

@article {beran1996,
    AUTHOR = {Beran, Rudolf},
     TITLE = {Confidence sets centered at {$C_p$}-estimators},
   JOURNAL = {Ann. Inst. Statist. Math.},
  FJOURNAL = {Annals of the Institute of Statistical Mathematics},
    VOLUME = {48},
      YEAR = {1996},
    NUMBER = {1},
     PAGES = {1--15},
      ISSN = {0020-3157,1572-9052},
   MRCLASS = {62H12 (62G15 62J07)},
  MRNUMBER = {1392512},
MRREVIEWER = {B.\ M.\ P\"otscher},
       DOI = {10.1007/BF00049285},
       URL = {https://doi.org/10.1007/BF00049285},
}

@article {Beran1998,
    AUTHOR = {Beran, Rudolf and D\"umbgen, Lutz},
     TITLE = {Modulation of estimators and confidence sets},
   JOURNAL = {Ann. Statist.},
  FJOURNAL = {The Annals of Statistics},
    VOLUME = {26},
      YEAR = {1998},
    NUMBER = {5},
     PAGES = {1826--1856},
      ISSN = {0090-5364,2168-8966},
   MRCLASS = {62H12 (62G05 62G15)},
  MRNUMBER = {1673280},
MRREVIEWER = {Ra\'ul\ Pedro\ Mentz},
       DOI = {10.1214/aos/1024691359},
       URL = {https://doi.org/10.1214/aos/1024691359},
}

@article {Stein1981,
    AUTHOR = {Stein, Charles M.},
     TITLE = {Estimation of the mean of a multivariate normal distribution},
   JOURNAL = {Ann. Statist.},
  FJOURNAL = {The Annals of Statistics},
    VOLUME = {9},
      YEAR = {1981},
    NUMBER = {6},
     PAGES = {1135--1151},
      ISSN = {0090-5364,2168-8966},
   MRCLASS = {62F15 (62F10)},
  MRNUMBER = {630098},
MRREVIEWER = {James\ Berger},
       URL =
              {http://links.jstor.org/sici?sici=0090-5364(198111)9:6<1135:EOTMOA>2.0.CO;2-5&origin=MSN},
}

@article {Carpentier2013,
    AUTHOR = {Carpentier, Alexandra},
     TITLE = {Honest and adaptive confidence sets in {$L_p$}},
   JOURNAL = {Electron. J. Stat.},
  FJOURNAL = {Electronic Journal of Statistics},
    VOLUME = {7},
      YEAR = {2013},
     PAGES = {2875--2923},
      ISSN = {1935-7524},
   MRCLASS = {62G15 (62G08 62G10)},
  MRNUMBER = {3148371},
MRREVIEWER = {Johannes\ Schmidt-Hieber},
       DOI = {10.1214/13-EJS867},
       URL = {https://doi.org/10.1214/13-EJS867},
}

@article {Hoffmann2011,
    AUTHOR = {Hoffmann, Marc and Nickl, Richard},
     TITLE = {On adaptive inference and confidence bands},
   JOURNAL = {Ann. Statist.},
  FJOURNAL = {The Annals of Statistics},
    VOLUME = {39},
      YEAR = {2011},
    NUMBER = {5},
     PAGES = {2383--2409},
      ISSN = {0090-5364,2168-8966},
   MRCLASS = {62G15 (62G05 62G10)},
  MRNUMBER = {2906872},
MRREVIEWER = {Snigdhansu\ B.\ Chatterjee},
       DOI = {10.1214/11-AOS903},
       URL = {https://doi.org/10.1214/11-AOS903},
}

@article{pflug1991asymptotic,
  title={Asymptotic dominance and confidence for solutions of stochastic programs},
  author={Pflug, Georg Ch},
  year={1991},
  publisher={WP-91-048}
}

@article{geyer1996asymptotics,
  title={On the asymptotics of convex stochastic optimization},
  author={Geyer, Charles J},
    year={1996},
   journal={Unpublished manuscript}
}

@article{dumbgen2003optimal,
  title={Optimal confidence bands for shape-restricted curves},
  author={D{\"u}mbgen, Lutz},
  journal={Bernoulli},
  volume={9},
  number={3},
  pages={423--449},
  year={2003},
  publisher={Bernoulli Society for Mathematical Statistics and Probability}
}

@article{Patschkowski2019,
   author = {Tim Patschkowski and Angelika Rohde},
   doi = {10.1214/18-AOS1690},
   issn = {00905364},
   issue = {1},
   journal = {Annals of Statistics},
   keywords = {Confidence bands in density estimation,Local adaptivity},
   month = {2},
   pages = {349-381},
   publisher = {Institute of Mathematical Statistics},
   title = {Locally adaptive confidence bands},
   volume = {47},
   year = {2019}
}

@article{Yang2019,
   author = {Fan Yang and Rina Foygel Barber},
   doi = {10.1214/18-EJS1520},
   issn = {19357524},
   issue = {1},
   journal = {Electronic Journal of Statistics},
   keywords = {Contraction,Convergence rates,Data-adaptive band,Density estimation,Isotonic regression},
   pages = {646-677},
   publisher = {Institute of Mathematical Statistics},
   title = {Contraction and uniform convergence of isotonic regression},
   volume = {13},
   year = {2019}
}

@article{Bellec2021,
   author = {Pierre C. Bellec},
   doi = {10.3150/20-BEJ1223},
   issn = {13507265},
   issue = {1},
   journal = {Bernoulli},
   keywords = {Adaptive confidence set,Convex regresion,Isotonic regression,Piecewise affine,Piecewise constant,Shape constraints,Unimodal regression},
   month = {2},
   pages = {66-92},
   publisher = {International Statistical Institute},
   title = {Adaptive confidence sets in shape restricted regression},
   volume = {27},
   year = {2021}
}

@article{guigues2017non,
  title={Non-asymptotic confidence bounds for the optimal value of a stochastic program},
  author={Guigues, Vincent and Juditsky, Anatoli and Nemirovski, Arkadi},
  journal={Optimization Methods and Software},
  volume={32},
  number={5},
  pages={1033--1058},
  year={2017},
  publisher={Taylor \& Francis}
}

@article{chakravarti2019gaussian,
  title={Gaussian mixture clustering using relative tests of fit},
  author={Chakravarti, Purvasha and Balakrishnan, Sivaraman and Wasserman, Larry},
  journal={arXiv preprint arXiv:1910.02566},
  year={2019}
}

@article{Ptscher2002Lower,
   author = {Benedikt M Pötscher},
   number = {3},
   journal = {Econometrica},
   pages = {1035-1065},
   title = {Lower Risk Bounds and Properties of Confidence Sets for Ill-Posed Estimation Problems with Applications to Spectral Density and Persistence Estimation, Unit Roots, and Estimation of Long Memory Parameters},
   volume = {70},
   year = {2002}
}

@article{chen2001non,
  title={A non-uniform Berry--Esseen bound via Stein's method},
  author={Chen, Louis HY and Shao, Qi-Man},
  journal={Probability theory and related fields},
  volume={120},
  number={2},
  pages={236--254},
  year={2001},
  publisher={Springer}
}

@article{bradley2005basic,
  title={Basic properties of strong mixing conditions. A survey and some open questions},
  author={Bradley, Richard C},
  year={2005}
}

@book{rio2017asymptotic,
  title={Asymptotic theory of weakly dependent random processes},
  author={Rio, Emmanuel and others},
  volume={80},
  year={2017},
  publisher={Springer}
}

@book{berbee1979random,
  title={Random Walks with Stationary Increments and Renewal Theory},
  author={Berbee, H.C.P.},
  isbn={9789061961826},
  lccn={80456496},
  series={Mathematical Centre tracts},
  url={https://books.google.com/books?id=ZxjvAAAAMAAJ},
  year={1979},
  publisher={Mathematisch Centrum}
}

@article{lanke1974interval,
  title={Interval estimation of a median},
  author={Lanke, Jan},
  journal={Scandinavian Journal of Statistics},
  pages={28--32},
  year={1974},
  publisher={JSTOR}
}

@article{gasparin2024merging,
  title={Merging uncertainty sets via majority vote},
  author={Gasparin, Matteo and Ramdas, Aaditya},
  journal={arXiv preprint arXiv:2401.09379},
  year={2024}
}

@misc{chernozhukov2018double,
  title={Double/debiased machine learning for treatment and structural parameters},
  author={Chernozhukov, Victor and Chetverikov, Denis and Demirer, Mert and Duflo, Esther and Hansen, Christian and Newey, Whitney and Robins, James},
  year={2018},
  publisher={Oxford University Press Oxford, UK}
}

@article{foster2023orthogonal,
  title={Orthogonal statistical learning},
  author={Foster, Dylan J and Syrgkanis, Vasilis},
  journal={The Annals of Statistics},
  volume={51},
  number={3},
  pages={879--908},
  year={2023},
  publisher={Institute of Mathematical Statistics}
}

@article{hormann2009berry,
  title={Berry-Esseen bounds for econometric time series},
  author={H{\"o}rmann, Siegfried},
  journal={ALEA Lat. Am. J. Probab. Math. Stat},
  volume={6},
  pages={377--397},
  year={2009}
}

@article{haeusler1988nonuniform,
  title={A nonuniform bound on the rate of convergence in the martingale central limit theorem},
  author={Haeusler, Erich and Joos, Konrad},
  journal={The Annals of Probability},
  pages={1699--1720},
  year={1988},
  publisher={JSTOR}
}

@article{zhao1983non,
  title={NON-UNIFORM CONVERGENCE-RATES FOR DISTRIBUTIONS OF U-STATISTICS},
  author={Zhao, LC and Chen, XR},
  journal={Scientia Sinica Series A-mathmatical physical Astronomical \& Technical Sciences},
  volume={26},
  number={8},
  pages={795--810},
  year={1983},
  publisher={SCIENCE PRESS 16 DONGHUANGCHENGGEN NORTH ST, BEIJING 100717, PEOPLES R CHINA}
}

@article{chen2007normal,
author = {Louis H.Y. Chen and Qi-Man Shao},
title = {{Normal approximation for nonlinear statistics using a concentration inequality approach}},
volume = {13},
journal = {Bernoulli},
number = {2},
publisher = {Bernoulli Society for Mathematical Statistics and Probability},
pages = {581 -- 599},
keywords = {concentration inequality approach, L-statistics, multisample U-statistics, nonlinear statistics, non-uniform Berry–Esseen bound, Normal approximation, uniform Berry–Esseen bound, U-statistics},
year = {2007},
doi = {10.3150/07-BEJ5164},
URL = {https://doi.org/10.3150/07-BEJ5164}
}

@article{haeusler1988rate,
  title={On the rate of convergence in the central limit theorem for martingales with discrete and continuous time},
  author={Haeusler, Erich},
  journal={The Annals of Probability},
  pages={275--299},
  year={1988},
  publisher={JSTOR}
}

@article{fan2017non,
  title={Non-uniform Berry--Esseen bounds for martingales with applications to statistical estimation},
  author={Fan, Xiequan and Grama, Ion and Liu, Quansheng},
  journal={Statistics},
  volume={51},
  number={1},
  pages={105--122},
  year={2017},
  publisher={Taylor \& Francis}
}

@article{hoeffding1994central,
  title={The central limit theorem for dependent random variables},
  author={Hoeffding, Wassily and Robbins, Herbert},
  booktitle={Duke Mathematical Journal},
  volume={15},
  number={3},
  pages={773--780},
  year={1948},
  publisher={Springer}
}

@book{blum1963strong,
  title={On the strong law of large numbers for a class of stochastic processes},
  author={Blum, JR and Hanson, David Lee and Koopmans, Lambert Herman},
  year={1963},
  publisher={Sandia Corporation}
}

@article{ibragimov1962some,
  title={Some limit theorems for stationary processes},
  author={Ibragimov, Ildar A},
  journal={Theory of Probability \& Its Applications},
  volume={7},
  number={4},
  pages={349--382},
  year={1962},
  publisher={SIAM}
}

@misc{kiessler2009weak,
  title={Weak Dependence: With Examples and Applications},
  author={Kiessler, Peter},
  year={2009},
  publisher={JSTOR}
}

@article{takatsu2025precise,
  title={On the Precise Asymptotics of Universal Inference},
  author={Takatsu, Kenta},
  journal={arXiv preprint arXiv:2503.14717},
  year={2025}
}

@article{chen2004normal,
author = {Louis H. Y. Chen and Qi-Man Shao},
title = {{Normal approximation under local dependence}},
volume = {32},
journal = {The Annals of Probability},
number = {3},
publisher = {Institute of Mathematical Statistics},
pages = {1985 -- 2028},
keywords = {concentration inequality, local dependence, nonuniform Berry–Esseen bound, Normal approximation, Random field, Stein’s method, uniform Berry–Esseen bound},
year = {2004},
doi = {10.1214/009117904000000450},
URL = {https://doi.org/10.1214/009117904000000450}
}

@article{liu2023wasserstein,
  title={Wasserstein-p bounds in the central limit theorem under local dependence},
  author={Liu, Tianle and Austern, Morgane},
  journal={Electronic Journal of Probability},
  volume={28},
  pages={1--47},
  year={2023},
  publisher={The Institute of Mathematical Statistics and the Bernoulli Society}
}

@article{hafouta2022non,
  title={Non-uniform Berry-Esseen theorem and Edgeworth expansions with applications to transport distances for weakly dependent random variables},
  author={Hafouta, Yeor},
  journal={arXiv preprint arXiv:2210.07204},
  year={2022}
}

@article{kuchibhotla2024hulc,
  title={The HulC: confidence regions from convex hulls},
  author={Kuchibhotla, Arun Kumar and Balakrishnan, Sivaraman and Wasserman, Larry},
  journal={Journal of the Royal Statistical Society Series B: Statistical Methodology},
  volume={86},
  number={3},
  pages={586--622},
  year={2024},
  publisher={Oxford University Press UK}
}

@article{wang2009relative,
  title={Relative errors in central limit theorems for Student's t statistic, with applications},
  author={Wang, Qiying and Hall, Peter},
  journal={Statistica Sinica},
  pages={343--354},
  year={2009},
  publisher={JSTOR}
}

@article{shao2016cramer,
author = {Qi-Man Shao and Wen-Xin Zhou},
title = {{Cramér type moderate deviation theorems for self-normalized processes}},
volume = {22},
journal = {Bernoulli},
number = {4},
publisher = {Bernoulli Society for Mathematical Statistics and Probability},
pages = {2029 -- 2079},
keywords = {$U$-statistics, Moderate deviation, nonlinear statistics, relative error, self-normalized processes, Studentized statistics},
year = {2016},
doi = {10.3150/15-BEJ719},
URL = {https://doi.org/10.3150/15-BEJ719}
}

@article{shao1999cramer,
  title={A Cram{\'e}r type large deviation result for Student's t-statistic},
  author={Shao, Qi-Man},
  journal={Journal of Theoretical Probability},
  volume={12},
  number={2},
  pages={385--398},
  year={1999},
  publisher={Springer}
}

@article{jing2003self,
  title={Self-normalized Cram{\'e}r-type large deviations for independent random variables},
  author={Jing, Bing-Yi and Shao, Qi-Man and Wang, Qiying},
  journal={The Annals of probability},
  volume={31},
  number={4},
  pages={2167--2215},
  year={2003},
  publisher={Institute of Mathematical Statistics}
}

@article{wang2005limit,
  title={Limit theorems for self-normalized large deviation},
  author={Wang, Qiying},
volume = {10},
journal = {Electronic Journal of Probability},
number = {none},
publisher = {Institute of Mathematical Statistics and Bernoulli Society},
pages = {1260 -- 1285},
keywords = {Cram'er large deviation, limit theorem},
year = {2005},
doi = {10.1214/EJP.v10-289},
URL = {https://doi.org/10.1214/EJP.v10-289}
}

@article{fan2018berry,
  title={Berry--Esseen bounds for self-normalized martingales},
  author={Fan, Xiequan and Shao, Qi-Man},
  journal={Communications in Mathematics and Statistics},
  volume={6},
  number={1},
  pages={13--27},
  year={2018},
  publisher={Springer}
}

@article{cousins2018gaussian,
  title={Gaussian Cooling and O\^{}*(n\^{}3) Algorithms for Volume and Gaussian Volume},
  author={Cousins, Ben and Vempala, Santosh},
  journal={SIAM Journal on Computing},
  volume={47},
  number={3},
  pages={1237--1273},
  year={2018},
  publisher={SIAM}
}

@article{kim2025locally,
  title={Locally minimax optimal confidence sets for the best model},
  author={Kim, Ilmun and Ramdas, Aaditya},
  journal={arXiv preprint arXiv:2503.21639},
  year={2025}
}

@article{bertail1999subsampling,
  title={On subsampling estimators with unknown rate of convergence},
  author={Bertail, Patrice and Politis, Dimitris N and Romano, Joseph P},
  journal={Journal of the American Statistical Association},
  volume={94},
  number={446},
  pages={569--579},
  year={1999},
  publisher={Taylor \& Francis}
}

@article{wang2002non,
  title={Non-uniform Berry-Ess{\'e}en bound for U-statistics},
  author={Wang, Qiying},
  journal={Statistica Sinica},
  pages={1157--1169},
  year={2002},
  publisher={JSTOR}
}

@article{cessie1992ridge,
  title={Ridge estimators in logistic regression},
  author={Cessie, S Le and Houwelingen, JC Van},
  journal={Journal of the Royal Statistical Society Series C: Applied Statistics},
  volume={41},
  number={1},
  pages={191--201},
  year={1992},
  publisher={Oxford University Press}
}

@article{robinson2005self,
  title={On the self-normalized Cram{\'e}r-type large deviation},
  author={Robinson, John and Wang, Qiying},
  journal={Journal of Theoretical Probability},
  volume={18},
  number={4},
  pages={891--909},
  year={2005}
}

@article{bentkus1997edgeworth,
  title={An Edgeworth expansion for symmetric statistics},
  author={Bentkus, Vidmantas and G{\"o}tze, Friedrich and van Zwet, Willem R},
  journal={The Annals of Statistics},
  volume={25},
  number={2},
  pages={851--896},
  year={1997},
  publisher={Institute of Mathematical Statistics}
}

@article{wu1986jackknife,
  title={Jackknife, bootstrap and other resampling methods in regression analysis},
  author={Wu, Chien-Fu Jeff},
  journal={the Annals of Statistics},
  volume={14},
  number={4},
  pages={1261--1295},
  year={1986},
  publisher={Institute of Mathematical Statistics}
}

@article{efron1979bootstrap,
author = {B. Efron},
title = {{Bootstrap Methods: Another Look at the Jackknife}},
volume = {7},
journal = {The Annals of Statistics},
number = {1},
publisher = {Institute of Mathematical Statistics},
pages = {1 -- 26},
keywords = {bootstrap, discriminant analysis, error rate estimation, jackknife, Nonlinear regression, nonparametric variance estimation, Resampling, subsample values},
year = {1979},
doi = {10.1214/aos/1176344552},
URL = {https://doi.org/10.1214/aos/1176344552}
}

@article{chanda1974strong,
  title={Strong mixing properties of linear stochastic processes},
  author={Chanda, Kamal C},
  journal={Journal of Applied Probability},
  volume={11},
  number={2},
  pages={401--408},
  year={1974},
  publisher={Cambridge University Press}
}

@article{andrews1984non,
  title={Non-strong mixing autoregressive processes},
  author={Andrews, Donald WK},
  journal={Journal of Applied Probability},
  volume={21},
  number={4},
  pages={930--934},
  year={1984},
  publisher={Cambridge University Press}
}

@article{heinrichs2026note,
  title={A Note on Physical Dependence and Mixing Conditions for Triangular Arrays},
  author={Heinrichs, Florian},
  journal={arXiv preprint arXiv:2602.04250},
  year={2026}
}

@article{wu2005nonlinear,
  title={Nonlinear system theory: Another look at dependence},
  author={Wu, Wei Biao},
  journal={Proceedings of the National Academy of Sciences},
  volume={102},
  number={40},
  pages={14150--14154},
  year={2005},
  publisher={National Academy of Sciences}
}

@article{Wu20014martingale,
author = {Wei Biao Wu and Michael Woodroofe},
title = {{Martingale approximations for sums of stationary processes}},
volume = {32},
journal = {The Annals of Probability},
number = {2},
publisher = {Institute of Mathematical Statistics},
pages = {1674 -- 1690},
keywords = {central limit theorem, invariance principle, linear process, Markov chain, martingale, Poisson equation, stationary process},
year = {2004},
doi = {10.1214/009117904000000351},
URL = {https://doi.org/10.1214/009117904000000351}
}

@article{gine1997student,
  title={When is the Student $ t $-statistic asymptotically standard normal?},
  author={Gin{\'e}, Evarist and G{\"o}tze, Friedrich and Mason, David M},
  journal={The Annals of Probability},
  volume={25},
  number={3},
  pages={1514--1531},
  year={1997},
  publisher={Institute of Mathematical Statistics}
}

@article{kuchibhotla2022moving,
  title={Moving beyond sub-Gaussianity in high-dimensional statistics: Applications in covariance estimation and linear regression},
  author={Kuchibhotla, Arun Kumar and Chakrabortty, Abhishek},
  journal={Information and Inference: A Journal of the IMA},
  volume={11},
  number={4},
  pages={1389--1456},
  year={2022},
  publisher={Oxford University Press}
}

@article{Bickel1982adaptive,
author = {P. J. Bickel},
title = {{On Adaptive Estimation}},
volume = {10},
journal = {The Annals of Statistics},
number = {3},
publisher = {Institute of Mathematical Statistics},
pages = {647 -- 671},
keywords = {Adaptation, efficient estimation, elliptic distributions, linear models},
year = {1982},
doi = {10.1214/aos/1176345863},
URL = {https://doi.org/10.1214/aos/1176345863}
}

@article{pfanzagl1985contributions,
  title={Contributions to a general asymptotic statistical theory},
  author={Pfanzagl, J and Wefelmeyer, W},
  journal={Statistics \& Risk Modeling},
  volume={3},
  number={3-4},
  pages={379--388},
  year={1985},
  publisher={De Gruyter (A)}
}

@article{klaassen1987consistent,
  title={Consistent estimation of the influence function of locally asymptotically linear estimators},
  author={Klaassen, Chris AJ},
  journal={The Annals of Statistics},
  volume={15},
  number={4},
  pages={1548--1562},
  year={1987},
  publisher={Institute of Mathematical Statistics}
}

@article{Audibert2007Fast,
 ISSN = {00905364},
 URL = {http://www.jstor.org/stable/25463570},
 author = {Jean-Yves Audibert and Alexandre B. Tsybakov},
 journal = {The Annals of Statistics},
 number = {2},
 pages = {608--633},
 publisher = {Institute of Mathematical Statistics},
 title = {Fast Learning Rates for Plug-In Classifiers},
 urldate = {2026-04-12},
 volume = {35},
 year = {2007}
}

@article{Koltchinskii2015Bounding,
    author = {Koltchinskii, Vladimir and Mendelson, Shahar},
    title = {Bounding the Smallest Singular Value of a Random Matrix Without Concentration},
    journal = {International Mathematics Research Notices},
    volume = {2015},
    number = {23},
    pages = {12991-13008},
    year = {2015},
    month = {03},
    issn = {1073-7928},
    doi = {10.1093/imrn/rnv096},
    url = {https://doi.org/10.1093/imrn/rnv096},
    eprint = {https://academic.oup.com/imrn/article-pdf/2015/23/12991/1968432/rnv096.pdf},
}

@article{cattaneo2018inference,
  title={Inference in linear regression models with many covariates and heteroscedasticity},
  author={Cattaneo, Matias D and Jansson, Michael and Newey, Whitney K},
  journal={Journal of the American Statistical Association},
  volume={113},
  number={523},
  pages={1350--1361},
  year={2018},
  publisher={Taylor \& Francis}
}

\newpage

\setcounter{section}{0}
\setcounter{equation}{0}
\setcounter{figure}{0}
\renewcommand{\thesection}{S.\arabic{section}}
\renewcommand{\theequation}{E.\arabic{equation}}
\renewcommand{\thefigure}{A.\arabic{figure}}
\renewcommand{\theHsection}{S.\arabic{section}}
\renewcommand{\theHequation}{E.\arabic{equation}}
\renewcommand{\theHfigure}{A.\arabic{figure}}
\clearpage
  \begin{center}
  \Large {\bf Supplement to ``Honest Inference for Stochastic Optimization''}
  \end{center}
       \vspace{1mm}
\begin{abstract}
This supplement contains the proofs of all the main results in the paper and some supporting lemmas. 
\end{abstract}
\vspace{1mm}

\section{Review of History}\label{appsec:references-history}
This section summarizes the historical developments of the key idea behind the content of this manuscript.  
\begin{itemize}
    \item Inverting the asymptotic risk of (irregular) estimators to construct confidence sets has a long history. The idea dates back at least to \citet[Equation 8.11]{Stein1981}, who foreshadowed the possibility of the inference for a shrinkage estimator of the multivariate Gaussian mean in large dimensions. 
    \item  Later, \citet{Li1989} applied a similar risk inversion framework to nonparametric regression with Gaussian errors and introduced the concept of ``honest'' confidence sets, which subsequently sparked developments in adaptive nonparametric inference. \citet[Theorem 3.1]{beran1996} and \citet[Theorem 3.1]{Beran1998} extended these ideas, explicitly crediting \citet{Stein1981}, and proposed the inversion of the asymptotic normality based on the central limit theorem (CLT), which they call modulation of estimators. Here, sample-splitting was not considered.
    \item Parallel developments in stochastic programming analyzed the risk of constrained optimization problems. Early references include \citet[Theorem 3.2]{Shapiro1989} and \citet[Theorem 4.4]{geyer1994asymptotics}, both of whom leveraged the CLT to establish asymptotic distributions. Confidence set construction in this setting was explicitely mentioned by \citet{geyer1996asymptotics}. The asymptotic behavior of the risk under general loss functions and constraints was studied in great generality by \cite{pflug1991asymptotic, pflug1995asymptotic, pflug2003stochastic}. Here, as well, sample splitting was not considered.
    \item \citet{Robins2006} were among the first to combine sample-splitting with with risk inversion based on the CLT. Their Theorem 3.4 established that the validity of the CLT depended only on the sample size tending to infinity and a Feller condition on the univariate risk space, making their result effectively ``dimension-agnostic". 
    During this period, statistical literature primarily focused on squared error loss in nonparametric regression, with exceptions such as \citet{Hoffmann2011, Carpentier2013}. Meanwhile, operations research literature examined inference for constrained optimization solutions with general loss functions. Inspired by the series of works by \cite{pflug1991asymptotic, pflug1995asymptotic, pflug2003stochastic}, \citet{vogel2008universal}  investigated risk inversion for confidence sets, but without incorporating sample-splitting. Related works include \citet{vogel2008confidence, vogel2017confidence} and \cite{guigues2017non}.
    \item \citet{kim2020dimension} later introduced the term ``dimension-agnostic" to describe properties similar to those established by \citet[Theorem 3.4]{Robins2006}, though they did not cite the earlier work. For instance, Theorem 4.2 of \citet{kim2020dimension} should be compared to Theorem 3.4 of \citet{Robins2006}. They applied sample-splitting and CLT-based inversion to high-dimensional hypothesis testing problems, such as goodness-of-fit testing using Gaussian maximum mean discrepancy (MMD). \cite{chakravarti2019gaussian} uses the similar methodology based on sample-splitting and CLT for testing the relative fit of Gaussian mixtures.  \citet{park2023robust} also employ sample-splitting and CLT for the inference on population maximum likelihood estimation under model misspecification, among other techniques. Neither \citet{chakravarti2019gaussian, kim2020dimension} nor \citet{park2023robust} developed a general theory for M-estimation such as width/diameter of the resulting confidence sets. 
    \item Although the explicit application of sample-splitting and CLT inversion to general M-estimation has not been previously explored, the methodological approach is a natural consequence of prior work, including \cite{Beran1998}, \citet{Robins2006}, and \citet{vogel2008confidence}. Consequently, we do not claim innovation in methodological front, as such confidence sets would likely have emerged given the historical trajectory of the field. Instead, our contribution lies in analyzing the properties of these confidence sets, including their validity and width.
    \item M-estimators are known to exhibit locally adaptive rates of convergence, depending on problem-specific geometric factors such as curvatures \citep{kim1990cube, van1996weak}. This notion of adaptivity has not been investigated within the ``adaptive" inference literature on nonparametric submodels, such as \citet{Robins2006} and \citet{Patschkowski2019}. The concept of locally adaptive confidence sets is particularly relevant to the M-estimation framework, holding both methodological and theoretical significance. While some adaptive confidence sets have been studied in shape-restricted regression \citep{Yang2019, dumbgen2003optimal, Bellec2021}, these approaches typically assume strong distributional conditions such as (sub-)Gaussian errors. One of the key contributions of this work is the establishment of adaptive confidence sets for general M-estimation under weaker distributional assumptions.
\end{itemize}
Research on adaptive confidence sets for nonparametric models was particularly active from the 1990s to the 2010s. These studies generally relied on concentration inequalities to establish validity, requiring precise error quantification for adaptive nonparametric estimators. Additional historical developments can be found in Chapter 8.4 of \citet{gine2021mathematical}.

\section{Proofs from \Cref{sec:general}}\label{appsec:proof-of-coverage-anti-conservative}
\subsection{Proof of Theorem~\ref{thm:coverage-anti-conservative-confidence-set}}
Let $\widetilde D_1$ be a random element, defined on a (possibly extended) probability space, such that
$\mathcal L(\widetilde D_1) = \mathcal L(D_1)$ and
$\widetilde D_1$ is independent of $D_2$. Define $\widehat\theta_1 := \widehat\theta(D_1)$ and
$\widetilde\theta_1 := \widehat\theta(\widetilde D_1)$,
where $\widehat\theta(\cdot)$ is a measurable function. By Berbee’s coupling lemma \citep{berbee1979random}, also in Chapter 5, Lemma 5.1 of \citet{rio2017asymptotic}, there exists a coupling satisfying
\[
\mathbb P(\widetilde D_1 \neq D_1)
\le
\beta(r).
\]
Since $\{\widehat{\theta}_1 \not=\widetilde{\theta}_1\} \subseteq \{\widetilde D_1 \neq D_1\}$, it implies $\mathbb{P}(\widehat{\theta}_1 \not=\widetilde{\theta}_1) \le \beta(r)$. Denote $\mathcal{L}(\widetilde D_1) = \widetilde P^1$, $\mathcal{L}(D_1) = P^1$, and $\mathcal{L}(D_2) = P^2$. Then the miscoverage probability can be written as
    \begin{align*}
        &\mathbb{P}_{P^N}(\theta(P^N) \not\in \widehat{\mathrm{CI}}^\dagger_{N}) \\
        &\quad = \mathbb{P}_{P^N}\left( \widehat{\mathbb{M}}_2(\theta(P^N)) - \widehat{\mathbb{M}}_2(\widehat{\theta}_1) \ge 0\right) \\
        &\quad\le \mathbb{P}_{P^N}\left( \widehat{\mathbb{M}}_2(\theta(P^N)) - \widehat{\mathbb{M}}_2(\widehat{\theta}_1) \ge 0 \, \cap\, \{\widehat{\theta}_1=\widetilde{\theta}_1\}\right) + \mathbb{P}(\widehat{\theta}_1 \not=\widetilde{\theta}_1)\\
        &\quad= \mathbb{E}_{\widetilde P^1}\left[\mathbb{P}_{P^2|\widetilde P^1}\left( \widehat{\mathbb{M}}_2(\theta(P^N)) - \widehat{\mathbb{M}}_2(\widehat{\theta}_1) \ge 0 \, \cap\, \{\widehat{\theta}_1=\widetilde{\theta}_1\}\, |\, \widetilde{\theta}_1\right)\right] + \mathbb{P}(\widehat{\theta}_1 \not=\widetilde{\theta}_1).
    \end{align*}
    Conditional on $\widetilde{\theta}_1$, $\widehat{\mathbb{M}}_2(\cdot)$ depends only on $D_2$, which is independent of $\widetilde D_1$. Applying Chebyshev’s inequality (see Equation (3) of \citet{pinelis2010between}) gives
    \begin{align*}
        &\mathbb{P}_{P^2}\left( \widehat{\mathbb{M}}_2(\theta(P^N)) - \widehat{\mathbb{M}}_2(\widehat{\theta}_1) \ge 0 \, \cap\, \{\widehat{\theta}_1=\widetilde{\theta}_1\}\mid\widetilde{\theta}_1\right)\\
        &\quad \le \min \left\{\frac{\mathbb{E}_{P^2}[|(\widehat{\mathbb{M}}_2 - \mathbb{M}_2)(\theta(P^N)) - (\widehat{\mathbb{M}}_2 - \mathbb{M}_2)(\widehat{\theta}_1)|^2\mid \widetilde{\theta}_1]}{(\mathbb{M}_2(\widehat{\theta}_1) - \mathbb{M}_2(\theta(P^N)))^2}, 1\right\} \\
        &\quad = \min \left\{\frac{\mathbb{V}_2(\widehat{\theta}_1)}
{\mathbb{C}_2^2(\widehat{\theta}_1)}, 1\right\} = \min\left\{\frac{1}{\ratio^2}, 1\right\}.
\end{align*}
We conclude the result by taking the expectation over $\widetilde D_1$, which has the same law as $D_1$.

\subsection{Proof of Theorem~\ref{thm:coverage-anti-conservative-confidence-set-unbiased}}\label{appsec:proof-of-coverage-anti-conservative-unbiased}
The coupling argument is identical to that of the proof of \Cref{thm:coverage-anti-conservative-confidence-set}. Since $\widehat{\mathbb{M}}_2(\theta(P^N)) - \widehat{\mathbb{M}}_2(\widehat{\theta}_1)$ is unbiased, we can now invoke Cantelli's inequality (see Equation (2) of \citet{pinelis2010between}) instead of Chebyshev's inequality. This gives 
    \begin{align*}
        &\mathbb{P}_{P^2}\left( \widehat{\mathbb{M}}_2(\theta(P^N)) - \widehat{\mathbb{M}}_2(\widehat{\theta}_1) \ge 0 \, \cap\, \{\widehat{\theta}_1=\widetilde{\theta}_1\}\, |\, \widetilde{\theta}_1\right)\\
        &\quad = \mathbb{P}_{P^2}\left( \frac{\widehat{\mathbb{M}}_2(\theta(P^N)) - \widehat{\mathbb{M}}_2(\widehat{\theta}_1) + \widehat{\mathbb{C}}_2}{\widehat{\mathbb{V}}^{1/2}_2}\ge \frac{\widehat{\mathbb{C}}_2}{\widehat{\mathbb{V}}^{1/2}_2} \, \cap\, \{\widehat{\theta}_1=\widetilde{\theta}_1\}\, |\, \widetilde{\theta}_1\right)\le \frac{1}{1+\widehat{\mathbb{C}}_2^2/\widehat{\mathbb{V}}_2} = \frac{1}{1+\ratio^2}.
\end{align*}
We conclude the result by taking the expectation over $\widetilde D_1$, which has the same law as $D_1$.
\section{Proofs from \Cref{sec:general-M-estimation}}\label{appsec:proof-of-M-estimation}
\subsection{Proof of \Cref{thm:coverage-anti-conservative-confidence-set-empirical-risk}}
Since $P^1$ and $P^2$ are independent, we can write the miscoverage probability as  
\begin{align*}
    \mathbb{P}_{P^N}\left( \widehat{\mathbb{M}}_2(\theta(P^N)) - \widehat{\mathbb{M}}_2(\widehat{\theta}_1) \ge 0 \right) = \E_{P^1}\left[\mathbb{P}_{P^{2}|P^{1}}\left( \widehat{\mathbb{M}}_2(\theta(P^N)) - \widehat{\mathbb{M}}_2(\widehat{\theta}_1) \ge 0 \right)\right].
\end{align*}
Conditionally on $D_1$, we have
    \begin{align*}
    &\mathbb{P}_{P^{2}|P^{1}}\left( \widehat{\mathbb{M}}_2(\theta(P^N)) - \widehat{\mathbb{M}}_2(\widehat{\theta}_1) \ge 0 \right)\\
        &\quad = \mathbb{P}_{P^{2}|P^{1}}\left(\sum_{i\in I_2} m_{\theta(P^N)}(Z_i)-m_{\widehat\theta}(Z_i)\ge 0 \right)\\
        &\quad = \mathbb{P}_{P^{2}|P^{1}}\left( -\sum_{i \in I_2} \widehat \xi_i/(n_2\widehat{\mathbb{V}}_{2}^{1/2}) \ge \widehat{\mathbb{C}}_{2}/\widehat{\mathbb{V}}^{1/2}_{2} \right)\\
    &\quad \le \mathbb{P}\left(Z \ge \widehat{\mathbb{C}}_{2}/\widehat{\mathbb{V}}^{1/2}_{2} \,\bigg| \, D_1\right) \\
    &\quad\quad+ \left|\mathbb{P}_{P^{2}|P^{1}}\left( -\sum_{i \in I_2} \widehat \xi_i/(n_2\widehat{\mathbb{V}}_{2}^{1/2}) \ge \widehat{\mathbb{C}}_{2}/\widehat{\mathbb{V}}^{1/2}_{2} \right)-\mathbb{P}\left(Z \ge \widehat{\mathbb{C}}_{2}/\widehat{\mathbb{V}}^{1/2}_{2} \,\bigg| \, D_1\right)\right| 
    \end{align*}
    where $Z$ denotes a standard Normal random variable.  Conditioning on $D_1$, the standardized sum $-n_2^{-1}\widehat{\mathbb{V}}_2^{-1/2}\sum_{i \in I_2} \widehat\xi_i$ is a sum of independent mean-zero random variables with unit variance. Hence, the last remainder term can be controlled by a non-uniform Berry-Esseen bound (such as Theorem 2.1 of \citet{chen2001non}):
    \begin{align*}
    &\left|\mathbb{P}_{P^{2}|P^{1}}\left( -\sum_{i \in I_2} \widehat \xi_i/(n_2\widehat{\mathbb{V}}_{2}^{1/2}) \ge \widehat{\mathbb{C}}_{2}/\widehat{\mathbb{V}}^{1/2}_{2} \right)-\mathbb{P}\left(Z \ge \widehat{\mathbb{C}}_{2}/\widehat{\mathbb{V}}^{1/2}_{2} \,\bigg| \, D_1\right)\right|  \\
    &\quad \le C\sum_{i\in I_2} \mathbb{E}_{P_i}\left[\frac{|\widehat\xi_i|^2}{(n_2^2\widehat{\mathbb{V}}_{2})(1 + \widehat{\mathbb{C}}_{2}/\widehat{\mathbb{V}}^{1/2}_{2})^2}\min\left\{1,\,\frac{|\widehat\xi_i|}{n_2\widehat{\mathbb{V}}_{2}^{1/2}(1 + \widehat{\mathbb{C}}_{2}/\widehat{\mathbb{V}}_{2}^{1/2})}\right\} \bigg|D_1\right],
    \end{align*}
    where $C$ is a universal constant. Finally, taking expectation over $D_1$ and using linearity of expectation concludes the proof.

\subsection{Proof of \Cref{thm:coverage-martingale}}\label{appsec:proof-of-coverage-martingale}
We first establish the following lemma, of which \Cref{thm:coverage-martingale} is a direct consequence.
\begin{lemma}\label{thm:general-dependence}
    Let $\mathcal{H}_0 \subseteq \ldots \subseteq \mathcal{H}_N$ be a filtration, supporting $\widehat\xi_1, \ldots, \widehat\xi_N$ defined in \eqref{eq:centered-xi}, with  $\widehat\theta_1$ assumed $\mathcal{H}_0$-measurable. Define the martingale approximation 
    \begin{equation}
        \widetilde{\xi}_i = \sum_{r=1}^N(\E[\widehat\xi_r|\mathcal{H}_i]-\E[\widehat\xi_r|\mathcal{H}_{i-1}]) \quad \textrm{and} \quad U=\sum_{i=1}^N (\widehat\xi_i - \E[\widehat\xi_i|\mathcal{H}_N] + \E[\widehat\xi_i|\mathcal{H}_0]).
    \end{equation}
    Set $\widetilde{\mathbb{V}} = \mathrm{Var}(\sum_{i=1}^N \widetilde{\xi}_i|\mathcal{H}_0)$ and $\widehat{\mathbb{C}} = \sum_{i=1}^N \E[m_{\widehat\theta_1}(Z_i)-m_{\theta(P^N)}(Z_i)|\mathcal{H}_0]$. For $\delta \in (0, \infty)$, define 
    \begin{equation}
        \begin{split}
        L_{2\delta}:= \sum_{i=1}^N \E\left[\left|\frac{\widetilde\xi_i}{\widetilde{\mathbb{V}}^{1/2}}\right|^{2+2\delta} \,|\, \mathcal{H}_0\right] \quad \textrm{and} \quad  
        M_{2\delta}:= \E\left(\left|\sum_{i =1}^N \E\left[\frac{\widetilde\xi_i^2}{\widetilde{\mathbb{V}}} \mid \mathcal{H}_{i-1}\right]-1\right|^{1+\delta} \mid \mathcal{H}_0\right)\quad.
    \end{split}
    \end{equation}
    Then for any $\eta > 0$, it holds that 
    \begin{equation}
        \begin{split}
           \mathbb{P}_{P^N}\!\left(\theta(P^N) \notin \widehat{\mathrm{CI}}_{N}^{\dagger}\right) &\le \mathbb{E}\left[1-\Phi\left((1-\eta)\widehat{\mathbb{C}}  /\widetilde{\mathbb{V}}^{1/2} \right) \right] \\
    &\quad + \mathbb{E}\left[\min\left\{1, C_\delta \frac{(L_{2\delta} +  M_{2\delta})^{1/(3+2\delta)}}{1+|(1-\eta)\widehat{\mathbb{C}}/\widetilde{\mathbb{V}}^{1/2}|^{2+2\delta}}\right\}\right] \\
    &\quad + \mathbb{P}\left( U < -\eta \,\widehat{\mathbb{C}} \right),
        \end{split}
    \end{equation}
    where $C_\delta$ is a constant depending only on $\delta$.
\end{lemma}

\begin{proof}[\bfseries{Proof of \Cref{thm:general-dependence}}]
    By a telescoping identity, $\sum_{i=1}^N \widehat \xi_i = \sum_{i=1}^N \widetilde \xi_i + U$. Since $\widehat\theta_1$ is $\mathcal{H}_0$-measurable, conditioning on $\mathcal{H}_0$ gives 
\begin{align*}
    &\mathbb{P}\left(\theta(P^N) \notin \widehat{\mathrm{CI}}_{N}^{\dagger}\mid \mathcal{H}_0\right) \\
    &\quad = \mathbb{P}\left(\sum_{i=1}^N m_{\theta(P^N)}(Z_i)-m_{\widehat\theta_1}(Z_i) \ge 0\mid \mathcal{H}_0\right)\\
    &\quad=  \mathbb{P}\left(-\sum_{i=1}^N  \widehat \xi_i\ge \widehat{\mathbb{C}} \mid \mathcal{H}_0\right)\\
    &\quad=  \mathbb{P}\left(-\sum_{i=1}^N  \widetilde \xi_i/\widetilde{\mathbb{V}}^{1/2} \ge (\widehat{\mathbb{C}}  + U)/\widetilde{\mathbb{V}}^{1/2} \mid \mathcal{H}_0\right)\\
    &\quad\le  \mathbb{P}\left(-\sum_{i=1}^N  \widetilde \xi_i/\widetilde{\mathbb{V}}^{1/2} \ge (1-\eta)\widehat{\mathbb{C}}  /\widetilde{\mathbb{V}}^{1/2} \mid \mathcal{H}_0\right) + \mathbb{P}\left( U < -\eta \widehat{\mathbb{C}} \mid \mathcal{H}_0\right)\\
    &\quad\le  \mathbb{P}\left(Z \ge (1-\eta)\widehat{\mathbb{C}}  /\widetilde{\mathbb{V}}^{1/2} \mid \mathcal{H}_0\right) + \mathbb{P}\left( U < -\eta \widehat{\mathbb{C}} \mid \mathcal{H}_0\right)\\
    &\quad\quad + \left|\mathbb{P}\left(-\sum_{i=1}^N \widetilde \xi_i/\widetilde{\mathbb{V}}^{1/2} \ge (1-\eta)\widehat{\mathbb{C}}  /\widetilde{\mathbb{V}}^{1/2} \mid \mathcal{H}_0\right)-\mathbb{P}\left(Z \ge (1-\eta)\widehat{\mathbb{C}}  /\widetilde{\mathbb{V}}^{1/2} \mid \mathcal{H}_0\right)\right|,
    \end{align*}
    where $Z$ follows a standard normal distribution and is independent of $\mathcal{H}_N$. On the $\mathcal{H}_0$-measurable event $\{L_{2\delta} + M_{2\delta}\le 1\}$, Theorem 1 of \citet{haeusler1988nonuniform} gives, and obtain 
    \begin{align*}
        &\left|\mathbb{P}\left(-\sum_{i=1}^N  \widetilde \xi_i/\widetilde{\mathbb{V}}^{1/2} \ge (1-\eta)\widehat{\mathbb{C}}  /\widetilde{\mathbb{V}}^{1/2} \mid \mathcal{H}_0\right)-\mathbb{P}\left(Z \ge (1-\eta)\widehat{\mathbb{C}}  /\widetilde{\mathbb{V}}^{1/2} \mid \mathcal{H}_0\right)\right| \\
        &\quad \le C_\delta \frac{( L_{2\delta} +  M_{2\delta})^{1/(3+2\delta)}}{1+|(1-\eta)\widehat{\mathbb{C}}/\widetilde{\mathbb{V}}^{1/2}|^{2+2\delta}} + \mathbf{1}\{L_{2\delta} +  M_{2\delta} > 1\},
    \end{align*}
    where $C_\delta$ is a constant only depending on $\delta$. In the proof of Theorem 1 of \citet{haeusler1988nonuniform}, it is stated that the following bound
    \begin{align*}
        \frac{L_{2\delta} +  M_{2\delta}}{1+|(1-\eta)\widehat{\mathbb{C}}/\widetilde{\mathbb{V}}^{1/2}|^{2+2\delta}}
    \end{align*}
    is valid for $\mathbf{1}\{L_{2\delta} +  M_{2\delta} > 1\}$, which can further bounded by the trivial bound of 1. Taking expectations over $\mathcal{H}_0$ and combining the terms concludes the proof.
\end{proof}
\begin{proof}[\bfseries{Proof of \Cref{thm:coverage-martingale}}]
For $i \in I_2$, the definition of $\widehat\xi_i$ and $\mathcal{H}_k$-measurability of $\widehat\theta_1$ together imply that $\E[\widehat\xi_i|\mathcal{H}_{n_2}]  = \widehat \xi_i$ and $\E[\widehat\xi_i|\mathcal{H}_0]=0$. Then we have
\begin{align*}
    U = \sum_{i \in I_2} (\widehat\xi_i - \E[\widehat\xi_i|\mathcal{H}_{n_2}] + \E[\widehat\xi_i|\mathcal{H}_0]) = 0 \quad \text{and} \quad \sum_{i \in I_2} \widehat \xi_i = \sum_{i \in I_2} \widetilde \xi_i.
\end{align*}
Moreover, 
\begin{align*}
    \widehat{\mathbb{C}} = \sum_{i\in I_2} \E[m_{\widehat\theta_1}(Z_i)-m_{\theta(P^N)}(Z_i)|\mathcal{H}_0] = n_2 \widehat{\mathbb{C}}_2, 
\end{align*}
and
\begin{align*}
    \widetilde{\mathbb{V}}= \mathrm{Var}\left(\sum_{i \in I_2} \widetilde{\xi}_i|\mathcal{H}_0\right) = n_2^2\mathrm{Var}\left(\frac{1}{n_2}\sum_{i \in I_2} \widehat{\xi}_i|\mathcal{H}_0\right) = n_2^2\widehat{\mathbb{V}}_2.
\end{align*}
The factors of $n_2$ cancels, yielding $\ratio =\widehat{\mathbb{C}}_2/\widehat{\mathbb{V}}_2^{1/2} = \widehat{\mathbb{C}}/\widetilde{\mathbb{V}}^{1/2}$. Since $\widehat{\mathbb{C}}_2 \ge 0$ almost surely by definition, it follows that $\mathbb{P}( U  < -\eta \widehat{\mathbb{C}}_2 \mid \mathcal{H}_0) = 0$ for any $\eta > 0$. The result is obtained from applying \Cref{thm:general-dependence} and taking $\eta \to 0$. 
\end{proof}

\section{Proofs from \Cref{sec:coverage-slpha}}\label{supsec:coverage-proofs}
\subsection{Proofs of \Cref{thm:hulc-like} and \Cref{thm:general_LCB}}
\begin{proof}[\bfseries{Proof of \Cref{thm:hulc-like}}]
Denote $\overline D_\ell := \{Z_i \, :\, i \in S_\ell\}$ for $1 \le \ell \le B$. By iterative application of Berbee’s coupling lemma \citep{berbee1979random}, construct independent copies $\widetilde D_1, \ldots, \widetilde D_B$ such that $\mathcal{L}(\widetilde D_\ell) = \mathcal{L}(\overline D_\ell)$ with $\mathbb{P}(\overline D_\ell \neq \widetilde D_\ell) \le \beta^\dagger(r)$ at each coupling step. Define test functions 
\begin{align*}
    \widehat{\phi}_\ell := \mathbf{1}\{\theta(P^N) \notin \widehat{\mathrm{CI}}^\dagger_\ell\} \quad \text{and} \quad \widetilde{\phi}_\ell := \mathbf{1}\{\theta(P^N) \notin \widetilde{\mathrm{CI}}^\dagger_\ell\},
\end{align*}
where $\widehat{\mathrm{CI}}^\dagger_\ell$ and $\widetilde{\mathrm{CI}}^\dagger_\ell$ are the confidence sets constructed from $\overline D_\ell$ and $\widetilde D_\ell$ respectively. By the union bound at each $B-1$ coupling steps, $\mathbb{P}(\exists\, \ell \in \{1, \ldots, B\} : \widehat{\phi}_\ell \neq \widetilde{\phi}_\ell) \le (B-1)\beta^\dagger(r).$ On the complimentary event $\{\widehat{\phi}_\ell= \widetilde{\phi}_\ell \textrm{ for all }\ell\}$, the confidence sets $\widetilde{\mathrm{CI}}^\dagger_1, \ldots, \widetilde{\mathrm{CI}}^\dagger_B$ are independent. Therefore,
\begin{align*}
\mathbb{P}_{P^N}\!\left(\theta(P^N) \notin \widehat{\mathrm{CI}}^{\texttt{DS}}_{N,\alpha}\right)
&= \mathbb{P}_{P^N}\!\left(\bigcap_{\ell=1}^B \left\{\theta(P^N) \notin \widehat{\mathrm{CI}}^\dagger_\ell\right\}\right) \\
&\le \mathbb{P}_{P^N}\!\left(\bigcap_{\ell=1}^B \left\{\theta(P^N) \notin \widetilde{\mathrm{CI}}^\dagger_\ell\right\}\right) + (B-1)\beta^\dagger(r) \\
&= \prod_{\ell=1}^B \mathbb{P}_{P^N}\!\left(\theta(P^N) \notin \widetilde{\mathrm{CI}}^\dagger_\ell\right) + (B-1)\beta^\dagger(r) \\
&\le \left(p+ \mathfrak{R}_{N_0, P^N}\right)^{\!B} + (B-1)\beta^\dagger(r) \\
&\le \alpha\left(1 + p^{-1}\mathfrak{R}_{N_0, P^N}\right)^B + (B-1)\beta^\dagger(r),
\end{align*}
where $N_0 = \min_\ell |S_\ell|$ denotes the smallest bin size, the third line uses independence, the fourth line applies \eqref{eq:median-valid} and the last line uses $p^B \le \alpha$.
\end{proof}
\begin{proof}[\bfseries{Proof of \Cref{thm:general_LCB}}]
By Berbee's lemma, construct $\widetilde{D}_1$ with $\mathcal{L}(D_1) = \mathcal{L}(\widetilde{D}_1)$, and with $\widetilde{D}_1$ and $D_2$ independent. Let $\widetilde\theta_1 := \widehat{\theta}(\widetilde D_1)$ and define 
\begin{equation*}
\Omega := \left\{
        \mathbb{M}_2(\theta(P^N)) - \mathbb{M}_2(\widetilde{\theta}_1) \geq \widehat{\mathbb{M}}_2(\theta(P^N)) - \widehat{\mathbb{M}}_2(\widetilde{\theta}_1)-\widehat t_{\alpha}(\theta(P^N), \widetilde\theta_1)\right\}.
\end{equation*}
By \eqref{eq:lower-bound}, we have $\mathbb{P}(\Omega^c) \le \alpha$.
Conditioning on $\Omega \, \cap \, \{D_1 = \widetilde D_1\}$, the basic inequality \eqref{eq:zeroth-order} gives 
\begin{align*}
&\mathbb{P}_{P^N}\left(\theta(P^N) \notin \widehat{\mathrm{CI}}^{\texttt{LCB}}_{N,\alpha}\, \cap\, \Omega \, \cap \, \{D_1 = \widetilde D_1\}\right)\\
&\quad = \mathbb{P}_{P^N}\left(\widehat{\mathbb{M}}_2(\theta(P^N)) - \widehat{\mathbb{M}}_2(\widetilde{\theta}_1)-\widehat t_{\alpha}(\theta(P^N), \widetilde{\theta}_1) \ge 0\, \cap\, \Omega \, \cap \, \{D_1 = \widetilde D_1\}\right)\\
&\quad = \mathbb{P}_{P^N}\left(\mathbb{M}_2(\theta(P^N)) - \mathbb{M}_2(\widetilde{\theta}_1)  \ge 0\, \cap\, \Omega \, \cap \, \{D_1 = \widetilde D_1\}\right) = 0.
\end{align*}
Putting together, we conclude
\begin{align*}
\mathbb{P}_{P^N}\!\left(\theta(P^N) \notin \widehat{\mathrm{CI}}^{\texttt{LCB}}_{N,\alpha}\right)
&\le \mathbb{P}(\Omega^c) + \mathbb{P}(D_1 \neq \widetilde{D}_1) \le \alpha + \beta(r). 
\end{align*}
\end{proof}
\subsection{Proofs of \Cref{thm:studentized-nonIID}---\Cref{thm:large-deviation}}\label{supp:studentized-proofs}
\begin{proof}[\bfseries{Proof of \Cref{thm:studentized-nonIID}}]
    The goal is to prove \eqref{eq:reinterpret-lower-upper} with $\beta(r) = 0$ holding from independence. The result then follows by \Cref{thm:general_LCB}. Observe
    \begin{align*}
    &\mathbb{P}_{P^{2}}\left(\frac{1}{n_2}\sum_{i \in I_2}\widehat\xi_i  \ge \widehat{t}_\alpha(\theta(P^N), \widehat{\theta}_1) |D_1\right) \\
        & \quad = \mathbb{P}_{P^{2}}\left(\frac{1}{n_2}\sum_{i \in I_2}\widehat\xi_i  \ge n_2^{-1/2}z_\alpha \widehat \sigma_{\theta, \widehat\theta_1}|D_1\right)\\
        & \quad \le \left| \mathbb{P}_{P^{2}}\left(n_2^{-1/2} \widehat \sigma_{\theta, \widehat\theta_1}^{-1}\sum_{i \in I_2}\widehat\xi_i  \ge z_\alpha|D_1\right) - \mathbb{P}(Z\ge z_\alpha) \right|+\mathbb{P}(Z\ge z_\alpha) \\
        & \quad \le C \left\{\sum_{i\in I_2}\mathbb{P}_{P_i}(\widehat{\xi}^2_i > V^2 |D_1) + M^{-1}\sum_{i\in I_2}|\mathbb{E}_{P_i}[\overline{\xi}_i |D_1]| + M^{-3}\sum_{i \in I_2} \mathbb{E}_{P_i}[|\overline{\xi}_i|^3| D_1]\right\}+\alpha
    \end{align*}
    where the last inequality is by Corollary 1.1 of \cite{bentkus1996berry}. The result is obtained after taking expectation over $D_1$.
\end{proof}
\begin{proof}[\bfseries{Proof of \Cref{thm:studentized-katz}}]
    \Cref{thm:studentized-nonIID} holds after truncating at 1, such that 
    \begin{align*}
        &\mathbb{P}_{P^N}\left(\theta(P^N) \not\in \widehat{\mathrm{CI}}^{\mathtt{CLT}}_{N, \alpha}\right) \le \alpha \\
        &\quad +\E_{P^1}\left[\min\left\{1, C \left(\sum_{i\in I_2}\mathbb{P}_{P_i}(\widehat{\xi}^2_i > V^2 |D_1) + \frac{\sum_{i\in I_2}|\mathbb{E}_{P_i}[\overline{\xi}_i|D_1]|}{M} +\frac{\sum_{i \in I_2} \mathbb{E}_{P_i}[|\overline{\xi}_i|^3| D_1]}{M^3}\right)\right\}\right].
    \end{align*}
    Since this holds for any $V$, we can further bound the remainder term with the choice $V = n_2\widehat{\mathbb{V}}^{1/2}_2$. The following proof is based on \cite{katz1963note}.
     Define $\mathcal{G}$ to be the class of all non-decreasing functions $g : (0, \infty) \mapsto (0, \infty)$ and that $x/g(x)$ is non-decreasing on $(0, \infty)$. Fix $g \in \mathcal{G}$. Using the properties of $\mathcal{G}$ such that $g \in \mathcal{G}$ is non-decreasing and non-negative, we have
\begin{align}
&\{|\widehat \xi_i| \ge n_2\widehat{\mathbb{V}}^{1/2}_2\} \subseteq \{|\widehat \xi_i|g(|\widehat \xi_i|) \ge n_2\widehat{\mathbb{V}}^{1/2}_2 g(\widehat{\mathbb{V}}^{1/2}_2)\}.
\end{align}
Then we have 
\begin{align*}
    \sum_{i\in I_2}\mathbb{P}_{P_i}(\widehat{\xi}^2_i > n_2^2\widehat{\mathbb{V}}_2 | D_1) \le \sum_{i\in I_2}\mathbb{P}_{P_i}(\widehat{\xi}^2_ig(|\widehat{\xi_i}|) > n_2^2\widehat{\mathbb{V}}_2g(n_2\widehat{\mathbb{V}}_2^{1/2})| D_1) \le \frac{\sum_{i\in I_2}\E_{P_i}[\widehat{\xi}^2_ig(|\widehat{\xi_i}|)|D_1]}{n_2^2\widehat{\mathbb{V}}_2g(n_2\widehat{\mathbb{V}}_2^{1/2})},
\end{align*}
where $\mathbf{1}\{A > B\} \le A/B$ for $A, B \ge 0$. Next, observe that 
\begin{align*}
    n_2^{-1}\widehat{\mathbb{V}}_2^{-1/2}\widehat{\xi}_i &= n_2^{-1}\widehat{\mathbb{V}}_2^{-1/2}\widehat{\xi}_i\mathbf{1}\{|\widehat{\xi}_i| \le n_2\widehat{\mathbb{V}}_2^{1/2}\} + n_2^{-1}\widehat{\mathbb{V}}_2^{-1/2}\widehat{\xi}_i\mathbf{1}\{|\widehat{\xi}_i| > n_2\widehat{\mathbb{V}}_2^{1/2}\} \\
    &= \overline{\xi}_i+ n_2^{-1}\widehat{\mathbb{V}}_2^{-1/2}\widehat{\xi}_i\mathbf{1}\{|\widehat{\xi}_i| > n_2\widehat{\mathbb{V}}_2^{1/2}\}.
\end{align*}
Since $\E_{P_i}[\widehat{\xi}_i|D_1] = 0$, we have 
\begin{align}\label{eq:katz-property}
    \E_{P_i}[\overline{\xi}_i|D_1]  = -n_2^{-1}\widehat{\mathbb{V}}_2^{-1/2}\E_{P_i}[\widehat{\xi}_i\mathbf{1}\{|\widehat{\xi}_i| > n_2\widehat{\mathbb{V}}_2^{1/2}\}|D_1].
\end{align}
Next, we bound the variance. Observe 
\begin{align*}
  1-M^2 &= \frac{\sum_{i\in I_2}\E_{P_i}[\widehat \xi_i^2|D_1]}{n_2^2\widehat{\mathbb{V}}_2}-\sum_{i \in I_2} \E_{P_i}[\overline{\xi}_i^2| D_1] +\sum_{i \in I_2} (\E_{P_i}[\overline{\xi}_i| D_1])^2\\
  &\le \frac{2\sum_{i\in I_2}\E_{P_i}[\widehat \xi_i^2\mathbf{1}\{|\widehat \xi_i| \ge n_2\widehat{\mathbb{V}}_2^{1/2}\}|D_1]}{n_2^2\widehat{\mathbb{V}}_2} \le \frac{2\sum_{i\in I_2}\E_{P_i}[|\widehat{\xi}_i|^2g(|\widehat{\xi}_i|)|D_1]}{n_2^2\widehat{\mathbb{V}}_2g(n_2\widehat{\mathbb{V}}_2^{1/2})},
\end{align*}
where we used \eqref{eq:katz-property}. Next,  
\begin{align*}
    \sum_{i\in I_2}|\mathbb{E}_{P_i}[\overline{\xi}_i| D_1]| &= \sum_{i\in I_2}|\E_{P_i}[n_2^{-1}\widehat{\mathbb{V}}_2^{-1/2}\widehat{\xi}_i\mathbf{1}\{|\widehat{\xi}_i| > n_2\widehat{\mathbb{V}}_2^{1/2}\}|D_1]| \\
    &\le \sum_{i\in I_2}\E_{P_i}[n_2^{-1}\widehat{\mathbb{V}}_2^{-1/2}|\widehat{\xi}_i|\mathbf{1}\{|\widehat{\xi}_i| > n_2\widehat{\mathbb{V}}_2^{1/2}\}|D_1]\\
    &\le \sum_{i\in I_2}\E_{P_i}[n_2^{-1}\widehat{\mathbb{V}}_2^{-1/2}|\widehat{\xi}_i|\mathbf{1}\{|\widehat{\xi}_i|g(|\widehat{\xi}_i|) > n_2\widehat{\mathbb{V}}_2^{1/2}g(n_2\widehat{\mathbb{V}}_2^{1/2})\}|D_1]\\
    &\le \frac{\sum_{i\in I_2}\E_{P_i}[|\widehat{\xi}_i|^2g(|\widehat{\xi}_i|)|D_1]}{n_2^2\widehat{\mathbb{V}}_2g(n_2\widehat{\mathbb{V}}_2^{1/2})}.
\end{align*}
Finally, we have
\begin{align*}
    \sum_{i \in I_2} \mathbb{E}_{P_i}[|\overline{\xi}_i|^3| D_1] &= \frac{\sum_{i \in I_2} \mathbb{E}_{P_i}[|\widehat{\xi}_i|^3\mathbf{1}\{|\widehat{\xi}_i| \le n_2\widehat{\mathbb{V}}_2^{1/2}\}| D_1]}{n_2^3\widehat{\mathbb{V}}_2^{3/2}}\\
    &= \sum_{i \in I_2} \mathbb{E}_{P_i}\left[\frac{|\widehat{\xi}_i|^2|\widehat\xi_i|g(|\widehat \xi_i|)\mathbf{1}\{|\widehat{\xi}_i| \le n_2\widehat{\mathbb{V}}_2^{1/2}\}}{n_2^2\widehat{\mathbb{V}}_2 \cdot n_2\widehat{\mathbb{V}}_2^{1/2}g(|\widehat \xi_i|)}\bigg| D_1\right]\\
    &\le \frac{\sum_{i \in I_2} \mathbb{E}_{P_i}[|\widehat{\xi}_i|^2g(|\widehat \xi_i|)| D_1]}{n_2^2\widehat{\mathbb{V}}_2g(n_2\widehat{\mathbb{V}}_2^{1/2})}.
\end{align*}

We now assume that 
\begin{align*}
    \frac{\sum_{i\in I_2}\E_{P_i}[|\widehat{\xi}_i|^2g(|\widehat{\xi}_i|)|D_1]}{n_2^2\widehat{\mathbb{V}}_2g(n_2\widehat{\mathbb{V}}_2^{1/2})} \le \frac{1}{3}.
\end{align*}
Under this assumption, we have $1/3 \le M^3$, and thus $M$ is bounded from below. Therefore, 
\begin{align*}
    &\mathbb{P}_{P^N}\left(\theta(P^N) \not\in \widehat{\mathrm{CI}}^{\mathtt{CLT}}_{N, \alpha}\right) \le \alpha & \\
    &\quad  + \min\left\{1, C\, \mathbb{E}_{P^1}\left[\sum_{i\in I_2}\mathbb{P}_{P_i}(\widehat{\xi}^2_i > \widehat{\mathbb{V}}_2 | D_1) + \frac{\sum_{i\in I_2}|\mathbb{E}_{P_i}[\overline{\xi}_i|D_1]|}{M} +\frac{\sum_{i \in I_2} \mathbb{E}_{P_i}[|\overline{\xi}_i|^3| D_1]}{M^3}\right] \right\}&\\
    &\quad \le \alpha + \min\left\{1, C\frac{\sum_{i \in I_2} \mathbb{E}_{P_i}[|\widehat{\xi}_i|^2g(|\widehat \xi_i|)| D_1]}{n_2^2\widehat{\mathbb{V}}_2g(n_2\widehat{\mathbb{V}}_2^{1/2})}\right\}.
\end{align*}
Note that choosing $C \ge 3$,  the bound is still valid under the case where 
\begin{align*}
    \frac{\sum_{i\in I_2}\E_{P_i}[|\widehat{\xi}_i|^2g(|\widehat{\xi}_i|)|D_1]}{n_2^2\widehat{\mathbb{V}}_2g(n_2\widehat{\mathbb{V}}_2^{1/2})} \ge \frac{1}{3},
\end{align*}
since the result follows trivially.
Finally, choosing $g(|x|) = \min\{|x|, n_2\widehat{\mathbb{V}}_2^{1/2}\}$, we conclude 
\begin{align*}
    &\mathbb{P}_{P^N}\left(\theta(P^N) \not\in \widehat{\mathrm{CI}}^{\mathtt{CLT}}_{N, \alpha}\right) & \\&\quad \le  \alpha + \min\left\{1, C\frac{\sum_{i \in I_2} \mathbb{E}_{P_i}[|\widehat{\xi}_i|^2\min(|\widehat \xi_i|, n_2\widehat{\mathbb{V}}_2^{1/2})| D_1]}{n_2^3\widehat{\mathbb{V}}_2^{3/2}}\right\}\\
    &\quad =\alpha + \min\left\{1, C\sum_{i \in I_2} \mathbb{E}_{P_i}\left[\frac{|\widehat{\xi}_i|^2}{n_2^2\widehat{\mathbb{V}}_2}\min\left\{\frac{|\widehat \xi_i|}{n_2\widehat{\mathbb{V}}_2^{1/2}}, 1\right\}\bigg| D_1\right]\right\}.
\end{align*}
\end{proof}
\begin{proof}[\bfseries{Proof of \Cref{cor:studentized-IID}}]
    The proof is identical to that of \Cref{thm:studentized-nonIID}, except that we use Corollary 1.2 of \cite{bentkus1996berry}.
\end{proof}
\begin{proof}[\bfseries{Proof of \Cref{thm:studentized-IID-multi}}]
First, we prove the case with $\alpha \le 1/2$. We denote, conditionally on $D_1$, 
\begin{align*}
    T_{n_2} = n_2^{-1/2} \widehat \sigma_{\theta, \widehat\theta_1}^{-1}\sum_{i \in I_2}\widehat\xi_i, \quad S_{n_2} =\sum_{i \in I_2}\widehat\xi_i, \quad \text{and} \quad V_{n_2}^2=\sum_{i \in I_2}\widehat\xi_i^2.
\end{align*}
By the algebraic identity (1.2) of \citet{gine1997student}, the events $\{T_{n_2} \ge z_\alpha\}$ and $\{S_{n_2}/V_{n_2}\ge z_{\alpha, n_2}\}$ coincide, so we have 
\begin{align*}
    &\mathbb{P}_{P^N}\left(\theta(P^N) \not\in \widehat{\mathrm{CI}}^{\mathtt{CLT}}_{N, \alpha}\right) = \mathbb{E}_{P^1}[\mathbb{P}_{P^2}(T_{n_2} \ge z_\alpha|D_1)]= \mathbb{E}_{P^1}\left[\mathbb{P}_{P^2}\left(\frac{S_{n_2}}{V_{n_2}} \ge z_{\alpha, n_2}\bigg|D_1\right)\right].
\end{align*}
The bound $R_1$ follows by the same argument as in \Cref{thm:studentized-nonIID}, using Theorem 2 of \citet{robinson2005self} when $R_* \le C/(1+z_{\alpha, n_2})^2$, for some universal constant $C$. 

For $\alpha \ge 1/2$, it follows that $z_\alpha \le 0$. Consider the following objects:
\begin{align*}
    \widetilde T_{n_2} = n_2^{-1/2} \widehat \sigma_{\theta, \widehat\theta_1}^{-1}\sum_{i \in I_2}(-\widehat\xi_i), \quad \widetilde S_{n_2} =\sum_{i \in I_2}(-\widehat\xi_i), \quad \text{and} \quad \widetilde V_{n_2}^2 = V_{n_2}^2=\sum_{i \in I_2}\widehat\xi_i^2.
\end{align*}
Then 
\begin{align*}
    \{S_{n_2} \ge z_{\alpha, n_2} V_{n_2} \} &= \{S_{n_2} \ge -z_{1-\alpha, n_2} V_{n_2} \} = \{\widetilde S_{n_2} \le z_{1-\alpha, n_2} \widetilde V_{n_2} \}.
\end{align*}
Using equation (1.6) of \citet{robinson2005self}, we have
\begin{align*}
    \mathbb{P}_{P^N}\left(\theta(P^N) \not\in \widehat{\mathrm{CI}}^{\mathtt{CLT}}_{N, \alpha}\right) &= 1-\mathbb{E}_{P^1}\left[\mathbb{P}_{P^2}\left(\frac{\widetilde S_{n_2}}{\widetilde V_{n_2}} \ge z_{1-\alpha, n_2}\bigg|D_1\right)\right]\\
    &\le 1-(1-\Phi(z_{1-\alpha, n_2}))\exp(-AR_*)\\
    &=\Phi(z_{1-\alpha, n_2})\exp(-AR_*)
\end{align*}
when $R_* \le (1+z_{1-\alpha, n_2})^2/A$ and $A > 0$ is a universal constant. Finally, using the identity,
\begin{align*}
    z_{1-\alpha, n_2} = -z_{\alpha, n_2} \quad \text{and} \quad \Phi(z_{1-\alpha, n_2}) = 1- \Phi(z_{\alpha, n_2}),
\end{align*}
we conclude the result.

It remains to establish $R_2$. We use Lemma 2.2 and Theorem 2.8 of \citet{gine1997student}, asserts that stochastic boundedness of the self-normalized statistics implies sub-Gaussianity. Denote 
\begin{align*}
    c(2) = \sqrt{2}\left(\frac{4e}{3}+1\right)^2.
\end{align*}
For any $a$, Lemma 2.2  
\begin{align*}
    \mathbb{E}_{P^2}\left[\left|\frac{S_{n_2}}{V_{n_2}}\right|^2\bigg|D_1\right] &\le c(2) \left(\mathbb{E}_{P^2}\left[\left|\frac{S_{n_2}}{V_{n_2}}\right|\bigg|D_1\right]\right)^2\\
    &\le c(2) \left\{a + \sqrt{\mathbb{E}_{P^2}\left[\left(\frac{S_{n_2}}{V_{n_2}}\right)^2\bigg|D_1\right]}\sqrt{\mathbb{P}_{P^2}\left(\left|\frac{S_{n_2}}{V_{n_2}}\right|\ge a\bigg|D_1\right)}\right\}^2\\
    &\le 2c(2) \left\{a^2 + \mathbb{E}_{P^2}\left[\left(\frac{S_{n_2}}{V_{n_2}}\right)^2\bigg|D_1\right]\mathbb{P}_{P^2}\left(\left|\frac{S_{n_2}}{V_{n_2}}\right|\ge a\bigg|D_1\right)\right\}\\
    &\le 2c(2) \left\{a^2 + \mathbb{E}_{P^2}\left[\left(\frac{S_{n_2}}{V_{n_2}}\right)^2\bigg|D_1\right]\left(\mathbb{P}_{P^2}(|Z|\ge a|D_1) + C'R_*\right)\right\}
\end{align*}
where the first inequality follows from the equation 2.8 of \citet{gine1997student}, the second by H\"{o}lder's inequality, and the last follows by Corollary 1.2 of \citet{bentkus1996berry}, which we leveraged in \Cref{cor:studentized-IID}. Here, $C' > 0$ is a universal constant. Choose $a$ such that 
\begin{align*}
    \mathbb{P}_{P^2}(|Z|\ge a|D_1) = \frac{1}{4c(2)},
\end{align*}
and fix $a$ as a universal constant. Rearranging, we obtain
\begin{align*}
    &\mathbb{E}_{P^2}\left[\left|\frac{S_{n_2}}{V_{n_2}}\right|^2\bigg|D_1\right]  \le 2c(2)a^2 + \frac{1}{2}\mathbb{E}_{P^2}\left[\left|\frac{S_{n_2}}{V_{n_2}}\right|^2\bigg|D_1\right] + 2c(2)C'R_*\mathbb{E}_{P^2}\left[\left|\frac{S_{n_2}}{V_{n_2}}\right|^2\bigg|D_1\right] \\
    &\quad \Leftrightarrow \left(\frac{1}{2} - 2c(2)C'R_*\right)_+\mathbb{E}_{P^2}\left[\left|\frac{S_{n_2}}{V_{n_2}}\right|^2\bigg|D_1\right]  \le 2c(2)a^2\\
    &\quad \Leftrightarrow \mathbb{E}_{P^2}\left[\left|\frac{S_{n_2}}{V_{n_2}}\right|^2\bigg|D_1\right]  \le \frac{4c(2)a^2}{(1 - 4c(2)C'R_*)_+} =: M^2.
\end{align*}
By Cauchy-Schwarz, $\mathbb{E}_{P^2}\left[\left|S_{n_2}/{V_{n_2}}\right||D_1\right] \le M$, and thus $S_{n_2}/V_{n_2}$ is stochastically bounded. Theorem 2.5 of \citet{gine1997student} then gives, $t > 0$
\begin{align*}
    \mathbb{E}_{P^2}\!\left[\exp\!\left(t\left|\frac{S_{n_2}}{V_{n_2}}\right|\right)\bigg|D_1\right] \le 2\exp\!\left(2^{-1/2}c(2) M^2 t^2\right).
\end{align*}
When $R_* \le C'' := 1/(4c(2)C'R^*)$, the denominator is bounded away from zero, and thus $M^2$ is bounded by a universal constant. We conclude that 
\begin{align*}
    \mathbb{E}_{P^1}\left[\mathbb{P}_{P^2}\!\left(\frac{S_{n_2}}{V_{n_2}} \ge z_{\alpha,n_2}\bigg|D_1\right)\right]
    \le \min\left\{1, C'\exp(-z_{\alpha,n_2}^2)\mathbb{P}_{P^1}(R_* \le C'')\right\},
\end{align*}
which concludes the result. 
\end{proof}
\begin{proof}[\bfseries{Proof of \Cref{thm:large-deviation}}]
    The result is a direct application of Theorem 1 of \citet{wang1996asymptotics} to 
    \begin{align*}
        &\mathbb{P}_{P^N}\left(\theta(P^N)\not\in \widehat{\mathrm{CI}}^{\mathtt{CLT}}_{N, \alpha} |D_1 \right) = \mathbb{P}_{P^N}\left(\sum_{i\in I_2} \widehat\xi_i \ge n_2^{1/2}z_\alpha \widehat \sigma_{\theta(P^N), \widehat\theta_1} + n_2\widehat{\mathbb{C}}_2 |D_1\right) .
    \end{align*}
\end{proof}
\section{Proofs from \Cref{sec:convergence-rates}}\label{sec:proofs-convergence-rates}
\subsection{Proof of \Cref{thm:hulc-width}}\label{sec:hulc-width}
Any element $\theta \in \Theta$ in the confidence set $\widehat{\mathrm{CI}}^\dagger_{N}$, defined as \eqref{eq:anti-conservative-confidence-set}, satisfies the following:
	\begin{align*}
		&\widehat\M_2(\theta) - \widehat\M_2(\widehat\theta_1)\le 0\\
        &\quad \Longleftrightarrow \M_2(\theta)-\M_2(\theta(P^N))\\
        &\quad\quad \le -\left((\widehat\M_2-\M_2)(\theta) -(\widehat\M_2-\M_2)(\theta(P^N))\right)+\widehat\M_2(\widehat\theta_1) -\widehat\M_2 (\theta(P^N))
	\end{align*}
Hence, the confidence set \eqref{eq:anti-conservative-confidence-set} is contained as 
\begin{align*}
    \widehat{\mathrm{CI}}^\dagger_{N} &:= \left\{\theta \in \Theta\, :\, \widehat\M_2(\theta) - \widehat\M_2(\widehat\theta_1)\le 0\right\} \\
    &\subseteq \left\{\theta \in \Theta\, :\, \M_2(\theta) - \M_2(\widehat\theta_1) \right.\\
    &\quad\le\left.-\left((\widehat\M_2-\M_2)(\theta) -(\widehat\M_2-\M_2)(\theta(P^N))\right)+\widehat\M_2(\widehat\theta_1) -\widehat\M_2 (\theta(P^N))\right\}\\
    &\subseteq \bigg\{\theta \in \Theta\, :\, c_0\|\theta - \theta(P^N)\|^{1+\gamma}\\
    &\quad\le\left.\left|(\widehat\M_2-\M_2)(\theta) -(\widehat\M_2-\M_2)(\theta(P^N))\right|+|\widehat\M_2(\widehat\theta_1) -\widehat\M_2 (\theta(P^N))|\right\} =:\overline{\mathrm{CI}}^\dagger_{N},
\end{align*}
where we used \ref{as:margin}. Given $R \ge 0$, we consider the partition of the parameter space $\Theta$ into 
\begin{align*}
    B := \left\{\theta \in \Theta\, :\,\|\theta - \theta(P^N)\| \le R\right\} \quad \textrm{and}\quad 
    B^c := \left\{\theta \in \Theta\, :\,\|\theta - \theta(P^N)\| > R\right\}.
\end{align*}
In our case we choose 
\begin{align}\label{eq:radius-def}
    R = 2^{M} c_0^{-1/(1+\gamma)}\left(r_{n_1}^{2/(1+\gamma)} +s_{n_1,n_2}^{1/(1+\gamma)}\right),
\end{align}
where $M$ will be specified later. 
The goal is to show that the confidence set $\overline{\mathrm{CI}}^\dagger_{N}$ is contained in $B$ with high probability, which implies $\widehat{\mathrm{CI}}^\dagger_{N}$ is also contained in $B$. It is then equivalent to show that $\overline{\mathrm{CI}}^\dagger_{N}$ intersects with $B^c$ with small probability. It then follows that 
\begin{align*}
    \mathbb{P}^*_{P^N}(\overline{\mathrm{CI}}^\dagger_{N} \cap B^c) &= \mathbb{E}_{\widetilde P^1}\left[\mathbb{P}^*_{P^2}(\overline{\mathrm{CI}}^\dagger_{N} \cap B^c )\right] + \beta(r),
\end{align*}
where the notation $\widetilde D_1$ comes from coupling arguments as in \Cref{thm:coverage-anti-conservative-confidence-set}. It remains to evaluate the conditional probability. We have 
\begin{align*}
    &\mathbb{P}^*_{P^2}(\overline{\mathrm{CI}}^\dagger_{N} \cap B^c) \\
    &\quad \le \mathbb{P}^*_{P^2}\left(c_0\|\theta - \theta(P^N)\|^{1+\gamma} \le 2\left|(\widehat\M_2-\M_2)(\theta) -(\widehat\M_2-\M_2)(\theta(P^N))\right| \cap B^c|\widetilde D_1\right)\\
    &\quad\quad + \mathbb{P}^*_{P^2}\left(c_0\|\theta - \theta(P^N)\|^{1+\gamma} \le 2|\widehat\M_2(\widehat\theta_1) -\widehat\M_2 (\theta(P^N))| \cap B^c|\widetilde D_1\right)= \mathbf{I} + \mathbf{II}.
\end{align*}

The second term can be controlled by \ref{as:rate-initial-estimator} and the choice \eqref{eq:radius-def}. Conditioning on the event where 
\begin{align*}
    \Omega_{\mathtt{init}} :=\left\{ \mathbb{E}_{P^2}[|\widehat\M_2(\widehat\theta_1) -\widehat\M_2 (\theta(P^N))| \widetilde D_1] \le C_{\mathtt{init}}s_{n_1, n_2}\right\},
\end{align*}
it follows 
\begin{align}
    \mathbf{II} &= \mathbb{P}^*_{P^2}\left(c_0\|\theta - \theta(P^N)\|^{1+\gamma} \le 2|\widehat\M_2(\widehat\theta_1) -\widehat\M_2 (\theta(P^N))| \cap B^c|\widetilde D_1\right) + \mathbb{P}_{\widetilde P^1}(\Omega^c_{\mathtt{init}})\nonumber\\ &
    \le2\cdot 2^{-M(1+\gamma)}C_{\mathtt{init}} + \varepsilon_{\mathtt{init}}\nonumber
\end{align}
by Markov inequality. Moving onto the term $\mathbf{I}$, we first observe that 
\begin{align*}
    \mathbf{I} &=\mathbb{P}^*_{P^2}\left(c_0\|\theta - \theta(P^N)\|^{1+\gamma} \le 2\left|(\widehat\M_2-\M_2)(\theta) -(\widehat\M_2-\M_2)(\theta(P^N))\right|\right.\\
    &\quad\quad\quad\bigg. \mbox{ for } \|\theta-\theta(P^N)\| \ge \mathrm{R} \bigg) \\
     &\le \mathbb{P}^*_{P^2}\left(c_0\|\theta - \theta(P^N)\|^{1+\gamma} \le 2\left|(\widehat\M_2-\M_2)(\theta) -(\widehat\M_2-\M_2)(\theta(P^N))\right|\right.\\
    &\quad\quad\quad\bigg. \mbox{ for } \|\theta-\theta(P^N)\| \ge 2^{M} c_0^{-1/(1+\gamma)}r_{n_2}^{2/(1+\gamma)}  \bigg).
\end{align*}
We define the ``shell": 
\[S_j = \{\theta \in \Theta : 2^{j}c_0^{-1/(1+\gamma)}r_{n_2}^{2/(1+\gamma)} \le \|\theta-\theta(P)\| < 2^{j+1}c_0^{-1/(1+\gamma)}r_{n_2}^{2/(1+\gamma)}\}\]
for each $j  \in \{0\} \cup \mathbb{N}$. It then follows that 
\begin{align*}
    &\mathbb{P}^*_{P^2}\left(c_0\|\theta - \theta(P^N)\|^{1+\gamma} \le 2\left|(\widehat\M_2-\M_2)(\theta) -(\widehat\M_2-\M_2)(\theta(P^N))\right|\right.\\
    &\quad\quad\quad\bigg. \mbox{ for } \|\theta-\theta(P^N)\| \ge 2^{M/(1+\gamma)} c_0^{-1/(1+\gamma)}r_{n_2}^{2/(1+\gamma)}  \bigg)\\
    &\quad= \mathbb{P}^*_{P^2}\bigg(\exists\, (j \ge M, \theta \in S_j) \,:\, c_0\|\theta - \theta(P^N)\|^{1+\gamma}\\
    &\quad\quad \quad\quad\le \left. 2\left|(\widehat\M_2-\M_2)(\theta) -(\widehat\M_2-\M_2)(\theta(P^N))\right| \right) \\
    &\quad \le \sum_{j=M}^\infty \mathbb{P}^*_{P^2}\bigg(\exists\,  \theta \in S_j \,:\,c_0\|\theta - \theta(P^N)\|^{1+\gamma}\le  2\left|(\widehat\M_2-\M_2)(\theta) -(\widehat\M_2-\M_2)(\theta(P^N))\right|\bigg) \\
    & \quad\le \sum_{j=M}^\infty \mathbb{P}^*_{P^2}\bigg(2^{j(1+\gamma)}r_{n_2}^{2}\le  2\sup_{\theta \in S_j}\left|(\widehat\M_2-\M_2)(\theta) -(\widehat\M_2-\M_2)(\theta(P^N))\right|\bigg)\\
    & \quad\le 2\sum_{j=M}^\infty 2^{-j(1+\gamma)}r_{n_2}^{-2}  \mathbb{E}^*_{P^2}\bigg[\sup_{\theta \in S_j}\left|(\widehat\M_2-\M_2)(\theta) -(\widehat\M_2-\M_2)(\theta(P^N))\right|\bigg]\\
    & \quad \le 2\sum_{j=M}^\infty 2^{-j(1+\gamma)}r_{n_2}^{-2} \phi_{n_2}(2^{j+1} c_0^{-1/(1+\gamma)}r_{n_2}^{2/(1+\gamma)})\\
    & \quad \le 2\sum_{j=M}^\infty 2^{-j(1+\gamma)}2^{q(j+1)}r_{n_2}^{-2} \phi_{n_2}( c_0^{-1/(1+\gamma)}r_{n_2}^{2/(1+\gamma)})\\
    & \quad \le 2\sum_{j=M}^\infty 2^{-j(1+\gamma)}2^{q(j+1)}.
\end{align*} 
Now the last term can be written as 
\begin{align*}
    2\sum_{j=M}^\infty 2^{-j(1+\gamma)}2^{q(j+1)}  = 2C_{q,\gamma} 2^{-M(1+\gamma-q)} \quad \textrm{where} \quad C_{q,\gamma} = \frac{2^q}{1-2^{q-(1+\gamma)}}.
\end{align*}
where $C_{q,\gamma}$ is a constant only depending on $q$ and $\gamma$. Putting together we have 
\begin{align*}
&\mathbb{P}^*_{P^N}\left(\mathrm{Diam}_{\|\cdot\|}\big(\widehat{\mathrm{CI}}^{\dagger}_{N}\big)> 2^{M}c_0^{-1/(1+\gamma)}(r_{n_2}^{2/(1+\gamma)}  + s_{n_1,n_2}^{1/(1+\gamma)})\right) \\
&\quad \le 2\cdot C_{\mathtt{init}}2^{-M(1+\gamma)}+ 2\cdot C_{q,\gamma} 2^{-M(1+\gamma-q)} + \varepsilon_{\mathtt{init}}  + \beta(r)\\
&\quad \lesssim_{q,\gamma, C_{\mathtt{init}}}  2^{-M(1+\gamma-q)} + \varepsilon_{\mathtt{init}}  + \beta(r).
\end{align*}
We conclude the result by choosing $M$ to be 
\begin{align*}
    M = \frac{\log (\mathfrak{C}/\varepsilon)}{(1+\gamma-q) \cdot \log 2} \quad\textrm{and} \quad 2^{M} = \left(\frac{\mathfrak{C}}{\varepsilon}\right)^{1/(1+\gamma-q)},
\end{align*}
where $\mathfrak{C}=2(C_{\mathtt{init}}+C_{q,\gamma})$.
\subsection{Proof of \Cref{thm:clt-width}}\label{sup:proof-clt-width}
Denoting $m_{\theta}(Z_i)-m_{ \widehat{\theta}_1}(Z_i)=(m_{\theta}-m_{ \widehat{\theta}_1})(Z_i)$, we observe that
\begin{align*}
    \widehat \sigma^2_{\theta, \widehat\theta_1} &= \frac{n_2}{n_2-1}\left(\frac{1}{n_2}\sum_{i \in I_2} \{(m_{\theta}-m_{ \widehat{\theta}_1})(Z_i)\}^2 - \left(\frac{1}{n_2}\sum_{j \in I_2} \{(m_{\theta}-m_{ \widehat{\theta}_1})(Z_i)\right)^2\right) \\
    &\le \frac{2}{n_2}\sum_{i \in I_2} \{(m_{\theta}-m_{ \widehat{\theta}_1})(Z_i)\}^2.
\end{align*}
Hence we have 
\begin{align*}
    \widehat t_{\alpha}(\theta, \widehat \theta_1) &= z_\alpha n_2^{-1/2} \sqrt{\frac{2}{n_2}\sum_{i \in I_2} \{(m_{\theta}-m_{ \widehat{\theta}_1})(Z_i)\}^2} \\
    &\le z_\alpha n_2^{-1/2} \sqrt{\frac{2}{n_2}\sum_{i \in I_2} 2\{(m_{\theta}-m_{\theta(P^N)}(Z_i)\}^2 + 2\{(m_{\theta(P^N)}-m_{ \widehat{\theta}_1})(Z_i)\}^2}\\
    &\le 2z_\alpha n_2^{-1/2} \sqrt{\left|\frac{1}{n_2}\sum_{i \in I_2} \{(m_{\theta}-m_{ \theta(P^N)})(Z_i)\}^2 - \frac{1}{n_2}\sum_{i\in I_2}\E_{P_i}[\{(m_{\theta}-m_{\theta(P^N)})(Z)\}^2]\right|} \\
    &\quad + 2z_\alpha n_2^{-1/2}\sqrt{\left| \frac{1}{n_2}\sum_{i\in I_2}\E_{P_i}[\{(m_{\theta}-m_{ \theta(P^N)})(Z_i)\}^2]\right|} \\
    &\quad + 2z_\alpha n_2^{-1/2}\sqrt{\frac{1}{n_2}\sum_{i \in I_2} \{(m_{\theta(P^N)}-m_{ \widehat{\theta}_1})(Z_i)\}^2} \\
    &= \mathfrak{R}_1 + \mathfrak{R}_2 + \mathfrak{R}_3.
\end{align*}
Using this expression, we have 
\begin{align*}
    \widehat{\mathrm{CI}}^{\mathtt{CLT}}_{N, \alpha} &:= \left\{\theta \in \Theta\, :\, \widehat\M_2(\theta) - \widehat\M_2(\widehat\theta_1)\le \widehat t_{\alpha}(\theta, \widehat \theta_1)\right\} \\
    &\subseteq \bigg\{\theta \in \Theta\, :\, c_0\|\theta - \theta(P^N)\|^{1+\gamma}\\
    &\quad\le\left.\left|(\widehat\M_2-\M_2)(\theta) -(\widehat\M_2-\M_2)(\theta(P^N))\right|+|\widehat\M_2(\widehat\theta_1) -\widehat\M_2 (\theta(P^N))|\right.\\
    &\quad \quad + \mathfrak{R}_1 + \mathfrak{R}_2 + \mathfrak{R}_3\bigg\} =: \overline{\mathrm{CI}}^{\mathtt{CLT}}_{N, \alpha}.
\end{align*}
We can now use the same logic as the proof of \Cref{thm:hulc-width}. We then have 
\begin{align*}
    &\mathbb{P}^*_{P^2|\widetilde P^1}(\overline{\mathrm{CI}}^{\mathtt{CLT}}_{N, \alpha}  \cap B^c) \\
    &\quad \le \mathbb{P}^*_{P^2|\widetilde P^1}\left(c_0\|\theta - \theta(P^N)\|^{1+\gamma} \le 5\left|(\widehat\M_2-\M_2)(\theta) -(\widehat\M_2-\M_2)(\theta(P^N))\right| \cap B^c\right)\\
    &\quad\quad + \mathbb{P}^*_{P^2|\widetilde P^1}\left(c_0\|\theta - \theta(P^N)\|^{1+\gamma} \le 5|\widehat\M_2(\widehat\theta_1) -\widehat\M_2 (\theta(P^N))| \cap B^c\right)\\
    &\quad\quad + \mathbb{P}^*_{P^2|\widetilde P^1}\left(c_0\|\theta - \theta(P^N)\|^{1+\gamma} \le 5\mathfrak{R}_1 \cap B^c\right)\\
    &\quad\quad + \mathbb{P}^*_{P^2|\widetilde P^1}\left(c_0\|\theta - \theta(P^N)\|^{1+\gamma} \le 5\mathfrak{R}_2 \cap B^c\right)\\
    &\quad\quad + \mathbb{P}^*_{P^2|\widetilde P^1}\left(c_0\|\theta - \theta(P^N)\|^{1+\gamma} \le 5\mathfrak{R}_3 \cap B^c\right)= \mathbf{I} + \mathbf{II} + \mathbf{III} + \mathbf{IV} + \mathbf{V}.
\end{align*}
The terms $\mathbf{I}$ is already controlled in the proof of \Cref{thm:hulc-width}, such that $\mathbf{I} \le 5 \cdot C_{q,\gamma}2^{-M(1+\gamma-q)}$. For $\mathbf{II}$, using Markov inequality after squaring both sides, we get
\begin{align*}
    \mathbf{II} &= \mathbb{P}^*_{P^2|\widetilde P^1}\left(c_0\|\theta - \theta(P^N)\|^{1+\gamma} \le 5|\widehat\M_2(\widehat\theta_1) -\widehat\M_2 (\theta(P^N))| \cap B^c\right)\nonumber\\ 
    & =\mathbb{P}^*_{P^2|\widetilde P^1}\left(c_0^2\|\theta - \theta(P^N)\|^{2+2\gamma} \le 25|\widehat\M_2(\widehat\theta_1) -\widehat\M_2 (\theta(P^N))|^2 \cap B^c\right).
\end{align*}
We observe that 
\begin{align*}
    &\mathbb{E}_{P^2|\widetilde P^1}[|\widehat\M_2(\widehat\theta_1) -\widehat\M_2 (\theta(P^N))|^2]\\
    &\quad =\mathbb{E}_{P^2|\widetilde P^1}\left[\left|\frac{1}{n_2}\sum_{i \in I_2} (m_{\theta(P^N)}-m_{ \widehat{\theta}_1})(Z_i)\right|^2\right] \\
    &\quad= \mathbb{E}_{P^2|\widetilde P^1}\left[\left|\frac{1}{n_2}\sum_{i \in I_2} \widehat \xi_i\ \right|^2\right] + \left|\frac{1}{n_2}\sum_{i \in I_2}  \mathbb{E}_{P_i}[(m_{\theta(P^N)}-m_{ \widehat{\theta}_1})(Z_i) |D_1] \right|^2
    \\
    &\quad= \frac{1}{n_2}\mathbb{E}_{P^2|\widetilde P^1}\left[\frac{1}{n_2}\sum_{i \in I_2} \widehat \xi_i^2\right] + \left|\frac{1}{n_2}\sum_{i \in I_2}  \mathbb{E}_{P_i}[(m_{\theta(P^N)}-m_{ \widehat{\theta}_1})(Z_i) |D_1] \right|^2 \\
    &\quad = \frac{1}{n_2}\mathbb{E}_{P^2|\widetilde P^1}\left[\frac{1}{n_2}\sum_{i \in I_2} \widehat \xi_i^2\right] + \widehat{\mathbb{C}}_2^2
\end{align*}
where we used the fact that $\mathbb{E}_{P^2}[\widehat \xi_i\widehat \xi_j | D_1]=0$ due to independence and $\mathbb{E}_{P^2}[\widehat \xi_i| D_1] = 0$. Hence, conditioning on the event 
\begin{align*}
    \widetilde\Omega_{\mathtt{init}} := \left\{ \frac{1}{n_2}\mathbb{E}_{P^2|\widetilde P^1}\left[\frac{1}{n_2}\sum_{i \in I_2} \widehat \xi_i^2\right] + \widehat{\mathbb{C}}_2^2 \le \widetilde C_{\mathtt{init}}\widetilde s_{n_1, n_2}\right\},
\end{align*}
we have 
\begin{align*}
    \mathbf{II} \le &\mathbb{P}^*_{P^2|\widetilde P^1}\left(c_0^2\|\theta - \theta(P^N)\|^{2+2\gamma} \le 25|\widehat\M_2(\widehat\theta_1) -\widehat\M_2 (\theta(P^N))|^2 \cap B^c\right)\\
    & \le 25  \cdot\widetilde C_{\mathtt{init}} 2^{-M(1+\gamma)} + \mathbb{P}(\widetilde\Omega_{\mathtt{init}}^c)\le 25  \cdot\widetilde C_{\mathtt{init}} 2^{-M(1+\gamma)} + \widetilde\varepsilon_{\mathtt{init}}.
\end{align*}
For  $\mathbf{III}$, we follow the same chain of logic as $\mathbf{I}$ in the proof of \Cref{thm:hulc-width}, and 
\begin{align*}
    \mathbf{III} &\le \mathbb{P}^*_{P^2|\widetilde P^1}\left(c_0\|\theta - \theta(P^N)\|^{1+\gamma} \le 5\mathfrak{R}_1\right.\\
    &\quad\quad\quad\quad\bigg. \mbox{ for } \|\theta-\theta(P^N)\| \ge 2^{M} c_0^{-1/(1+\gamma)}u_{n_2}^{2/(1+\gamma)}  \bigg)\\
    &=\mathbb{P}^*_{P^2|\widetilde P^1}\left(c_0^2\|\theta - \theta(P^N)\|^{2+2\gamma} \le 25\mathfrak{R}_1^2\right.\\
    &\quad\quad\quad\quad\bigg. \mbox{ for } \|\theta-\theta(P^N)\| \ge 2^{M} c_0^{-1/(1+\gamma)}u_{n_2}^{2/(1+\gamma)}  \bigg)\\
    &\le 25 \cdot 4z_\alpha^2 n_2^{-1} \sum_{j=M}^\infty 2^{-2j(1+\gamma)} u_{n_2}^{-4}\omega^2_{n_2}(2^{j+1}c_0^{-1/(1+\gamma)}u_{n_2}^{2/(1+\gamma)})\\
    &\le 25 \cdot 4z_\alpha^2 n_2^{-1} \sum_{j=M}^\infty 2^{-2j(1+\gamma)} 2^{2q(j+1)}u_{n_2}^{-4}\omega^2_{n_2}(c_0^{-1/(1+\gamma)}u_{n_2}^{2/(1+\gamma)})\\
    &\le 25 \cdot 4z_\alpha^2 \sum_{j=M}^\infty 2^{-2j(1+\gamma)} 2^{2q(j+1)} \le 100 \cdot z_\alpha^2  \cdot C_{q,\gamma}^2 2^{-2M(1+\gamma-q)}. 
\end{align*}
The bound for $\mathbf{IV}$ is analogous. Finally for $\mathbf{V}$, observe that
\begin{align*}
    &n_2^{-1} \mathbb{E}_{P^2|\widetilde P^1}\left[\frac{1}{n_2}\sum_{i \in I_2} \{(m_{\theta(P^N)}-m_{ \widehat{\theta}_1})(Z_i)\}^2\right] \\
    &\quad = \frac{1}{n_2} \mathbb{E}_{P^2|\widetilde P^1}\left[\frac{1}{n_2}\sum_{i \in I_2} \widehat \xi_i^2\right] + \frac{1}{n_2}\mathbb{E}_{P^2|\widetilde P^1}\left[\frac{1}{n_2}\sum_{i \in I_2} \{\E_{P_i}[(m_{\theta(P^N)}-m_{ \widehat{\theta}_1})(Z_i)|D_1]\}^2\right]\\
    &\quad = \frac{1}{n_2} \mathbb{E}_{P^2|\widetilde P^1}\left[\frac{1}{n_2}\sum_{i \in I_2} \widehat \xi_i^2\right] + \frac{1}{n_2}\mathbb{E}_{P^2|\widetilde P^1}\left[\frac{1}{n_2}\sum_{i \in I_2} \mathbb{C}^2_i(\widehat{\theta}_1)\right]\le \widetilde s_{n_1, n_2}^{2}.
\end{align*}
Hence, on the event $\widetilde\Omega_{\mathtt{init}}$ this term is controlled as $\mathbf{II}$. Putting together, we have 
\begin{align*}
&\mathbb{P}^*_{P^N}\left(\mathrm{Diam}_{\|\cdot\|}\big(\widehat{\mathrm{CI}}^{\mathtt{CLT}}_{N, \alpha}\big)> 2^{M}c_0^{-1/(1+\gamma)}(r_{n_2}^{2/(1+\gamma)}  +u_{n_2}^{2/(1+\gamma)} +{s'}_{n_1,n_2}^{1/(1+\gamma)})\right)  \\
&\quad \lesssim C_{q,\gamma} 2^{-M(1+\gamma-q)} + \widetilde C_{\mathtt{init}}2^{-M(1+\gamma)} +  z_\alpha^2 C^2_{q,\gamma} 2^{-2M(1+\gamma-q)} + \widetilde C_{\mathtt{init}}  z_\alpha^2 2^{-2M(1+\gamma)} + \widetilde \varepsilon_{\mathtt{init}}\\
&\quad \le \mathfrak{C} (1+2^{-M(1+\gamma-q)}z_\alpha^2) 2^{-M(1+\gamma-q)} + \widetilde \varepsilon_{\mathtt{init}},
\end{align*}
where $\mathfrak{C}$ is a constant depending on $q,\gamma,\widetilde C_{\mathtt{init}}$. We conclude the claim by choosing $M$ to be 
\begin{align*}
    M = \frac{\log ((1+|z_\alpha|)\mathfrak{C}/\varepsilon)}{(1+\gamma-q) \cdot \log 2} \quad\textrm{and} \quad 2^{M} = \left(\frac{\mathfrak{C}(1+|z_\alpha|)}{\varepsilon}\right)^{1/(1+\gamma-q)}.
\end{align*}

\subsection{Proof of \Cref{thm:hulc-width-local}}\label{suppsec:hulc-width-local-proof}
\Cref{thm:hulc-width-local} is a simpler version of the following non-asymptotic bound.
\begin{theorem}\label{thm:hulc-width-local-full}
Assume \ref{as:margin} and \ref{as:local-entropy} hold for all $\|\theta - \theta(P^N)\| \le \rho$, and that \ref{as:rate-initial-estimator}, \ref{as:margin-global} and \ref{as:ratio-process} hold. Then for $n_2 \ge N_2$ where $N_2$ depends on $\varepsilon_{\mathtt{ratio}}$ and $\rho$, and for any $\varepsilon > 0$
\begin{align*}
\mathbb{P}^*_{P^N}\left(\mathrm{Diam}_{\|\cdot\|}\big(\widehat{\mathrm{CI}}^{\dagger}_{N}\big)\le C\max\{\mathrm{R}^{\dagger}_{N}, \mathrm{Q}^{\dagger}_{N}\mathbf{1}\{\mathrm{Q}^{\dagger}_{N} \ge \rho\}\}\right) \ge 1-\varepsilon^\circ-\beta(r), 
\end{align*}
where $\varepsilon^\circ = \varepsilon + \varepsilon_{\mathtt{init}} + \varepsilon_{\mathtt{ratio}}$,
\begin{align*}
    \mathrm{R}^{\dagger}_{N}:= c_0^{-1/(1+\gamma)}(r_{n_2}^{2/(1+\gamma)}  + s_{n_1, n_2}^{1/(1+\gamma)}) ,\quad\mathrm{Q}^{\dagger}_{N}:= C_\rho
^{-1}\left(s_{n_1, n_2}\right),
\end{align*}
and $C$ is a constant depending on $\gamma, q, C_{\mathtt{init}}, C_{\mathtt{ratio}}, g(\cdot)$ and $\varepsilon^\circ$. 
\end{theorem}
\begin{proof}[\bfseries{Proof of \Cref{thm:hulc-width-local-full}}]
    For $\rho > 0$, we define the partition of the parameter space as
\begin{align*}
    \Theta_{\rho} := \{\theta \in \Theta\, :\, \|\theta-\theta(P^N)\|\le \rho \}\quad \textrm{and} \quad \Theta_{\rho}^c :=  \Theta \setminus \Theta_{\rho}.
\end{align*}
The proof will proceed by analyzing the two disjoint partition of the confidence set:
\begin{align*}
    \widehat{\mathrm{CI}}^\dagger_{N} = (\widehat{\mathrm{CI}}^\dagger_{N} \,\cap \, \Theta_{\rho}) \,\cup\, (\widehat{\mathrm{CI}}^\dagger_{N} \,\cap\,\Theta_{\rho}^c). 
\end{align*}
The first set can be analyzed as the proof of \Cref{thm:hulc-width} assuming \ref{as:margin} and \ref{as:local-entropy}, but their respective requirements holding only on $\Theta_{\rho}$. This result concludes that $\widehat{\mathrm{CI}}^\dagger_{N} \,\cap \, \Theta_{\rho}$ is contained in the ball with radius $C_\varepsilon\mathrm{R}_N^\dagger$ with probability greater than $1-\varepsilon$.

We now turn our attention to $\widehat{\mathrm{CI}}^\dagger_{N} \,\cap\,\Theta_{\rho}^c$. Any element $\theta \in \Theta_\rho^c$ in the confidence set $\widehat{\mathrm{CI}}_{N}^\dagger$ satisfies the following:
    \begin{align*}
    &\widehat\M_2(\theta) - \widehat\M_2(\widehat\theta_1)\le 0\\
    &\quad \Longleftrightarrow \M_2(\theta)-\M_2(\theta(P^N))+\left((\widehat\M_2-\M_2)(\theta) -(\widehat\M_2-\M_2)(\theta(P^N))\right)\\
    &\quad\quad \le \widehat\M_2(\widehat\theta_1) -\widehat\M_2 (\theta(P^N))\\
    &\quad \Longleftrightarrow \M_2(\theta)-\M_2(\theta(P^N))\left(1+\frac{(\widehat\M_2-\M_2)(\theta) -(\widehat\M_2-\M_2)(\theta(P^N))}{\M_2(\theta)-\M_2(\theta(P^N))}\right)\\
    &\quad\quad \le \widehat\M_2(\widehat\theta_1) -\widehat\M_2 (\theta(P^N))\\
    &\quad \Longrightarrow C_\rho(\|\theta-\theta(P^N)\|)\left(1-\sup_{\|\theta-\theta(P^N)\| > \rho}\left|\frac{(\widehat\M_2-\M_2)(\theta) -(\widehat\M_2-\M_2)(\theta(P^N))}{\M_2(\theta)-\M_2(\theta(P^N))}\right|\right)_+\\
    &\quad\quad \le |\widehat\M_2(\widehat\theta_1) -\widehat\M_2 (\theta(P^N))|,
\end{align*}
where we used \ref{as:margin-global} in the last step. We define the event
\begin{align*}
    \Omega_{\rho} := \left\{\sup_{\|\theta-\theta(P^N)\| > \rho}\left|\frac{(\widehat\M_2-\M_2)(\theta) -(\widehat\M_2-\M_2)(\theta(P^N))}{\M_2(\theta)-\M_2(\theta(P^N))}\right| \le 1/2\right\}.
\end{align*}
Under \ref{as:ratio-process} for fixed $\varepsilon_{\mathtt{ratio}} > 0$, we choose $N_2$ large enough such that for all $n_2 \ge N_2$, $C_{\mathtt{ratio}} R(n_2, \rho) \le 1/2$. Such $N_2$ exists in view of the limiting nature of $R(n_2, \rho)$. With such a choice of $N_2$, we have $\mathbb{P}_{P^2}( \Omega_{\rho}^c) \le \varepsilon_{\mathtt{ratio}}$. On the event $\Omega_{\rho}$, the confidence set satisfies the inclusion:
\begin{align*}
    \widehat{\mathrm{CI}}_{N}^\dagger \cap \Theta_{\rho}^c\subseteq \left\{\theta \in \Theta_{\rho}^c\, :\,C_\rho(\|\theta-\theta(P^N)\|)\le 2|\widehat\M_2(\widehat\theta_1) -\widehat\M_2 (\theta(P^N))| \right\}
\end{align*}
with probability greater than $1-\varepsilon_{\mathtt{ratio}}$. Furthermore, by Markov inequality, we have
\begin{align*}
    \mathbb{P}_{P^2}\left(|\widehat\M_2(\widehat\theta_1) -\widehat\M_2 (\theta(P^N))| \ge \frac{\E_{P^2}[|\widehat\M_2(\widehat\theta_1) -\widehat\M_2 (\theta(P^N))||D_1]}{\varepsilon} \bigg|D_1\right) \le \varepsilon.
\end{align*}
Hence, $|\widehat\M_2(\widehat\theta_1) -\widehat\M_2 (\theta(P^N))| \le \varepsilon^{-1}\E_{P^2}[|\widehat\M_2(\widehat\theta_1) -\widehat\M_2 (\theta(P^N))|]$ in probability greater than $1-\varepsilon$. Note that this is further bounded by $C_{\mathtt{init}}s_{n_1, n_2}$ in view of \ref{as:rate-initial-estimator}. By the fact that $\theta(P^N)$ is a unique solution, $C_\rho$ is an increasing function. Hence, 
\begin{align}\label{eq:ci-rho-complement-ball} 
    \widehat{\mathrm{CI}}_{N}^\dagger \cap \Theta_{\rho}^c\subseteq \left\{\theta \in \Theta_{\rho}^c\, :\,\|\theta-\theta(P^N)\|\le C_\rho^{-1}\big((2/\varepsilon)C_{\mathtt{init}}s_{n_1, n_2}\big) \right\},
\end{align}
with probability greater than $1-\varepsilon-\varepsilon_{\mathtt{init}}$. When the upper bound in \eqref{eq:ci-rho-complement-ball} becomes smaller than $\rho$, the right-hand side of the set inclusion becomes an empty set. Hence, we can safely replace the upper bound with 
\begin{align*}
    \mathrm{Q}_N^\dagger \mathbf{1}\{\mathrm{Q}_N^\dagger \ge \rho\} \quad \text{where} \quad \mathrm{Q}_N^\dagger = C_\rho^{-1}((2/\varepsilon)C_{\mathtt{init}}s_{n_1, n_2}).
\end{align*}
Putting together, we have 
\begin{align*}
     \widehat{\mathrm{CI}}_{N}^\dagger \cap \Theta_{\rho}&\subseteq \left\{\theta \in \Theta\, :\,\|\theta-\theta(P^N)\| \le \min(C_{\varepsilon_1}\mathrm{R}_{N}^\dagger, \rho) \right\} \quad \text{with prob. grt. than} \quad 1-\varepsilon_1\\
     \widehat{\mathrm{CI}}_{N}^\dagger \cap \Theta_{\rho}^c &\subseteq \left\{\theta \in \Theta\, :\,\|\theta-\theta(P^N)\| \le \mathrm{Q}_N^\dagger \mathbf{1}\{\mathrm{Q}_N^\dagger \ge \rho\} \right\} \quad \text{with prob. grt. than} \quad 1-\varepsilon_2.
\end{align*}
Hence we conclude that 
\begin{align*}
     \widehat{\mathrm{CI}}_{N}^\dagger &\subseteq \left\{\theta \in \Theta\, :\,\|\theta-\theta(P^N)\| \le C_{\varepsilon_1, \varepsilon_2}\max(\mathrm{R}_{N}^\dagger, \mathrm{Q}_N^\dagger \mathbf{1}\{\mathrm{Q}_N^\dagger \ge \rho\})\right\}
\end{align*}
with probability greater than $1-\varepsilon_1 - \varepsilon_2$ where the constant $C_{\varepsilon_1, \varepsilon_2}$ depends on $\varepsilon_1, \varepsilon_2$.
\end{proof}

\subsection{Proof of \Cref{thm:clt-width-local}}\label{suppsec:clt-width-local-proof}
\Cref{thm:clt-width-local} is a simpler version of the following non-asymptotic bound.
\begin{theorem}\label{thm:clt-width-local-full}
    Assume $Z_1,\ldots, Z_N$ are independent, \ref{as:margin}, \ref{as:local-entropy}, \ref{as:square-process} hold for all $\|\theta - \theta(P^N)\| \le \rho$ and that \ref{as:rate-initial-estimator3},  \ref{as:margin-global}, \ref{as:ratio-process}, and \ref{as:ratio-square-process} hold.  Then for $n_2 \ge N_2$ where $N_2$ depends on $\varepsilon_{\mathtt{ratio}},  \varepsilon_{\mathtt{emp}}$ and $\rho$, and for any $\varepsilon > 0$
\begin{align*}
\mathbb{P}^*_{P^N}\left(\mathrm{Diam}_{\|\cdot\|}\big(\widehat{\mathrm{CI}}^{\mathtt{CLT}}_{N, \alpha}\big)\le C\max\{(1+|z_\alpha|)^{1/(1+\gamma-q)}\mathrm{R}^{\mathtt{CLT}}_{N}, \mathrm{Q}^{\mathtt{CLT}}_{N,\alpha}\mathbf{1}\{\mathrm{Q}^{\mathtt{CLT}}_{N,\alpha} \ge \rho\}\}\right) \ge 1-\varepsilon^\circ, 
\end{align*}
where $\varepsilon^\circ = \varepsilon + \widetilde\varepsilon_{\mathtt{init}} + \varepsilon_{\mathtt{ratio}} + \varepsilon_{\mathtt{emp}}$,
\begin{align*}
    \mathrm{R}_N^{\mathtt{CLT}} = c_0^{-1/(1+\gamma)}(r_{n_2}^{2/(1+\gamma)}  + u_{n_2}^{2/(1+\gamma)}+\widetilde s_{n_1, n_2}^{1/(1+\gamma)}),\quad\mathrm{Q}^{\mathtt{CLT}}_{N,\alpha}:= C_\rho
^{-1}\left((1+|z_\alpha|)\widetilde s_{n_1, n_2}\right),
\end{align*}
and $C$ is a constant depending on $\gamma, q, \widetilde C_{\mathtt{init}}, C_{\mathtt{ratio}}, \widetilde C_{\mathtt{emp}}, g(\cdot)$ and $\varepsilon^\circ$. 
\end{theorem}
\begin{proof}[\bfseries{Proof of \Cref{thm:clt-width-local-full}}]
  The general proof is analogous to that of \Cref{thm:hulc-width-local-full}. In the proof of \Cref{thm:clt-width}, we have established that 
    \begin{align*}
    \widehat t_{\alpha}(\theta, \widehat \theta_1) &\le 2z_\alpha n_2^{-1/2} \sqrt{\left|\frac{1}{n_2}\sum_{i \in I_2} \{(m_{\theta}-m_{ \theta(P^N)})(Z_i)\}^2 - \frac{1}{n_2}\sum_{i\in I_2}\E_{P_i}[\{(m_{\theta}-m_{\theta(P^N)})(Z)\}^2]\right|} \\
    &\quad + 2z_\alpha n_2^{-1/2}\sqrt{\left| \frac{1}{n_2}\sum_{i\in I_2}\E_{P_i}[\{(m_{\theta}-m_{ \theta(P^N)})(Z_i)\}^2]\right|} \\
    &\quad + 2z_\alpha n_2^{-1/2}\sqrt{\frac{1}{n_2}\sum_{i \in I_2} \{(m_{\theta(P^N)}-m_{ \widehat{\theta}_1})(Z_i)\}^2}.
\end{align*}
Any element $\theta \in \Theta_\rho^c$ in the confidence set $\widehat{\mathrm{CI}}_{N,\alpha}^{\mathtt{CLT}}$ satisfies the following:
    \begin{align*}
    &\widehat\M_2(\theta) - \widehat\M_2(\widehat\theta_1)\le \widehat t_{\alpha}(\theta, \widehat \theta_1)\\
    &\quad \Longrightarrow \left\{\M_2(\theta)-\M_2(\theta(P^N))\right\}\left(1-\mathfrak{R}_{\rho, 1} -\mathfrak{R}_{\rho, 2}-\mathfrak{R}_{\rho, 3}  \right)_+\\
    &\quad\quad \le |\widehat\M_2(\widehat\theta_1) -\widehat\M_2 (\theta(P^N))| + 2z_\alpha n_2^{-1/2}\sqrt{\frac{1}{n_2}\sum_{i \in I_2} \{(m_{\theta(P^N)}-m_{ \widehat{\theta}_1})(Z_i)\}^2},
\end{align*}
where 
\begin{align*}
    \mathfrak{R}_{\rho, 1} &= \sup_{\|\theta-\theta(P^N)\| > \rho}\left|\frac{(\widehat\M_2-\M_2)(\theta) -(\widehat\M_2-\M_2)(\theta(P^N))}{\M_2(\theta)-\M_2(\theta(P^N))}\right| \\
    \mathfrak{R}_{\rho, 2} &= 2\sup_{\|\theta-\theta(P^N)\| > \rho}\sqrt{\left|\frac{z_\alpha ^2\sum_{i\in I_2}(m_\theta - m_{\theta(P^N)})^2 - \mathbb{E}_{P_i}[(m_\theta - m_{\theta(P^N)})^2]}{n_2^2\{\M_2(\theta)-\M_2(\theta(P^N))\}^2}\right| },\quad \textrm{and}\\
    \mathfrak{R}_{\rho, 3} &=2\sup_{\|\theta-\theta(P^N)\| > \rho}\sqrt{\frac{z_\alpha^2\sum_{i\in I_2}\mathbb{E}_{P_i}[(m_\theta - m_{\theta(P^N)})^2(Z_i)]}{n_2^2\{\M_2(\theta)-\M_2(\theta(P^N))\}^2}}.
\end{align*}
We define the events
\begin{align*}
    \Omega_{\rho,1} &:= \left\{\sup_{\|\theta-\theta(P^N)\| > \rho}\left|\frac{(\widehat\M_2-\M_2)(\theta) -(\widehat\M_2-\M_2)(\theta(P^N))}{\M_2(\theta)-\M_2(\theta(P^N))}\right| \le 1/6\right\}, \quad \textrm{and}\\
    \Omega_{\rho,2} &:= \left\{2\sup_{\|\theta-\theta(P^N)\| > \rho}\sqrt{\left|\frac{z_\alpha^2\sum_{i\in I_2}(m_\theta - m_{\theta(P^N)})^2 - \mathbb{E}_{P_i}[(m_\theta - m_{\theta(P^N)})^2]}{n_2^2\{\M_2(\theta)-\M_2(\theta(P^N))\}^2}\right| } \le 1/6\right\}.
\end{align*}
Under \ref{as:ratio-process} and \ref{as:ratio-square-process}, for fixed $\varepsilon_{\mathtt{ratio}}+\varepsilon_{\mathtt{emp}} > 0$, we choose $N_2$ large enough such that for all $n_2 \ge N_2$ 
\begin{align*}
    C_{\mathtt{ratio}}R(n_2, \rho) \le \frac{1}{6}, \quad C_{\mathtt{emp}}S_{\mathtt{emp}}(n_2, \rho, \alpha) \le \frac{1}{(2\cdot 6)^2} \quad \text{and} \quad S_{\mathtt{pop}}(n_2, \rho, \alpha) \le \frac{1}{(2\cdot 6)^2}.
\end{align*}
Such $N_2$ exists by the fact that $R(n_2, \rho)$, $S_{\mathtt{emp}}(n_2, \rho, \alpha)$ and $S_{\mathtt{pop}}(n_2, \rho, \alpha)$ all tend to zero. With such choice of $n_2$, we have 
\begin{align*}
    \left(1-\mathfrak{R}_{\rho, 1} -\mathfrak{R}_{\rho, 2}-\mathfrak{R}_{\rho, 3}  \right)_+ \ge 1/2
\end{align*}
with probability greater than $1-\varepsilon_{\mathtt{ratio}}-\varepsilon_{\mathtt{emp}}$.  Furthermore, we have established in the proof of \Cref{thm:clt-width} that under \ref{as:rate-initial-estimator3}, 
\begin{align*}
    |\widehat\M_2(\widehat\theta_1) -\widehat\M_2 (\theta(P^N))|^2 + 2z_\alpha^2 \left(\frac{1}{n_2^2}\sum_{i \in I_2} \{(m_{\theta(P^N)}-m_{ \widehat{\theta}_1})(Z_i)\}^2\right)^2 \lesssim \widetilde C_{\mathtt{init}}(1+z_\alpha^2)\widetilde s_{n_1, n_2}^2,
\end{align*}
with probability greater than $1-\widetilde\varepsilon_{\mathtt{init}}$. The results thus far imply that with probability greater than $1-\varepsilon_{\mathtt{ratio}}-\varepsilon_{\mathtt{emp}}-\widetilde\varepsilon_{\mathtt{init}}$,
\begin{align*}
    &\left\{2^{-1}\M_2(\theta)-\M_2(\theta(P^N))\right\}^2 \lesssim C_{\mathtt{init}}(1+z_\alpha^2)\widetilde s_{n_1, n_2}^2\\
    &\quad \Rightarrow  \left\{2^{-1}\M_2(\theta)-\M_2(\theta(P^N))\right\} \lesssim \sqrt{C_{\mathtt{init}}}(1+|z_\alpha|)\widetilde s_{n_1, n_2}\\
    &\quad \Rightarrow  C_\rho(\|\theta - \theta(P^N)\|) \lesssim \sqrt{C_{\mathtt{init}}}(1+|z_\alpha|)\widetilde s_{n_1, n_2}\\
    &\quad \Rightarrow  \|\theta - \theta(P^N)\| \lesssim C^{-1}_\rho(\sqrt{C_{\mathtt{init}}}(1+|z_\alpha|)\widetilde s_{n_1, n_2})\\
    &\quad \Rightarrow  \|\theta - \theta(P^N)\| \lesssim g(\sqrt{C_{\mathtt{init}}})C^{-1}_\rho((1+|z_\alpha|)\widetilde s_{n_1, n_2}).
\end{align*}
This concludes the claim.
\end{proof}

\subsection{Additional Results on Convergence Rates}\label{suppsec:addition-diameter}
A diameter bound is derived for the confidence set based on the lower confidence bound construction \eqref{eq:CI-without-upper-bound}. Validity of this confidence set in its general form was established under no structural assumptions on the optimization problem in \Cref{thm:general_LCB}. The following two additional conditions are needed.
\begin{enumerate}[label=\textbf{(C\arabic*)},leftmargin=2cm]
\item \label{as:general-t-entropy}There exists a function $\psi_{n_2} : \mathbb{R}_+ \mapsto \mathbb{R}_+$ such that 
    \begin{align}
    	\sup_{\eta \in \Theta}\, \E^*_{P^2} \left[\sup_{\|\theta-\theta(P^N)\| < \delta}|\widehat t_{\alpha}(\theta, \eta)-\widehat t_{\alpha}(\theta(P^N), \eta)\right] \le  \psi_{n_2}(\delta)\label{eq:general-t-modulus}
    \end{align}
    for every $n_2\ge 1$ and $\delta > 0$, and $\psi_{n_2}(x)/x^q$ is assumed non-increasing for some $q < 1+\gamma$.
    \item \label{as:rate-initial-estimator2} 
    For every $n_1, n_2 \ge 1$ and $\varepsilon'_{\mathtt{init}} > 0$, the initial estimator based on $D_1$ satisfies
    \begin{align}
        \mathbb{P}_{P^1}\!\left(
    \mathbb{E}_{P^2}[
    |\widehat t_{\alpha}(\theta(P^N), \widehat \theta_1)| \,|\, D_1
    ] \geq C'_{\mathtt{init}} \cdot s'_{n_1, n_2}
    \right) \leq \varepsilon'_{\mathtt{init}},
    \end{align}
    where $s'_{n_1, n_2}, C'_{\mathtt{init}}$ are non-negative constants. 
\end{enumerate}
Condition \ref{as:general-t-entropy} is the counterpart of \ref{as:local-entropy}, stated for the modulus for $\widehat t_{\alpha}(\theta, \eta)-\widehat t_{\alpha}(\theta(P^N), \eta)$ uniformly over $\eta \in \Theta$. Condition \ref{as:rate-initial-estimator2} essentially quantifies the rate of convergence of $\widehat\theta_1$.
\begin{theorem}\label{thm:LCB-general-width}Assume $\theta(P^N)$ is the unique solution of \eqref{eq:def-m-functional} that satisfies \ref{as:margin}--\ref{as:rate-initial-estimator2}. Define $u_{n_2}$ as any value that satisfies
\begin{align}
    u_{n_2}^{-2} \psi_{n_2}(c_0^{-1/(1+\gamma)}u_{n_2}^{2/(1+\gamma)}) \le 1,
\end{align}
and define $r_{n_2}$ as in \eqref{rq:define-rn}. 
Then, for any $n_1, n_2\ge 1$ and $\varepsilon > 0$, 
\begin{align*}
&\mathbb{P}^*_{P^N}\left(\mathrm{Diam}_{\|\cdot\|}\big(\widehat{\mathrm{CI}}^{\mathtt{LCB}}_{N, \alpha}\big)\le C\varepsilon^{-1/(1+\gamma-q)}\mathrm{R}_N^{\mathtt{LCB}})\right)\ge 1-\varepsilon-\varepsilon_{\mathtt{init}}-\varepsilon'_{\mathtt{init}}-\beta(r),
\end{align*}
where 
\begin{align}
    \mathrm{R}_N^{\mathtt{LCB}} = c_0^{-1/(1+\gamma)}(r_{n_2}^{2/(1+\gamma)} +u_{n_2}^{2/(1+\gamma)} + s_{n_1, n_2}^{1/(1+\gamma)}+{s'}_{n_1, n_2}^{1/(1+\gamma)}),\nonumber
\end{align}
and $C$ is a constant depending on $\gamma, q, C_{\mathtt{init}}$, and $C'_{\mathtt{init}}$.  
\end{theorem}
The proof is structurally identical to that of \Cref{thm:hulc-width} and we provide a general argument while highlighting the difference from \Cref{thm:hulc-width}. As with \Cref{thm:hulc-width} this result is stated for general stochastic optimization problems and  imposes no particular form on $\widehat t_{\alpha}(\cdot, \cdot)$, provided \ref{as:general-t-entropy} and \ref{as:rate-initial-estimator2} can be verified.
\begin{proof}[\bfseries{Proof of \Cref{thm:LCB-general-width}}]
    The proof is structurally identical to that of \Cref{thm:hulc-width}. As such, we only highlight the differences from \Cref{thm:hulc-width}. First, we define the superset of $\widehat{\mathrm{CI}}^{\mathtt{LCB}}_{N, \alpha}$ as follows:
\begin{align*}
    \widehat{\mathrm{CI}}^{\mathtt{LCB}}_{N, \alpha} &:= \left\{\theta \in \Theta\, :\, \widehat\M_2(\theta) - \widehat\M_2(\widehat\theta_1)\le \widehat t_{\alpha}(\theta, \widehat \theta_1)\right\} \\
    &\subseteq \bigg\{\theta \in \Theta\, :\, c_0\|\theta - \theta(P^N)\|^{1+\gamma}\\
    &\quad\le\left.\left|(\widehat\M_2-\M_2)(\theta) -(\widehat\M_2-\M_2)(\theta(P^N))\right|+|\widehat\M_2(\widehat\theta_1) -\widehat\M_2 (\theta(P^N))|\right.\\
    &\quad \quad + |\widehat t_{\alpha}(\theta, \widehat \theta_1)-\widehat t_{\alpha}(\theta(P^N), \widehat \theta_1)|+|\widehat t_{\alpha}(\theta(P^N), \widehat \theta_1)|\bigg\} =: \overline{\mathrm{CI}}^{\mathtt{LCB}}_{N, \alpha} 
\end{align*}
We then define $B$ and $B^c$ but now with 
\begin{align}
    R = 2^{M} c_0^{-1/(1+\gamma)}\left(r_{n_1}^{2/(1+\gamma)} +r_{n_1}^{2/(1+\gamma)} +s_{n_1,n_2}^{1/(1+\gamma)} + {s'}_{n_1,n_2}^{1/(1+\gamma)}\right)\nonumber.
\end{align}
We then have 
\begin{align*}
    &\mathbb{P}^*_{P^2|\widetilde P^1}(\overline{\mathrm{CI}}^{\mathtt{LCB}}_{N, \alpha}  \cap B^c) \\
    &\quad \le \mathbb{P}^*_{P^2|\widetilde P^1}\left(c_0\|\theta - \theta(P^N)\|^{1+\gamma} \le 4\left|(\widehat\M_2-\M_2)(\theta) -(\widehat\M_2-\M_2)(\theta(P^N))\right| \cap B^c\right)\\
    &\quad\quad + \mathbb{P}^*_{P^2|\widetilde P^1}\left(c_0\|\theta - \theta(P^N)\|^{1+\gamma} \le 4|\widehat\M_2(\widehat\theta_1) -\widehat\M_2 (\theta(P^N))| \cap B^c\right)\\
    &\quad\quad + \mathbb{P}^*_{P^2|\widetilde P^1}\left(c_0\|\theta - \theta(P^N)\|^{1+\gamma} \le 4|\widehat t_{\alpha}(\theta, \widehat \theta_1)-\widehat t_{\alpha}(\theta(P^N), \widehat \theta_1)| \cap B^c\right)\\
    &\quad\quad + \mathbb{P}^*_{P^2|\widetilde P^1}\left(c_0\|\theta - \theta(P^N)\|^{1+\gamma} \le 4|\widehat t_{\alpha}(\theta(P^N), \widehat \theta_1)| \cap B^c\right)= \mathbf{I} + \mathbf{II} + \mathbf{III} + \mathbf{IV}.
\end{align*}
The first two terms are already controlled in the proof of \Cref{thm:hulc-width}, and we have 
\begin{align*}
    \mathbf{I} \le \frac{4C_{q,\gamma}}{2^{M(1+\gamma-q)}}\quad\textrm{and} \quad \mathbf{II} \le \frac{4C_{\mathtt{init}}}{2^{M(1+\gamma)}} + \varepsilon_{\mathtt{init}}.
\end{align*}
The other two follow analogously. Following the peeling and Markov inequality as in 
\begin{align*}
    \mathbf{III}& \le \mathbb{P}^*_{P^2|\widetilde P^1}\left(c_0\|\theta - \theta(P^N)\|^{1+\gamma} \le 4\left|\widehat t_{\alpha}(\theta, \widehat \theta_1)-\widehat t_{\alpha}(\theta(P^N), \widehat \theta_1)\right|\right.\\
    &\quad\quad\quad\quad\bigg. \mbox{ for } \|\theta-\theta(P^N)\| \ge 2^{M} c_0^{-1/(1+\gamma)}u_{n_2}^{2/(1+\gamma)}  \bigg)\\
    &\le 4\sum_{j=M}^\infty 2^{-j(1+\gamma)}2^{q(j+1)} = \frac{4C_{q,\gamma}}{2^{M(1+\gamma-q)}}.
\end{align*} 
Similarly, condition on the event where 
\begin{align*}
    \Omega'_{\mathtt{init}} := \left\{ \mathbb{E}_{P^2}[|\widehat t_{\alpha}(\theta(P^N), \widehat \theta_1)| \widetilde D_1] \le C'_{\mathtt{init}}s'_{n_1, n_2}\right\},
\end{align*}
we obtain
\begin{align}
    \mathbf{IV} &= \mathbb{P}_{P^2|\widetilde P^1}\left(c_0\|\theta - \theta(P^N)\|^{1+\gamma} \le 2|\widehat t_{\alpha}(\theta(P^N), \widehat \theta_1)| \cap B^c\right) \nonumber\\ &
    \le4\cdot 2^{-M(1+\gamma)}{s'}^{-1}_{n_1,n_2}\mathbb{E}_{P^2|\widetilde P^1}[|\widehat t_{\alpha}(\theta(P^N), \widehat \theta_1)|]+ \mathbb{P}_{\widetilde P^1}({\Omega'}_{\mathtt{init}}^c)\le \frac{4C'_{\mathtt{init}}}{2^{M(1+\gamma)}} + \varepsilon'_{\mathtt{init}}\nonumber
\end{align}
by Markov inequality. Hence, we conclude 
\begin{align*}
&\mathbb{P}^*_{P^N}\left(\mathrm{Diam}_{\|\cdot\|}\big(\widehat{\mathrm{CI}}^{\mathtt{LCB}}_{N, \alpha}\big)> 2^{M}c_0^{-1/(1+\gamma)}(r_{n_2}^{2/(1+\gamma)}  +u_{n_2}^{2/(1+\gamma)} +s_{n_1,n_2}^{1/(1+\gamma)}+{s'}_{n_1,n_2}^{1/(1+\gamma)})\right) \\
&\quad \le 4\cdot (C_\mathtt{init}+C'_\mathtt{init})2^{-M(1+\gamma)}+ 8\cdot C_{q,\gamma} 2^{-M(1+\gamma-q)} +\varepsilon_{\mathtt{init}}+\varepsilon'_{\mathtt{init}}+ \beta(r)\\
&\quad \lesssim_{q,\gamma, C_\mathtt{init},C'_\mathtt{init}}  2^{-M(1+\gamma-q)} +\varepsilon_{\mathtt{init}}+\varepsilon'_{\mathtt{init}}+ \beta(r).
\end{align*}
We can choose the same $M$ as the proof of \Cref{thm:hulc-width} except now we have $\mathfrak{C} = 4(C_\mathtt{init}+C'_\mathtt{init}+2C_{q,\gamma})$.
\end{proof}

\section{Proofs from \Cref{sec:computation}}
\subsection{Proof of \Cref{thm:geometry-mean}}
The key identity, obtained by expanding both squared norms, is
\begin{equation}\label{eq:mean-expansion}
\frac{1}{n_2}\sum_{i \in I_2} \|Z_i - \theta\|_2^2 - \|Z_i - \widehat{\theta}_1\|_2^2 = \|\widehat{\theta}_1 - \theta\|_2^2 + 2\widehat{H}^\top(\widehat{\theta}_1 - \theta).
\end{equation}
Two cases can be proved analogously, and we show the case (2). Write out the set as
    \begin{align*}
    \widehat{\mathrm{CI}}_{N, \alpha}^{\mathtt{CLT}, 2}  &= \left\{\theta\in\mathbb{R}^d:\, \frac{1}{n_2}\sum_{i \in I_2} \|Z_i - \theta\|_2^2 - \|Z_i - \widehat\theta_1\|_2^2\le -\|\widehat\theta_1 -\theta\|_2^2 + n_2^{-1/2}z_\alpha  \widehat \sigma_{\widehat{\theta}_3, \widehat\theta_1}\right\} \\
    &= \left\{\theta\in\mathbb{R}^d:\, \|\widehat{\theta}_1 - \theta\|_2^2 + 2\widehat{H}^\top(\widehat{\theta}_1 - \theta)\le -\|\widehat\theta_1 -\theta\|_2^2 + n_2^{-1/2}z_\alpha  \widehat \sigma_{\widehat{\theta}_3, \widehat\theta_1}\right\} \\
         &= \left\{\theta\in\mathbb{R}^d:\, \widehat H (\widehat\theta_1 - \theta) + \|\widehat\theta_1 -\theta\|_2^2\le n_2^{-1/2}z_\alpha  \widehat \sigma_{\widehat{\theta}_3, \widehat\theta_1}/2\right\}\\
         &= \left\{\theta\in\mathbb{R}^d:\, (\widehat\theta_1 - \theta + \widehat H/2)^\top(\widehat\theta_1 - \theta + \widehat H/2)\le \widehat H ^\top \widehat H/4 + n_2^{-1/2}z_\alpha  \widehat \sigma_{\widehat{\theta}_3, \widehat\theta_1}/2\right\},
    \end{align*}
    which is an $\mathbb{R}^d$-ball with center given by $\widehat\theta_1 + \widehat H/2 = (\overline Z_2+\widehat\theta_1)/2$, and the radius given by $(\|\widehat H\|^2_2/4 + n_2^{-1/2}z_\alpha  \widehat \sigma_{\widehat{\theta}_3, \widehat\theta_1}/2)^{1/2}$.

\subsection{Proof of \Cref{thm:geometry-LR}}
We begin by writing out
\begin{equation}\label{eq:LR-expansion}
\frac{1}{n_2}\sum_{i \in I_2} (Y_i - \theta^\top X_i)^2 - (Y_i - \widehat\theta_1^\top X_i)^2 = (\widehat{\theta}_1 - \theta)^\top \widehat\Gamma (\widehat{\theta}_1 - \theta) + 2\widehat{\Lambda}^\top(\widehat{\theta}_1 - \theta),
\end{equation}
where we denote 
\begin{align*}
    \widehat \Lambda = {n_2}^{-1}\sum_{i\in I_2} \widehat\varepsilon_iX_i, \quad \textrm{where}\quad \widehat\varepsilon_i = Y_i - \widehat\theta_1^\top X_i\quad \textrm{and} \quad 
    \widehat\Gamma = {n_2}^{-1}\sum_{i\in I_2} X_i X_i^\top.
\end{align*}
Two cases can be proved analogously, and we show the case (2). Write out the set as
    \begin{align*}
    \widehat{\mathrm{CI}}_{N,\alpha}^{\mathtt{CLT}, 2}  &= \left\{\theta\in\mathbb{R}^d:\, (\widehat{\theta}_1 - \theta)^\top \widehat\Gamma (\widehat{\theta}_1 - \theta) + 2\widehat{\Lambda}^\top(\widehat{\theta}_1 - \theta)\le n_2^{-1/2}z_\alpha  \widehat \sigma_{\widehat{\theta}_3, \widehat\theta_1}-(\widehat{\theta}_1 - \theta)^\top \widehat\Gamma (\widehat{\theta}_1 - \theta)\right\} \\
    &= \left\{\theta\in\mathbb{R}^d:\, (\widehat{\theta}_1 - \theta)^\top \widehat\Gamma (\widehat{\theta}_1 - \theta) + \widehat{\Lambda}^\top(\widehat{\theta}_1 - \theta)\le n_2^{-1/2}z_\alpha  \widehat \sigma_{\widehat{\theta}_3, \widehat\theta_1}/2\right\} \\
    &= \left\{\theta\in\mathbb{R}^d:\, (\widehat{\theta}_1 - \theta + \widehat\Gamma^{-1}\widehat\Lambda/2)^\top \widehat\Gamma (\widehat{\theta}_1 - \theta + \widehat\Gamma^{-1}\widehat\Lambda/2)\le \widehat\Lambda{\widehat\Gamma^{-1}}\widehat\Lambda/4 + n_2^{-1/2}z_\alpha  \widehat \sigma_{\widehat{\theta}_3, \widehat\theta_1}/2\right\}
    \end{align*}
    Here, observe that 
    \begin{align*}
        \widehat\Gamma^{-1}\widehat\Lambda &= \widehat\Gamma^{-1}\left(n_2^{-1}\sum_{i \in I_2}Y_iX_i-\widehat\Gamma \widehat\theta_1\right) = \theta_{\mathrm{OLS}} -  \widehat \theta_1 \quad \textrm{and}\\
     \widehat\Lambda{\widehat\Gamma^{-1}}\widehat\Lambda&=\widehat\Lambda{\widehat\Gamma^{-1}}{\widehat\Gamma}{\widehat\Gamma^{-1}}\widehat\Lambda = \|\theta_{\mathrm{OLS}} -  \widehat \theta_1\|_{\widehat\Gamma}.
    \end{align*}
    Hence we obtain that 
    \begin{align*}
        \widehat{\mathrm{CI}}_{N,\alpha}^{\mathtt{CLT}, 2} = \left\{\theta\in\mathbb{R}^d:\, \|(\widehat{\theta}_1 + \theta_{\mathrm{OLS}})/2- \theta\|_{\widehat\Gamma}^2\le \|\theta_{\mathrm{OLS}} -  \widehat \theta_1\|_{\widehat\Gamma}^2/4 + n_2^{-1/2}z_\alpha  \widehat \sigma_{\widehat{\theta}_3, \widehat\theta_1}/2\right\}.
    \end{align*}

\section{Proofs from Statistical Applications}\label{sec:proof-applications}
This section contains all proofs associated with the statistical applications. 

We frequently use following two lemmas, which becomes useful for the analysis of validity and diameter calculation. Their proofs are postponed to \Cref{sec:technical-lemma}. 
\begin{lemma}\label{prop:linearization-prop}
    Suppose that there exists a constant $\delta_0 > 0$ and a $P_i$-dependent mean-zero random vector $G_i$, such that 
    \begin{align*}
        \frac{\E_{P_i}[(\widehat \xi_i- \langle \widehat{\theta}_1 - \theta(P^N), G_i\rangle)^2  |D_1]}{\E_{P_i}[\langle \widehat{\theta}_1 - \theta(P^N), G_i\rangle^2 |D_1]} \le \varphi(\|\widehat{\theta}_1 - \theta(P^N)\|),
    \end{align*}
    for all $i \in I_2$ and $\|\widehat{\theta}_1 - \theta(P^N)\| < \delta_0$
    where $\varphi : \mathbb{R}_+ \mapsto \mathbb{R}_+$ is continuous and $\varphi(0) = 0$. Then, 
    \begin{align*}
        &\mathbb{E}_{P^1}\left[\min\left\{1, C\sum_{i\in I_2} \mathbb{E}_{P_i}\left[\frac{|\widehat\xi_i|^2}{n_2^2\widehat{\mathbb{V}}_{2}}\min\left\{1,\,\frac{|\widehat \xi_i|}{n_2\widehat{\mathbb{V}}_{2}^{1/2}}\right\} \bigg| D_1\right]\right\}\right] \le \min\left\{1,  C'R_{n_2}\right\},
    \end{align*}
    where $C, C'$ are universal constants, 
    \begin{align*}
        R_{n_2} = \inf_{\delta < \delta_0}\left\{2\sqrt{\varphi(\delta)} + \mathbb{P}_{P^1}(\|\widehat\theta_1 - \theta(P^N)\| > \delta)\right\} + \sum_{i\in I_2} \mathbb{E}_{P_i}\left[\frac{|\langle u, G_i\rangle|^2}{\mathbb{V}_G}\min\left\{1,\,\frac{|\langle u, G_i\rangle|}{\mathbb{V}_G^{1/2}}\right\}\right],
    \end{align*}
    and $\mathbb{V}_G = \sum_{i \in I_2}\E_{P_i}[\langle u, G_i\rangle^2]$.
\end{lemma}
The following lemma provides an intermediate bound on $\omega_{n_2, \mathtt{emp}}$ 
in \ref{as:square-process} under moment conditions on the envelope. 
The standard empirical process notation is adopted throughout: for a 
measurable function $f$,
\begin{align*}
    (\mathbb{P}_{n_2} - P^2)f := \frac{1}{n_2}\sum_{i\in I_2} f(Z_i) 
    - \mathbb{E}_{P^2}[f(Z)].
\end{align*}
The localized function class and its envelope are defined respectively as
\begin{align*}
    \mathcal{M}_{\delta} := \{m_\theta - m_{\theta_0} : \|\theta - \theta_0\| 
    \leq \delta\} \quad \textrm{and} \quad M_\delta(z) := \sup_{m \in \mathcal{M}_{\delta}}|m(z)|,
\end{align*}
where $M_\delta$ is assumed measurable. 
\begin{lemma}\label{prop:squared-Gn}
Let $Z_1, \ldots, Z_{n_2}$ be independent random variables with law $P^2$, let $M_\delta$ 
be the envelope defined above, and let $\phi_{n_2}(\delta)$ be as in \ref{as:local-entropy}.
\begin{enumerate}
\item \textbf{$L^q$ envelope}: If $\E_{P^2}[|M_\delta|^q] \le C_q$ for some $q \ge 2$, then
    \begin{align*}
        \mathbb{E}^*_{P^2}\!\left[\sup_{m\in\mathcal{M}_{\delta}} |(\mathbb{P}_{n_2}-P^2) m^2|\right] \leq 16\,n_2^{2/q - 1} C_q^{2/q} + 8\cdot 8^{1/q}\, n_2^{1/q} C_q^{1/q}\,\phi_{n_2}(\delta).
    \end{align*}
\item \textbf{Sub-Weibull envelope}: If $\mathbb{E}_{P^2}[|M_\delta|^q] \leq K^q q^{q/\gamma}$ for all $q \geq 1$, then for $n_2 > e^{2/\gamma}$
    \begin{align*}
        \mathbb{E}^*_{P^2}\!\left[\sup_{m\in\mathcal{M}_{\delta}} |(\mathbb{P}_{n_2}-P^2) m^2|\right] &\leq 16n_2^{-1}e^{2/\gamma} K^2 (\gamma \log n_2) ^{2/\gamma}\\
        &\quad +16 \cdot e^{1/\gamma}K (\gamma \log n_2) ^{1/\gamma} \phi_{n_2}(\delta).
    \end{align*}
\end{enumerate}
\end{lemma}



\subsection{High-dimensional Mean Inference}
\begin{proof}[\bfseries{Proof of Theorem~\ref{thm:mean-ci-validity}}]
The proof is an application of \Cref{thm:coverage-anti-conservative-confidence-set-empirical-risk} for $\alpha = 1/2$ and \Cref{thm:studentized-katz} for $\alpha \neq 1/2$. We collect relevant values. First, 
    \begin{align*}
        \widehat\xi_i &= \|X_i - \widehat \theta_1\|^2 - \|X_i - \theta(P^N)\|^2 - \mathbb{E}_{P_i}[\|X_i - \widehat \theta_1\|^2 - \|X_i - \theta(P^N)\|^2|D_1].\\
        & = 2\langle X_i - \theta(P^N), \theta(P^N)-\widehat\theta_1\rangle.
    \end{align*}
    Now, we denote $u \in \mathbb{S}^{d-1}$ such that $\theta(P^N)-\widehat\theta_1 = \delta_1 u$. Then 
    \begin{align*}
        |\widehat\xi_i|^2 = 4\delta_1^2 \langle X_i - \theta(P^N), u\rangle^2 \quad \text{and} \quad \widehat{\mathbb{V}}_2 = \frac{4\delta_1^2}{n_2^2}\sum_{i\in I_2} u^\top  \Sigma_i u = \frac{4\delta_1^2 u^\top \bar\Sigma_2 u}{n_2}.
    \end{align*}
    We then have 
    \begin{align*}
        &\sum_{i\in I_2} \mathbb{E}_{P_i}\left[\frac{|\widehat\xi_i|^2}{n_2^2\widehat{\mathbb{V}}_{2}}\min\left\{1,\,\frac{|\widehat \xi_i|}{n_2\widehat{\mathbb{V}}_{2}^{1/2}}\right\} \bigg|D_1\right] \\
        &\quad \le \sup_{u \in \mathbb{S}^{d-1}}\sum_{i\in I_2} \mathbb{E}_{P_i}\left[\frac{\langle X_i - \theta(P^N), u\rangle^2}{n_2 u^\top  \bar\Sigma_2 u}\min\left\{1,\,\frac{|\langle X_i - \theta(P^N), u\rangle|}{\sqrt{n_2 u^\top  \bar\Sigma_2 u}}\right\}\right]\\
        &\quad = \sup_{u \in \mathbb{S}^{d-1}}\sum_{i\in I_2} \mathbb{E}_{P_i}\left[\frac{\langle \bar\Sigma_2^{-1/2}(X_i - \theta(P^N)), u\rangle^2}{n_2}\min\left\{1,\,\frac{|\langle \bar\Sigma_2^{-1/2}(X_i - \theta(P^N)), u\rangle|}{\sqrt{n_2}}\right\}\right] \\
        &\quad = \sup_{u \in \mathbb{S}^{d-1}}\sum_{i\in I_2} \mathbb{E}_{P_i}\left[\frac{\langle X_i^\circ, u\rangle^2}{n_2}\min\left\{1,\,\frac{|\langle X_i^\circ, u\rangle|}{\sqrt{n_2}}\right\}\right]
    \end{align*}
   The result for $\alpha \neq 1/2$ is claimed by applying \Cref{thm:studentized-katz}. 

    Next consider the case $\alpha = 1/2$. To apply \Cref{thm:coverage-anti-conservative-confidence-set-empirical-risk}, we observe that  
    \begin{align*}
        \ratio^2 = \widehat{\mathbb{C}}^2_2/\widehat{\mathbb{V}}_{2} =\|\widehat\theta_1 - \theta(P^N)\|^4/\left(\frac{4\delta_1^2 u^\top \bar\Sigma_2 u}{n_2}\right) = \frac{n_2\|\bar\Sigma_2^{-1/2}(\widehat\theta_1 - \theta(P^N))\|^2}{4}.
    \end{align*}
    We then have
\begin{align*}
    &\sum_{i\in I_2}\mathbb{E}_{P_i}\left[\frac{|\widehat\xi_i|^2}{n_2^2\widehat{\mathbb{V}}_{2}(1 + \ratio)^2}\min\left\{1,\,\frac{|\widehat \xi_i|}{n_2\widehat{\mathbb{V}}_{2}^{1/2}(1 + \ratio)}\right\}\bigg|D_1\right]\\
    &\quad = \sum_{i\in I_2} \E_{P_i}\left[\frac{\langle X_i - \theta(P^N), u\rangle^2}{n_2 u^\top  \bar\Sigma_2 u(1 + \ratio)^2}\min\left\{1,\,\frac{|\langle X_i - \theta(P^N), u\rangle|}{\sqrt{n_2 u^\top  \bar\Sigma_2 u}(1 + \ratio)}\right\}\bigg|D_1\right] \le \frac{1}{(1+\ratio)^2}.
\end{align*}
By \Cref{thm:coverage-anti-conservative-confidence-set-empirical-risk} and \Cref{thm:studentized-katz}, we conclude concludes the result for $\alpha = 1/2$.
\end{proof}
\begin{proof}[\bfseries{Proof of \Cref{thm:mean-ci-width}}]
The result is a direct consequence of \Cref{lemma:mean-width-1} and \Cref{lemma:mean-width-2}
\end{proof}
\begin{lemma}\label{lemma:mean-width-1}
    When $\alpha = 1/2$, for any $\varepsilon > 0$, it holds that     \begin{align*}
    \mathbb{P}_{P^N}\!\left(
    \mathrm{Diam}_{\|\cdot\|_2}\bigl(\widehat{\mathrm{CI}}_{N,1/2}^{\mathtt{CLT}}\bigr)
    \leq 2\sqrt{2}\varepsilon^{-1/2}
    \left\{\sqrt{\frac{\mathrm{tr}(\bar\Sigma_2)}{n_2}} 
    + \|\widehat{\theta}_1 - \theta(P^N)\|_2\right\}
    \right) \geq 1 - \varepsilon,
\end{align*}
for $n_1, n_2 \ge 1$.
\end{lemma}
\begin{proof}[\bfseries{Proof of \Cref{lemma:mean-width-1}}]
    When $\alpha = 1/2$, that is $z_\alpha = 0$, \Cref{thm:geometry-mean} establishes that the diameter of the confidence set can be computed exactly as 
    \begin{align*}
        2\|\widehat H\|_2 \quad \textrm{where} \quad \widehat H = n_2^{-1}\sum_{i \in I_2} Z_i - \widehat{\theta}_1.
    \end{align*}
    By triangle inequality, we have 
    \begin{align*}
        \|\widehat H\|_2 \le \|\widehat{\theta}_1-\theta(P^N)\|_2 + \left\|n_2^{-1}\sum_{i \in I_2} Z_i-\theta(P^N)\right\|_2.
    \end{align*}
    Observe that $\theta(P^N) = \E_{P_i}[X_i]$ for all $1 \le i \le N$, under independence, we have 
    \begin{align*}
        \E_{P^N}\left[\left\|n_2^{-1}\sum_{i \in I_2} Z_i-\theta(P^N)\right\|_2^2\right] &= \E_{P^N}\left[\left\|n_2^{-1}\sum_{i \in I_2} (Z_i-\E_{P_i}[Z_i])\right\|_2^2\right] = \frac{\mathrm{tr}(\bar\Sigma_2)}{n_2}.
    \end{align*}
    Denoting
    \begin{align*}
        \mathrm{R}_N = \|\widehat{\theta}_1-\theta(P^N)\|_2 + \sqrt{\frac{\mathrm{tr}(\bar\Sigma_2)}{n_2}},
    \end{align*}
    it follows as 
    \begin{align*}
        &\mathbb{P}_{P^N}(2\|\widehat H\|_2 \ge C\varepsilon^{-1/2} \mathrm{R}_N) \le \frac{4\varepsilon \mathbb{E}_{P^N}[\|\widehat H\|_2^2/\mathrm{R}_N^2]}{C^2} \le \frac{8\varepsilon}{C^2}\E_{P^1}\left[\frac{\|\widehat{\theta}_1-\theta(P^N)\|_2^2 + \mathrm{tr}(\bar\Sigma_2)/n_2}{\mathrm{R}_N^2}\right] \le \varepsilon,
    \end{align*}
    with $C = 2\sqrt{2}$. This concludes the result. 
\end{proof}

\begin{lemma}\label{lemma:mean-width-2}
    Let 
$\widetilde{s}_{n_1,n_2}$ be as in \ref{as:rate-initial-estimator3}. 
For any $\varepsilon > 0$, setting $\varepsilon^\circ = \varepsilon + \widetilde \varepsilon_{\mathtt{init}}$, it holds that
\begin{align*}
    \mathbb{P}_{P^N}\!\left(
    \mathrm{Diam}_{\|\cdot\|_2}\bigl(\widehat{\mathrm{CI}}_{N,\alpha}^{\mathtt{CLT}}\bigr)
    \leq C_{\varepsilon^\circ}\left(1+|z_{\alpha}|\right)
    \left\{\sqrt{\frac{\mathrm{tr}(\bar\Sigma_2)}{n_2}} 
    + \widetilde{s}_{n_1,n_2}^{1/2}\right\}
    \right) \geq 1 - \varepsilon^\circ,
\end{align*}
provided $\max\{2,z_\alpha^2 C'_{\varepsilon^\circ}\}\le n_2$ where $C_{\varepsilon^\circ}$ and $C'_{\varepsilon^\circ}$ depend on $\varepsilon^\circ$, but not on $d$ or $\alpha$.
\end{lemma}
\begin{proof}[\bfseries{Proof of \Cref{lemma:mean-width-2}}]
    Throughout, we treat $\|\cdot\|\equiv\|\cdot\|_2$. The proof is a direct application of \Cref{thm:clt-width-local-full}, and thus proceeds by verifying \ref{as:margin}, \ref{as:local-entropy}, and \ref{as:square-process} to hold locally and \ref{as:margin-global}, \ref{as:ratio-process}, and \ref{as:ratio-square-process} to hold globally.

    \paragraph{Verifying \ref{as:margin}} For any $\theta \in \Theta$, 
    \begin{align*}
    m_{\theta}-m_{\theta(P^N)} := \|X-\theta\|^2 - \|X-\theta(P^N)\|^2 = 2(X-\theta(P^N))^\top(\theta-\theta(P^N)) + \|\theta-\theta(P^N)\|^2,
\end{align*}
and $\M_2(\theta)-\M_2(\theta(P^N)) =  \|\theta-\theta(P^N)\|^2$. Thus \ref{as:margin} holds with $\gamma=1$ and $c_0=1$. 

\paragraph{Verifying \ref{as:local-entropy}}
For any $\theta$ such that $\|\theta-\theta(P^N)\| \le \delta$, we have 
    \begin{align*}
        &\sup_{\|\theta-\theta(P^N)\| \le \delta}\, |m_{\theta}-m_{\theta(P^N)} - \E_{P_i}(m_{\theta}-m_{\theta(P^N)})| \\
        &\quad = \sup_{\|\theta-\theta(P^N)\| \le \delta}\, 2|\big(X-\theta(P^N)\big)^\top (\theta(P^N)-\theta)| \\
        &\quad= 2\delta \sup_{u \in \mathbb{S}^{d-1}}\,|u^\top (X-\theta(P^N))|  \\
        &\quad= 2\delta \|X-\theta(P^N)\|.
    \end{align*}
   Below, denote by $\{\epsilon_i\}_{i \in I_2}$ independent Rademacher random variables. We then obtain by symmetrization, such that, 
   \begin{align*}
       &\E_{P^2} \left[\sup_{\|\theta-\theta(P^N)\| < \delta}|(\widehat{\mathbb{M}}_2 - \mathbb{M}_2)(\theta) - (\widehat{\mathbb{M}}_2 - \mathbb{M}_2)(\theta(P^N))| \right] \\
       &\quad \le 2\E_{P^2 \times \epsilon} \left[\left|\frac{1}{n_2}\sum_{i \in I_2}2\delta \epsilon_i\|X_i-\theta(P^N)\| \right|\right] \\
       &\quad \le \frac{4}{n_2}\left(\E_{P^2 \times \epsilon}\left[\left|\sum_{i \in I_2} \epsilon_i\delta \|X_i-\theta(P^N)\|\right|^2 \right]\right)^{1/2}\\
       &\quad = \frac{4}{n_2}\left(\E_{P^2}\left[\sum_{i \in I_2}\delta^2 \|X_i-\theta(P^N)\|^2\right]\right)^{1/2}\\
       &\quad 
       \le \frac{4\delta}{n_2}\left(\sum_{i \in I_2}\E_{P_i}\|X_i-\theta(P^N)\|^2\right)^{1/2} = \phi_{n_2}(\delta).
   \end{align*}
    This satisfies the requirement with $q=1$.
   \paragraph{Verifying \ref{as:square-process}} First, we derive $\omega_{\mathtt{pop}}$. We have
    \begin{align*}
    &\E_{P_i}[(m_{\theta}-m_{\theta(P^N)})^2] \\
    &\quad= \E_{P_i}[(2(X-\theta)^\top(\theta-\theta(P^N)) + \|\theta-\theta(P^N)\|^2)^2]\\
    &\quad\le 8\E_{P_i}|(X-\theta)^\top(\theta-\theta(P^N))|^2 + 2\|\theta-\theta(P^N)\|^4\\
    &\quad\le 8\|\theta-\theta(P^N)\|^2\E_{P_i}\|X-\theta(P^N)\|^2  + 2\|\theta-\theta(P^N)\|^4.
\end{align*}
Hence we can set 
\begin{align*}
    \omega^2_{\mathtt{pop}}(\delta) =\frac{8\delta^2}{n_2}\sum_{i\in  I_2}\E_{P_i}\|X-\theta(P^N)\|^2  + 2\delta^4.
\end{align*}
In order to derive $\omega_{n_2, \mathtt{emp}}(\delta)$, we use \Cref{prop:squared-Gn}. From the earlier derivation, the local envelope function can be defined as 
\begin{align*}
     M_\delta(X_i)&= \sup_{\|\theta-\theta(P^N)\| \le \delta}\, |(m_{\theta}-m_{\theta(P^ )})(X_i)|\\
     &=\sup_{\|\theta-\theta(P^N)\| \le \delta}\, |(2(X_i-\theta(P^N))^\top(\theta-\theta(P^N)) + \|\theta-\theta(P^N)\|^2|\\
     &\le  2\delta \|X_i-\theta(P^N)\|  + \delta^2.
\end{align*}
By invoking \Cref{prop:squared-Gn} with $q=2$, we have for any $\eta > 0$,
\begin{align*}
    &\E_{P^2}\left[\sup_{\|\theta-\theta(P^N)\| < \delta}\left|\frac{1}{n_2}\sum_{i\in I_2}(m_\theta - m_{\theta(P^N)})^2 - \mathbb{E}_{P_i}[(m_\theta - m_{\theta(P^N)})^2]\right| \right]\\
    &\quad \le 16\,\E_{P^2}[|M_\delta(X)|^2] + 8\cdot 8^{1/2}\, n_2^{1/2} (\E_{P^2}[|M_\delta(X)|^2])^{1/2}\,\phi_{n_2}(\delta)\\
    &\quad \le 32(2\delta)^2 \E_{P^2}[\|X-\theta(P^N)\|^2]  + 32\delta^4 \\
    &\quad\quad + 8\cdot 8^{1/2}\, n_2^{1/2} 2\delta(\E_{P^2}[\|X-\theta(P^N)\|^2])^{1/2}\,\phi_{n_2}(\delta)\\
    &\quad\quad + 8\cdot 8^{1/2}\, n_2^{1/2} \sqrt{2}\delta^2\,\phi_{n_2}(\delta)\\
    &\quad \le C \left(\delta^2 \mathrm{tr}(\bar\Sigma_2) + \delta^3 \sqrt{\mathrm{tr}(\bar\Sigma_2)}+\delta^4\right) := \omega^2_{n_2, \mathtt{emp}}(\delta),
\end{align*}
for some universal constant $C > 0$.

Notice that this derivation does not satisfy $q < 1+\gamma$ globally. Suppose that $\delta \le \rho$, then 
\begin{align*}
    \omega^2_{\mathtt{pop}}(\delta) &=C\left(\mathrm{tr}(\bar\Sigma_2)  + \rho^2\right)\delta^2, \quad \text{and} \\
    \omega^2_{n_2, \mathtt{emp}}(\delta)&=C \left(\mathrm{tr}(\bar\Sigma_2) + \rho \sqrt{\mathrm{tr}(\bar\Sigma_2)} + \rho^2\right) \delta^2,
\end{align*}
hence we can take $q = 1$ and $q < 1+\gamma$ is now satisfied locally. 

\paragraph{Verifying \ref{as:margin-global}}
\ref{as:margin} holds globally and thus one can choose 
\begin{align*}
    C_{\rho}(\|\theta-\theta(P^N)\|) = \|\theta-\theta(P^N)\|^2.
\end{align*}

\paragraph{Verifying \ref{as:ratio-process}} 
For any $\delta > \rho$,
\begin{align*}
    &\E_{P^2} \left[\sup_{\|\theta-\theta(P^N)\| > \rho}\left|\frac{(\widehat{\mathbb{M}}_2 - \mathbb{M}_2)(\theta) - (\widehat{\mathbb{M}}_2 - \mathbb{M}_2)(\theta(P^N))}{\mathbb{M}_2(\theta) -\mathbb{M}_2(\theta(P^N))}\right| \right] \\
    &\quad \le \frac{4}{n_2 \rho}\left(\sum_{i \in I_2}\E_{P_i}\|X_i-\theta(P^N)\|^2\right)^{1/2} = \frac{4\sqrt{\mathrm{tr}(\bar\Sigma_2)}}{\sqrt{n_2} \rho} = R(n_2, \rho).
\end{align*}
Hence, \ref{as:ratio-process} holds with $C_{\mathtt{ratio}}=1/\varepsilon_{\mathtt{ratio}}$ by Markov's inequality.

\paragraph{Verifying \ref{as:ratio-square-process}} Similarly for any $\delta > \rho$,
\begin{align*}
    &\E_{P^2} \left[\sup_{\|\theta-\theta(P^N)\| > \rho}\frac{z_\alpha^2}{n_2^2}\left|\frac{\sum_{i\in I_2}(m_\theta - m_{\theta(P^N)})^2 - \mathbb{E}_{P_i}[(m_\theta - m_{\theta(P^N)})^2]}{\{\mathbb{M}_2(\theta) -\mathbb{M}_2(\theta(P^N))\}^2}\right| \right]\\
    &\quad \le \frac{C  \mathrm{tr}(\bar\Sigma_2)z_\alpha^2}{\rho^2 n_2} + \frac{C \sqrt{\mathrm{tr}(\bar\Sigma_2)}z_\alpha^2}{\rho n_2}+ \frac{Cz_\alpha^2}{n_2} = S_{\mathtt{emp}}(n_2, \rho, \alpha),
\end{align*}
and 
\begin{align*}
    &\sup_{\|\theta-\theta(P^N)\| > \rho}\frac{z_\alpha^2\sum_{i\in I_2}\mathbb{E}_{P_i}[(m_\theta - m_{\theta(P^N)})^2(Z_i)]}{n_2^2\{\M_2(\theta)-\M_2(\theta(P^N))\}^2}\le \frac{Cz_\alpha^2}{n_2}\left(\frac{\mathrm{tr}(\bar\Sigma_2)}{\rho^2} + 1\right) = S_{\mathtt{pop}}(n_2, \rho, \alpha).
\end{align*}
Hence, \ref{as:ratio-square-process} holds with $\widetilde C_{\mathtt{emp}}=1/\varepsilon_{\mathtt{emp}}$ by Markov's inequality.
\paragraph{Evaluating the rate of convergence} We now evaluate the rate of convergence by applying \Cref{thm:clt-width} and \Cref{thm:clt-width-local-full}. Denote by $C>0$ a universal constant that changes from line to line. Choose $\rho = \sqrt{\mathrm{tr}(\bar\Sigma_2)}$. Then, using the fact that $c_0 = 1$ and $\gamma = 1$, 
\begin{align*}
    r_{n_2}^{-2}\phi_{n_2}(r_{n_2}) \le 1 \Leftrightarrow Cr^{-2}_{n_2}r_{n_2}\left(\frac{\mathrm{tr}(\bar\Sigma_2)}{n_2}\right)^{1/2}\le 1 \Leftrightarrow C\left(\frac{\mathrm{tr}(\bar\Sigma_2)}{n_2}\right)^{1/2}\le r_{n_2}.
\end{align*}
Next, we evaluate the value related to $\omega_{\mathtt{pop}}$. This follows as 
\begin{align*}
    u_{n_2}^{-4}\omega^2_{\mathtt{pop}}(u_{n_2}) \le n_2 \Leftrightarrow Cu_{n_2}^{-2}\mathrm{tr}(\bar\Sigma_2)\le n_2 \Leftrightarrow C\sqrt{\frac{\mathrm{tr}(\bar\Sigma_2)}{n_2}}\le u_{n_2}.
\end{align*}
Finally, the value related to $\omega_{n_2, \mathtt{emp}}$ yields that 
\begin{align*}
   u_{n_2}^{-4}\omega^2_{n_2, \mathtt{emp}}(u_{n_2}) \le n_2\Leftrightarrow  Cu_{n_2}^{-4}\mathrm{tr}(\bar\Sigma_2)u_{n_2}^2 \le n_2 \Leftrightarrow C\sqrt{\frac{\mathrm{tr}(\bar\Sigma_2)}{n_2}}\le u_{n_2}.
\end{align*}
Then by \Cref{thm:clt-width}, we obtain 
\begin{align*}
   (1+|z_\alpha|)^{1+\gamma-q} \mathrm{R}^{\mathtt{CLT}}_{N} &=   (1+|z_\alpha|)\sqrt{\frac{\mathrm{tr}(\bar\Sigma_2)}{n_2}}  +   (1+|z_\alpha|)\widetilde s^{1/2}_{n_1, n_2}.
\end{align*}
Meanwhile, by \Cref{thm:clt-width-local-full}, we have 
\begin{align*}
\mathrm{Q}^{\mathtt{CLT}}_{N,\alpha} = (1+|z_\alpha|)^{1/2}\widetilde s^{1/2}_{n_1, n_2}
\end{align*}
in view of $C_{\rho}(\|\theta-\theta(P^N)\|) = \|\theta-\theta(P^N)\|^2$. Hence, we conclude that 
\begin{align*}
    &\max\{(1+|z_\alpha|)^{1/(1+\gamma-q)}\mathrm{R}^{\mathtt{CLT}}_{N}, \mathrm{Q}^{\mathtt{CLT}}_{N,\alpha}\mathbf{1}\{\mathrm{Q}^{\mathtt{CLT}}_{N,\alpha} \ge \rho\}\} \\
    &\quad \le (1+|z_\alpha|)\sqrt{\frac{\mathrm{tr}(\bar\Sigma_2)}{n_2}}  +   (1+|z_\alpha|)\widetilde s^{1/2}_{n_1, n_2},
\end{align*}
with probability greater than $1-\varepsilon$ as long as $n_2$ is large enough such that 
\begin{align*}
    C_\varepsilon \max\{R(n_2, \rho),S_{\mathtt{emp}}(n_2, \rho, \alpha), S_{\mathtt{pop}}(n_2, \rho, \alpha) \} \le 1/3,
\end{align*}
where $C_\varepsilon$ is a constant depending on $\varepsilon$.
This concludes the result. 
\end{proof}

For \Cref{cor:mean-plugin}, we instead prove the slightly rephrased version of the corollary. 
\begin{corollary}\label{cor:mean-plugin-full}
    Suppose the initial estimator satisfies for all $n_1 \geq N_1$, \begin{align}\nonumber\mathbb{P}_{P^1}\left(\|\widehat\theta_1 - \theta(P^N)\|_2^2 \le \frac{\widetilde C_{\mathtt{init}}\mathrm{tr}(\bar \Sigma_1)}{n_1} \right) \ge 1-\widetilde\varepsilon_{\mathtt{init}},
    \end{align}
     For any $\varepsilon \in (0, 1-\widetilde \varepsilon_{\mathtt{init}})$, setting $\varepsilon^\circ = \varepsilon + \widetilde \varepsilon_{\mathtt{init}}$, $n_1 \ge N_1$, 
    \begin{align*}
            \mathbb{P}_{P^N}\!\left(
    \mathrm{Diam}_{\|\cdot\|_2}\bigl(\widehat{\mathrm{CI}}_{N,\alpha}^{\mathtt{CLT}}\bigr)
    \leq C''_{\varepsilon^\circ}\left(1+|z_{\alpha}|\right)
    \left\{\sqrt{\frac{\mathrm{tr}(\bar\Sigma_2)}{n_2}} 
    + \sqrt{\frac{\mathrm{tr}(\bar\Sigma_1)}{n_1}}\right\}
    \right) \geq 1 - \varepsilon^\circ,
        \end{align*}
        provided $n_2 \ge 1$ when $\alpha = 1/2$, and $n_2 \ge  \max\{2,z_\alpha^2 C'_{\varepsilon}\}$ when $\alpha \neq 1/2$, 
    where $ C'_{\varepsilon^\circ}$ and $C''_{\varepsilon^\circ}$ depend on $\varepsilon^\circ$, but not on $d$ or $\alpha$.
\end{corollary}
\begin{proof}[\bfseries{Proof of \Cref{cor:mean-plugin-full}}]
Verifying \ref{as:rate-initial-estimator3}, we have that 
\begin{align*}
   &\frac{1}{n_2}\mathbb{E}_{P^2| P^1}\left[\frac{1}{n_2}\sum_{i \in I_2} \widehat \xi_i^2\right] \le \frac{4\|\widehat \theta_1-\theta(P^N)\|^2}{n_2}\sum_{i \in I_2}\frac{\E_{P_i}\|X_i-\theta(P^N)\|^2}{n_2} = \frac{4\|\widehat \theta_1-\theta(P^N)\|^2\mathrm{tr}(\bar\Sigma_2)}{n_2}
\end{align*}
and $\widehat{\mathbb{C}}_2^2 = \|\widehat \theta_1-\theta(P^N)\|^4$. Consider the event 
\begin{align*}
    \Omega_{\mathtt{init}} := \left\{\|\widehat \theta_1-\theta(P^N)\|^2 \le \widetilde C_{\mathtt{init}}\frac{\mathrm{tr}(\bar\Sigma_1)}{n_1}\right\}.
\end{align*}
Then on this event, we can take 
\begin{align*}
    \widetilde s_{n_1, n_2}^2 = \frac{\mathrm{tr}(\bar\Sigma_1)\mathrm{tr}(\bar\Sigma_2)}{n_1n_2} + \frac{\mathrm{tr}^2(\bar\Sigma_1)}{n_1^2},
\end{align*}
since 
\begin{align*}
    &\frac{1}{n_2}\mathbb{E}_{P^2| P^1}\left[\frac{1}{n_2}\sum_{i \in I_2} \widehat \xi_i^2\right] + \widehat{\mathbb{C}}_2^2 \\
    &\quad\le \frac{4\mathrm{tr}(\bar\Sigma_2)}{n_2}\|\widehat \theta_1-\theta(P^N)\|^2 + \|\widehat \theta_1-\theta(P^N)\|^4 \le \max\{ 4\widetilde C_{\mathtt{init}}, \widetilde C_{\mathtt{init}}^2\}\widetilde s_{n_1, n_2}^2
\end{align*}
with probability greater than $1-\widetilde\varepsilon_{\mathtt{init}}$. Then the result follows with 
\begin{align*}
  \sqrt{\frac{\mathrm{tr}(\bar\Sigma_2)}{n_2}} +   \widetilde s_{n_1, n_2}^{1/2} &\le  \sqrt{\frac{\mathrm{tr}(\bar\Sigma_2)}{n_2}} +\left(\frac{\mathrm{tr}(\bar\Sigma_1)}{n_1}\right)^{1/4}\left(\frac{\mathrm{tr}(\bar\Sigma_2)}{n_2}\right)^{1/4} + \sqrt{\frac{\mathrm{tr}(\bar\Sigma_1)}{n_1}}\\
  &\lesssim \sqrt{\frac{\mathrm{tr}(\bar\Sigma_2)}{n_2}} + \sqrt{\frac{\mathrm{tr}(\bar\Sigma_1)}{n_1}},
\end{align*}
by AM-GM inequality. 
\end{proof}

\subsection{High-dimensional Misspecified Linear Regression}
\begin{proof}[\bfseries{Proof of \Cref{thm:LR-valid}}]
The proof is an application of \Cref{thm:coverage-anti-conservative-confidence-set-empirical-risk} for $\alpha = 1/2$ and \Cref{thm:studentized-katz} for $\alpha \neq 1/2$. We collect relevant values. First, 
\begin{align*}
        \widehat\xi_i &= (Y_i-\widehat \theta_1^\top X_i)^2 - (Y_i-\theta(P^N)^\top X_i)^2 - \mathbb{E}_{P_i}[(Y_i-\widehat \theta_1^\top X_i)^2 - (Y_i-\theta(P^N)^\top X_i)^2|D_1]\\
        & = 2\varepsilon_i (\theta(P^N)-\widehat\theta_1)^\top X_i - 2\E_{P_i}[\varepsilon_i (\theta(P^N)-\widehat\theta_1)^\top X_i |D_1]\\
        & \quad + \{(\theta(P^N)-\widehat \theta_1)^\top X_i\}^2-\E_{P_i}[ \{X_i^\top(\theta(P^N) - \widehat \theta_1)\}^2 |D_1].
    \end{align*}
    Under non-identically distributed settings, $\E_{P_i}[\varepsilon_i (\theta(P^N)-\widehat\theta_1)^\top X_i |D_1]$ may not be zero. 
Taking $G_i := 2\{X_i\varepsilon_i - \E_{P_i}[X_i\varepsilon_i]\}$, 
\begin{align*}
     \frac{\E_{P_i}[(\widehat \xi_i- \langle \widehat{\theta}_1 - \theta(P^N), G_i\rangle)^2  |D_1]}{\E_{P_i}[\langle \widehat{\theta}_1 - \theta(P^N), G_i\rangle^2 |D_1]}
     & \le \frac{\mathrm{Var}_{P_i}[\{(\theta(P^N)-\widehat \theta_1)^\top X_i\}^2 |D_1]}{4\E_{P_i}[\langle \widehat{\theta}_1 - \theta(P^N), \{X_i\varepsilon_i - \E_{P_i}[X_i\varepsilon_i]\}\rangle^2 |D_1]}\\
     & \le \frac{\E_{P_i}[\{(\theta(P^N)-\widehat \theta_1)^\top X_i\}^4 |D_1]}{4\sigma^2_i\E_{P_i}[\langle \widehat{\theta}_1 - \theta(P^N), X_i - \E_{P_i}[X_i]\rangle^2 |D_1]}\\
     & \le \frac{L^4\|\theta(P^N)-\widehat \theta_1\|_{\bar\Gamma_2}^2}{4\sigma^2_i\lambda_{\min}(\bar\Gamma_2^{-1/2}\Sigma_i \bar\Gamma_2^{-1/2})}.
\end{align*}
Then the requirement of \Cref{prop:linearization-prop} holds with 
\begin{align*}
    \varphi(\|\theta(P^N)-\widehat \theta_1\|_{\bar\Gamma_2}) = \frac{L^4\|\theta(P^N)-\widehat \theta_1\|_{\bar\Gamma_2}^2}{4\underline{\sigma}^2 \underline{\lambda}}.
\end{align*}
Denoting $\mathbb{V}_G = \sum_{i \in I_2}\E_{P_i}[\langle u, X_i\varepsilon_i - \E_{P_i}[X_i\varepsilon_i]\rangle^2]$, we have
\begin{align*}
    &\sum_{i\in I_2} \mathbb{E}_{P_i}\left[\frac{|\langle u, X_i\varepsilon_i - \E_{P_i}[X_i\varepsilon_i]\rangle|^2}{\mathbb{V}_G}\min\left\{1,\,\frac{|\langle u, X_i\varepsilon_i - \E_{P_i}[X_i\varepsilon_i]\rangle|}{\mathbb{V}_G^{1/2}}\right\}\right]\\
    &\quad = \sup_{u \in \mathbb{S}^{d-1}}\sum_{i\in I_2} \mathbb{E}_{P_i}\left[\frac{\langle W_i^\circ, u\rangle^2}{n_2}\min\left\{1,\,\frac{|\langle W_i^\circ, u\rangle|}{\sqrt{n_2}}\right\}\right],
\end{align*}
where
\begin{equation*}
        \bar H_2 = \frac{1}{n_2}\sum_{i \in I_2} \mathrm{Cov}_{P_i}(X_i \epsilon_i) \quad \textrm{and} \quad W_i^\circ = \bar H_2^{-1/2}(X_i\epsilon_i - \E_{P_i}[X_i\epsilon_i]).
    \end{equation*}
The result for $\alpha \neq 1/2$ is concluded by applying \Cref{prop:linearization-prop}.

For $\alpha = 1/2$, we have an additional result from \Cref{thm:coverage-anti-conservative-confidence-set-empirical-risk}. 
    We obtain 
    \begin{align*}
        \mathrm{Var}_{P_i}[\widehat{\xi}_i |D_1] \le 4\E_{P_i}[\{\varepsilon_i (\theta(P^N)-\widehat\theta_1)^\top X_i\}^2|D_1] + 2\E_{P_i}[\{(\theta(P^N)-\widehat \theta_1)^\top X_i\}^4|D_1]
    \end{align*}
    and hence 
    \begin{align*}
        \widehat{\mathbb{V}}_2 &=\frac{1}{n_2^2}\sum_{i \in I_2} \mathrm{Var}_{P_i}[\widehat{\xi}_i |D_1] \\
        & \le \frac{4\|\theta(P^N)-\widehat\theta_1\|_{\bar\Gamma_2}^2\bar\sigma^2}{n_2^2}\sum_{i \in I_2}\E_{P_i}[\{u^\top\bar\Gamma_2^{-1/2} X_i\}^2|D_1] \\
        &\quad+ \frac{2\|\theta(P^N)-\widehat\theta_1\|_{\bar\Gamma_2}^4}{n_2^2}\sum_{i \in I_2}\E_{P_i}[\{u^\top \bar\Gamma_2^{-1/2}X_i\}^4|D_1] \\
        & \le \frac{4\|\theta(P^N)-\widehat\theta_1\|_{\bar\Gamma_2}^2\bar\sigma^2}{n_2} + \frac{2\|\theta(P^N)-\widehat\theta_1\|_{\bar\Gamma_2}^4 L^4}{n_2}.
    \end{align*}
    Meanwhile, the curvature is given by 
    \begin{align*}
        \widehat{\mathbb{C}}_2 
        &= \frac{2}{n_2}\sum_{i \in I_2}\E_{P_i}[\varepsilon_i (\theta(P^N)-\widehat\theta_1)^\top X_i |D_1] + \frac{1}{n_2}\sum_{i \in I_2}\E_{P_i}[\{X_i^\top(\theta(P^N) - \widehat \theta_1)\}^2 |D_1]\\
        &= \frac{1}{n_2}\sum_{i \in I_2}\E_{P_i}[\{X_i^\top(\theta(P^N) - \widehat \theta_1)\}^2|D_1] =\|\theta(P^N)-\widehat\theta_1\|_{\bar\Gamma_2}^2,
    \end{align*}
    where the second equality follows even under non-identically distributed settings. Finally, the following ratio is bounded from below as
    \begin{align*}
        \ratio^2 &\ge \frac{n_2\|\theta(P^N)-\widehat\theta_1\|_{\bar\Gamma_2}^4}{4\|\theta(P^N)-\widehat\theta_1\|_{\bar\Gamma_2}^2\bar\sigma^2 + 2\|\theta(P^N)-\widehat\theta_1\|_{\bar\Gamma_2}^4 L^4}= \frac{n_2\|\theta(P^N)-\widehat\theta_1\|_{\bar\Gamma_2}^2}{4\bar\sigma^2 + 2L^4\|\theta(P^N)-\widehat\theta_1\|_{\bar\Gamma_2}^2} = \widetilde\Delta_2^2.
    \end{align*}
The remainder term in \Cref{thm:coverage-anti-conservative-confidence-set-empirical-risk} becomes 
    \begin{align*}
    &\E_{P^1} \left[\min\left\{1, C\sum_{i\in I_2}\mathbb{E}_{P_i}\left[\frac{|\widehat\xi_i|^2}{n_2^2\widehat{\mathbb{V}}_{2}(1 + \ratio)^2}\min\left\{1,\,\frac{|\widehat \xi_i|}{n_2\widehat{\mathbb{V}}_{2}^{1/2}(1 + \ratio)}\right\} \bigg|D_1\right]\right\}\right]\\
    &\quad \le\E_{P^1} \left[\min\left\{1,  \frac{C}{(1 + \widetilde\Delta_2)^2} \right\}\right].
\end{align*}
This concludes the claim.
\end{proof}

\begin{proof}[\bfseries{Proof of \Cref{thm:LR-ci-width}}]
    The result is a direct consequence of \Cref{lemma:lr-direct-calculation} and \Cref{lemma:LR-ci-width-full}
\end{proof}
\begin{lemma}\label{lemma:lr-direct-calculation}
    Assume \ref{as:eigen_value} and \ref{as:linreg-moment-covariate} with $q_x>2$. For $\alpha = 1/2$, $n_1 \ge 1$, and any $\varepsilon \in (0,1)$,
    \begin{equation*}
        \mathbb{P}_{P^N}\left(\mathrm{Diam}_{\|\cdot\|_{ \Gamma_2}}\bigl(\widehat{\mathrm{CI}}_{N,1/2}^{\mathtt{CLT}}\bigr)
    \leq 4\varepsilon^{-1/2}
    \left\{\sqrt{\frac{\bar\sigma^2 d}{n_2}} 
    + \|\widehat{\theta}_1 - \theta(P^N)\|_{\Gamma_2}\right\}\right)\ge 1-\varepsilon - \exp(-Cn_2),
    \end{equation*}
    provided that $n_2$ satisfies $n_2 \ge \mathfrak{C}d$ where $\mathfrak{C}$ depends only on $q_x$, and $C$ is a universal constant. 
\end{lemma}

\begin{proof}[\bfseries{Proof of \Cref{lemma:lr-direct-calculation}}]
    When $\alpha = 1/2$, that is $z_\alpha = 0$, \Cref{thm:geometry-LR} establishes that the diameter of the confidence set can be computed exactly, in terms of $\|\cdot\|_{\widehat{\Gamma}_2}$, 
    \begin{align*}
        2\|\widehat{H}\|_{\widehat\Gamma_2} \quad \textrm{where} \quad \widehat{H} = \widehat{\theta}_1 - \theta_{\mathtt{OLS}}.
    \end{align*}
    Here, $\theta_{\mathtt{OLS}}$ is the OLS based on $D_2$. Throughout, we assume $\widehat \Gamma_2$ is invertible, that is $n_2 \ge d$. By the triangle inequality, we have 
    \begin{align*}
        \|\widehat{H}\|_{\widehat\Gamma_2}  \le \|\widehat{\theta}_1 - \theta(P^N)\|_{\widehat\Gamma_2}  + \|\theta_{\mathtt{OLS}}- \theta(P^N)\|_{\widehat\Gamma_2}.
    \end{align*}
    We observe 
    \begin{align*}
         \theta_{\mathtt{OLS}} - \theta(P^N) &= \widehat\Gamma_2^{-1}\left(\frac{1}{n_2}\sum_{i\in I_2}X_i  Y_i\right) -\theta(P^N) \\
         &= \widehat\Gamma_2^{-1}\left(\frac{1}{n_2}\sum_{i\in I_2}X_i  (X^\top \theta(P^N) + \varepsilon_i)\right) -\theta(P^N)\\
         &= \widehat\Gamma_2^{-1}\left(\frac{1}{n_2}\sum_{i\in I_2}X_i  \varepsilon_i\right).
    \end{align*}
    Hence, using \ref{as:eigen_value}, 
    \begin{align*}
        \E_{P^N}[\| \theta_{\mathtt{OLS}} - \theta(P^N)\|^2_{\widehat\Gamma_2}] &= \E_{P^N}\left[\left\|\widehat\Gamma_2^{-1}\left(\frac{1}{n_2}\sum_{i\in I_2}X_i  \varepsilon_i\right)\right\|_{\widehat\Gamma_2}\right]\\
        &\le \bar\sigma^2\E_{P^N}\left[\left\|\widehat\Gamma_2^{-1}\left(\frac{1}{n_2}\sum_{i\in I_2}X_i \right)\widehat\Gamma_2^{1/2}\right\|_2^2\right]\\
        &= \frac{\bar\sigma^2}{n_2}\E_{P^N}\left[\mathrm{tr}\left(\widehat\Gamma_2^{-1}\left(\frac{1}{n_2}\sum_{i\in I_2}X_iX_i^\top \right)\right)\right] \le \frac{d\bar\sigma^2}{n_2}.
    \end{align*}
    Also by linearity of expectation,
    \[\mathbb{E}_{P^2}[\|\widehat{\theta}_1 -\theta(P^N)\|_{\widehat\Gamma_2}| D_1] = \|\widehat{\theta}_1 -\theta(P^N)\|_{\Gamma_2}.\]
    Denoting
    \begin{align*}
        \mathrm{R}_N = \sqrt{\frac{d\bar\sigma^2}{n_2}} + \|\widehat{\theta}_1 -\theta(P^N)\|_{\Gamma_2},
    \end{align*}
    it follows from Markov's inequality, 
    \begin{align*}
        &\mathbb{P}_{P^N}\left(2\|\widehat H\|_{\widehat{\Gamma}} \ge C\varepsilon^{-1/2}\mathrm{R}_N\right) \\
        &\quad \le \frac{4\varepsilon}{C^2}\mathbb{E}_{P^N}\left[\frac{\|\widehat H\|^2_{\widehat{\Gamma}}}{\mathrm{R}_N^2}\right] \le \frac{8\varepsilon}{C^2}\mathbb{E}_{P^N}\left[\frac{(d\bar\sigma^2/n_2) + \|\widehat{\theta}_1 -\theta(P^N)\|_{\Gamma_2}^2}{\mathrm{R}_N^2}\right] \le \varepsilon
    \end{align*}
    where $C = 2\sqrt{2}$. Finally, since
    \begin{align*}
        \|\widehat H\|^2_{\widehat{\Gamma}}  = \widehat H^{\top}\widehat{\Gamma}\widehat H \ge  \lambda_{\min}(\Gamma^{-1/2}\widehat{\Gamma}\Gamma^{-1/2}) \cdot \|\widehat H\|^2_{\Gamma},
    \end{align*}
    it remains to bound $\lambda_{\min}(\Gamma^{-1/2}\widehat{\Gamma}\Gamma^{-1/2})$ from below. Note that $\Gamma^{-1/2}\widehat{\Gamma}\Gamma^{-1/2} = n_2^{-1}\sum_{i \in I_2}\widetilde X_i\widetilde X_i^\top$ where $\widetilde X_i = \bar\Gamma^{-1/2}X_i$ satisfies $n_2^{-1}\sum_{i \in I_2}\E_{P_i}[\widetilde X_i\widetilde X_i^\top] = I_d$ by construction. Theorem 1.3 of \citet{Koltchinskii2015Bounding} implies that \ref{as:linreg-moment-covariate} with $q_x > 2$, whenever $d/n_2 \le \mathfrak{C}$ for a universal constant $\mathfrak{C}$ with probability at least $1-\exp(-C'n_2)$,
    \begin{align*}
        \lambda_{\min}(\Gamma^{-1/2}\widehat{\Gamma}\Gamma^{-1/2}) \ge 1/2 \implies \|\widehat H\|_{\widehat{\Gamma}} \ge  \|\widehat H\|_{\Gamma}/\sqrt{2}.
    \end{align*}
    Therefore on this event, we have 
    \begin{align*}
        \mathbb{P}_{P^N}\left(2\|\widehat H\|_{\Gamma} \ge \sqrt{2}C\varepsilon^{-1/2}\mathrm{R}_N\right)&\le \mathbb{P}_{P^N}\left(2\|\widehat H\|_{\widehat{\Gamma}} \ge C\varepsilon^{-1/2}\mathrm{R}_N\right) +\exp(-C'n_2)\\
        &\le \varepsilon + \exp(-C'n_2).
    \end{align*}
   This concludes the claim.
\end{proof}
\begin{lemma}\label{lemma:LR-ci-width-full}
    Assume \ref{as:eigen_value}, \ref{as:linreg-moment-covariate} with $q_x\ge 4$, and \ref{as:error_moment} and let 
$\widetilde{s}_{n_1,n_2}$ be as in \ref{as:rate-initial-estimator3}. For $\alpha \neq 1/2$, $n_1 \ge 1$, and any $\varepsilon > 0$, setting $\varepsilon^\circ = \varepsilon +\widetilde\varepsilon_{\mathtt{init}}$, it holds that 
    \begin{equation*}
        \mathbb{P}_{P^N}\left(\mathrm{Diam}_{\|\cdot\|_2}\bigl(\widehat{\mathrm{CI}}_{N,\alpha}^{\mathtt{CLT}}\bigr)
    \leq C_{\varepsilon^\circ}(1+|z_{\alpha}|)\left\{\sqrt{\frac{\overline{\sigma}^2 d}{n_2}} + \widetilde s_{n_1, n_2}^{1/2}\right\}\right) \ge 1-\varepsilon^\circ,
    \end{equation*}
provided that $n_2$ satisfies 
\begin{equation*}
    C'_{\varepsilon^\circ} \max\bigg\{((1+|z_\alpha|) d\log (2d) L^4)^{q_x/(q_x-2)}, ((1+|z_\alpha|)(1+K) dL^2)^{p/(p-1)}\bigg\} \le n_2.
\end{equation*}
where $p = \min\{q_y, q_x/2\}$, and $C_{\varepsilon^\circ}, C'_{\varepsilon^\circ}$ depend on $\varepsilon^\circ$, but not on $d$ or $\alpha$.
\end{lemma}
\begin{proof}[\bfseries{Proof of \Cref{lemma:LR-ci-width-full}}]The proof is a direct application of \Cref{thm:clt-width-local-full} and thus proceeds by verifying
\ref{as:margin}, \ref{as:local-entropy}, and \ref{as:square-process} to hold locally and \ref{as:margin-global}, \ref{as:ratio-process}, and \ref{as:ratio-square-process} to hold globally. 

\paragraph{Verifying \ref{as:margin}}For any $\theta \in \Theta$, 
\begin{align*}
    m_\theta - m_{\theta(P^N)} &= (Y_i-\theta^\top X_i)^2 - (Y_i-\theta(P^N)^\top X_i)^2 \\
        & = 2\varepsilon_i (\theta(P^N)-\theta)^\top X_i + \{(\theta(P^N)-\theta)^\top X_i\}^2,
\end{align*}
and 
\begin{align*}
    &\frac{1}{n_2}\sum_{i \in I_2}\E_{P_i}\left[2\varepsilon_i (\theta(P^N)-\theta)^\top X_i \right]+ \frac{1}{n_2}\sum_{i \in I_2}\E_{P_i}\left[\{(\theta(P^N)-\theta)^\top X_i\}^2\right] \\
    &\quad= \frac{1}{n_2}\sum_{i \in I_2}\E_{P_i}\left[\{(\theta(P^N)-\theta)^\top X_i\}^2\right]\\
    &\quad= (\theta(P^N)-\theta)^\top\left(\frac{1}{n_2}\sum_{i \in I_2}\E_{P_i}\left[X_iX_i^\top \right]\right)(\theta(P^N)-\theta) = \|\theta(P^N)-\theta\|^2_{\bar\Gamma_2}.
\end{align*}
Thus \ref{as:margin} holds with $\gamma = 1$ and $c_0 = 1$.
\paragraph{Verifying \ref{as:local-entropy}} For any $\theta$ such that $\|\theta-\theta(P^N)\|_{\bar\Gamma_2} \le \delta$, we have
\begin{align*}
    &\sup_{\|\theta-\theta(P^N)\|_{\bar\Gamma_2}\le \delta} |(Y-\theta^\top X)^2 - (Y-\theta(P^N)^\top X)^2| \\
    &\quad \le \sup_{\|\theta-\theta(P^N)\|_{\bar\Gamma_2}\le \delta}\left\{2|\varepsilon(\theta(P^N)^\top X-\theta^\top X) | + |(\theta(P^N)^\top X-\theta^\top X)^2|\right\}\\
    &\quad \le \sup_{\|\theta-\theta(P^N)\|_{\bar\Gamma_2}\le \delta}\left\{2|(\theta(P^N)-\theta)^\top \bar\Gamma_2^{1/2}\bar\Gamma_2^{-1/2}\varepsilon X |\right\} \\
    &\quad\quad+ \sup_{\|\theta-\theta(P^N)\|_{\bar\Gamma_2}\le \delta}\left\{|(\theta(P^N)-\theta)^\top \bar\Gamma_2^{1/2}\bar\Gamma_2^{-1/2}X|^2\right\}.
\end{align*}
By Cauchy-Schwarz, for any $\|\theta-\theta(P^N)\|_{\bar\Gamma_2} \le \delta$, 
\begin{align*}
    |(\theta(P^N)-\theta)^\top \bar\Gamma_2^{1/2}\bar\Gamma_2^{-1/2}\varepsilon X | &\le |\varepsilon| \cdot \|(\theta(P^N)-\theta)^\top \bar\Gamma_2^{1/2}\|_2\cdot\|\bar\Gamma_2^{-1/2} X\|_2 \\
    &\le |\varepsilon|\cdot\|\theta(P^N)-\theta\|_{\bar\Gamma_2}\cdot\|\bar\Gamma_2^{-1/2} X\|_2\\
    &\le \delta|\varepsilon|\|\bar\Gamma_2^{-1/2} X\|_2, 
\end{align*}
and 
\begin{align*}
    &|(\theta(P^N)-\theta)^\top \bar\Gamma_2^{1/2}\bar\Gamma_2^{-1/2}X|^2 \le  \|(\theta(P^N)-\theta)^\top \bar\Gamma_2^{1/2}\|_2^2\cdot \|\bar\Gamma_2^{-1/2}X\|_2^2 \le  \delta^2\|\bar\Gamma_2^{-1/2}X\|_2^2.
\end{align*}
For a measurable function $f$, we denote 
\begin{align*}
    (\mathbb{P}_{n_2}-P^2) f = \frac{1}{n_2}\sum_{i \in I_2} f(X_i, Y_i) - \E_{P_i}[f(X_i, Y_i)].
\end{align*}
Then we have 
\begin{align*}
    &\E_{P^2} \left[\sup_{\|\theta-\theta(P^N)\|_{\bar\Gamma_2} < \delta}|(\widehat{\mathbb{M}}_2 - \mathbb{M}_2)(\theta) - (\widehat{\mathbb{M}}_2 - \mathbb{M}_2)(\theta(P^N))| \right]\\
    &\quad = \E_{P^2} \left[\sup_{\|\theta-\theta(P^N)\|_{\bar\Gamma_2} < \delta}|(\mathbb{P}_{n_2}-P^2)(m_{\theta} - m_{\theta(P^N)})| \right]\\
    &\quad \le 2\delta \E_{P^2} \left[\|\bar\Gamma_2^{-1/2}(\mathbb{P}_{n_2}-P^2) [\varepsilon X]\| \right] + \delta^2 \E_{P^2} \left[\|\bar\Gamma_2^{-1/2} (\mathbb{P}_{n_2}-P^2)[XX^\top ]\bar\Gamma_2^{-1/2}\|_{\mathrm{op}} \right] \\
    &\quad = \mathbf{I} + \mathbf{II},
\end{align*}
where $\|\cdot\|_{\mathrm{op}}$ denotes the operator norm.
\Cref{lemma:linear-empirical-process} and \Cref{lemma:operator-empirical-process} at the end of this section prove that 
\begin{align*}
    \mathbf{I} &\le 2\delta \sqrt{\frac{d\bar\sigma^2 }{n_2}} \quad \text{and}\quad 
    \mathbf{II}\le \left\{4\sqrt{\frac{d\log(2d)L^4}{n_2}} + 32\sqrt{2}\log(2d)\left(\frac{dL^2}{n_2^{1-2/q_x}}\right)\right\}\delta^2,
\end{align*}
under \ref{as:linreg-moment-covariate} and \ref{as:eigen_value}. 

Notice that this derivation does not satisfy $q < 1+\gamma$ globally. Assume that $\delta \le \rho$, then
\begin{align*}
    \phi_{n_2}(\delta) = \left( 2 \sqrt{\frac{d\bar\sigma^2 }{n_2}} + \left\{4\sqrt{\frac{d\log(2d)L^4}{n_2}} + 32\sqrt{2}\log(2d)\left(\frac{dL^2}{n_2^{1-2/q_x}}\right)\right\}\rho \right)\delta,
\end{align*}
hence we can take $q=1$ and $q < 1+\gamma$ is now satisfied locally.
\paragraph{Verifying \ref{as:square-process}}
We begin by analyzing $\omega_{\mathtt{pop}}(\cdot)$. It follows that 
\begin{align*}
    &(m_\theta(X_i, Y_i) - m_{\theta(P)}(X_i, Y_i))^2 \\
    &\quad \le 4|(\theta(P^N)-\theta)^\top \bar\Gamma_2^{1/2}\bar\Gamma_2^{-1/2}\varepsilon X |^2 + 2|(\theta(P^N)-\theta)^\top \bar\Gamma_2^{1/2}\bar\Gamma_2^{-1/2}X|^4,
\end{align*}
and hence 
\begin{align*}
    &\sup_{\|\theta-\theta(P^N)\|_{\bar\Gamma_2}\le \delta}\, \frac{1}{n_2}\sum_{i\in I_2}\E_{P_i}(m_\theta - m_{\theta(P)})^2(X_i, Y_i) \\
    & \quad \le  \frac{4\delta^2}{n_2}\sum_{i\in I_2}\E_{P_i}\left[|\varepsilon_i|^2\cdot|v\bar\Gamma_2^{-1/2} X |^2 \right] + \frac{2\delta^4}{n_2}\sum_{i\in I_2}\E_{P_i}\left[|v\bar\Gamma_2^{-1/2}X|^4 \right]\\
    & \quad \le  \frac{2\delta^2\bar\sigma^2}{n_2}\sum_{i\in I_2}v^\top \E_{P_i}\left[\bar\Gamma_2^{-1/2} X_iX_i^\top \bar\Gamma_2^{-1/2} \right]v + \frac{2\delta^4}{n_2}\sum_{i\in I_2}L^4\\
    &\quad = 2\delta^2 \bar\sigma^2 + 2\delta^4 L^4.
\end{align*}
When $\delta \le\rho$, we have 
\begin{align*}
    2\delta^2 \bar\sigma^2 + 2\delta^4 L^4 \le \delta^2 \left(2\bar\sigma^2 + 2\rho^2 L^4\right) =\omega_{\mathtt{pop}}^2(\delta),
\end{align*}
which satisfies $q=1$ and $q < 1+\gamma$ locally.

Next, we derive $\omega_{n_2, \mathtt{emp}}(\cdot)$. By \Cref{prop:squared-Gn} with  $q=p =\min\{q_y, q_x/2\}$, we have 
\begin{align*}
    \E_{P^2}[|M_{\delta}|^p] \le 2 \cdot 2^{p-1}\delta^p\E_{P^2}[|\varepsilon|^p\|\bar\Gamma_2^{-1/2} X\|_2^p] + 2^{p-1}\delta^{2p}\E_{P^2}[\|\bar\Gamma_2^{-1/2}X\|_2^{2p}].
\end{align*}
We observe that 
\begin{align*}
    \E_{P^2}[|\varepsilon|^p\|\bar\Gamma_2^{-1/2} X\|_2^p] &= \E_{P^2}[\E_{P^2}[|\varepsilon|^p|X]\|\bar\Gamma_2^{-1/2} X\|_2^p]\\
    &\le \E_{P^2}[\left(\E_{P^2}[|\varepsilon|^{q_y}|X]\right)^{p/q_y}\|\bar\Gamma_2^{-1/2} X\|_2^p]\\
    &\le K^p\E_{P^2}[\|\bar\Gamma_2^{-1/2} X\|_2^p]\\
    &\le(\E_{P^2}[\|\bar\Gamma_2^{-1/2} X\|_2^{q_x}])^{p/q_x} \\
    &\le K^pd^{p/2}L^{p}
\end{align*}
assuming $p \le q_y$. Furthermore, 
\begin{align*}
    \E_{P^2}[\|\bar\Gamma_2^{-1/2}X\|_2^{2p}] &\le \left(\E_{P^2}[\|\bar\Gamma_2^{-1/2}X\|_2^{q_x}] \right)^{(2p)/q_x} \le d^p L^{2p}
\end{align*}
assuming $2p \le q_x$. Hence by \Cref{prop:squared-Gn}, 
\begin{align*}
    &\E_{P^2}^* \left[\sup_{\|\theta-\theta(P^N)\| < \delta}\left|\frac{1}{n_2}\sum_{i\in I_2}(m_\theta - m_{\theta(P^N)})^2 - \mathbb{E}_{P_i}[(m_\theta - m_{\theta(P^N)})^2]\right| \right] \\
    &\quad \le 16\, n_2^{2/p-1}(2^{2/p}\cdot 2^{2-2/p}\delta^2 K^2d L^2 + 2^{2-2/p}\delta^4 d^2L^4) \\
    &\quad\quad+ 8\cdot 8^{1/p}\, n_2^{1/p} (2^{1/p}\cdot 2^{1-1/p}\delta K d^{1/2} L + 2^{1-1/p}\delta^2 d L^2) (\mathbf{I} + \mathbf{II}) \\
    &\quad= A_2 \delta^2 + A_3 \delta^3 +A_4 \delta^4
\end{align*}
for $p = \min\{q_y, q_x/2\}$ where 
\begin{align*}
    A_2 &= C_1 n_2^{2/p-1}K^2 dL^2 + n_2^{1/p}\sqrt{\frac{\bar\sigma^2}{n_2}}KdL \\
    A_3 &=C_2 \cdot n_2^{1/p} K d^{1/2} L \left\{4\sqrt{\frac{d\log(2d)L^4}{n_2}} + 32\sqrt{2}\log(2d)\left(\frac{dL^2}{n_2^{1-2/q_x}}\right)\right\}, \quad\text{and}\\
    A_4 &= C_3 n^{2/p-1}d^2 L^4 + 8\cdot 4\, n_2^{1/p} dL^2\left\{4\sqrt{\frac{d\log(2d)L^4}{n_2}} + 32\sqrt{2}\log(2d)\left(\frac{dL^2}{n_2^{1-2/q_x}}\right)\right\}.
\end{align*}
When $\delta \le \rho$, we can set $(A_2 + A_3 \rho  +A_4 \rho^2) \delta^2$, which satisfies $q=1$ and $q < 1+\gamma$ locally.


\paragraph{Verifying \ref{as:margin-global}}
\ref{as:margin} holds globally and thus one can choose 
\begin{align*}
    C_{\rho}(\|\theta-\theta(P^N)\|) = \|\theta-\theta(P^N)\|_{\bar\Gamma_2}^2.
\end{align*}
\paragraph{Verifying \ref{as:ratio-process}} 
Next, we have
\begin{align*}
    &\E^*_{P^2} \left[\sup_{\|\theta-\theta(P^N)\| > \rho}\left|\frac{(\widehat{\mathbb{M}}_2 - \mathbb{M}_2)(\theta) - (\widehat{\mathbb{M}}_2 - \mathbb{M}_2)(\theta(P^N))}{\mathbb{M}_2(\theta) -\mathbb{M}_2(\theta(P^N))}\right| \right] \\
    &\quad \le 2\rho^{-1} \sqrt{\frac{d\bar\sigma^2 }{n_2}} + \left\{4\sqrt{\frac{d\log(2d)L^4}{n_2}} + 32\sqrt{2}\log(2d)\left(\frac{dL^2}{n_2^{1-2/q_x}}\right)\right\} = R(n_2, \rho).
\end{align*}
Hence, \ref{as:ratio-process} holds with $C_{\mathtt{ratio}}=1/\varepsilon_{\mathtt{ratio}}$ by Markov's inequality.

\paragraph{Verifying \ref{as:ratio-square-process}} Similarly for any $\delta > \rho$,
\begin{align*}
    &\E_{P^2}^* \left[\sup_{\|\theta-\theta(P^N)\| > \rho}\frac{z_\alpha^2}{n_2^2}\left|\frac{\sum_{i\in I_2}(m_\theta - m_{\theta(P^N)})^2 - \mathbb{E}_{P_i}[(m_\theta - m_{\theta(P^N)})^2]}{\{\mathbb{M}_2(\theta) -\mathbb{M}_2(\theta(P^N))\}^2}\right| \right]\\
    &\quad \le \frac{A_2(1+z_\alpha^2)}{\rho^2 n_2} + \frac{A_3(1+z_\alpha^2)}{\rho n_2}+ \frac{A_4(1+z_\alpha^2)}{n_2} = S_{\mathtt{emp}}(n_2, \rho, \alpha),
\end{align*}
and 
\begin{align*}
    &\sup_{\|\theta-\theta(P^N)\| > \rho}\frac{z_\alpha^2\sum_{i\in I_2}\mathbb{E}_{P_i}[(m_\theta - m_{\theta(P^N)})^2(Z_i)]}{n_2^2\{\M_2(\theta)-\M_2(\theta(P^N))\}^2} \le \frac{(1+z_\alpha^2)}{n_2}\left( 2\rho^{-2} \bar\sigma^2 d+ 2 L^4\right) = S_{\mathtt{pop}}(n_2, \rho, \alpha).
\end{align*}
Hence, \ref{as:ratio-square-process} holds with $\widetilde C_{\mathtt{emp}}=1/\varepsilon_{\mathtt{emp}}$ by Markov's inequality.

We assume that $n_2$ is large enough satisfies the following:
\begin{align}\label{eq:eqreuiment_for_n2}
    C_{\varepsilon^\circ}\max\{R(n_2, \rho), S_{\mathtt{emp}}(n_2, \rho, \alpha), S_{\mathtt{pop}}(n_2, \rho, \alpha)\} \le 1/3,
\end{align}
where $C_{\varepsilon^\circ}$ is a constant depending on $\varepsilon^\circ$. We choose $\rho = C'_{\varepsilon^\circ}\sqrt{\bar\sigma^2 d/n_2}$ with $C'_{\varepsilon^\circ}$ being a sufficiently large constant depending on $\varepsilon^\circ$. Then \eqref{eq:eqreuiment_for_n2} is satisfied if
\begin{equation}\label{eq:eqreuiment_for_n2_simplified}
    \begin{split}
        &(1+|z_\alpha|)\max\left\{\frac{d\log(2d) L^4}{n_2^{1-2/q_x}}, \frac{(1+K) dL^2}{n_2^{1-1/p}} \right\} \le C_{\varepsilon^\circ} \\
& \quad \Leftrightarrow C''_{\varepsilon^\circ} \max\left\{((1+|z_\alpha|) d\log (2d) L^4)^{q_x/(q_x-2)}, ((1+|z_\alpha|)(1+K) dL^2)^{p/(p-1)}\right\} \le n_2.
    \end{split}
\end{equation}
\paragraph{Evaluating the rate of convergence}
We now evaluate the rate of convergence by applying \Cref{thm:clt-width} and \Cref{thm:clt-width-local-full}. Denote $\widetilde C_{\varepsilon^\circ}$ a constant depending on $\varepsilon^\circ$ that changes from line to line. Then with the choice of $\rho$ and $n_2$ satisfying \eqref{eq:eqreuiment_for_n2_simplified}, 
\begin{align*}
    &r_{n_2}^{-2}\phi_{n_2}(r_n) \le 1  \Leftrightarrow r_{n_2}^{-1}\widetilde C_{\varepsilon^\circ}\sqrt{\frac{d\bar\sigma^2 }{n_2}}  \le 1\Leftrightarrow \widetilde C_{\varepsilon^\circ}\sqrt{\frac{d\bar\sigma^2 }{n_2}} \le r_{n_2}.
\end{align*}
Next, we evaluate the value related to $\omega_{\mathtt{pop}}$. Under the choice of $\rho$ and $n_2$ satisfying \eqref{eq:eqreuiment_for_n2_simplified}, 
\begin{align*}
   u_{n_2}^{-4}\omega^2_{\mathtt{pop}}(u_{n_2}) \le n_2 
   & \Leftrightarrow u_{n_2}^{-2}  \left(2\bar\sigma^2\rho^{-2} + 2L^4\right) \rho^2\le n_2  \Leftrightarrow \frac{\widetilde C_{\varepsilon^\circ}\bar\sigma^2 d}{n_2}  \le u_{n_2}^2,
\end{align*}
where we used $\bar\sigma \le \bar\sigma d$ and $L^4/n_2 \le C_{\varepsilon^\circ}/(d\log (2d) n^{2/q_x}) \le C_{\varepsilon^\circ}$ by \eqref{eq:eqreuiment_for_n2_simplified}.

Finally, we evaluate the value related to $\omega_{n_2, \mathtt{emp}}$. From earlier derivation, we have 
\begin{align*}
    (A_2 + A_3 \rho  +A_4 \rho^2) \delta^2 = n_2 \rho^2\left(\frac{A_2}{n_2 \rho^2} + \frac{A_3}{n_2 \rho}  +\frac{A_4}{n_2}\right) \delta^2 \le  \widetilde C_{\varepsilon^\circ}\bar\sigma^2 d\delta^2 = \omega^2_{n_2, \mathtt{emp}}(\delta),
\end{align*}
under \eqref{eq:eqreuiment_for_n2_simplified}. Hence we conclude
\begin{align*}
   u_{n_2}^{-4}\omega^2_{\mathtt{pop}}(u_{n_2}) \le n_2 
   & \Leftrightarrow  \frac{\widetilde C_{\varepsilon^\circ}\bar\sigma^2 d}{n_2}  \le u_{n_2}^2.
\end{align*}
By \Cref{thm:clt-width}, we conclude 
\begin{align*}
    (1+|z_\alpha|)^{1+\gamma-q} \mathrm{R}_N^{\mathtt{CLT}} = C_{\varepsilon^\circ}(1+|z_\alpha|) \sqrt{\frac{\bar\sigma^2 d}{n_2}}+ (1+|z_\alpha|)\widetilde s_{n_1, n_2}^{1/2}. 
\end{align*}
Meanwhile, by \Cref{thm:clt-width-local-full}, we have 
\begin{align*}
\mathrm{Q}^{\mathtt{CLT}}_{N,\alpha} = (1+|z_\alpha|)^{1/2}\widetilde s^{1/2}_{n_1, n_2}
\end{align*}
in view of $C_{\rho}(\|\theta-\theta(P^N)\|) = \|\theta-\theta(P^N)\|^2$. Hence, we conclude that 
\begin{align*}
    &\max\{(1+|z_\alpha|)^{1/(1+\gamma-q)}\mathrm{R}^{\mathtt{CLT}}_{N}, \mathrm{Q}^{\mathtt{CLT}}_{N,\alpha}\mathbf{1}\{\mathrm{Q}^{\mathtt{CLT}}_{N,\alpha} \ge \rho\}\} \\
    &\quad \le C_{\varepsilon^\circ}(1+|z_\alpha|) \sqrt{\frac{\bar\sigma^2 d}{n_2}}  +   (1+|z_\alpha|)\widetilde s^{1/2}_{n_1, n_2},
\end{align*}
which concludes the result.
\end{proof}

For \Cref{cor:lr-plugin}, we instead prove the slightly rephrased version of the corollary. 
\begin{corollary}\label{cor:lr-plugin-full}
    Suppose the initial estimator satisfies, for all $n_1 \geq N_1$, \begin{align}\nonumber\mathbb{P}_{P^1}\left(\|\widehat \theta_1-\theta(P^N)\|^2_{\bar\Gamma_1} \le \widetilde C_{\mathtt{init}}\frac{d \bar\sigma^2}{n_1} \right) \ge 1-\widetilde\varepsilon_{\mathtt{init}}.
    \end{align}
    Assume \ref{as:eigen_value} and \ref{as:linreg-moment-covariate} with $q_x > 2$ when $\alpha = 1/2$ and $q_x \ge 4$ when $\alpha \neq 1/2$. Additionally assume \ref{as:error_moment} when $\alpha \neq 1/2$.
     For any $\varepsilon \in (0, 1-\widetilde \varepsilon_{\mathtt{init}})$, setting $\varepsilon^\circ = \varepsilon + \widetilde \varepsilon_{\mathtt{init}}$, $n_1 \ge N_1$, 
    \begin{align*}
\mathrm{Diam}_{\|\cdot\|_{\bar\Gamma_2}}\bigl(\widehat{\mathrm{CI}}_{N,\alpha}^{\mathtt{CLT}}\bigr)
    \leq C_{\varepsilon^\circ}(1+|z_{\alpha}|)
    \left\{\sqrt{\frac{\bar\sigma^2 d}{n_2}} 
    + \sqrt{\frac{\bar\sigma^2 d}{n_1}}\lambda^{1/2}_{\max}(\bar\Gamma_1^{-1/2}\bar\Gamma_2\bar\Gamma_1^{-1/2})\right\}
        \end{align*}
        with probability at least $1-\varepsilon^\circ - \exp(-Cn_2)$, 
        provided $n_2 \ge \mathfrak{C}d$ when $\alpha = 1/2$, and with probability $1-\varepsilon^\circ$ provided that $n_2$ satisfies 
\begin{equation*}
    C'_{\varepsilon^\circ} \max\bigg\{((1+|z_\alpha|) d\log (2d) L^4)^{q_x/(q_x-2)}, ((1+|z_\alpha|)(1+K) dL^2)^{p/(p-1)}\bigg\} \le n_2,
\end{equation*}
when $\alpha \neq 1/2$, where $C$ is a universal constant, $p = \min\{q_y, q_x/2\}$, $\mathfrak{C}$ depends on $q_x > 2$, $C_{\varepsilon^\circ}$ and $C'_{\varepsilon^\circ}$ depend on $\varepsilon^\circ$, but not on $d$ or $\alpha$.
\end{corollary}
\begin{proof}[\bfseries{Proof of \Cref{cor:lr-plugin-full}}]
Verifying \ref{as:rate-initial-estimator3}, we have that 
\begin{align*}
    &\frac{1}{n_2}\mathbb{E}_{P^2| P^1}\left[\frac{1}{n_2}\sum_{i \in I_2} \widehat \xi_i^2\right] + \widehat{\mathbb{C}}_2^2 \\
    &\quad \le \frac{4\|\widehat{\theta}_1 - \theta(P^N)\|_{\bar\Gamma_2}^2 \bar\sigma^2}{n_2} + \frac{2\|\widehat{\theta}_1 - \theta(P^N)\|_{\bar\Gamma_2}^4L^4}{n_2} + \|\widehat{\theta}_1 - \theta(P^N)\|_{\bar\Gamma_2}^4\\
    &\quad = \frac{\|\widehat{\theta}_1 - \theta(P^N)\|_{\bar\Gamma_2}^2 \bar\sigma^2}{n_2} + C \|\widehat{\theta}_1 - \theta(P^N)\|_{\bar\Gamma_2}^4\\
    &\quad \le \frac{\|\widehat{\theta}_1 - \theta(P^N)\|_{\bar\Gamma_1}^2 \bar\sigma^2}{n_2}\lambda_{\max}(\bar\Gamma_1^{-1/2}\bar\Gamma_2\bar\Gamma_1^{-1/2}) + C \|\widehat{\theta}_1 - \theta(P^N)\|_{\bar\Gamma_1}^4\lambda^2_{\max}(\bar\Gamma_1^{-1/2}\bar\Gamma_2\bar\Gamma_1^{-1/2})
\end{align*}
since we already assumed that $CL^4 \le n_2$ in \eqref{eq:eqreuiment_for_n2_simplified}. Consider the event 
\begin{align*}
    \Omega_{\mathtt{init}} := \left\{\|\widehat \theta_1-\theta(P^N)\|^2_{\bar\Gamma_1} \le \widetilde C_{\mathtt{init}}\frac{d \bar\sigma^2}{n_1}\right\}.
\end{align*}
Then on this event, we can take 
\begin{align*}
    \widetilde s_{n_1, n_2}^2 = \frac{(d\bar\sigma^2)(d\bar\sigma^2)}{n_1n_2} \lambda_{\max}(\bar\Gamma_1^{-1/2}\bar\Gamma_2\bar\Gamma_1^{-1/2})+ \frac{(d\bar\sigma^2)^2}{n_1^2}\lambda^2_{\max}(\bar\Gamma_1^{-1/2}\bar\Gamma_2\bar\Gamma_1^{-1/2}),
\end{align*}
since 
\begin{align*}
    &\frac{1}{n_2}\mathbb{E}_{P^2| P^1}\left[\frac{1}{n_2}\sum_{i \in I_2} \widehat \xi_i^2\right] + \widehat{\mathbb{C}}_2^2 \\
    &\quad\le \frac{\|\widehat{\theta}_1 - \theta(P^N)\|_{\bar\Gamma_1}^2 \bar\sigma^2}{n_2}\lambda_{\max}(\bar\Gamma_1^{-1/2}\bar\Gamma_2\bar\Gamma_1^{-1/2}) + C \|\widehat{\theta}_1 - \theta(P^N)\|_{\bar\Gamma_1}^4\lambda^2_{\max}(\bar\Gamma_1^{-1/2}\bar\Gamma_2\bar\Gamma_1^{-1/2})\\
    &\quad\lesssim \max\{\widetilde C_{\mathtt{init}}, \widetilde C_{\mathtt{init}}^2\}\widetilde s_{n_1, n_2}^2
\end{align*}
with probability greater than $1-\widetilde\varepsilon_{\mathtt{init}}$. Then the result follows with 
\begin{align*}
  &\sqrt{\frac{d\bar\sigma^2}{n_2}} +   \widetilde s_{n_1, n_2}^{1/2} \\
  &\quad \le    \sqrt{\frac{d\bar\sigma^2}{n_2}} +\left(\frac{d\bar\sigma^2}{n_2}\right)^{1/4}\left(\frac{d\bar\sigma^2}{n_2}\right)^{1/4}\lambda^{1/4}_{\max}(\bar\Gamma_1^{-1/2}\bar\Gamma_2\bar\Gamma_1^{-1/2})+ \sqrt{\frac{d\bar\sigma^2}{n_1}} \lambda^{1/2}_{\max}(\bar\Gamma_1^{-1/2}\bar\Gamma_2\bar\Gamma_1^{-1/2})\\
  &\quad \lesssim \sqrt{\frac{d\bar\sigma^2}{n_2}} +\sqrt{\frac{d\bar\sigma^2}{n_1}} \lambda^{1/2}_{\max}(\bar\Gamma_1^{-1/2}\bar\Gamma_2\bar\Gamma_1^{-1/2})
\end{align*}
by AM-GM inequality. 
\end{proof}

\begin{lemma}\label{lemma:linear-empirical-process}
    Let $X_i$ and $\varepsilon_i = Y_i - \theta(P^N)^\top X_i$ for $i \in I_2$. Then under \ref{as:eigen_value}, 
    \begin{align*}
        \E_{P^2} \left[\|\bar\Gamma_2^{-1/2}(\mathbb{P}_{n_2}-P^2) [\varepsilon X]\| \right] \le\sqrt{\frac{d\bar\sigma^2 }{n_2}}.
    \end{align*}
\end{lemma}
\begin{proof}[\bfseries{Proof of \Cref{lemma:linear-empirical-process}}]
We have
\begin{align*}
    \E_{P^2} \left[\|\bar\Gamma_2^{-1/2}(\mathbb{P}_{n_2}-P^2) [\varepsilon X]\|_2 \right] \le \sqrt{\E_{P^2} \left[\|(\mathbb{P}_{n_2}-P^2) \bar\Gamma_2^{-1/2}[\varepsilon X]\|_2^2 \right]}.
\end{align*}
We focus on the expectation inside, and obtain 
\begin{align*}
    &\E_{P^2} \left[\|(\mathbb{P}_{n_2}-P^2) \bar\Gamma_2^{-1/2}[\varepsilon X]\|_2^2\right] \\
    &\quad = \E_{P^2} \left[\left\|\frac{1}{n_2}\sum_{i \in I_2} \bar\Gamma_2^{-1/2} \varepsilon_i X_i - \E_{P_i}[\bar\Gamma_2^{-1/2} \varepsilon_i X_i]\right\|^2_2\right]= \E_{P^2} \left[\left\|\frac{1}{n_2}\sum_{i \in I_2} X_i^\circ \right\|^2\right]
\end{align*}
where 
\begin{align*}
    X_i^\circ = \bar\Gamma_2^{-1/2} \varepsilon_i X_i-\E_{P_i}[\bar\Gamma_2^{-1/2} \varepsilon_i X_i].
\end{align*}
Since each $X_i^\circ$ is mean zero, and as $X_i^\circ$ and $X_j^\circ$ are independent for $i \neq j$, we have 
\begin{align*}
    \E_{P^2} \left[\left\|\frac{1}{n_2}\sum_{i \in I_2} X_i^\circ \right\|^2\right] &= \frac{1}{n_2^2}\sum_{i \in I_2} \E_{P_i}[\|X_i^\circ\|^2] \\
    &\le  \frac{1}{n_2^2}\sum_{i \in I_2} \mathrm{tr}(\E_{P_i}[\bar\Gamma_2^{-1/2} \varepsilon_i^2 X_iX_i^\top \bar\Gamma_2^{-1/2}])\\
    &\le  \frac{1}{n_2^2}\sum_{i \in I_2} \bar\sigma^2 \mathrm{tr}(\E_{P_i}[\bar\Gamma_2^{-1/2} X_iX_i^\top \bar\Gamma_2^{-1/2}])\\
    &=  \frac{\bar\sigma^2 }{n_2} \cdot \mathrm{tr}(\bar\Gamma_2^{-1/2} \bar\Gamma_2 \bar\Gamma_2^{-1/2}) =\frac{d\bar\sigma^2 }{n_2},
\end{align*}
where we used \ref{as:eigen_value} with the tower property of the expectation. This concludes that 
\begin{align*}
    \E_{P^2} \left[\|\bar\Gamma_2^{-1/2}(\mathbb{P}_{n_2}-P^2) [\varepsilon X]\| \right]  \le \sqrt{\frac{d\bar\sigma^2 }{n_2}}.
\end{align*}
\end{proof}
\begin{lemma}\label{lemma:operator-empirical-process}
    Let $X_i$ for $i \in I_2$. Then under \ref{as:linreg-moment-covariate} for $q_x \ge 4$,
    \begin{align*}
      &\E_{P^2} \left[\|\bar\Gamma_2^{-1/2} (\mathbb{P}_{n_2}-P^2)[XX^\top ]\bar\Gamma_2^{-1/2}\|_{\mathrm{op}} \right] \le 4\sqrt{\frac{d\log(2d)L^4}{n_2}} + 32\sqrt{2}\log(2d)\left(\frac{dL^2}{n_2^{1-2/q_x}}\right).
    \end{align*}
\end{lemma}
\begin{proof}[\bfseries{Proof of \Cref{lemma:operator-empirical-process}}]
We use Theorem I of~\cite{tropp2016expected} to bound the expectation of the operator norm. Define 
\begin{align*}
    \mathbf{v}(X) = \left\|\sum_{i\in I_2} \mathbb{E}_{P_i}\left[\left(\frac{\bar\Gamma_2^{-1/2}X_iX_i^{\top}\bar\Gamma_2^{-1/2}}{n_2} - \frac{\bar\Gamma_2^{-1/2}\Gamma_i \bar\Gamma_2^{-1/2}}{n_2}\right)^2\right]\right\|_{\mathrm{op}},
\end{align*}
and 
\begin{align*}
    \mathbf{L}^2 = \mathbb{E}_{P^2}\left[\max_{i \in I_2} \left\|\left(\frac{\bar\Gamma_2^{-1/2}X_iX_i^{\top}\bar\Gamma_2^{-1/2}}{n_2} - \frac{\bar\Gamma_2^{-1/2}\Gamma_i \bar\Gamma_2^{-1/2}}{n_2}\right)\right\|^2_{\mathrm{op}}\right].
\end{align*}
Then Theorem I of \cite{tropp2016expected} states that 
\begin{align*}
    \E_{P^2} \left[\|\bar\Gamma_2^{-1/2} (\mathbb{P}_{n_2}-P^2)[XX^\top ]\bar\Gamma_2^{-1/2}\|_{\mathrm{op}} \right]  \le 2\sqrt{(1+2\log(2d))\mathbf{v}(X)} + 4(1+2\log(2d))\mathbf{L}.
\end{align*}

Now we derive bounds for $\mathbf{v}(X)$ and $\mathbf{L}$. We begin with $\mathbf{v}(X)$. First denote 
\begin{align*}
    \widetilde X_i = \bar\Gamma_2^{-1/2}X_i\quad \textrm{and} \quad  \widetilde \Gamma_i  =\bar\Gamma_2^{-1/2}\Gamma_i \bar\Gamma_2^{-1/2}.
\end{align*}
Then, we have 
\begin{align*}
    &\E_{P_i}\left[\left(\frac{\bar\Gamma_2^{-1/2}X_iX_i^{\top}\bar\Gamma_2^{-1/2}}{n_2} - \frac{\bar\Gamma_2^{-1/2}\Gamma_i \bar\Gamma_2^{-1/2}}{n_2}\right)^2 \right]=\E_{P_i}\left[\left(\frac{\widetilde X_i\widetilde X_i^{\top}}{n_2} - \frac{\widetilde \Gamma_i}{n_2}\right)^2 \right]\\
    &\quad =\E_{P_i}\left[\frac{\widetilde X_i\widetilde X_i^{\top}\widetilde X_i\widetilde X_i^{\top}}{n_2^2} - \frac{\widetilde X_i\widetilde X_i^{\top}\widetilde \Gamma_i}{n_2^2} -\frac{\widetilde \Gamma_i\widetilde X_i\widetilde X_i^{\top}}{n_2^2} +\frac{\widetilde \Gamma_i^2}{n_2^2} \right]=\E_{P_i}\left[\frac{\|\widetilde X_i\|^2\widetilde X_i\widetilde X_i^{\top}}{n_2^2}  \right] - \frac{\widetilde \Gamma_i^2}{n_2^2}.
\end{align*}
Furthermore, we have
\begin{align*}
    \mathbf{v}(X) = \left\|\sum_{i\in I_2}\E_{P_i}\left[\frac{\|\widetilde X_i\|^2\widetilde X_i\widetilde X_i^{\top}}{n_2^2}  \right] - \frac{\widetilde \Gamma_i^2}{n_2^2}\right\|_{\mathrm{op}} \le \left\|\sum_{i\in I_2}\E_{P_i}\left[\frac{\|\widetilde X_i\|^2\widetilde X_i\widetilde X_i^{\top}}{n_2^2}  \right] \right\|_{\mathrm{op}}.
\end{align*}
Now, we focus on the expectation inside. For any $\|u\|_2 = 1$,
\begin{align*}
    u^\top \E_{P_i}[\|\widetilde X_i\|^2\widetilde X_i\widetilde X_i^{\top} ] u&= \E_{P_i}[\|\bar\Gamma_2^{-1/2}X_i\|^2|u^\top \bar\Gamma_2^{-1/2}X_i|^2 ] \\
    &\le \sqrt{\E_{P_i}[\|\bar\Gamma_2^{-1/2}X_i\|^4}\sqrt{\E_{P_i}|u^\top  \bar\Gamma_2^{-1/2}X_i|^4}.
\end{align*}
We observe that 
\begin{align*}
    \E_{P_i}[\|\bar\Gamma_2^{-1/2}X_i\|^4] &= \E_{P_i}\left[\left\{\sum_{j=1}^d(e_j^\top \bar\Gamma_2^{-1/2}X_i)^2\right\}^2\right] \le d^2\E_{P_i}\left[\frac{1}{d}\sum_{j=1}^d(e_j^\top \bar\Gamma_2^{-1/2}X_i)^4\right] \le d^2L^4,
\end{align*}
where the last step follows by \ref{as:linreg-moment-covariate} since $q_x \ge 4$. Similarly, we have $\E_{P_i}|u^\top  \bar\Gamma_2^{-1/2}X_i|^4 \le L^4$. Putting together 
\begin{align*}
    \mathbf{v}(X) \le \left\|\sum_{i\in I_2}\E_{P_i}\left[\frac{\|\widetilde X_i\|^2\widetilde X_i\widetilde X_i^{\top}}{n_2^2}  \right] \right\|_{\mathrm{op}} \le \frac{dL^4}{n_2}.
\end{align*}

Next for $\mathbf{L}$, we have, 
\begin{align*}
    \mathbf{L}^2 \le \frac{2}{n_2^2}\mathbb{E}_{P^2}\left[\max_{i \in I_2}\|\bar\Gamma_2^{-1/2}X_i\|^4 \right] + \frac{2}{n_2^2}\mathbb{E}_{P^2}\left[\max_{i \in I_2} \|\bar\Gamma_2^{-1/2}\Gamma_i \bar\Gamma_2^{-1/2}\|^2_{\mathrm{op}}\right].
\end{align*}
For the first term, we have 
\begin{align*}\mathbb{E}_{P^2}\left[\max_{i \in I_2}\|\bar\Gamma_2^{-1/2}X_i\|^4\right] &\le \left(\mathbb{E}_{P^2}\left[\max_{i \in I_2}\|\bar\Gamma_2^{-1/2}X_i\|^{q_x}\right]\right)^{4/q_x} \le \left(\sum_{i \in I_2}\mathbb{E}_{P_i}[\|\bar\Gamma_2^{-1/2}X_i\|^{q_x}]\right)^{4/q_x}.
\end{align*}
As before, we have
\begin{align*}
    \E_{P_i}\left[\|\bar\Gamma_2^{-1/2}X_i\|^{q_x}\right] &= \E_{P_i}\left[\left\{\sum_{j=1}^d (e_j^\top \bar\Gamma_2^{-1/2}X_i)^{2}\right\}^{q_x/2}\right]\\
    &\le d^{q_x/2}\E_{P_i}\left[\frac{1}{d}\sum_{j=1}^d |e_j^\top \bar\Gamma_2^{-1/2}X_i|^{q_x}\right] \le d^{q_x/2} L^{q_x}
\end{align*}
where the last step follows by \ref{as:linreg-moment-covariate}. Therefore, we conclude
\begin{align*}
    \mathbb{E}_{P^2}\left[\max_{i \in I_2}\|\bar\Gamma_2^{-1/2}X_i\|^4\right] \le n_2^{4/q_x}d^2 L^4.
\end{align*}
For $\max_{i \in I_2} \|\bar\Gamma_2^{-1/2}\Gamma_i \bar\Gamma_2^{-1/2}\|^2_{\mathrm{op}}$, we have 
\begin{align*}
    \sup_{\|u\|=1}u^\top \bar\Gamma_2^{-1/2}\Gamma_i \bar\Gamma_2^{-1/2} u = \sup_{\|u\|=1}\E_{P_i}[|u^\top \bar\Gamma_2^{-1/2}X_i|^2] \le \sup_{\|u\|=1}\big(\E_{P_i}[|u^\top \bar\Gamma_2^{-1/2}X_i|^4]\big)^{1/2} \le L^2.
\end{align*}
Putting together 
\begin{align*}
    \mathbf{L} \le \sqrt{2}n_2^{-1+2/q_x}d L^2 + \frac{\sqrt{2}L^2}{n_2} \le 2\sqrt{2}n_2^{-1+2/q_x}d L^2.
\end{align*}
The result is concluded by choosing $\eta = 1$ and plugging $\mathbf{v}(X)$ and $\mathbf{L}$ into the expression of Theorem I of \citet{tropp2016expected}.
\end{proof}
\begin{proof}[\bfseries{Proof of \Cref{thm:LR-valid-pen}}]
    Since the additional term $\lambda(\theta)$ is not random, we have 
    \begin{align*}
        \widehat\xi_i &= (Y_i-\widehat\theta^\top X)^2 + \lambda(\widehat \theta) -(Y_i-\theta(P^N)^\top X)^2 - \lambda(\theta(P^N)) \\
        &\quad - \E_{P_i}[(Y_i-\widehat\theta^\top X)^2 + \lambda(\widehat \theta) -(Y_i-\theta(P^N)^\top X)^2 - \lambda(\theta(P^N)) |D_1]\\
        &= (Y_i-\widehat\theta^\top X)^2  -(Y_i-\theta(P^N)^\top X)^2 - \E_{P_i}[(Y_i-\widehat\theta^\top X)^2 -(Y_i-\theta(P^N)^\top X)^2 |D_1].
    \end{align*}
    Thus the expression for $\widehat \xi_i$ is identical as in \Cref{thm:LR-valid}. Hence the validity result by \Cref{thm:LR-valid} remains to hold whenever $\widehat\theta_1$ is consistent. For $\alpha = 1/2$, the upper bound for variance also does not change since $\lambda(\theta)$ is constant. The expression for $\widehat{\mathbb{C}}_2$ may depend on $\lambda(\cdot)$. In particular, for any convex $\lambda$ and $g_0 \in \partial \lambda(\theta_0)$, 
    \begin{align*}
        \widehat{\mathbb{C}}_2 &= \frac{1}{n_2}\sum_{i \in I_2}\E_{P_i}[(Y_i-\widehat\theta^\top X_i)^2  -(Y_i-\theta(P^N)^\top X_i)^2+ \lambda(\widehat \theta)- \lambda(\theta(P^N)) | D_1] \\
         &\ge \frac{1}{n_2}\sum_{i \in I_2}\E_{P_i}[2(Y_i-\theta(P^N)^\top X_i)(\theta(P^N)^\top X_i-\widehat \theta^\top X_i)+g_0^\top(\widehat \theta-\theta(P^N)) | D_1] \\
         &\quad +\frac{1}{n_2}\sum_{i \in I_2}\E_{P_i}[(\widehat \theta^\top X_i-\theta(P^N)^\top X_i)^2| D_1].
    \end{align*}
    On the other hand, since $\theta(P^N)$ is a population minimizer, it follows that
    \begin{align*}
        &0 \in \frac{1}{n_2}\sum_{i \in I_2}\E_{P_i}[2(Y_i-\theta(P^N)^\top X_i)X_i] + \partial \lambda(\theta_0) \\
        &\qquad \implies \frac{1}{n_2}\sum_{i \in I_2}\E_{P_i}[2(Y_i-\theta(P^N)^\top X_i)(\theta(P^N)^\top X_i-\widehat \theta^\top X_i)+g_0^\top(\widehat \theta-\theta(P^N)) | D_1]  = 0.
    \end{align*}
    Hence the lower bound 
    \begin{align*}
        \widehat{\mathbb{C}}_2 \ge \frac{1}{n_2}\sum_{i \in I_2}\E_{P_i}[(\widehat \theta^\top X_i-\theta(P^N)^\top X_i)^2| D_1]
    \end{align*}
    remains to hold when $\lambda(\cdot)$ is convex. The rest of the proof is identical to that of \Cref{thm:LR-valid}.
\end{proof}

\subsection{Manski's Discrete Choice Model}
In this example, we take 
\[m_{\theta} := (y,x) \mapsto -\frac{1}{2}\, y \cdot \mathrm{sgn}(\theta^\top x).\]
whose envelop function for $m_{\theta}-m_{\theta(P^N)}$ is trivially given by a constant function at $1$. The leading constant $1/2$ is introduced without loss of generality.
\begin{proof}[\bfseries{Proof of \Cref{thm:manski-validity-consistent}}]The proof is an application of \Cref{thm:coverage-anti-conservative-confidence-set-empirical-risk} for $\alpha = 1/2$ and \Cref{thm:studentized-katz} for $\alpha \neq 1/2$. First, observe that 
    \begin{align*}
        &\left|\E_{P_i}\left[-\frac{1}{2}\, Y_i \cdot \mathrm{sgn}(\widehat{\theta}_1^\top X_i) + \frac{1}{2}\, Y_i \cdot \mathrm{sgn}(\theta(P^N)^\top X_i)\right]\right| \\
        &\quad = \left|\E_{P_i}\left[(2\eta_{P_i}(X_i)-1)\cdot \mathrm{sgn}(\theta(P^N)^\top X_i)\left( \mathrm{sgn}(\theta(P^N)^\top X_i) \neq  \mathrm{sgn}(\widehat{\theta}_1^\top X_i)\right)\right]\right|\\
        &\quad \le \mathbb{P}_{P_i}(\mathrm{sgn}(\theta(P^N)^\top X_i) \neq  \mathrm{sgn}(\widehat{\theta}_1^\top X_i)) = d_{\Delta, i}(\widehat{\theta}_1, \theta(P^N)).
    \end{align*} 
    This implies that 
    \begin{align*}
        &\mathrm{Var}_{P_i}[\widehat \xi_i] = \mathrm{Var}_{P_i}\left[\frac{1}{2}Y_i\left( \mathrm{sgn}(\theta(P^N)^\top X_i) -  \mathrm{sgn}(\widehat{\theta}_1^\top X_i)\right)\right] \\
    &\quad= \mathbb{P}_{P_i}\left( \mathrm{sgn}(\theta(P^N)^\top X_i) \neq  \mathrm{sgn}(\widehat{\theta}_1^\top X_i)\right) - \left[\frac{1}{2}\E_{P_i}\left[Y_i\left( \mathrm{sgn}(\theta(P^N)^\top X_i) -  \mathrm{sgn}(\widehat{\theta}_1^\top X_i)\right)\right]\right]^2\\
    &\quad\ge \mathbb{P}_{P_i}\left( \mathrm{sgn}(\theta(P^N)^\top X_i) \neq  \mathrm{sgn}(\widehat{\theta}_1^\top X_i)\right)-\left\{\mathbb{P}_{P_i}\left( \mathrm{sgn}(\theta(P^N)^\top X_i) \neq  \mathrm{sgn}(\widehat{\theta}_1^\top X_i)\right)\right\}^2\\
    &\quad= \mathbb{P}_{P_i}\left( \mathrm{sgn}(\theta(P^N)^\top X_i) \neq  \mathrm{sgn}(\widehat{\theta}_1^\top X_i)\right)\left\{1-\mathbb{P}_{P_i}\left( \mathrm{sgn}(\theta(P^N)^\top X_i) \neq  \mathrm{sgn}(\widehat{\theta}_1^\top X_i)\right)\right\} \\
    &\quad = d_{\Delta, i}(\widehat{\theta}_1, \theta(P^N)) (1-d_{\Delta, i}(\widehat{\theta}_1, \theta(P^N))).
    \end{align*}
    Finally, using the fact that $|\widehat{\xi}_i| \le 1$, we have
    \begin{align*}
        \E_{P_i}[|\widehat\xi_i|^3] &\le \E_{P_i}[|\widehat\xi_i|^2]\\
    &\le \E_{P_i}\left[\left|\frac{1}{2}\, Y_i \cdot \left\{\mathrm{sgn}(\widehat{\theta}_1^\top X_i)-\mathrm{sgn}(\theta(P^N)^\top X_i)\right\}\right|^2\right]\\
    &= \mathbb{P}_{P_i}\left(\mathrm{sgn}(\widehat{\theta}_1^\top X_i)\neq \mathrm{sgn}(\theta(P^N)^\top X_i)\right) = d_{\Delta, i}(\widehat{\theta}_1, \theta(P^N)).
    \end{align*}

Putting together, we obtain
\begin{align*}
    \mathbb{E}_{P^N}\left[\sum_{i\in I_2} \frac{|\widehat\xi_i|^2}{\widehat{\mathbb{V}}_{2}}\min\left\{1,\,\frac{|\widehat \xi_i|}{\widehat{\mathbb{V}}_{2}^{1/2}}\right\}\right] &\le \mathbb{E}_{P^N}\left[\sum_{i\in I_2} \frac{|\widehat\xi_i|^3}{\widehat{\mathbb{V}}_{2}^{3/2}}\right] \\
    &\le \mathbb{E}_{P^1}\left[\frac{\sum_{i\in I_2} |d_{\Delta, i}(\widehat{\theta}_1, \theta(P^N))|}{\left(\sum_{i \in I_2}d_{\Delta, i}(\widehat{\theta}_1, \theta(P^N))(1-d_{\Delta, i}(\widehat{\theta}_1, \theta(P^N)))\right)^{3/2}}\right].
\end{align*}
First, assume that $d_{\Delta, i}(\widehat{\theta}_1, \theta(P^N)) \le 1/2$ for all $i \in I_2$. Then 
\begin{align*}
    &\min\left\{1, C\mathbb{E}_{P^1}\left[\frac{\sum_{i\in I_2} |d_{\Delta, i}(\widehat{\theta}_1, \theta(P^N))|}{\left(\sum_{i \in I_2}d_{\Delta, i}(\widehat{\theta}_1, \theta(P^N))(1-d_{\Delta, i}(\widehat{\theta}_1, \theta(P^N)))\right)^{3/2}}\right]\right\} \\
    &\quad \le \min\left\{1, C\mathbb{E}_{P^1}\left[\frac{2^{3/2}}{\left(\sum_{i \in I_2}d_{\Delta, i}(\widehat{\theta}_1, \theta(P^N))\right)^{1/2}}\right]\right\}.
\end{align*}
Meanwhile if $d_{\Delta, i}(\widehat{\theta}_1, \theta(P^N)) \ge 1/2$ for some $i \in I_2$, then $\sum_{i \in I_2}d_{\Delta, i}(\widehat{\theta}_1, \theta(P^N)) \ge 1/2$, and the result holds trivially. Finally, in view of \ref{as:covariate-manski} and \Cref{thm:studentized-katz}, we conclude 
    \begin{align*}
&\mathbb{P}_{P^N}(\theta(P^N) \not\in \widehat{\mathrm{CI}}^{\mathtt{CLT}}_{N,\alpha}) \le \alpha +  \E_{P^1}\left[\min\left\{1, \frac{C}{(n_2\|\widehat\theta_1 - \theta(P^N)\|)^{1/2}}\right\}\right],
\end{align*}
where $C > 0$ is a constant depending on $c_1$.

Next, consider the case $\alpha = 1/2$. We derive the lower bound on the curvature. Following Proposition 1 of \citet{tsybakov2004optimal} (and similarly for Proposition 2.4 of \citet{mukherjee2021optimal}), we define the set $\mathcal{A}(\theta) := \{x : \mathrm{sgn}(\theta(P^N)^\top X) \neq \mathrm{sgn}(\theta^\top X)\}$ for each $\theta$. It then follows that 
\begin{align*}
   \E_{P_i}(m_{\theta}-m_{\theta(P^N)}) &=\frac{1}{2}\E_{P_i}\left[ Y \left(\mathrm{sgn}(\theta(P^N)^\top X)-\mathrm{sgn}(\theta^\top X)\right)\right] \\
   &=\int_{\mathcal{A}(\theta)}\, |\E_{P_i}[Y |X=x] |P_X(x)\, dx\\
   &= 2 \int_{\mathcal{A}(\theta)}\, |\eta_{P_i}(x)-1/2|P_X(x)\, dx \\
   &\ge 2\sup_{0 \le t \le t^*}t\mathbb{P}_{P_i}\left(|\eta_{P_i}(X)-1/2| \ge t \cap X \in \mathcal{A}(\theta)\right)\\
   &\ge 2\sup_{0 \le t \le t^*}t\left(d_{\Delta, i}(\theta, \theta(P^N))-\mathbb{P}_{P_i}\left(|\eta_P(X)-0.5| \le t \right)\right)\\
   &\ge 2\sup_{0 \le t \le t^*}t\left(d_{\Delta, i}(\theta, \theta(P^N))- C_0t^{1/\gamma} \right).
\end{align*}
The last inequality uses \ref{as:tsybakov}. The optimal choice of $t$ is given by 
\begin{align*}
    t = \begin{cases}
        (1+1/\gamma)^{-\gamma}C_0^{-\gamma}d^\gamma_{\Delta, i}(\theta, \theta(P^N)) & \text{when}\quad d_{\Delta, i}(\theta, \theta(P^N))\le (1+1/\gamma)C_0(t^*)^{1/\gamma}\\
        t^* & \text{otherwise.}
    \end{cases}
\end{align*}
Putting together, it follows for all $\theta \in \mathbb{S}^{d-1}$,
    \begin{align*}
        \E_{P_i}(m_{\theta}-m_{\theta(P^N)}) &\ge \left(\frac{2}{1+\gamma}\right)\frac{d^{1+\gamma}_{\Delta, i}(\theta, \theta(P^N))}{ (1+1/\gamma)^{\gamma}C_0^\gamma}\mathbf{1}\left\{d_{\Delta, i}(\theta, \theta(P^N))\le (1+1/\gamma)C_0(t^*)^{1/\gamma}\right\}\\
        &\quad +  \left(\frac{2}{1+\gamma}\right)t^*d_{\Delta, i}(\theta, \theta(P^N))\mathbf{1}\left\{d_{\Delta, i}(\theta, \theta(P^N))>(1+1/\gamma)C_0(t^*)^{1/\gamma}\right\}\\
        &\ge \mathfrak{C}d_{\Delta, i}(\theta, \theta(P^N))\min\{d^\gamma_{\Delta, i}(\theta, \theta(P^N)), t^*\},
    \end{align*}
    where $\mathfrak{C}$ depends on $C_0$. Based on the earlier calculation, we have 
\begin{align*}
    \mathrm{Var}_{P_i}[\widehat{\xi}_i] \le \E_{P_i}[\widehat{\xi}_i^2] \le d_{\Delta, i}(\theta, \theta(P^N)).
\end{align*}
Hence, we have 
\begin{align*}
    \ratio^2 &\ge  \frac{(\sum_{i \in I_2}\mathfrak{C}d_{\Delta, i}(\theta, \theta(P^N))\min\{d^\gamma_{\Delta, i}(\theta, \theta(P^N)), t^*\})^2}{\sum_{i \in I_2}d_{\Delta, i}(\theta, \theta(P^N))} \\
    &\ge \frac{(\sum_{i \in I_2}\mathfrak{C}d_{\Delta, i}(\theta, \theta(P^N))\min\{c_1\|\theta- \theta(P^N)\|^\gamma, t^*\})^2}{\sum_{i \in I_2}d_{\Delta, i}(\theta, \theta(P^N))}\\
    &\ge \mathfrak{C}^2\sum_{i \in I_2}d_{\Delta, i}(\theta, \theta(P^N))\min\{c_1^2\|\theta- \theta(P^N)\|^{2\gamma}, (t^*)^2\}\\
    &\ge \mathfrak{C}n_2\|\theta- \theta(P^N)\|\min\{\|\theta- \theta(P^N)\|^{2\gamma}, (t^*)^2\} = \widetilde \Delta_2^2,
\end{align*}
where we used \ref{as:covariate-manski} and $\mathfrak{C}$ depends on $C_0$ and $c_1$.

Finally by \Cref{thm:coverage-anti-conservative-confidence-set-empirical-risk}, we have
\begin{align*}
    &\min\left\{1, C\mathbb{E}_{P^N}\left[\sum_{i\in I_2}\frac{|\widehat\xi_i|^2}{n_2^2\widehat{\mathbb{V}}_{2}(1 + \ratio)^2}\min\left\{1,\,\frac{|\widehat \xi_i|}{n_2\widehat{\mathbb{V}}_{2}^{1/2}(1 + \ratio)}\right\}\right]\right\}\\
    &\quad \le \min\left\{1, C\E_{P^1}\left[\frac{1}{(1 + \widetilde\Delta_2)^2} \right]\right\}.
\end{align*}
This concludes the claim.
\end{proof}
\begin{proof}[\bfseries{Proof of \Cref{thm:manski-width}}] The proof is a direct application of \Cref{thm:clt-width-local-full} and thus proceeds by verifying \ref{as:margin}, \ref{as:local-entropy}, \ref{as:square-process}, to hold locally and \ref{as:margin-global}, \ref{as:ratio-process}, and \ref{as:ratio-square-process} to hold globally. We first establish the convergence rate of the diameter in terms of the pseudo-metric $d_\Delta(\theta_1, \theta_2)$ and then translate the result to $\|\cdot\|_2$. For identical distributions $X_i$ with $i \in I_2$, we define the metric over $P^2$ as 
\begin{align*}
    d_{\Delta}(\theta_1, \theta_2) = \frac{1}{n_2}\sum_{i \in I_2}d_{\Delta, i}(\theta_1, \theta_2).
\end{align*}
We define 
\begin{align*}
    \M_2(\theta) = \frac{1}{n_2}\sum_{i \in I_2}\E_{P_i}\left(-\frac{1}{2}\, Y_i \cdot \mathrm{sgn}(\theta^\top X_i)\right) \quad \text{and} \quad \widehat{\M}_2(\theta) = \frac{1}{n_2}\sum_{i \in I_2}\left(-\frac{1}{2}\, Y_i \cdot \mathrm{sgn}(\theta^\top X_i)\right).
\end{align*}
We consider the following collection of ``localized'' functions:
\begin{align*}
    \mathcal{M}_{\delta}^\Delta := \left\{  \frac{1}{2}\, y \left(\mathrm{sgn}(\theta(P^N)^\top x)-\mathrm{sgn}(\theta^\top x)\right)  \text{ for all }\theta \text{ s.t., } d_\Delta(\theta, \theta(P^N)) \le \delta \text{ and } \theta \in \mathbb{S}^{d-1}\right\}.
\end{align*}
\paragraph{Verifying \ref{as:margin}}Throughout, we assume $X_i$ is identically distributed. In the proof of \Cref{thm:manski-validity-consistent}, we have already established that 
\begin{align*}
    \frac{1}{n_2}\sum_{i\in I_2}\E_{P_i}(m_{\theta}-m_{\theta(P^N)}) \ge \mathfrak{C} d^{1+\gamma}_{\Delta}(\theta, \theta(P^N))
\end{align*}
whenever $d_\Delta(\theta, \theta(P^N)) \le (t^*)^{1/\gamma}$ where $\mathfrak{C}$ depends on $C_0, \gamma$. Hence \ref{as:margin} holds locally with $\gamma$ for $d_\Delta(\theta, \theta(P^N)) \le (t^*)^{1/\gamma}$.
\paragraph{Verifying \ref{as:local-entropy}}
For a measurable function $f$, we denote 
\begin{align}\label{eq:definition-Gn}
    \mathbb{G}_{n_2} f = n_2^{1/2}\left(\frac{1}{n_2}\sum_{i \in I_2} f(X_i, Y_i) - \E_{P_i}[f(X_i, Y_i)]\right).
\end{align}
Then 
\begin{align*}
    &\E^*_{P^2} \left[\sup_{d_\Delta(\theta, \theta(P^N)) < \delta}|(\widehat{\mathbb{M}}_2 - \mathbb{M}_2)(\theta) - (\widehat{\mathbb{M}}_2 - \mathbb{M}_2)(\theta(P^N))| \right]  \\
    &\quad = n_2^{-1/2}\E^*_{P^2} \left[\sup_{d_\Delta(\theta, \theta(P^N)) < \delta}| \mathbb{G}_{n_2}(m_{\theta} - m_{\theta(P^N)})| \right] =n_2^{-1/2}\E^*_{P^2} \left[\sup_{m \in \mathcal{M}^\Delta_\delta}\left|\mathbb{G}_{n_2} m\right| \right].
\end{align*}
  In order to control this term, we introduce the following objects. For any set $\Theta$ equipped with a metric $\|\cdot\|$, and any $\varepsilon > 0$, an $\varepsilon$-covering number $\mathcal{N}(\varepsilon,\Theta, \|\cdot\|)$ of $\Theta$ relative to the metric $\|\cdot\|$ is defined as the minimal number of $\|\cdot\|$-balls of radius less than or equal to $\varepsilon$ required for covering $\Theta$. In particular, we consider when $\Theta$ contains measurable functions of observations $Z_i \in \mathcal{Z}$ and let $Q$ be any  discrete probability measure on $Z_i$ for $i \in I_2$. We define an envelop function $F$ of the class $\Theta$ as $F := z\mapsto \sup_{f\in\Theta}|f(z)|$. The uniform entropy numbers of $\Theta$ relative to $L_r$ is defined as 
\begin{align*}
    J(\delta, \Theta, L_r) := \sup_{Q}\int_{0}^\delta \sqrt{1 + \log \mathcal{N}(\varepsilon\|F\|_{Q, r}, \Theta, L_r(Q))}\, d\varepsilon
\end{align*}
where $\|f\|_{Q,r} := \left(\sum_{i=1}^n f^r(z_i)Q(z_i)\right)^{1/r}$. 
We use the following result from \citet{van2011local}:
\begin{theorem}[Theorem 2.1 of \citet{van2011local}]\label{thm:local-maximal-ineq}
    Let $\mathcal{F}$ be a collection of $P$-square integrable functions equipped with an envelop function $F \le 1$. If $\mathbb{E}_P f^2 \le t^2 \mathbb{E}_P F^2$, for every $f$ and some $t \in (0,1)$, then 
    \begin{align*}
        \E_P^*\, \left[\sup_{f \in \mathcal{F}}\, |\mathbb{G}_n f|\right] \lesssim J(t, \mathcal{F}, L_2)\left(1+ \frac{J(t, \mathcal{F}, L_2)}{t^2 \sqrt{n}\|F\|_{P,2}}\right) \|F\|_{P, 2}.
    \end{align*}
\end{theorem}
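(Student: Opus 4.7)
\medskip

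\textbf{Proof plan for Theorem~\ref{thm:local-maximal-ineq}.}

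The plan is to follow the standard symmetrization--chaining route, with the localization refinement coming from relating the empirical $L_2(\mathbb{P}_n)$ radius back to the population $L_2(P)$ radius. First I would apply the symmetrization inequality (Lemma 2.3.1 of \citet{van1996weak}) to reduce the problem to bounding $\mathbb{E}_P \mathbb{E}_\epsilon \sup_{f\in\mathcal F} | n^{-1/2}\sum_{i=1}^n \epsilon_i f(Z_i) |$, where the $\epsilon_i$ are i.i.d.\ Rademacher variables independent of the data. Conditionally on $Z_1,\ldots,Z_n$, the Rademacher process is sub-Gaussian with respect to the random metric $\rho_n(f,g) := (\mathbb{P}_n (f-g)^2)^{1/2}$, so Dudley's entropy integral yields
\begin{equation*}
\mathbb{E}_\epsilon \sup_{f\in\mathcal F} \Bigl|n^{-1/2}\sum_{i=1}^n \epsilon_i f(Z_i)\Bigr| \;\lesssim\; \int_{0}^{R_n} \sqrt{1 + \log \mathcal N(\varepsilon, \mathcal F, L_2(\mathbb{P}_n))}\, d\varepsilon,
\end{equation*}
where $R_n = \sup_{f\in\mathcal F} \rho_n(f,0)$ is the random $L_2(\mathbb{P}_n)$-radius. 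Rescaling $\varepsilon \mapsto \varepsilon \|F\|_{\mathbb{P}_n,2}$ produces, after taking a supremum over discrete measures, an upper bound involving $J(R_n/\|F\|_{\mathbb{P}_n,2}, \mathcal F, L_2)\, \|F\|_{\mathbb{P}_n,2}$.

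Next, I would take expectations in $Z_1,\ldots,Z_n$ and use the variance constraint $Pf^2 \le t^2 PF^2$ to control the random radius. By the definition of $J$ and monotonicity of the integrand, one can show $\mathbb{E}_P [J(R_n/\|F\|_{\mathbb{P}_n,2},\mathcal F, L_2)\|F\|_{\mathbb{P}_n,2}] \le J(\bar R/\|F\|_{P,2}, \mathcal F, L_2)\|F\|_{P,2}$ where $\bar R = (\mathbb{E}_P R_n^2)^{1/2}$, using concavity of the entropy integral. The key observation is then that
\begin{equation*}
\mathbb{E}_P R_n^2 \;=\; \mathbb{E}_P \sup_{f\in\mathcal F} \mathbb{P}_n f^2 \;\le\; \sup_{f\in\mathcal F} Pf^2 + n^{-1/2}\, \mathbb{E}_P \sup_{f\in\mathcal F}|\mathbb{G}_n f^2| \;\le\; t^2 \|F\|_{P,2}^2 + n^{-1/2}\, \mathbb{E}_P \sup_{f\in\mathcal F} |\mathbb{G}_n f^2|.
\end{equation*}
Since $|f^2 - g^2| \le 2|f-g|$ whenever $|f|,|g|\le 1$, a contraction-type argument (Theorem 4.12 of \citet{ledoux2013probability}) bounds the squared-process maximum by twice the original maximum, which lets me close a self-bounding inequality.

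Write $M_n := \mathbb{E}_P\sup_{f\in\mathcal F}|\mathbb{G}_n f|$ and $J_* := J(t,\mathcal F, L_2)$. Combining the two steps above yields an inequality of the schematic form
\begin{equation*}
M_n \;\lesssim\; J\!\left(\sqrt{t^2 + M_n/(\sqrt{n}\|F\|_{P,2}^2)},\mathcal F, L_2\right) \|F\|_{P,2}.
\end{equation*}
Using the subadditivity $J(\sqrt{a^2+b^2}) \le J(a) + J(b)$ (which follows from $\varepsilon \mapsto \sqrt{1+\log\mathcal N(\varepsilon\|F\|,\cdot)}$ being decreasing) and the elementary inequality $J(x) \le J_* \cdot (x/t)$ for $x\ge t$ — valid because the entropy integral is concave — separates the two regimes and leads to
\begin{equation*}
M_n \;\lesssim\; J_* \|F\|_{P,2} + \frac{J_*^2}{t^2\sqrt{n}\|F\|_{P,2}} \cdot \|F\|_{P,2},
\end{equation*}
which is exactly the claimed bound.

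The main obstacle will be the self-referential step: one has to solve a fixed-point-style inequality $M_n \lesssim J(g(M_n)) \|F\|_{P,2}$ and extract a clean closed-form bound. Handling this cleanly requires the concavity/subadditivity properties of $J(\cdot)$ and the right splitting into the regimes $\bar R \le t\|F\|_{P,2}$ versus $\bar R > t\|F\|_{P,2}$; once the splitting is done, the rest is bookkeeping with the Dudley integral. A secondary technical point is the measurability/outer-expectation issue when $\sup_{f\in\mathcal F}$ is not a measurable random variable, which is handled by the standard convention of outer expectation as in Chapter~1.2 of \citet{van1996weak}.
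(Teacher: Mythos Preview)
The paper does not prove this statement: it is quoted verbatim as Theorem~2.1 of \citet{van2011local} and used as a black-box tool in the application proofs (e.g., the Manski example). There is therefore no ``paper's own proof'' to compare against.

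That said, your sketch is the standard route and is essentially the argument in \citet{van2011local}: symmetrize, apply Dudley's entropy bound conditionally on the data to get a bound in terms of the random $L_2(\mathbb{P}_n)$ radius, then control that radius via $\mathbb{E}_P \sup_f \mathbb{P}_n f^2 \le t^2\|F\|_{P,2}^2 + n^{-1/2}\mathbb{E}_P\sup_f|\mathbb{G}_n f^2|$ together with contraction for the squared process, and finally solve the resulting self-bounding inequality using concavity of $J(\cdot,\mathcal{F},L_2)$. The only point to be careful about is the step where you pass the expectation through $J(\cdot)$: the clean way is to use that $x\mapsto J(x,\mathcal{F},L_2)$ is concave (as an integral of a decreasing function), apply Jensen, and then use the sub-root property $J(cx)\le cJ(x)$ for $c\ge 1$ to linearize the fixed-point inequality. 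Your outline has all the right ingredients.
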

    
First, we relate the covering number of $\mathcal{M}^\Delta_\delta$ to the VC dimension of the subgraphs of the functions in $\mathcal{M}^\Delta_\delta$. First, the function $f$ in $\mathcal{M}^\Delta_\delta$ takes values in $\{-1, 0, 1\}$. Hence, for the subgraph $\{(x,y,t) : t < f(x,y)\}$, we only need to consider cases with $-1 < t \le 0$ and $0 < t \le 1$. They are identical and only provide the case with $0 < t \le 1$. We only need to consider the set where the function evaluates to non-zero values, that is, 
\begin{align*}
    \left\{(1,x,t) : \theta(P^N)^\top x > 0, \theta^\top x \le 0, t>0\right\} \cup \left\{(-1,x,t) : \theta(P^N)^\top x \le 0, \theta^\top x > 0, t>0\right\}. 
\end{align*}
Then, 
\begin{align*}
    &\left\{(y,x,t) \, :\, 2^{-1}y\{\mathrm{sgn}(\theta(P^N)^\top x)-\mathrm{sgn}(\theta^\top x)\} \ge t, t>0\right\} \\
        &\quad \subseteq \left\{(1,x,t) : \theta(P^N)^\top x > 0, \theta^\top x \le 0, t>0\right\} \\
        &\quad\quad\cup \left\{(-1,x,t) : \theta(P^N)^\top x \le 0, \theta^\top x > 0, t>0\right\}.
\end{align*}
Each component is the intersection of two half-spaces in $\mathbb{R}^d$ whose VC-dimension is $d+1$ and the intersection of two sets in VC-classes is also VC. The corresponding VC-dimension of the resulting set space is $2d+2$. See Lemma 2.6.17 of \citet{van1996weak}. Since the subgraph of a function is VC, the covering number of the function space is 
    \begin{align*}
        \mathcal{N}(\varepsilon \|M\|_{L_2(Q)}, \mathcal{M}_\delta^\Delta, L_2(Q)) \le C d (16e)^d \left(\frac{1}{\varepsilon}\right)^{4d}
    \end{align*}
    by Theorem 2.6.7 of \citet{van1996weak} with their $r=2$ for any probability measure $Q$ and $\varepsilon \in (0,1)$ and $C$ is a universal constant. We thus obtain  
    \begin{align*}
        t\mapsto J(t, \mathcal{M}^\Delta_{\delta}, L_2) &= \sup_{Q} \int_{0}^t \sqrt{1+ \log\left(C d (16e)^d \left(\frac{1}{\varepsilon}\right)^{4d}\right)}\, d\varepsilon\\
        &\le  \sup_{Q} \int_{0}^t \sqrt{\mathfrak{C} d + 4d\log\left(\frac{1}{\varepsilon}\right)}\, d\varepsilon \le  \mathfrak{C}t \sqrt{d\log(1/t)}
    \end{align*}
    where $\mathfrak{C}$ is a universal constants that may change line by line.
    
    Next, for the variance bound, we have 
    \begin{align*}
        \frac{1}{n_2}\sum_{i \in I_2}\E_{P_i}f^2  = \frac{1}{n_2}\sum_{i \in I_2}\mathbb{P}_{P_i}\left(\mathrm{sgn}(\theta(P^N)^\top x) \neq \mathrm{sgn}(\theta^\top x)\right) = \delta.
    \end{align*}
    Thus the condition of Theorem~\ref{thm:local-maximal-ineq} holds with $t^2 = \delta$ and $F=1$. By Theorem~\ref{thm:local-maximal-ineq}, we obtain
    \begin{align}
        \E^*_{P^2}\, \left[\sup_{m \in \mathcal{M}_\delta^\Delta}\, |\mathbb{G}_{n_2} m|\right]&\lesssim J(\sqrt{\delta}, \mathcal{F}, L_2)\left(1+ \frac{J(\sqrt{\delta}, \mathcal{F}, L_2)}{\delta\sqrt{n_2}}\right)\nonumber \\
        & \lesssim\sqrt{\delta d\log(1/\delta)}\left(1+ \frac{ \sqrt{\delta d\log(1/\delta)}}{\delta \sqrt{n_2}}\right) \nonumber\\
        &= \sqrt{\delta d\log(1/\delta)}+ \frac{d\log(1/\delta)}{\sqrt{n_2}}.\nonumber
    \end{align}
    We can thus take $\phi_{n_2}$ in \ref{as:local-entropy} as
    \begin{align*}
        \delta \mapsto \phi_{n_2}(\delta) = C\left(\sqrt{\frac{\delta d\log(1/\delta)}{n_2}}+ \frac{d\log(1/\delta)}{n_2}\right),
    \end{align*}
    where $C$ is a universal constant. The requirement $q < 1+\gamma$ is satisfies with $q = 1/2$.  

\paragraph{Verifying \ref{as:square-process}}
Recalling the discussion after \Cref{thm:clt-width}, we have $\phi_{n_2} = \omega_{n_2, \mathtt{emp}}$ for bounded processes. This follows from the contraction principle \citep[Theorem 4.12]{ledoux2013probability}. Thus we only need to evaluate $\omega^2_{\mathtt{pop}}$. This follows
\begin{align*}
    &\frac{1}{n_2}\sum_{i \in I_2}\E_{P_i}[(m_{\theta}-m_{\theta(P^N)})^2(Z_i)] \\
    &\quad = \frac{1}{n_2}\sum_{i \in I_2}\mathbb{P}_{P_i}\left(\mathrm{sgn}(\theta^\top X_i)\neq \mathrm{sgn}(\theta(P^N)^\top X_i)\right) = \frac{1}{n_2}\sum_{i \in I_2}d_{\Delta, i}(\theta, \theta(P^N)).
\end{align*}
Hence $\omega^2_{\mathtt{pop}}(\delta) = \delta$, which satisfies the requirement $q < 1+\gamma$ with $q=1/2$. 
\paragraph{Verifying \ref{as:margin-global}} Setting $\delta > (t^*)^{1/\gamma}$, we have shown that 
\begin{align*}
    C_{\delta_0}(d_{\Delta}(\theta, \theta(P^N))) \ge \mathfrak{C}t^* d_{\Delta}(\theta, \theta(P^N))
\end{align*}
from the proof of \Cref{thm:manski-validity-consistent}.

\paragraph{Verifying \ref{as:ratio-process}}
For any $\delta > (t^*)^{1/\gamma}$,
\begin{align*}
    &\E^*_{P^2} \left[\sup_{d_{\Delta}(\theta, \theta(P^N)) > (t^*)^{1/\gamma}}\left|\frac{(\widehat{\mathbb{M}}_2 - \mathbb{M}_2)(\theta) - (\widehat{\mathbb{M}}_2 - \mathbb{M}_2)(\theta(P^N))}{\mathbb{M}_2(\theta) -\mathbb{M}_2(\theta(P^N))}\right| \right] \\
    &\quad \le C\left(\sqrt{\frac{ d\log(1/t^*)}{\gamma(t^*)^{1/\gamma}n_2}}+ \frac{d\log(1/t^*)}{\gamma(t^*)^{1/\gamma}n_2}\right) = R(n_2, (t^*)^{1/\gamma})
\end{align*}
where $C$ is a universal constant. Hence, \ref{as:ratio-process} holds with $C_{\mathtt{ratio}}=1/\varepsilon_{\mathtt{ratio}}$ by Markov's inequality.

\paragraph{Verifying \ref{as:ratio-square-process}} Similarly for any $\delta > (t^*)^{1/\gamma}$,
\begin{align*}
    &\E_{P^2}^* \left[\sup_{d_{\Delta}(\theta, \theta(P^N)) > (t^*)^{1/\gamma}}\frac{z_\alpha^2}{n_2^2}\left|\frac{\sum_{i\in I_2}(m_\theta - m_{\theta(P^N)})^2 - \mathbb{E}_{P_i}[(m_\theta - m_{\theta(P^N)})^2]}{\{\mathbb{M}_2(\theta) -\mathbb{M}_2(\theta(P^N))\}^2}\right| \right]\\
    &\quad \le \frac{(1+z_\alpha^2)C}{(t^*)^{2/\gamma}n_2}\left(\frac{(t^*)^{1/\gamma} d\log(1/t^*)}{\gamma n_2}+ \left(\frac{d\log(1/t^*)}{\gamma n_2}\right)^2\right) =  S_{\mathtt{emp}}(n_2, (t^*)^{1/\gamma}, \alpha),
\end{align*}
and 
\begin{align*}
    &\sup_{d_{\Delta}(\theta, \theta(P^N)) > (t^*)^{1/\gamma}}\frac{z_\alpha^2\sum_{i\in I_2}\mathbb{E}_{P_i}[(m_\theta - m_{\theta(P^N)})^2(Z_i)]}{n_2^2\{\M_2(\theta)-\M_2(\theta(P^N))\}^2} \le \frac{(1+z_\alpha^2)}{(t^*)^{1/\gamma}n_2} =S_{\mathtt{pop}}(n_2, (t^*)^{1/\gamma}, \alpha),
\end{align*}
and \ref{as:ratio-square-process} holds with $\widetilde C_{\mathtt{emp}}=1/\varepsilon_{\mathtt{emp}}$ by Markov's inequality.

We assume that $n_2$ is large enough satisfies the following:
\begin{align}\label{eq:eqreuiment_for_n2-manski}
    C_{\varepsilon^\circ}\max\{R(n_2, \rho) , S_{\mathtt{emp}}(n_2, \rho, \alpha), S_{\mathtt{pop}}(n_2, \rho, \alpha)\} \le 1/3,
\end{align}
where $C_{\varepsilon^\circ}$ is a constant depending on $\varepsilon^\circ$. Then \eqref{eq:eqreuiment_for_n2-manski} is satisfied for $n_2$ large such that 
\begin{equation*}
    C'_{\varepsilon^\circ}(1+|z_\alpha|)^2\frac{ d\log(1/t^*)}{\gamma(t^*)^{1/\gamma}} \le n_2,
\end{equation*}
where $C'_{\varepsilon^\circ}$ is a constant depending on $\varepsilon^\circ$.

\paragraph{Evaluating the rate of convergence.}Throughout, let $\mathfrak{C}$ denote a constant, depending on $C_0, \gamma$, that may change from line to line. First, we obtain the value related to $\phi_{n_2}$, that is, 
\begin{align*}
    &r_{n_2}^{-2}\phi_{n_2}(\mathfrak{C}r_{n_2}^{2/(1+\gamma)}) \le 1 \\
    &\quad \Leftrightarrow r_{n_2}^{-2}\sqrt{\frac{\mathfrak{C}r_{n_2}^{2/(1+\gamma)} d\log(1/\mathfrak{C}r_{n_2}^{2/(1+\gamma)})}{n_2}}+ r_{n_2}^{-2}\frac{d\log(1/\mathfrak{C}r_{n_2}^{2/(1+\gamma)})}{n_2} \le 1\\
    &\quad \Leftarrow r_{n_2}^{-2}\sqrt{\frac{\mathfrak{C}r_{n_2}^{2/(1+\gamma)}d\log(1/\mathfrak{C}r_{n_2}^{2/(1+\gamma)})}{n_2}} \le 1/2 \quad \textrm{and} \quad r_{n_2}^{-2}\frac{d\log(1/\mathfrak{C}r_{n_2}^{2/(1+\gamma)})}{n_2} \le 1/2.
\end{align*}
The first and the second inequalities give  
\begin{align*}
    \frac{\mathfrak{C}d}{n_2} \le r_{n_2}^{(2+4\gamma)/(1+\gamma)} \log(1/r_{n_2}) \quad \text{and} \quad \frac{\mathfrak{C}d}{n_2} \le r_{n_2}^2 \log(1/r_{n_2}).
\end{align*}
Solving these inequalities assuming $d \le n_2$, we arrive 
\begin{align*}
    &r_{n_2} \ge \mathfrak{C}\left(\left(\frac{d\log(n_2/d)}{n_2}\right)^{(1+\gamma)/(2+4\gamma)} + \left(\frac{d\log(n_2/d)}{n_2}\right)^{1/2} \right)\\
    &\quad \Leftrightarrow r^{2/(1+\gamma)}_{n_2} \ge \mathfrak{C}\left(\left(\frac{d\log(n_2/d)}{n_2}\right)^{1/(1+2\gamma)} + \left(\frac{d\log(n_2/d)}{n_2}\right)^{1/(1+\gamma)}\right).
\end{align*}
The first term is always larger then the second for $n_2 \le d$. 
Next, we evaluate the value related to $\omega^2_{\mathtt{pop}}$, which yields,
\begin{align*}
    u_{n_2}^{-4}\omega^2_{\mathtt{pop}}(\mathfrak{C}^{-1/(1+\gamma)}u_{n_2}^{2/(1+\gamma)}) \le n_2 &\Leftrightarrow  u_{n_2}^{-4}\mathfrak{C}^{-1/(1+\gamma)}u_{n_2}^{2/(1+\gamma)} \le n_2 \\
    &\Leftrightarrow  \mathfrak{C}^{-1/(1+2\gamma)}n_2^{-(1+\gamma)/2(1+2\gamma)} \le u_{n_2}.
\end{align*}
Thus we can take $u_{n_2}^{2/(1+\gamma)} =\mathfrak{C}n_2^{-1/(1+2\gamma)}$, but this is bounded up to a constant by the term coming from $r_{n_2}$. Hence \Cref{thm:clt-width} holds with 
\begin{align*}
    \mathrm{R}_{N, \alpha}^{\mathtt{CLT}} = \left(\frac{d\log(n_2/d)}{n_2}\right)^{1/(1+2\gamma)} + \widetilde{s}_{n_1, n_2}^{1/(1+\gamma)}.
\end{align*}

Next, from \ref{as:margin-global}, we have 
\begin{align*}
     \mathrm{Q}_{N, \alpha}^{\mathtt{CLT}} = C_\rho
^{-1}\left((1+|z_\alpha|)\widetilde s_{n_1, n_2}\right) = \frac{\mathfrak{C}(1+|z_\alpha|)\widetilde s_{n_1, n_2}}{t^*}.
\end{align*}

Finally, we relate this result to $\|\theta-\theta(P^N)\|$ using \ref{as:covariate-manski}. The result established thus far and \ref{as:covariate-manski} together imply that for any $\theta \in \widehat{\mathrm{CI}}^{\mathtt{Manski}}_{N,\alpha}$, 
\begin{align*}
    \|\theta - \theta(P^N)\|_2 \le  c_1^{-1}\mathfrak{C} d_{\Delta}(\theta_1, \theta(P^N)) \le 
\end{align*}
with high probability in view of in view of \Cref{thm:clt-width-local-full}. This proves the result.

\end{proof}
For \Cref{cor:manski-plugin}, we instead prove the slightly rephrased version of the corollary.
\begin{corollary}\label{cor:manski-plugin-full}
    
\end{corollary}
\begin{proof}[\bfseries{Proof of \Cref{cor:manski-plugin-full}}]
Next, we verify \ref{as:rate-initial-estimator3}. First, we have  
\begin{align*}
    \frac{1}{n_2^2}\sum_{i \in I_2} \E_{P_i}[\widehat \xi_i^2|D_1] \le \frac{1}{n_2^2}\sum_{i \in I_2}d_{\Delta, i}(\widehat{\theta}_1, \theta(P^N)) = \frac{1}{n_2}d_{\Delta}(\widehat{\theta}_1, \theta(P^N))\le C_1^2\|\widehat{\theta}_1-\theta(P^N)\|^2.
\end{align*}
A straightforward calculation gives 
\begin{align*}
    \mathbb{C}^2_2(\widehat\theta_1) \le d_{\Delta}(\widehat{\theta}_1, \theta(P^N)) \le C_1^2\|\widehat{\theta}_1-\theta(P^N)\|^2
\end{align*}
Unfortunately, this is not sufficient and we need a stronger regularity condition. Observe that 
\begin{align*}
    \mathrm{sgn}(\widehat{\theta}_1^\top X_i) \neq \mathrm{sgn}(\theta(P^N)^\top X_i) &\Rightarrow  \widehat{\theta}_1^\top X_i\cdot \theta(P^N)^\top X_i \le 0 \\
    &\Rightarrow  |\theta(P^N)^\top X_i| \le |\widehat{\theta}_1^\top X_i- \theta(P^N)^\top X_i|\\
    &\Rightarrow  |\theta(P^N)^\top X_i| \le \|\widehat{\theta}_1-\theta(P^N)\|\|X_i\|.
\end{align*}
Hence 
\begin{align*}
   \E_{P_i}(m_{\theta}-m_{\theta(P^N)}) &=\frac{1}{2}\E_{P_i}\left[ Y \left(\mathrm{sgn}(\theta(P^N)^\top X)-\mathrm{sgn}(\theta^\top X)\right)\right] \\
   &=\int_{\mathcal{A}(\theta)}\, |\E_{P_i}[Y |X=x] |P_X(x)\, dx\\
   &= 2 \int_{\mathcal{A}(\theta)}\, |\eta_{P_i}(x)-1/2|P_X(x)\, dx \\
   &\le 2 \int_{\mathcal{A}'(\theta,x)}\, |\eta_{P_i}(x)-1/2|P_X(x)\, dx \\
   &= 2 \|\widehat{\theta}_1 - \theta(P^N)\|^{\gamma}P_X(\mathcal{A}'(\theta,x)).
\end{align*}
\end{proof}

\subsection{Quantile without Positive Densities}
\begin{proof}[\bfseries{Proof of Theorem~\ref{thm:quantile-validity}}]
First, we establish the result for $\alpha \neq 1/2$ as an application of \Cref{thm:studentized-katz}. Consider the case when $\theta > \theta(P^N)$, then 
    \begin{align*}
    m_{\theta} - m_{\theta(P^N)} 
    & = \eta(X-\theta)_+ + (1-\eta)(\theta-X)_+ - \eta(X-\theta(P^N))_+ - (1-\eta)(\theta(P^N)-X)_+\\
    & = (1-\eta)(\theta-\theta(P^N)) \mathbf{1}\{X \le \theta(P^N)\} 
 -\eta(\theta-\theta(P^N))\mathbf{1}\{\theta < X\} \\
 &\quad+ \left\{(1-\eta)(\theta-X)-\eta(X-\theta(P^N))\right\} \mathbf{1}\{\theta(P^N) < X \le \theta\}\\
 &=(\theta-\theta(P^N)) \mathbf{1}\{X \le \theta(P^N)\} 
 -\eta(\theta-\theta(P^N))(1-\mathbf{1}\{X > \theta(P^N)\})\\
 &\quad+\eta(\theta-\theta(P^N))\mathbf{1}\{\theta(P^N) < X \le \theta\} \\
 &\quad+ \left\{\theta(P^N)-X +(1-\eta) (\theta - \theta(P^N))\right\} \mathbf{1}\{\theta(P^N) < X \le \theta\}\\
    & = (\theta-\theta(P^N)) \mathbf{1}\{X \le \theta(P^N)\} 
 -\eta(\theta-\theta(P^N))\\
 &\quad+\eta(\theta-\theta(P^N))\mathbf{1}\{\theta(P^N) < X \le \theta\} \\
 &\quad+ \left\{\theta(P^N)-X +(1-\eta) (\theta - \theta(P^N))\right\} \mathbf{1}\{\theta(P^N) < X \le \theta\}\\
    & = (\theta-\theta(P^N)) \mathbf{1}\{X \le \theta(P^N)\} 
 -\eta(\theta-\theta(P^N))+ (\theta-X) \mathbf{1}\{\theta(P^N) < X \le \theta\}.
    \end{align*}
Analogously, we have 
\[m_{\theta} - m_{\theta(P^N)} = \eta(\theta(P^N)-\theta) - (\theta(P^N)-\theta) \mathbf{1}\{X \le \theta(P^N)\} + (X-\theta) \mathbf{1}\{\theta < X \le \theta(P^N)\}\]
when $\theta(P^N) > \theta$. Taking expectations, we obtain 
\begin{align*}
    &\E_{P_i}[m_{\widehat\theta_1}(X_i) - m_{\theta(P^N)}(X_i)|D_1] \\
    &\quad= \E_{P_i}[ (\widehat\theta_1-X) \mathbf{1}\{\theta(P^N) < X \le \widehat\theta_1\}|D_1] + \E_{P_i}[ (X-\widehat\theta_1) \mathbf{1}\{\widehat\theta_1 < X \le \theta(P^N)\}|D_1].
\end{align*}
We now define 
\begin{align*}
    \widehat\xi_i = m_{\widehat\theta_1}(X_i)-m_{\theta(P^N)}(X_i) - \E_{P_i}[m_{\widehat\theta_1}(X_i) - m_{\theta(P^N)}(X_i)|D_1],
\end{align*}
and take $G_i =  \mathbf{1}\{X_i \le \theta(P^N)\}  - \eta$.  For $\widehat\theta_1 > \theta(P^N)$ and $|\widehat\theta_1 - \theta(P^N)| \le \delta_0$, under which \ref{as:cdf-Holder} holds, it follows that
    \begin{align*}
        &\frac{\mathbb{E}_{P_i}[|\widehat\xi_i  - (\widehat\theta_1-\theta(P^N))\cdot(\mathbf{1}\{X_i \le \theta(P^N)\}  - \eta)|^2|D_1]}{|\widehat\theta_1-\theta(P^N)|^2\E_{P_i}[(\mathbf{1}\{X \le \theta(P^N)\}  - \eta)^2|D_1]} \\
        &\quad=\frac{\mathbb{E}_{P_i}[|(\widehat\theta_1-X) \mathbf{1}\{\theta(P^N) < X \le \widehat\theta_1\}|D_1] - \mathbb{E}_{P_i}[(\widehat\theta_1-X) \mathbf{1}\{\theta(P^N) < X \le \widehat\theta_1\}|^2|D_1]}{|\widehat\theta_1-\theta(P^N)|^2\E_{P_i}[(\mathbf{1}\{X \le \theta(P^N)\}  - \eta)^2|D_1]} \\
        &\quad \le \frac{(\widehat\theta_1-\theta(P^N))^2\mathbb{P}_{P_i}(\theta(P^N) < X \le\widehat\theta_1) }{|\widehat\theta_1-\theta(P^N)|^2\eta(1-\eta)}\\
        &\quad\le \frac{M_1|\widehat{\theta}_1-\theta(P^N)|^\gamma +M_0|\widehat{\theta}_1 - \theta(P^N)|^{\gamma}}{\eta(1-\eta)}\le \frac{2M_0|\widehat{\theta}_1-\theta(P^N)|^\gamma }{\eta(1-\eta)}
    \end{align*}
    where we used \ref{as:cdf-Holder} and the fact that $M_0 > M_1$.  We can repeat the identical argument for $\widehat{\theta}_1 < \theta(P^N)$. Thus the requirement of \Cref{prop:linearization-prop} holds with 
    \begin{align*}
        \varphi(|\widehat{\theta}_1-\theta(P^N)|) = \frac{2M_0|\widehat{\theta}_1-\theta(P^N)|^\gamma }{\eta(1-\eta)}.
    \end{align*}
By \Cref{prop:linearization-prop}, we conclude
\begin{align*}
   R_{n_2}
        &= \inf_{\delta_0 \ge \delta > 0}\left\{2 \sqrt{\frac{2M_0\delta^\gamma }{\eta(1-\eta)}}+ \mathbb{P}_{P^1}(|\widehat\theta_1 - \theta(P^N)| > \delta)\right\} \\
        &\quad\quad +  \sum_{i\in I_2} \mathbb{E}_{P_i}\left[\frac{|\mathbf{1}\{X_i \le \theta(P^N)\}  - \eta|^2}{n_2\eta(1-\eta)}\min\left\{1,\,\frac{|\mathbf{1}\{X_i \le \theta(P^N)\}  - \eta|}{\sqrt{n_2\eta(1-\eta)}}\right\}\right].
    \end{align*}
    Furthermore, observe that 
    \begin{align*}
        \E_{P_i}|\mathbf{1}\{X_i \le \theta(P^N)\}  - \eta|^3 = (1-\eta)^3 \eta + \eta^3 (1-\eta),
    \end{align*}
    and hence we have 
    \begin{align*}
        &\sum_{i\in I_2} \mathbb{E}_{P_i}\left[\frac{|\mathbf{1}\{X_i \le \theta(P^N)\}  - \eta|^2}{n_2\eta(1-\eta)}\min\left\{1,\,\frac{|\mathbf{1}\{X_i \le \theta(P^N)\}  - \eta|}{\sqrt{n_2\eta(1-\eta)}}\right\}\right] \\
        &\quad \le \frac{(1-\eta)^3 \eta + \eta^3 (1-\eta)}{\eta(1-\eta)\sqrt{n_2\eta(1-\eta)}} =  \frac{(1-\eta)^2 + \eta^2}{\sqrt{n_2\eta(1-\eta)}} \le \frac{1}{\sqrt{n_2\eta(1-\eta)}}.
    \end{align*}
    This concludes the result by invoking \Cref{prop:linearization-prop}.

Next, we establish the result for $\alpha = 1/2$ as an application of \Cref{thm:coverage-anti-conservative-confidence-set-empirical-risk}. The proof is split into two cases: (1) $|\widehat\theta_1-\theta(P^N)| \le \delta_0$ and (2) $|\widehat\theta_1-\theta(P^N)| > \delta_0$ where $\delta_0 > 0$ is defined in \ref{as:cdf-Holder}. First we consider the case (1). We have shown that 
\begin{align*}
    \E_{P_i}[m_{\widehat\theta_1} - m_{\theta(P^N)} |D_1] &= \E_{P_i}[ (\widehat\theta_1-X) \mathbf{1}\{\theta(P^N) < X \le \widehat\theta_1\}|D_1] \\
    &\quad+ \E_{P_i}[ (X-\widehat\theta_1) \mathbf{1}\{\widehat\theta_1 < X \le \theta(P^N)\}|D_1].
\end{align*}
We observe that $(\widehat\theta_1 -X) \mathbf{1}\{\theta(P^N) < X \le \widehat\theta_1 \}$ is a non-negative random variable, taking values from $0$ to $\widehat\theta_1 -\theta(P^N)$. Then 
\begin{align*}
    &(\widehat\theta_1 -X) \mathbf{1}\{\theta(P^N) < X \le \widehat\theta_1 \} \\
    &\quad \ge \frac{1}{2}(\widehat\theta_1-\theta(P^N))\mathbf{1}\left\{\theta(P^N) < X \le \theta(P^N) + \frac{1}{2}(\widehat\theta_1-\theta(P^N))\right\} 
\end{align*}
and thus taking expectations both sides,
\begin{align*}
    &\E_{P_i}[ (\widehat\theta_1-X) \mathbf{1}\{\theta(P^N) < X \le \widehat\theta_1\} |D_1]\\
    &\quad \ge \frac{1}{2}(\widehat\theta_1-\theta(P^N))\left\{F\left(\theta(P^N) + \frac{1}{2}(\widehat\theta_1-\theta(P^N))\right) - F(\theta(P^N))\right\}\\
    &\quad \ge \frac{1}{2^{1+\gamma}}M_0 |\theta-\theta(P^N)|^{1+\gamma} - M_1\frac{1}{2^{1+\gamma}}|\theta-\theta(P^N)|^{1+\gamma}\\
    &\quad= \frac{M_0-M_1}{2^{1+\gamma}}|\theta-\theta(P^N)|^{1+\gamma}
\end{align*}
By the assumption that $M_0 > M_1$, we can conclude that 
\begin{align*}
    \E_{P_i}[m_{\widehat\theta_1} - m_{\theta(P^N)}|D_1]  \ge \mathfrak{C} (\widehat\theta_1-\theta(P^N))^{1+\gamma}
\end{align*}
where $\mathfrak{C}$ depends on $M_0$ and $M_1$. 
The case with $\theta(P^N) > \widehat\theta_1$ is analogous and omitted. 
    
Next, we consider the case where $|\widehat{\theta}_1 - \theta(P^N)| > \delta_0$. For any $\widehat{\theta}_1 = \theta(P^N) + \ell u$ such that $u \in \{-1, 1\}$ and $\ell > \delta_0$, we define $\bar\theta = \theta(P^N) + \ell u$. Since $\bar\theta = (1-\delta/\ell)\theta(P^N) + \delta/\ell \widehat{\theta}_1$, it follows by the convexity of $\theta \mapsto \E_{P_i}[m_{\theta}]$, 
\begin{align*}
     &(1-\delta/\ell) \E_{P_i}[m_{\theta(P^N)}] - \delta/\ell \E_{P_i}[m_{\theta}] \ge \E_{P_i}[m_{\bar \theta}]\\
     &\quad \Longleftrightarrow  \E_{P_i}[m_{\theta}]-\E_{P_i}[m_{\theta(P^N)}] \ge (\ell/\delta)(\E_{P_i}[m_{\bar \theta}] -\E_{P_i}[m_{\theta(P^N)}]) \ge\mathfrak{C}  |\theta-\theta(P^N)|\delta^{\gamma}.
\end{align*}
Putting together, we have established that 
\begin{align*}
    \E_{P_i}[m_{\widehat\theta_1} - m_{\theta(P^N)}|D_1]  \ge \mathfrak{C} |\widehat\theta_1-\theta(P^N)|\min\{|\widehat\theta_1-\theta(P^N)|^{\gamma}, \delta^\gamma\}.
\end{align*}
We have also shown that 
\begin{align*}
    m_{\theta} - m_{\theta(P^N)} &= (\theta(P^N)-\theta)(\eta - \mathbf{1}\{X \le \theta(P^N)\}) + (X-\theta) \mathbf{1}\{\theta < X \le \theta(P^N)\} \\
    &\quad + (\theta-\theta(P^N)) (\mathbf{1}\{X \le \theta(P^N)\} 
 -\eta)+ (\theta-X) \mathbf{1}\{\theta(P^N) < X \le \theta\},
\end{align*}
and this function is Lipschitz in the sense that 
\begin{align*}
    | m_{\widehat\theta_1} - m_{\theta(P^N)}|\le \{\max\{\eta, 1-\eta\}+1\}|\widehat\theta_1-\theta(P^N)|.
\end{align*}
Hence, we have 
\begin{align*}
    \mathrm{Var}_{P_i}[\widehat{\xi}_i |D_1] \le \E_{P_i}[  | m_{\widehat\theta_1} - m_{\theta(P^N)}|^2|D_1] \le  \{\max\{\eta, 1-\eta\}+1\}^2|\widehat\theta_1-\theta(P^N)|^2.
\end{align*}
Putting together, we obtain 
\begin{align*}
    \ratio^2 = \frac{(\sum_{i \in I_2}\E_{P_i}[m_{\widehat\theta_1} - m_{\theta(P^N)}|D_1] )^2}{\sum_{i\in I_2}\mathrm{Var}_{P_i}[\widehat{\xi}_i |D_1]} \ge \mathfrak{C} n_2 \min\{|\widehat\theta_1-\theta(P^N)|^{2\gamma}, \delta_0^{2\gamma}\} = \widetilde \Delta_2^2.
\end{align*}
Finally, the remainder term in \Cref{thm:coverage-anti-conservative-confidence-set-empirical-risk} becomes 
    \begin{align*}
    &\E_{P^1} \left[\min\left\{1, C\sum_{i\in I_2}\mathbb{E}_{P_i}\left[\frac{|\widehat\xi_i|^2}{n_2^2\widehat{\mathbb{V}}_{2}(1 + \ratio)^2}\min\left\{1,\,\frac{|\widehat \xi_i|}{n_2\widehat{\mathbb{V}}_{2}^{1/2}(1 + \ratio)}\right\} \bigg|D_1\right]\right\}\right]\\
    &\quad \le\E_{P^1} \left[\min\left\{1, \frac{C}{(1 + \widetilde\Delta_2)^2}\right\}\right].
\end{align*}
This concludes the claim.
\end{proof}

\begin{proof}[\bfseries{Proof of Theorem~\ref{thm:quantile-ci-width}}] The proof is a direct application of \Cref{thm:clt-width-local-full} and thus proceeds by verifying \ref{as:margin}, \ref{as:local-entropy}, and \ref{as:square-process} to hold locally $\rho = \delta_0$ where $\delta_0$ is defined in \ref{as:cdf-Holder}, and \ref{as:margin-global}, \ref{as:ratio-process} and \ref{as:ratio-square-process} to globally. 
\paragraph{Verifying \ref{as:margin}}
From the proof of \Cref{thm:quantile-validity}, we have established that 
\begin{align*}
    \frac{1}{n_2}\sum_{i \in I_2}\E_{P_i}[m_{\widehat\theta_1} - m_{\theta(P^N)}|D_1]  \ge \mathfrak{C} |\widehat\theta_1-\theta(P^N)|\min\{|\widehat\theta_1-\theta(P^N)|^{\gamma}, \delta_0^\gamma\},
\end{align*}
where $\mathfrak{C}$ is a constant depending on $M_0$ and $M_1$. Thus \ref{as:margin} holds with $\gamma$ and $c_0 = \mathfrak{C}$ when $|\widehat\theta_1-\theta(P^N)| \le \delta_0$.

\paragraph{Verifying \ref{as:local-entropy}}
To control this term, we observe that 
\begin{align*}
    | m_{\theta} - m_{\theta(P^N)}|\le \{\max\{\eta, 1-\eta\}+1\}|\theta-\theta(P^N)|\le 2|\theta-\theta(P^N)|,
\end{align*}
that is, the function is Lipschitz in parameter. We define the following collection of ``localized'' functions:
\begin{align*}
    \mathcal{M}_{\delta} := \left\{ m_{\theta} - m_{\theta(P^N)}  \text{ for all }\theta \text{ s.t., } |\theta - \theta(P^N)| \le \delta\right\}.
\end{align*}
Then using the notation $\mathbb{G}_{n_2}$, defined in \eqref{eq:definition-Gn}, we have 
\begin{align*}
    &\E^*_{P^2} \left[\sup_{|\theta - \theta(P^N)| <\delta}|(\widehat{\mathbb{M}}_2 - \mathbb{M}_2)(\theta) - (\widehat{\mathbb{M}}_2 - \mathbb{M}_2)(\theta(P^N))| \right]  \\
    &\quad = n_2^{-1/2}\E^*_{P^2} \left[\sup_{|\theta - \theta(P^N)| <\delta}| \mathbb{G}_{n_2}(m_{\theta} - m_{\theta(P^N)})| \right] =n_2^{-1/2}\E^*_{P^2} \left[\sup_{m \in \mathcal{M}_\delta}\left|\mathbb{G}_{n_2} m\right| \right].
\end{align*}
The last object can be related to the $\varepsilon$-bracketing numbers. Two functions $\ell$ and $u$ are defined to be an $\varepsilon$-bracket of functions $m \in \mathcal{M}_\delta$ if $l(x) \le m(x) \le u(x)$ for all $x$ and $\|\ell - u\| \le \varepsilon$. The bracketing number $\mathcal{N}_{[\,]}(\varepsilon, \mathcal{M}_\delta, \|\cdot\|)$ is the minimum number of $\varepsilon$-brackets required for covering $\mathcal{M}_\delta$. Then by Theorem 2.14.2 of \cite{van1996weak}, we have 
\begin{align*}
    \E^*_{P^2} \left[\sup_{m \in \mathcal{M}_\delta}\left|\mathbb{G}_{n_2} m\right| \right] \lesssim(\E_{P^2}M^2)^{1/2}  \int_0^1 \sqrt{1 + \log \mathcal{N}_{[\,]}(\varepsilon\|M\|_{P^2,2}, \mathcal{M}_\delta, L_2(P^2))} \, d\varepsilon
\end{align*}
where the envelop function can be taken as $M = 2\delta$, and the Lipschtiz constant is $2$. Furthermore, Theorem 2.7.11 of \cite{van1996weak} shows that 
\begin{align*}
    \mathcal{N}_{[\,]}(\varepsilon\|M\|_{P,2}, \mathcal{M}_\delta, L_2(P)) \le \mathcal{N}(\delta\varepsilon/4, \Theta_\delta, |\cdot|) \lesssim  1/\varepsilon
\end{align*}
where $\Theta_\delta = [-\delta, \delta]$. Evaluating the integral, we have 
\begin{align*}
    \E_{P^2} \left[\sup_{m \in \mathcal{M}_\delta}\left|\mathbb{G}_{n_2} m\right| \right] &\le C\delta \int_0^1\sqrt{1 + \log (1/\varepsilon)}\, d\varepsilon \le C\delta  + C\delta \int_0^1\sqrt{\log (1/\varepsilon)}\, d\varepsilon \le C\delta,
\end{align*}
where $C$ is a universal constant that changes line by line. We thus conclude that $\phi_{n_2}(\delta) = C\delta/\sqrt{n_2}$. Hence the requirement $q < 1+\gamma$ is satisfied with $q=1$ globally. 

\paragraph{Verifying \ref{as:square-process}}
Using the Lipschitz continuity (in parameter) of $m_{\theta} - m_{\theta(P^N)}$, it follows, 
\begin{align*}
    \omega_{\mathtt{pop}}^2(\delta) = \sup_{|\theta - \theta(P^N)| \le \delta}\frac{1}{n_2}\sum_{i \in I_2} \E_{P^2}[| m_{\theta} - m_{\theta(P^N)}|^2] \le 4\delta^2.
\end{align*}
To control $\omega_{n_2, \mathtt{emp}}$, we employ \Cref{prop:squared-Gn}. 
By \Cref{prop:squared-Gn} with $q=2$, we have 
\begin{align*}
    &\E_{P^2}^* \left[\sup_{\|\theta-\theta(P^N)\| < \delta}\left|\frac{1}{n_2}\sum_{i\in I_2}(m_\theta - m_{\theta(P^N)})^2 - \mathbb{E}_{P_i}[(m_\theta - m_{\theta(P^N)})^2]\right| \right]\\
    &\quad \le 64 \delta^2 + 16\cdot 8^{1/2}\, n_2^{1/2} \delta\,\phi_{n_2}(\delta) \le C\delta^2,
\end{align*}
for some universal constant $C$. Hence the requirement $q < 1+\gamma$ is satisfied with $q=1$ globally. 
\paragraph{Verifying \ref{as:margin-global}}
Setting $\delta > \delta_0$, we have shown that 
\begin{align*}
    C_{\delta_0}(|\theta - \theta(P^N)|) = \delta_0^\gamma |\theta - \theta(P^N)|.
\end{align*}

\paragraph{Verifying \ref{as:ratio-process}}
For any $\delta > \delta_0$,
\begin{align*}
    &\E^*_{P^2} \left[\sup_{\|\theta-\theta(P^N)\| > \delta}\left|\frac{(\widehat{\mathbb{M}}_2 - \mathbb{M}_2)(\theta) - (\widehat{\mathbb{M}}_2 - \mathbb{M}_2)(\theta(P^N))}{\mathbb{M}_2(\theta) -\mathbb{M}_2(\theta(P^N))}\right| \right] \le \frac{C}{\delta_0^\gamma \sqrt{n_2}}= R(n_2, \delta_0),
\end{align*}
where $C$ is a universal constant. Hence, \ref{as:ratio-process} holds with $C_{\mathtt{ratio}}=1/\varepsilon_{\mathtt{ratio}}$ by Markov's inequality.

\paragraph{Verifying \ref{as:ratio-square-process}} Similarly for any $\delta > \delta_0$,
\begin{align*}
    &\E_{P^2}^* \left[\sup_{\|\theta-\theta(P^N)\| > \delta_0}\frac{z_\alpha^2}{n_2^2}\left|\frac{\sum_{i\in I_2}(m_\theta - m_{\theta(P^N)})^2 - \mathbb{E}_{P_i}[(m_\theta - m_{\theta(P^N)})^2]}{\{\mathbb{M}_2(\theta) -\mathbb{M}_2(\theta(P^N))\}^2}\right| \right]\le \frac{(1+z_\alpha^2)C}{\delta_0^{2\gamma}n_2},
\end{align*}
and 
\begin{align*}
    &\sup_{\|\theta-\theta(P^N)\| > \delta_0}\frac{z_\alpha^2\sum_{i\in I_2}\mathbb{E}_{P_i}[(m_\theta - m_{\theta(P^N)})^2(Z_i)]}{n_2^2\{\M_2(\theta)-\M_2(\theta(P^N))\}^2} \le \frac{(1+z_\alpha^2)C}{\delta_0^{2\gamma}n_2}.
\end{align*}
Hence we can set
\begin{align*}
     S_{\mathtt{emp}}(n_2, \delta_0, \alpha)= S_{\mathtt{pop}}(n_2, \delta_0, \alpha) = \frac{(1+z_\alpha^2)C}{\delta_0^{2\gamma}n_2},
\end{align*}
and \ref{as:ratio-square-process} holds with $\widetilde C_{\mathtt{emp}}=1/\varepsilon_{\mathtt{emp}}$ by Markov's inequality.

We assume that $n_2$ is large enough satisfies the following:
\begin{align}\label{eq:eqreuiment_for_n2-quantile}
    C_{\varepsilon^\circ}\max\{R(n_2, \rho) , S_{\mathtt{emp}}(n_2, \rho, \alpha), S_{\mathtt{pop}}(n_2, \rho, \alpha)\} \le 1/3,
\end{align}
where $C_{\varepsilon^\circ}$ is a constant depending on $\varepsilon^\circ$. Then \eqref{eq:eqreuiment_for_n2-quantile} is satisfied for $n_2$ large such that $C'_{\varepsilon^\circ}(1+|z_\alpha|)^2\delta_0^{2\gamma} \le n_2$.


\paragraph{Evaluating the rate of convergence}
We now evaluate the rate of convergence by applying \Cref{thm:clt-width} and \Cref{thm:clt-width-local-full}. Denote by $\mathfrak{C}$ a depending on $M_0, M_1$ and $\gamma$ that changes from line to line. For $\delta \le \delta_0$,
\begin{align*}
    r_{n_2}^{-2}\phi_{n_2}(\mathfrak{C}r_{n_2}^{2/(1+\gamma)}) \le 1 \Leftrightarrow \frac{\mathfrak{C}}{\sqrt{n_2}} \le r_{n_2}^{2\gamma/(1+\gamma)}\Leftrightarrow \frac{\mathfrak{C}}{n_2^{1/(2\gamma)}} \le r_{n_2}^{2/(1+\gamma)},
\end{align*}
\begin{align*}
    u_{n_2}^{-4}\omega^2_{\mathtt{pop}}(\mathfrak{C}u_{n_2}^{2/(1+\gamma)}) \le n_2 \Leftrightarrow \mathfrak{C} n_2^{-1}\le u_{n_2}^{\mathfrak{C}\gamma/(1+\gamma)}\Leftrightarrow \frac{4}{n_2^{1/(2\gamma)}} \le u_{n_2}^{2/(1+\gamma)},\quad \text{and}
\end{align*}
\begin{align*}
    u_{n_2}^{-4}\omega^2_{n_2, \mathtt{emp}}(\mathfrak{C}u_{n_2}^{2/(1+\gamma)}) \le n_2 \Leftrightarrow \mathfrak{C} n_2^{-1}\le u_{n_2}^{4\gamma/(1+\gamma)}\Leftrightarrow \frac{\mathfrak{C}}{n_2^{1/(2\gamma)}} \le u_{n_2}^{2/(1+\gamma)}.
\end{align*}
Hence we have $\mathrm{R}_{N}^{\mathtt{CLT}}=\mathfrak{C}(n_2^{-1/(2\gamma)} + \widetilde s_{n_1, n_2}^{1/(1+\gamma)})$ where $\mathfrak{C}$ depends on $M_0, M_1$ and $\gamma$.

Next, since $C_{\delta_0}(|\theta - \theta(P^N)|) = \delta_0^\gamma|\theta - \theta(P^N)|$, we have 
\begin{align*}
    \mathrm{Q}_{N, \alpha}^{\mathtt{CLT}}=C^{-1}_{\delta_0}((1+|z_\alpha|)\widetilde s_{n_1, n_2}) = \delta_0^{-\gamma}(1+|z_\alpha|)\widetilde s_{n_1, n_2}.
\end{align*}
This concludes the claim in view of \Cref{thm:clt-width-local-full}.

\end{proof}

For \Cref{cor:quantile-plugin}, we instead prove the slightly rephrased version of the corollary. 
\begin{corollary}\label{cor:quantile-plugin-full}
    Suppose the initial estimator satisfies for all $n_1 \geq N_1$, \begin{align}\nonumber\mathbb{P}_{P^1}\left(|\widehat\theta_1 - \theta(P^N)| \le \widetilde C_{\mathtt{init}} n_1^{-1/(2\gamma)} \right) \ge 1-\widetilde\varepsilon_{\mathtt{init}}.
    \end{align}
     Assume \ref{as:cdf-Holder}. For any $\varepsilon \in (0, 1-\widetilde \varepsilon_{\mathtt{init}})$, setting $\varepsilon^\circ = \varepsilon + \widetilde \varepsilon_{\mathtt{init}}$, $n_1 \ge N_1$, with probability at least $1-\varepsilon^\circ$, 
\begin{equation*}
\mathrm{Diam}_{|\cdot|}\big(\widehat{\mathrm{CI}}^{\mathtt{CLT}}_{N, \alpha}\big) \le C_{\varepsilon^\circ}\max\bigg\{\left(1+|z_\alpha|\right)^{1/\gamma}(n_2^{-1/(2\gamma)} + n_1^{-1/(2\gamma)}), \mathrm{Q}^{\mathtt{CLT}}_{N,\alpha}\mathbf{1}\{\mathrm{Q}^{\mathtt{CLT}}_{N,\alpha} \ge \delta_0\}\bigg\},
\end{equation*} 
provided $\max\{2, C_{\varepsilon^\circ}'(1+|z_\alpha|)^2\delta_0^{2\gamma}\} \le n_2$, where $\mathrm{Q}_{N, \alpha}^{\mathtt{CLT}}= \delta_0^{-\gamma}(1+|z_\alpha|)|\widehat\theta_1-\theta(P^N)|$ and $C_{\varepsilon^\circ}$ depends on $M_0, M_1$ and $\gamma$, while $C_{\varepsilon^\circ}'$ only depends on ${\varepsilon^\circ}$.
\end{corollary}
\begin{proof}[\bfseries{Proof of \Cref{cor:quantile-plugin-full}}]
    When $|\widehat\theta_1-\theta(P^N)| \le \delta_0$, we have 
\begin{align*}
    \frac{1}{n_2}\mathbb{E}_{P^2| P^1}\left[\frac{1}{n_2}\sum_{i \in I_2} \widehat \xi_i^2\right] &\le \frac{1}{n_2}\mathbb{E}_{P^2| P^1}\left[|m_{\widehat\theta_1}(X_i)-m_{\theta(P^N)}(X_i)|^2|D_1\right]  \\
    &\le \frac{1}{n_2}(\widehat\theta_1-\theta(P^N))^2\mathbb{P}_{P^1}(\theta(P^N) < X \le \widehat\theta_1)\\
    &\le \frac{1}{n_2}(M_1+M_0)(\widehat\theta_1-\theta(P^N))^{2+\gamma},
\end{align*}
and
\begin{align*}
    \widehat{\mathbb{C}}_2^2 \le (M_1+M_0)^2(\widehat\theta_1-\theta(P^N))^{2+2\gamma}.
\end{align*}
Consider the event 
\begin{align*}
    \Omega_{\mathtt{init}} := \left\{|\widehat\theta_1 - \theta(P^N)|\le \widetilde C_{\mathtt{init}} n_1^{-1/(2\gamma)}\right\}.
\end{align*}
Then on this event and $n_1$ sufficiently large such that $\widetilde C_{\mathtt{init}} n_1^{-1/\gamma} \le \delta_0$, \ref{as:rate-initial-estimator3} holds with
\begin{align*}
    \widetilde s_{n_1, n_2}^2 = n_2^{-(1+\gamma)/(2\gamma)}n_1^{-(1+\gamma)/(2\gamma)}+n_1^{-(2+2\gamma)/(2\gamma)}
\end{align*}
since
\begin{align*}
     &\frac{1}{n_2}\mathbb{E}_{P^2| P^1}\left[\frac{1}{n_2}\sum_{i \in I_2} \widehat \xi_i^2\right] + \widehat{\mathbb{C}}_2^2\\
     &\quad \le \frac{|\widehat\theta_1-\theta(P^N)|}{n_2}(M_1+M_0)|\widehat\theta_1-\theta(P^N)|^{1+\gamma} + (M_1+M_0)^2|\widehat\theta_1-\theta(P^N)|^{2+2\gamma}\\
     &\quad \le  \frac{\widetilde C_{\mathtt{init}}^{2+\gamma}}{n_2^{1+1/(2\gamma)}}(M_1+M_0)n_1^{-(1+\gamma)/(2\gamma)} + \widetilde C_{\mathtt{init}}^{2+2\gamma}(M_1+M_0)^2 n_1^{-(2+2\gamma)/(2\gamma)}\\
     &\quad \le \max\{\widetilde C_{\mathtt{init}}^{2+\gamma}(M_1+M_0), \widetilde C_{\mathtt{init}}^{2+2\gamma}(M_1+M_0)^2\}  \widetilde s_{n_1, n_2}^2.
\end{align*}
with probability greater than $1 - \widetilde \varepsilon_{\mathtt{init}}$. In particular, we used the fact that 
\begin{align*}
    n_2^{-1-1/(2\gamma)} = n_2^{-(2\gamma+1)/(2\gamma)} \le n_2^{-(\gamma+1)/(2\gamma)}.
\end{align*}
Then the result follows with 
\begin{align*}
    n_2^{-1/(2\gamma)} + \widetilde s_{n_1, n_2}^{1/(1+\gamma)} &\le n_2^{-1/(2\gamma)}  + n_2^{-1/2(2\gamma)}n_1^{-1/2(2\gamma)}+n_1^{-1/(2\gamma)}
 \\
 &\lesssim n_2^{-1/(2\gamma)}  +n_1^{-2/(2\gamma)}
 \end{align*}
 by AM-GM inequality. When $|\widehat\theta_1 - \theta(P^N)| \ge \delta_0$, we can instead use
 \begin{align*}
    \frac{1}{n_2}\mathbb{E}_{P^2| P^1}\left[\frac{1}{n_2}\sum_{i \in I_2} \widehat \xi_i^2\right] &\le \frac{1}{n_2}\mathbb{E}_{P^2| P^1}\left[|m_{\widehat\theta_1}(X_i)-m_{\theta(P^N)}(X_i)|^2|D_1\right]  \\
    &\le \frac{1}{n_2}(\widehat\theta_1-\theta(P^N))^2\mathbb{P}_{P^1}(\theta(P^N) < X \le \widehat\theta_1)\\
    &\le \frac{4}{n_2}(\widehat\theta_1-\theta(P^N))^{2},
\end{align*}
and
\begin{align*}
    \widehat{\mathbb{C}}_2^2 \le 4(\widehat\theta_1-\theta(P^N))^{2},
\end{align*}
where we simply used $\mathbb{P}_{P^1}(\theta(P^N) < X \le \widehat\theta_1) \le 1$. Hence we have 
\begin{align*}
    C_{\delta_0}^{-1}((1+|z_\alpha|)\widetilde s_{n_1, n_2}) = \frac{4(1+|z_\alpha|)}{\delta_0^\gamma}|\widehat\theta_1-\theta(P^N)| = \mathrm{Q}_{N, \alpha}^{\mathtt{CLT}}.
\end{align*}
Plugging them into the final expression of \Cref{thm:quantile-ci-width} concludes the claim.
\end{proof}

\subsection{Discrete Argmin Inference}
\begin{proof}[\bfseries{Proof of \Cref{thm:arginf-valid}}]
The proof is an application of \Cref{thm:coverage-anti-conservative-confidence-set-empirical-risk} for $\alpha = 1/2$ and \Cref{thm:studentized-katz} for $\alpha \neq 1/2$. 

For both cases, we observe that 
\begin{align*}
        &\mathbb{P}_{P^N}(\theta(P^N) \notin \widehat{\mathrm{CI}}_{N,\alpha}^{\mathtt{CLT}}) \\
        &\quad= \mathbb{P}_{P^N}(\theta(P^N) \notin \widehat{\mathrm{CI}}_{N,\alpha}^{\mathtt{CLT}} \,\cap \,\{\widehat\theta_1 \in \mathcal{S}^*\}) + \mathbb{P}_{P^N}(\theta(P^N) \notin \widehat{\mathrm{CI}}_{N,\alpha}^{\mathtt{CLT}} \,\cap \,\{\widehat\theta_1 \in \mathcal{S}^c\})\\
        &\quad=  \mathbb{P}_{P^N}(\theta(P^N) \notin \widehat{\mathrm{CI}}_{N,\alpha}^{\mathtt{CLT}} \,\cap \,\{\widehat\theta_1 \in \mathcal{S}^c\}),
    \end{align*}
since $\widehat\theta_1 \in \widehat{\mathrm{CI}}_{N,\alpha}^{\mathtt{CLT}}$ almost surely and by the convention \eqref{eq:miscoverage-over-set}. Hence throughout, we assume that $\widehat\theta_1 \in \mathcal{S}^c$. Observe that for any $k \in \mathcal{S}^*$,
    \begin{align*}
        \widehat \xi_i= (e_{\widehat{\theta}_1}-e_{k})^\top(X_i-\mu) = D_i^{\widehat{\theta}_1,k}
    \end{align*}
    and 
    \begin{align*}
        n_2^2\widehat{\mathbb{V}}_{2} &= \sum_{i\in I_2} \mathrm{Var}_{P_i}[\widehat \xi_i] = \sum_{i \in I_2} (e_{\widehat{\theta}_1}-e_{k})^\top \Sigma_i (e_{\widehat{\theta}_1}-e_{k}) = n_2\sigma^2_{\widehat\theta_1, k}.
    \end{align*}
    Then, the upper bound in \Cref{thm:studentized-katz} becomes 
    \begin{align*}
        &\E_{P^1}\left[\min\left\{1, C\,\sum_{i\in I_2} \mathbb{E}_{P_i}\left[\frac{|D_i^{\widehat{\theta}_1, k}|^2}{n_2\sigma_{\widehat\theta_1, k}^2}\min\left\{1,\,\frac{|D_i^{\widehat{\theta}_1, k}|}{n_2^{1/2}\sigma_{\widehat\theta_1, k}}\right\}\mathbf{1}\{\widehat\theta_1 \notin \mathcal{S}^*\}\bigg|D_1\right]\right\}\right] \\
        &\quad \le \min\left\{1, \sup_{(j, k)\in \mathcal{S}^c\times \mathcal{S}^*}C\sum_{i\in I_2} \mathbb{E}_{P_i}\left[\frac{|D_i^{j, k}|^2}{n_2\sigma_{j, k}^2}\min\left\{1,\,\frac{|D_i^{j, k}|}{n_2^{1/2}\sigma_{j, k}}\right\}\right]\right\}.
    \end{align*}

Next, we consider the case for $\alpha = 1/2$. Again, assuming that $\{\widehat\theta_1 \in \mathcal{S}^c\}$, and for any $j, k \in \mathcal{S}^c \times \mathcal{S}^*$
\begin{align*}
    \frac{\mathbb{C}^2_2(j)}{\mathbb{V}_2(j)} = \frac{n_2\delta_{j,k}^2}{\sigma_{j, k}^2} \ge \min_{(j, k)\in \mathcal{S}^c\times \mathcal{S}^*}\frac{n_2\delta_{j,k}^2}{\sigma_{j, k}^2} =\widetilde\Delta_2^2 \quad \text{where}  \quad \delta_{j,k}  = e_j^\top \mu - e_k^\top \mu.
\end{align*}
Hence \Cref{thm:coverage-anti-conservative-confidence-set-empirical-risk} implies,
\begin{align*}
    \mathbb{P}_{P^N}(\theta(P^N) \notin \widehat{\mathrm{CI}}_{N,\alpha}^{\mathtt{CLT}}) \le 1-\Phi(\widetilde\Delta_2) +\min\left\{1, \frac{C}{(1+\widetilde\Delta_2)^2}\right\},
\end{align*}
under no moment assumptions beyond finite variance. 
\end{proof}
\subsection{Auxiliary Results for Statistical Applications}\label{sec:technical-lemma}
\subsubsection{Results for Validity}
\begin{proof}[\bfseries{Proof of \Cref{prop:linearization-prop}}]
    Define random variable 
    \begin{align*}
        \Delta_{1} = \min\left\{1, C\sum_{i\in I_2} \mathbb{E}_{P_i}\left[\frac{|\widehat\xi_i|^2}{n_2^2\widehat{\mathbb{V}}_{2}}\min\left\{1,\,\frac{|\widehat \xi_i|}{n_2\widehat{\mathbb{V}}_{2}^{1/2}}\right\} \bigg| D_1\right]\right\}.
    \end{align*}
    Since $\Delta_{1} \le 1$, we can safely introduce the following indicator function, for any $0 \le \delta \le \delta_0$, 
    \begin{align*}
        &\Delta_{1} \le \Delta_{1} \mathbf{1}\{{\|\widehat{\theta}_1-\theta(P^N)\| < \delta}\} + \mathbf{1}\{{\|\widehat{\theta}_1-\theta(P^N)\| \ge \delta}\}.
    \end{align*}
    We also define 
    \[\widehat{\mathbb{V}}_H = \sum_{i \in I_2}\E_{P_i}[\langle u, G_i\rangle^2].\]
    Then, first term can be controlled as 
    \begin{align*}
        &\sum_{i\in I_2}\mathbb{E}_{P_i}\left[\frac{|\widehat\xi_i|^2}{n_2^2\widehat{\mathbb{V}}_{2}}\min\left\{1,\,\frac{|\widehat \xi_i|}{n_2\widehat{\mathbb{V}}_{2}^{1/2}}\right\} \bigg| D_1\right] \\
        &\quad = \sum_{i\in I_2} \mathbb{E}_{P_i}\left[\frac{|\widehat\xi_i|^2}{n_2^2\widehat{\mathbb{V}}_{2}}\min\left\{1,\,\frac{|\widehat \xi_i|}{n_2\widehat{\mathbb{V}}_{2}^{1/2}}\right\} \bigg| D_1\right]\\
        &\quad \quad+ \sum_{i\in I_2} \mathbb{E}_{P_i}\left[\frac{|\langle\widehat{\theta}_1 - \theta(P^N),  G_i\rangle|^2}{\widehat{\mathbb{V}}_H}\min\left\{1,\,\frac{|\langle \widehat{\theta}_1 - \theta(P^N),  G_i\rangle|}{\widehat{\mathbb{V}}_H^{1/2}}\right\}\bigg| D_1\right]\\
        &\quad\quad - \sum_{i\in I_2} \mathbb{E}_{P_i}\left[\frac{|\langle \widehat{\theta}_1 - \theta(P^N),  G_i\rangle|^2}{\widehat{\mathbb{V}}_H}\min\left\{1,\,\frac{|\langle \widehat{\theta}_1 - \theta(P^N),  G_i\rangle|}{\widehat{\mathbb{V}}_H^{1/2}}\right\}\bigg| D_1\right] \\
        &\quad =\sum_{i\in I_2} \mathbb{E}_{P_i}\left[\frac{|\langle \widehat{\theta}_1 - \theta(P^N),  G_i\rangle|^2}{\widehat{\mathbb{V}}_H}\min\left\{1,\,\frac{|\langle \widehat{\theta}_1 - \theta(P^N),  G_i\rangle|}{\widehat{\mathbb{V}}_H^{1/2}}\right\}\bigg| D_1\right]  \\
        &\quad\quad + 5 \varphi(\|\widehat{\theta}_1 - \theta(P^N)\|) + 10\sqrt{\varphi(\|\widehat{\theta}_1 - \theta(P^N)\|)},
    \end{align*}
    where the last step follows from \Cref{lemma:katz-local}. Since it is assumed that $\|\widehat\theta_1 - \theta(P^N)\|<\delta$,
    \begin{align*}
        &\sum_{i\in I_2} \mathbb{E}_{P_i}\left[\frac{|\langle u,  G_i\rangle|^2}{{\mathbb{V}}_H}\min\left\{1,\,\frac{|\langle u,  G_i\rangle|}{{\mathbb{V}}_H^{1/2}}\right\}\bigg| D_1\right]  + 5 \varphi(\delta) + 10\sqrt{\varphi(\delta)}\\
        &\quad \le \sup_{u \in \mathbb{S}^{d-1}}\, \sum_{i\in I_2} \mathbb{E}_{P_i}\left[\frac{|\langle u,  G_i\rangle|^2}{{\mathbb{V}}_H}\min\left\{1,\,\frac{|\langle u,  G_i\rangle|}{{\mathbb{V}}_H^{1/2}}\right\}\right]  + 5 \varphi(\delta) + 10\sqrt{\varphi(\delta)},
    \end{align*}
    where $\mathbb{V}_G = \sum_{i \in I_2}\E_{P_i}[\langle u, G_i\rangle^2]$. The result follows by taking expectation over $P^1$ and taking infimum over $0 \le \delta \le \delta_0$. In particular, when taken as the minimum between one, we have $\varphi(\delta) \le \sqrt{\varphi(\delta)}$, thus only $\sqrt{\varphi(\delta)}$ appears in the bound.
\end{proof}

\begin{lemma}\label{lemma:katz-local}
    Suppose $\{X_i\}_{i=1}^N$ and $\{Y\}_{i=1}^N$ are independent but not identically distributed random variables. Denote $V_X = \sum_{i=1}^N \E_{P_i}[X_i^2]$ and 
    $V_Y = \sum_{i=1}^N \E_{P_i}[Y_i^2]$
    such that $\mathbb{E}_{P_i}[|X_i-Y_i|^2] \le C \mathbb{E}_{P_i}[X_i^2]$ for all $1 \le i \le N$. Then  
    \begin{align*}
        \sum_{i=1}^N\mathbb{E}_{P_i}\left[\frac{X_i^2}{V_X}\min\left\{\frac{|X_i|}{\sqrt{V_X}},1\right\}\right] -  \sum_{i=1}^N\mathbb{E}_{P_i}\left[  \frac{Y_i^2}{V_Y}\min\left\{\frac{|Y_i|}{\sqrt{V_Y}},1\right\}\right]\le 5C + 10C^{1/2}.
    \end{align*}
\end{lemma}
\begin{proof}[\bfseries{Proof of \Cref{lemma:katz-local}}]
    Suppose $\{X_i\}_{i=1}^N$ and $\{Y_i\}_{i=1}^N$ satisfy $\mathbb{E}_{P_i}[|X_i-Y_i|^2] \le C \mathbb{E}_{P_i}[X_i^2]$ for all $ 1\le i \le N$. We observe 
    \begin{align*}
        \mathbb{E}_{P_i}\left[\left|\frac{X_i^2}{V_X}-\frac{Y_i^2}{V_Y}\right|\right] &=   \mathbb{E}_{P_i}\left[\left|\frac{X_i^2V_Y - Y_i^2V_X}{V_X V_Y}\right|\right] \\
        &=   \mathbb{E}_{P_i}\left[\left|\frac{X_i^2V_Y - Y_i^2V_X - Y_i^2V_Y +  Y_i^2V_Y}{V_X V_Y}\right|\right]\\
        &\le   \mathbb{E}_{P_i}\left[\left|\frac{(X_i^2 - Y_i^2)V_Y}{V_X V_Y}\right|\right] + \mathbb{E}_{P_i}\left[\left|\frac{Y_i^2(V_Y - V_X)}{V_X V_Y}\right|\right]\\
        &=   \mathbb{E}_{P_i}\left[\left|\frac{(X_i^2 - Y_i^2)}{V_X }\right|\right] + \left|\frac{\sum_{i=1}\E_{P_i}(Y_i^2 - X_i^2)}{V_X}\right| \frac{\E_{P_i}[Y_i^2]}{V_Y}.
    \end{align*}
    Summing over $i$, we obtain 
    \begin{align*}
        \sum_{i=1}^N \mathbb{E}_{P_i}\left[\left|\frac{X_i^2}{V_X}-\frac{Y_i^2}{V_Y}\right|\right] &\le  \sum_{i=1}^N\mathbb{E}_{P_i}\left[\left|\frac{(X_i^2 - Y_i^2)}{V_X }\right|\right] + \left|\frac{\sum_{i=1}\E_{P_i}(Y_i^2 - X_i^2)}{V_X}\right| \\
        &\le 2\sum_{i=1}^N\mathbb{E}_{P_i}\left[\frac{|X_i^2 - Y_i^2|}{V_X }\right]
    \end{align*}
    by Jensen's inequality. Since $Y^2=(Y-X)^2+2 X(Y-X)+X^2$, we obtain
\begin{align*}
	\mathbb{E}_{P_i}[| Y_i^2-X_i^2|] &= \mathbb{E}_{P_i}[|Y_i-X_i|^2] +2\mathbb{E}_{P_i}[ X_i(Y_i-X_i)]\\
	&\leq  \mathbb{E}_{P_i}[|Y_i-X_i|^2]+2\left(\mathbb{E}_{P_i}[X_i^2]\right)^{1/2}\left(\mathbb{E}_{P_i}[|Y_i-X_i|^2]\right)^{1 / 2}\\
	&\le  C \mathbb{E}_{P_i}[X_i^2]+2C^{1/2}\mathbb{E}_{P_i}[X_i^2]
	\end{align*}
by Cauchy-Schwarz inequality. Hence we conclude
\begin{align*}
    \sum_{i=1}^N \mathbb{E}_{P_i}\left[\left|\frac{X_i^2}{V_X}-\frac{Y_i^2}{V_Y}\right|\right] \le 2\sum_{i=1}^N\left[\frac{(C+2C^{1/2})\mathbb{E}_{P_i}[X_i^2]}{V_X }\right] \le 2(C+2C^{1/2}).
\end{align*}
    Define a function $g:\mathbb{R}_+ \mapsto \mathbb{R}_+$ such that 
    \begin{align*}
        g(x) = x\min\{\sqrt{x}, 1\}.
    \end{align*}
    We claim that this function is Lipschitz. When $x, y \ge 1$,
    \begin{align*}
        |g(x)-g(y)| = |x-y|.
    \end{align*}
    Next, when $x, y < 1$,
    \begin{align*}
        |g(x) - g(y)| = |x^{3/2}-y^{3/2}| = \frac{3}{2} \xi^{1/2}|x-y|,
    \end{align*}
    by the mean value theorem and $\min(x,y) \le \xi \le \max(x,y)$. Since $x,y \le 1$, we conclude $|g(x) - g(y)| = \frac{3}{2} |x-y|$. Finally, for $x \ge 1, y < 1$,
    \begin{align*}
        |g(x)-g(y)| = |x - y^{3/2}| \le  |x -1| +  |1 - y^{3/2}| \le |x-y| + \frac{3}{2}|x-y|= \frac{5}{2}|x-y|.
    \end{align*}
    The case with $y \ge 1, x<1$ is analogous. Hence $g$ is $5/2$-Lipschitz. This implies that 
    \begin{align*}
    &\frac{X_i^2}{V_X}\min\left\{\frac{|X_i|}{\sqrt{V_X}},1\right\} -    \frac{Y_i^2}{V_Y}\min\left\{\frac{|Y_i|}{\sqrt{V_Y}},1\right\} \\
    &\quad \le
        \left|g\left(\frac{X_i^2}{V_X}\right) - g\left(\frac{Y_i^2}{V_Y}\right)\right|\le \frac{5}{2}\left|\frac{X_i^2}{V_X}-\frac{Y_i^2}{V_Y}\right|.
    \end{align*}
    We conclude the result by taking the expectation both sides, summing over $1\le i \le N$ and applying the first result. 
\end{proof}
\subsubsection{Results for width analysis}

\begin{proof}[\bfseries{Proof of Lemma \ref{prop:squared-Gn}}]
Define a sequence of independent Rademacher random variables. Then, we have 
    \begin{align*}
    	\E^*_{P^2} \left[\sup_{m \in \mathcal{M}_\delta}\, |(\mathbb{P}_n-P^2) m^2| \right]
     & \le 2n_2^{-1}\E^*_{P^2\times \varepsilon} \left[\sup_{m \in \mathcal{M}_\delta}\, \left|\sum_{i \in I_2} \epsilon_i m^2(Z_i)\right|\right]\\
     & \le 2n_2^{-1}\E^*_{P^2\times \varepsilon} \left[\sup_{m \in \mathcal{M}_\delta}\, \left|\sum_{i \in I_2} \epsilon_i m^2(Z_i)\mathbf{1}\{M_\delta 
 > B\}\right|\right] \\
 &\quad+ 2n_2^{-1}\E^*_{P^2\times \varepsilon} \left[\sup_{m \in \mathcal{M}_\delta}\, \left|\sum_{i \in I_2} \epsilon_i m^2(Z_i)\mathbf{1}\{M_\delta 
 \le B\}\right|\right].
    \end{align*}
where the second inequality follows from symmetrization (see for instance, Lemma 2.3.1 of \cite{van1996weak}). We now handle two terms separately. For the unbounded part, we have 
\begin{align*}
    &\E^*_{P^2\times \varepsilon} \left[\sup_{m \in \mathcal{M}_\delta}\, \left|\sum_{i\in I_2} \epsilon_i m^2(Z_i)\mathbf{1}\{M_\delta 
 > B\}\right|\right] \le \E_{P^2} \left[\left|\sum_{i\in I_2}  M_\delta^2(Z_i)  \mathbf{1}\{M_\delta 
 > B\}\right|\right].
\end{align*}
We apply the Hoffmann-J$\o$rgensen inequality (See Proposition 6.8 of \citet{ledoux2013probability} with $p=1$), which states 
\begin{align*}
    \E_{P^2} \left[\left|\sum_{i\in I_2} M_\delta^2(Z_i)\mathbf{1}\{M_\delta 
 > B\}\right|\right] \le 8 \left(\mathbb{E}_{P^2}\left[\max_{i \in I_2} M_\delta^2(Z_i)\right] + t_0^2\right)
\end{align*}
for any $t_0$ such that 
\begin{align}\nonumber
\mathbb{P}_{P^2}\left(\sum_{i \in I_2} M_\delta^2(Z_i) \mathbf{1}\{M_\delta 
 > B\} > t_0\right) \le 1/8.
\end{align}
One can take $t_0=0$ as long as the following satisfies
\begin{align}\label{eq:condition-for-HJ}
&\mathbb{P}_{P^2}\left(\sum_{i \in I_2} M_\delta^2(Z_i) \mathbf{1}\{M_\delta 
 > B\} > 0\right)  \le \mathbb{P}_{P^2}\left(\max_{i \in I_2}M_\delta(Z_i) 
 > B\right) \le 1/8.
\end{align}
Under \eqref{eq:condition-for-HJ}, which to be verified late, the Hoffmann-J$\o$rgensen inequality implies
\begin{align*}
    2n_2^{-1}\E^*_{P^2\times \varepsilon} \left[\sup_{m \in \mathcal{M}_\delta}\, \left|\sum_{i\in I_2} \epsilon_i m^2(Z_i)\mathbf{1}\{M_\delta 
 > B\}\right|\right] &\le 16n_2^{-1} \mathbb{E}_{P^2}\left[\max_{i \in I_2} |M_\delta(Z_i)|^2\right].
\end{align*}
For the second term, we observe that the entire process is uniformly bounded by $B$. We can thus apply the contraction inequality, such as, Theorem 4.12 of \cite{ledoux2013probability} or Corollary 3.2.2 of \cite{gine2021mathematical}. This in tern implies that 
\begin{align*}
    &2n_2^{-1}\E^*_{P^2} \left[\sup_{m \in \mathcal{M}_\delta}\,\left|\sum_{i \in I_2} \epsilon_i m^2(Z_i)\mathbf{1}\{M_\delta 
 \le B\}\right|\right] \\
 &\quad \le 4B n_2^{-1}\E^*_{P^2} \left[\sup_{m \in \mathcal{M}_\delta}\,\left|\sum_{i \in I_2} \epsilon_i m(Z_i)\right|\right] \\
 &\quad \le 8B n_2^{-1}\E^*_{P^2} \left[\sup_{m \in \mathcal{M}_\delta}\,\left|\sum_{i \in I_2}m(Z_i)-\E_{P^2}[m(Z_i)]\right|\right]\\
 &\quad\le 8B \phi_{n_2}(\delta),
\end{align*}
where the first inequality is by the symmetrization and the second by desymmetrization. Now it remains to verify \eqref{eq:condition-for-HJ}.
\paragraph{Finite $q$th moment} When $\E_{P^2}[|M_\delta|^q] \le C_q$, it follows that 
\begin{align*}
&\mathbb{P}_{P^2}\left(\max_{i \in I_2}M_\delta(Z_i) 
 > B\right) \\
 &\quad =\mathbb{P}_{P^2}\left(\max_{i \in I_2}M^q_\delta(Z_i) 
 > B^q\right)\le \mathbb{P}_{P^2}\left(\sum_{i \in I_2}M^q_\delta(Z_i) 
 > B^q\right) \le \frac{n_2 \E_{P^2}[M_\delta^q]}{B^q}.
\end{align*}
In this case, we set $B^q = 8 n_2\E_{P^2}[M_\delta^q]$. The final bound is 
\begin{align*}
    &16n_2^{-1} \mathbb{E}_{P^2}\left[\max_{i \in I_2} |M_\delta(Z_i)|^2\right] + 8B \phi_{n_2}(\delta) \\
    &\quad \le 16n_2^{-1} \left(\mathbb{E}_{P^2}\left[\sum_{i \in I_2} |M_\delta(Z_i)|^{2 \cdot q/2}\right]\right)^{2/q} +8\cdot 8^{1/q} \cdot n_2^{1/q}C_q^{1/q} \phi_{n_2}(\delta) \\
    &\quad \le 16n_2^{-1+2/q} C_q^{2/q} +8\cdot 8^{1/q} \cdot n_2^{1/q}C_q^{1/q} \phi_{n_2}(\delta).
\end{align*}
\paragraph{Finite $\gamma$ Sub-Weibull}
In this case, we have $\E_{P^2}[|M_\delta|^q] \le K^q q^{q/\gamma}$ for all $q \ge 1$. Then the final bound becomes 
\begin{align*}
    16n_2^{-1+2/q} K^2 q^{2/\gamma}+8\cdot 8^{1/q} \cdot n_2^{1/q}K q^{1/\gamma} \phi_{n_2}(\delta).
\end{align*}
The second contraction term is dominated when $n_2^{1/q}q^{1/\gamma}$ is minimized. We choose $q_* = (\gamma \log n_2) > 2$, which requires $n_2 > e^{2/\gamma}$. This choice yields 
\begin{align*}
    &16n_2^{-1+2/q_*} K^2 q_*^{2/\gamma}+8\cdot 8^{1/q_*} \cdot n_2^{1/q_*}K q_*^{1/\gamma} \phi_{n_2}(\delta) \\
    &\quad =16n_2^{-1+2/(\gamma \log n_2)} K^2 (\gamma \log n_2) ^{2/\gamma}\\
    &\quad\quad +8\cdot 8^{1/(\gamma \log n_2)} \cdot n_2^{1/(\gamma \log n_2) }K (\gamma \log n_2) ^{1/\gamma} \phi_{n_2}(\delta)\\
    &\quad \le 16n_2^{-1}e^{2/\gamma} K^2 (\gamma \log n_2) ^{2/\gamma}\\
    &\quad\quad +8\cdot 2 \cdot e^{1/\gamma}K (\gamma \log n_2) ^{1/\gamma} \phi_{n_2}(\delta)
\end{align*}
where $8^{1/x} \le 2$ for $x > 2$.
\end{proof}

\section{Miscellaneous Derivation from Examples}
\subsection{Derivation for \Cref{example-u-statistics}}\label{supp:example1}
    Let $Z_1, \ldots, Z_{N}$ be IID observations following $\mathcal{N}(\mu, \sigma^2)$. We split data into two parts so that $|I_1| + |I_2| = n_1 + n_2 = N$. Recall that 
    \begin{equation}
        \theta(P^N) = \argmin_{\theta \in \mathbb{R}}\, \E_{P^2}[(Z_1Z_2-\theta)^2],\nonumber
    \end{equation}
    and
    \begin{equation}
        \widehat{\mathbb{M}}_2(\theta) = {n_2 \choose 2}^{-1} \sum_{n_1+1 \le i < j \le N} (Z_i Z_j - \theta)^2.\nonumber
    \end{equation}
    First, the curvature follows as
    \begin{align*}
        \mathbb{C}_{P^{2}}(\theta) &= \E_{P^{2}}[(Z_1Z_2-\theta)^2] - \E_{P^{2}}[(Z_1Z_2-\mu^2)^2] \\
        &= \E_{P^{2}}[(Z_1Z_2-\mu^2+\mu^2-\theta)^2] - \E_{P^{2}}[(Z_1Z_2-\mu^2)^2]\\
        &= 2(\mu^2-\theta)^2\E_{P^{2}}[Z_1Z_2] - 2\mu^2(\mu^2-\theta)^2 + (\theta-\mu^2)^2 
        \\&= (\theta-\mu^2)^2.
    \end{align*}
    Since $\widehat{\mathbb{M}}_2$ is an unbiased estimator, the MSE is given by variance. It then follows that 
    \begin{align*}
        \mathbb{V}_{P^{2}}(\theta) &=\mathrm{Var}_{P^{2}}\left[{n_2 \choose 2}^{-1} \sum_{n_1+1 \le i < j \le N} (Z_i Z_j - \theta)^2-(Z_i Z_j - \mu^2)^2\right]\\
        &= 4(\mu^2- \theta)^2\mathrm{Var}_{P^{2}}\left[{n_2 \choose 2}^{-1} \sum_{n_1+1 \le i < j \le N} Z_i Z_j \right].
    \end{align*}
    The variance of U-statistics with degree-2 Kernel is given by
    \begin{align*}
        \mathrm{Var}_{P^{2}}\left[{n_2 \choose 2}^{-1} \sum_{n_1+1 \le i < j \le N} Z_i Z_j \right] = \frac{2}{n_2(n_2-1)}[\zeta_2 + 2(n_2-2)\zeta_1]
    \end{align*}
    where $\zeta_1 = \mathrm{Cov}(Z_1Z_2, Z_1Z_3)$ and $\zeta_2 = \mathrm{Var}(Z_1Z_2)$. Furthermore, we have
    \begin{align*}
        \mathrm{Cov}(Z_1Z_2, Z_1Z_3) = \E[Z_1^2]\E[Z_2]\E[Z_3]-\mu^4 = \mu^2(\sigma^2 + \mu^2) - \mu^4 = \mu^2\sigma^2,
    \end{align*}
    and 
    \begin{align*}
        \mathrm{Var}(Z_1Z_2) = \E[Z_1^2]\E[Z_2^2]-\mu^4 = (\sigma^2 + \mu^2)^2 - \mu^4 = \sigma^4 + 2\sigma^2\mu^2.
    \end{align*}
    Hence, we obtain 
    \begin{align*}
        \mathbb{V}_{P^{2}}(\theta) = (\mu^2- \theta)^2\left(\frac{8\sigma^4}{n_2(n_2-1)} + \frac{16\mu^2\sigma^2}{n_2}\right).
    \end{align*}
    We conclude that 
    \begin{align*}
    \mathbb{C}_{P^{2}}^2(\theta)/\mathbb{V}_{P^{2}}(\theta) = (\mu^2- \theta)^2\left(\frac{8\sigma^4}{n_2(n_2-1)} + \frac{16\mu^2\sigma^2}{n_2}\right)^{-1}.
    \end{align*}
    The result for the constant estimator can be obtained by plugging in  $\theta = c$ to the display above. Next, we consider the standard U-statistics estimator based on $D_1$, that is, 
    \begin{align}\label{eq:sample-statistics}
        \widehat\theta_1 = {n_1 \choose 2}^{-1} \sum_{1 \le i < j \le n_1} Z_i Z_j = \frac{1}{n_1(n_1-1)}\left[\left(\sum_{i=1}^{n_1}Z_i\right)^2 - \sum_{i=1}^{n_1}Z_i^2\right].
    \end{align}
    Denote sample mean and variance as 
    \begin{align*}
        \overline{Z}_{n_1} = \frac{1}{n_1} \sum_{i=1}^{n_1} Z_i \quad \mbox{and} \quad S_{n_1}^2 = \frac{1}{n_1-1}\sum_{i=1}^{n_1}\left(Z_i -\overline{Z}_{n_1}\right)^2.
    \end{align*}
    Then we can write
    \begin{align*}
        \left(\sum_{i=1}^{n_1}Z_i\right)^2 - \sum_{i=1}^{n_1}Z_i^2 = n_1(n_1-1) \overline{Z}_{n_1}^2 - (n_1-1)S_{n_1}^2
    \end{align*}
    and hence 
    \begin{align*}
        \mu^2 -\widehat{\theta}_1 &= \mu^2 - \overline{Z}_{n_1}^2 + \frac{S_{n_1}^2}{n_1} \\
        &= (\mu - \overline{Z}_{n_1})(\mu +\overline{Z}_{n_1}) + \frac{S_{n_1}^2}{n_1}\\
        &\overset{d}{=} -\frac{2Z\sigma}{\sqrt{n_1}}-\frac{Z^2\sigma^2}{n_1}+ \frac{\sigma^2V}{n_1(n_1-1)} = \frac{\sigma}{\sqrt{n_1}}\left(-2Z -\frac{Z^2\sigma}{\sqrt{n_1}} + \frac{\sigma V}{\sqrt{n_1}(n_1-1)}\right)
    \end{align*}
    where $Z\sim \mathcal{N}(0,1)$ and $V \sim \chi^2_{n-1}$, with $Z$ and $V$ independent. Putting together, we have 
    \begin{align*}
    \mathbb{C}_{P^{2}}^2(\widehat\theta_1)/\mathbb{V}_{P^{2}}(\widehat\theta_1) \overset{d}{=} \left(-2Z -\frac{Z^2\sigma}{\sqrt{n_1}} + \frac{\sigma V}{\sqrt{n_1}(n_1-1)}\right)^2\left(\frac{8n_1\sigma^2}{n_2(n_2-1)} + \frac{16n_1\mu^2}{n_2}\right)^{-1}.
    \end{align*}

\subsection{Numerical study for \Cref{example-u-statistics}}\label{supp:example1-sim}
Below, we provide a numerical result to demonstrate how the behavior of the estimator affects the miscoverage probability of the proposed confidence set. We generate $200$ observations $Z_1, \ldots, Z_{200}$ from independent $\mathcal{N}(\mu, 1)$, where $\mu$ varies over the grid $\{0.01, 0.002,\ldots, 0.5\}$. The first $100$ observations are used to construct three estimators: (1) the constant estimator at zero, denoting $\widehat{\theta}_1^{(1)}$; (2) the U-statistics estimator \eqref{eq:sample-statistics}, denoting $\widehat{\theta}_1^{(2)}$. For each estimator and value of $\mu$, we estimate, based on $1000$ replications, the probability
\begin{align*}
    \mathbb{P} \left(\widehat \M_2(\mu) - \widehat \M_2(\widehat\theta_1^{(k)}) > 0\right) \quad \text{for $k \in \{1,2\}$},
\end{align*}
where 
\begin{equation}
    \widehat{\mathbb{M}}_2(\theta) = {100 \choose 2}^{-1} \sum_{101 \le i < j \le 2} (Z_i Z_j - \theta)^2.\nonumber
    \end{equation}
For each estimator and value of $\mu$, we can compute the distribution of $\ratio$ exactly. We then compute the upper bound of miscoverage, $\E[1/(1+\ratio^2)]$, by numerical integration. The results are displayed in \Cref{fig:example1}. The $X$-axis shows the values of true mean $\mu$ and the observed miscoverage based on $1000$ replication is shown in red. The theoretical upper bound of \Cref{thm:coverage-anti-conservative-confidence-set-empirical-risk} is provided in dashed line. We observe that \Cref{thm:coverage-anti-conservative-confidence-set-empirical-risk} is valid but a conservative upper bound of the miscoverage. In particular, \Cref{thm:coverage-anti-conservative-confidence-set-empirical-risk} confirms that constant estimator at zero yields asymptotically conservative confidence set with miscoverage of zero as the value of $\mu$ increases. This is reflected in diverging $\ratio$. For the estimator based on U-statistics, \Cref{thm:coverage-anti-conservative-confidence-set-empirical-risk} provides a conservative bound such that the miscoverage should be less than $\approx 80\%$ but in practice the miscoverage can be less than $50\%$.

\subsection{Derivation for \Cref{example:hodges-estimator}}\label{supp:example2}
Again, let $Z_1, \ldots, Z_{N}$ be IID observations following $\mathcal{N}(\mu, \sigma^2)$. We split data evenly such that $|D_1|=n_1$ and $|D_2| = n_2$. Recall
    \begin{equation}
        \theta(P^N) = \argmin_{\theta \in \mathbb{R}}\, \E_{P^2}[(Z_1-\theta)^2],\nonumber
    \end{equation}
    and 
    \begin{equation}
        \widehat{\mathbb{M}}_2(\theta) = \frac{1}{n_2} \sum_{n_1+1 \le i \le N} (Z_i - \theta)^2.\nonumber
    \end{equation}
    It is straightforward to show that 
    \begin{align*}\ratio^2 = \frac{n_2(\widehat{\theta}_1 - \mu)^2}{4\sigma^2} = \frac{n_1(\widehat{\theta}_1 - \mu)^2}{4\sigma^2} \cdot \frac{n_2}{n_1}.
    \end{align*}
    From this, the behavior of the constant estimator is immediate. For sample mean,
    \begin{align*}
        \ratio^2 = \frac{n_1(\widehat{\theta}_1 - \mu)^2}{4\sigma^2} \cdot \frac{n_2}{n_1} \overset{d}{=} \frac{Z^2}{4}\cdot \frac{n_2}{n_1} \quad \textrm{where} \quad Z \sim \mathcal{N}(0,1).
    \end{align*}
    Now consider Hodge's estimator based on $D_1$ such that $\widehat \theta_1 := \bar{X}_{n_1} \mathbf{1}\{|\bar{X}_{n_1} |\ge {n_1}^{-1/4}\}$ where $ \bar{X}_{n_1} = {n_1}^{-1}\sum_{i=1}^{n_1} X_i$. Then 
    \begin{align*}
        {n_1}(\widehat{\theta}_1 - \mu)^2 = {n_1}(\bar{X}_{n_1} - \mu)^2\mathbf{1}\{|\bar{X}_{n_1} |\ge {n_1}^{-1/4}\} + {n_1}\mu^2\mathbf{1}\{|\bar{X}_{n_1} | < {n_1}^{-1/4}\}.
    \end{align*}
    When $\mu=0$, we have 
    \begin{align*}
        {n_1}(\widehat{\theta}_1 - \mu)^2 &= \sigma^2\left(\frac{\sqrt{{n_1}}\bar{X}_{n_1}}{\sigma}\right)^2\mathbf{1}\left\{\left|\frac{\sqrt{{n_1}}\bar{X}_{n_1}}{\sigma}\right |\ge \sigma^{-1}{n_1}^{1/4}\right\} \\
        &\overset{d}{=}\sigma^2Z^2\mathbf{1}\left\{|Z|\ge \sigma^{-1}{n_1}^{1/4}\right\}\overset{p}{\to} 0,
    \end{align*}
    where $Z \sim\mathcal{N}(0,1)$.
    On the other hands, take $\mu = {n_1}^{-1/4}/2$. Then 
    \begin{align*}
        &{n_1}(\widehat{\theta}_1 - \mu)^2 \\
        &\qquad = \sigma^2\left(\sqrt{{n_1}}\frac{(\bar{X}_{n_1} - \mu)}{\sigma}\right)^2\mathbf{1}\left\{\sqrt{{n_1}}\frac{(\bar{X}_{n_1} - \mu)}{\sigma}\ge \frac{{n_1}^{1/4}}{2\sigma}\, \cup \,\sqrt{{n_1}}\frac{(\bar{X}_{n_1} - \mu)}{\sigma}\le \frac{-3{n_1}^{1/4}}{2\sigma}\right\} \\
        &\qquad\qquad + \frac{\sqrt{{n_1}}}{4}\mathbf{1}\left\{\sqrt{{n_1}}\frac{(\bar{X}_{n_1} - \mu)}{\sigma}< \frac{{n_1}^{1/4}}{2\sigma}\, \cap \,\sqrt{{n_1}}\frac{(\bar{X}_{n_1} - \mu)}{\sigma}> \frac{-3{n_1}^{1/4}}{2\sigma}\right\}  \\
        &\qquad \overset{d}{=} \sigma^2 Z^2\mathbf{1}\left\{Z\ge \frac{{n_1}^{1/4}}{2\sigma}\, \cup \,Z\le \frac{-3{n_1}^{1/4}}{2\sigma}\right\}  + \frac{\sqrt{{n_1}}}{4}\mathbf{1}\left\{Z< \frac{{n_1}^{1/4}}{2\sigma}\, \cap \,Z> \frac{-3{n_1}^{1/4}}{2\sigma}\right\} \\
        &\qquad \overset{p}{\to} \infty,
    \end{align*}
      where $Z \sim\mathcal{N}(0,1)$. In general for any $\mu$, Hodges' estimator yields
      \begin{align*}
          \ratio^2 &\overset{d}{=} \frac{n_2}{4n_1} \bigg(Z^2 \mathbf{1}\left\{Z\ge \frac{n_1^{1/4}-n_1^{1/2}\mu}{\sigma}\,\cup\, Z \le \frac{-n_1^{1/4}-n_1^{1/2}\mu}{\sigma}  \right\}\\
          &\qquad + \frac{n_1\mu^2}{\sigma^2}\mathbf{1}\left\{\frac{-n_1^{1/4}-n_1^{1/2}\mu}{\sigma} < Z < \frac{n_1^{1/4}-n_1^{1/2}\mu}{\sigma}  \right\}\bigg)
      \end{align*}
        
\subsection{Derivation for \Cref{example:hodge-revisited}}\label{supp:example3}
Consider the same setting as \Cref{example:hodges-estimator}. Then 
\begin{equation*}
    \begin{split}
        \widehat \xi_i &= (Z_i - \widehat\theta_1)^2 - (Z_i - \mu)^2- (\widehat\theta_1-\mu)^2 = 2(Z_i - \mu)(\mu - \widehat \theta_1), \\
        \widehat{\mathbb{V}}_{2} &= 4n_2(\mu - \widehat \theta_1)^2 \sigma^2, \quad \textrm{and}\quad
        |\widehat \xi_i|/\widehat{\mathbb{V}}^{1/2}_{2} = \frac{|Z_i-\mu|}{\sqrt{n_2}\sigma}.
    \end{split}
\end{equation*}
The remainder term of \Cref{thm:coverage-anti-conservative-confidence-set-empirical-risk} can be bounded as 
\begin{align*}
    &\mathbb{E}_{P^N}\left[\sum_{i\in I_2} \frac{|\widehat\xi_i|^2}{n_2^2\widehat{\mathbb{V}}_{2}(1 + \widehat{\mathbb{C}}_{2}/\widehat{\mathbb{V}}^{1/2}_{2})^2}\min\left\{1,\,\frac{|\widehat \xi_i|}{n_2\widehat{\mathbb{V}}_{2}^{1/2}(1 + \widehat{\mathbb{C}}_{2}/\widehat{\mathbb{V}}^{1/2}_{2})}\right\}\right] \\
    &\qquad \le \mathbb{E}_{P^N}\left[\sum_{i\in I_2} \frac{|\widehat\xi_i|^3}{n_2^3\widehat{\mathbb{V}}^{3/2}_{2}(1 + \widehat{\mathbb{C}}_{2}/\widehat{\mathbb{V}}^{1/2}_{2})^3}\right] = \frac{\E[|Z_1-\mu|^3]}{n_2^{1/2}\sigma^{3/2}(1 + \ratio)^3} = 2\sqrt{\frac{2}{\pi}}\frac{\sigma^{3/2}}{n_2^{1/2}(1 + \ratio)^3}. 
\end{align*}

\subsection{Numerical study for \Cref{example:hodge-revisited}}\label{sup:numerical-example3}
Below, we provide a numerical result to demonstrate how the behavior of the estimator affects the miscoverage probability of the proposed confidence set. We consider the following setting. First, we generate $200$ observations $Z_1,\ldots, Z_{200}$ from independent $\mathcal{N}(\mu, 1)$, where $\mu$ varies over the grid $\{0.01, 0.002,\ldots, 0.5\}$. The first $100$ observations are used to construct three estimators: (1) the constant estimator $\widehat\theta_1^{(1)} =0$; (2) the sample mean estimator $\widehat\theta_1^{(2)} =\overline{Z} =(100)^{-1}\sum_{i=1}^{100}Z_i$; (3) Hodges' estimator $\widehat\theta^{(3)}_1 =\overline{Z}\mathbf{1}\{|\overline{Z}| \ge 100^{-1/4}\}$.

For each estimator, we estimates the probability 
\begin{align*}
    \mathbb{P} \left(\frac{1}{100}\sum_{i=101}^{200} (Z_i - \mu)^2 -(Z_i - \widehat\theta_1^{(k)})^2 > 0\right) \quad \text{for $k \in \{1,2,3\}$}
\end{align*}
using $1000$ replications. This probability corresponds to the miscoverage probability of the confidence set \eqref{eq:anti-conservative-confidence-set} when the respective estimator is used. The results are shown in \Cref{fig:example3}. 

The simulation displays how closely the upper bound \Cref{thm:coverage-anti-conservative-confidence-set-empirical-risk} tracks the observed miscoverage. First, the miscoverage does not exceed $1/2$ (indicated by the dashed line) for any estimator across all values of $\mu$, hence the confidence set is valid at level $1/2$, which is evident from the fact that $\ratio \ge 0$ almost surely. The upper bound $\E[1-\Phi(\ratio)]$ gives sharper control on the miscoverage. When $\ratio$ is close to zero, the coverage is close to $1/2$, or the exact nominal level. This is observed for a constant estimator or Hodges' estimator near $\mu = 0$ where $\ratio \approx 0$. For these estimators, there are regions where the corresponding confidence set is more conservative than the set based on the sample mean estimator, for instance, the constant estimator for $\mu > 0$ or Hodges' estimator near $\mu = 100^{-1/4} \approx 0.31$. Finally, the sample mean estimator yields miscoverage that is approximately constant in $\mu$ as the behavior $\ratio$ does not depend on $\mu$. 

    
\section{Additional Numerical 
Results}\label{supp:num}
\subsection{High-dimensional Mean Inference}
The setup follows from \Cref{sec:mean}. For $N=300$ and $2 \le d \le 200$, observations $X_i \in \mathbb{R}^d$, $1 \le i \le N$ are generated independently as
\begin{equation}
    X_i \sim \mathcal{N}(0, \Sigma)\quad  \textrm{where}\quad \Sigma_{i,j} = 0.1^{|i-j|}.
\end{equation}
Five methods are compared. The first method, \texttt{Wald}, is based on the asymptotic distribution of the sample mean, with confidence set
\begin{equation}\label{eq:CI-wald}
\widehat{\mathrm{CI}}^{\mathtt{Wald}, \mathtt{Mean}}_{N,\alpha} := \left\{\theta \in \mathbb{R}^d : (\theta - \bar{X}_N)^\top \widehat{\Sigma}_N^{-1}(\theta - \bar{X}_N) \le N^{-1}\chi^2_{d,\alpha}\right\},
\end{equation}
where $\bar{X}_N$ is the sample mean and $\widehat{\Sigma}_N$
is the sample covariance matrix. Two proposed methods use an even split of the data: \texttt{CLT}, based on \eqref{eq:CI-CLT}, and \texttt{CLT+UCB}, which combines \eqref{eq:CI-CLT} with the upper confidence bound derived in \Cref{example:mean-UCB} as in \eqref{eq:LCB-set-UCB}. These are compared against nonparametric bootstrap (\texttt{Bootstrap}) and multiplier bootstrap (\texttt{Multiplier}); see Section~\ref{supp:baseline} for implementation details.

\paragraph{Validity} \Cref{fig:coverage-mean} displays the empirical coverage of $90\%$ and $50\%$ confidence sets. The $X$-axis displays the dimension $d$ and the $Y$-axis displays the estimated coverage over $500$ replications, with $N=300$ fixed. The $50\%$ nominal level corresponds to \eqref{eq:anti-conservative-confidence-set} and \eqref{eq:anti-conservative-confidence-set-UCB}, for which no variance estimation is required. For both levels, the coverage of the \texttt{Wald} method deteriorates rapidly as $d$ increases. The proposed CLT-based methods maintain validity across all dimensions examined. The improvement from incorporating the upper confidence bound is substantial: \texttt{CLT+UCB} achieves near-nominal coverage across all dimensions, whereas the coverage of \texttt{CLT} approaches one as $d$ increases. This conservativeness of \texttt{CLT} is captured by \Cref{thm:coverage-anti-conservative-confidence-set-empirical-risk}, which identifies large $\ratio$ as the mechanism driving the coverage toward one in high dimensions. Both bootstrap methods achieve near-nominal coverage with slight conservativeness at $\alpha=0.1$ when $d$ is large.

\begin{figure}
\centering
\begin{subfigure}{0.9\textwidth}
  \centering
  \includegraphics[width=\linewidth]{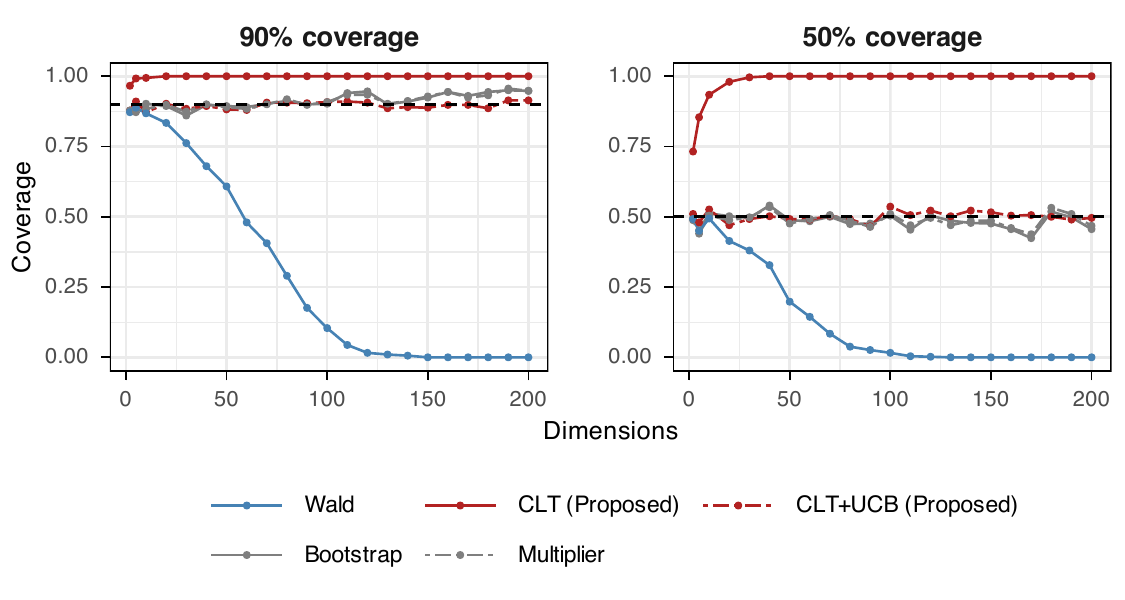}
\end{subfigure}
\caption{Estimated coverage of five confidence set methods for high-dimensional mean, targeted at the $90\%$ and $50\%$ nominal levels. The $X$-axis displays the dimension $d$ and the $Y$-axis displays the estimated coverage over $500$ replications, with $N=300$ fixed. The methods compared are the Wald interval (\texttt{Wald}), the CLT-based proposed method (\texttt{CLT}), the CLT-based method with upper confidence bound (\texttt{CLT+UCB}), nonparametric bootstrap (\texttt{Bootstrap}), and multiplier bootstrap (\texttt{Multiplier}). The coverage of \texttt{Wald} deteriorates rapidly as $d$ increases. Both \texttt{CLT} and \texttt{CLT+UCB} maintain validity across all dimensions. The \texttt{CLT+UCB} method in particular achieves near-nominal coverage throughout, while the coverage of \texttt{CLT} approaches one as $d$ increases. Both bootstrap methods achieve near-nominal coverage with slight conservativeness at $\alpha = 0.1$ and large $d$.}
\label{fig:coverage-mean}
\end{figure}
\begin{figure}
\centering
\begin{subfigure}{0.9\textwidth}
  \centering
  \includegraphics[width=\linewidth]{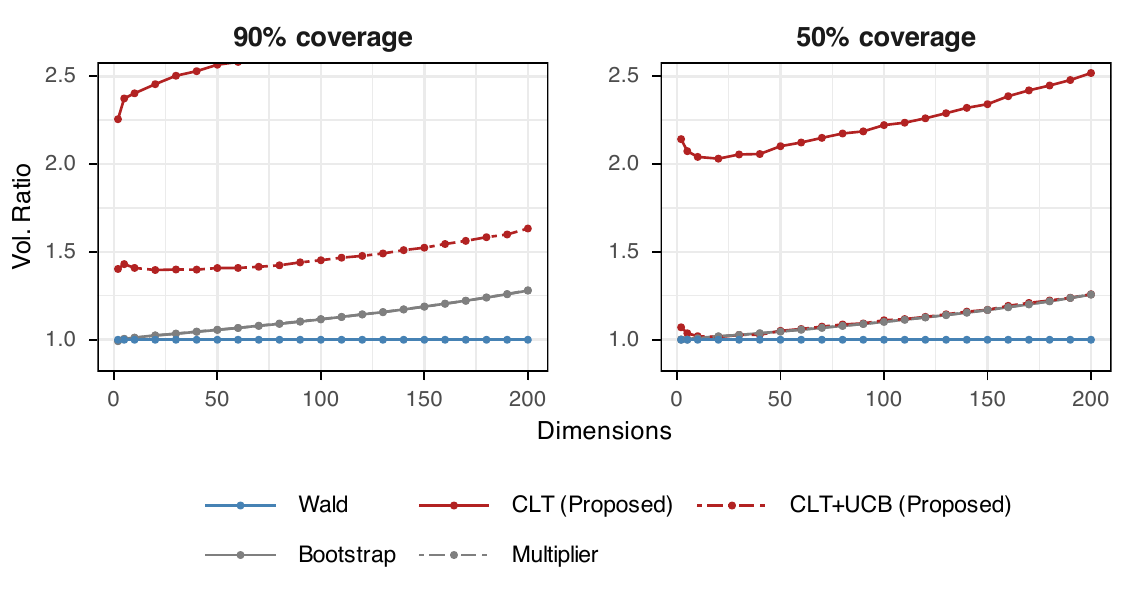}
\end{subfigure}
\caption{Average ratio of radii of each confidence set relative to the \texttt{Wald} interval for high-dimensional mean, computed over $500$ replications with $N=300$ fixed. The $X$-axis displays the dimension $d$ and the $Y$-axis displays the average radius ratio. For $\alpha=1$, the proposed methods use the three-way data split of \Cref{sec:computation}. The \texttt{CLT} method produces the largest sets, with radii more than twice those of \texttt{Wald}. Incorporating the upper confidence bound substantially reduces the size: \texttt{CLT+UCB} achieves radii approximately $1.5$ times those of \texttt{Wald} at $\alpha = 0.1$ and is considerably closer in size at $\alpha=0.5$. At $\alpha=0.5$, the methods \texttt{CLT+UCB}, \texttt{Bootstrap}, and \texttt{Multiplier} are comparable in size.}
\label{fig:volume-mean}
\end{figure}

\paragraph{Width Analysis} We next compare the size of the proposed confidence sets against the \texttt{Wald} interval. As established in \Cref{thm:geometry-mean}, all proposed confidence sets are Euclidean balls and can be computed in closed form. Since all methods yield
$d$-dimensional balls, size comparison reduces to comparing radii, which corresponds to the $d$th root of the volume ratio:
\begin{align*}
    \left(\frac{\mathrm{Vol}(B_d(c_1; r_1))}{\mathrm{Vol}(B_d(c_2; r_2))}\right)^{1/d} = r_1/r_2.
\end{align*}
\Cref{fig:volume-mean} the average ratio of radii relative to \texttt{Wald}, computed over $500$ replications across different dimensions. The $X$-axis displays the dimension $d$ and the $Y$-axis displays the average radius ratio. For $\alpha = 0.1$, we use the three-way data split procedure of \Cref{sec:computation}, which provides the closed-form expression for the corresponding confidence sets.

The \texttt{CLT} method yields the largest sets, with radii on average more than twice those of \texttt{Wald}. Incorporating the upper confidence bound substantially reduces the size: \texttt{CLT+UCB} achieves radii approximately $1.5$ times those of \texttt{Wald} at $\alpha = 0.1$, and is considerably closer in size at $\alpha = 1/2$. The bootstrap methods (\texttt{Bootstrap} and \texttt{Multiplier}) produce smaller confidence sets, and at $\alpha = 1/2$ all three methods \texttt{CLT+UCB}, \texttt{Bootstrap}, and \texttt{Multiplier} are comparable in size. For high-dimensional mean inference, the bootstrap achieves both valid coverage and competitive size. A practical advantage of \texttt{CLT+UCB} over the bootstrap procedures is that the confidence set has an exact closed-form expression given by \Cref{thm:geometry-mean}, requiring only a single pass through the data rather than repeated resampling.

Finally, we provide a qualitative illustration of different confidence sets by visualizing the confidence regions in a bivariate setting. We generate $X_1, \ldots, X_{100}$ from a bivariate normal distribution $\mathcal{N}(0, \Sigma)$ where
\begin{align*}
        \Sigma^2 = \begin{bmatrix}1 & 0.3 \\
        0.3 & 1\end{bmatrix}.
    \end{align*}
\Cref{fig:demo} displays the confidence regions three methods defined in Section~\ref{sec:num} for confidence levels of $95\%$, $85\%$, and $75\%$. This visualization highlights key qualitative differences between the methods. As shown, the proposed confidence sets (middle and right panels) are non-convex, whereas the Wald interval (left panel) forms an ellipse. Additionally, the proposed methods yield slightly enlarged confidence regions. A close inspection of the confidence set presented in the right panel reveals that it coincides with the adaptive confidence sets by \citet{Robins2006} and dimension-agnostic confidence sets based on cross U-statistics proposed by \citet{kim2020dimension} in their Appendix D (See the left panel of their Figure 3). For mean estimation, these methods become identical.

\begin{figure}
\centering
\begin{subfigure}{0.8\textwidth}
  \centering
  \includegraphics[width=\linewidth]{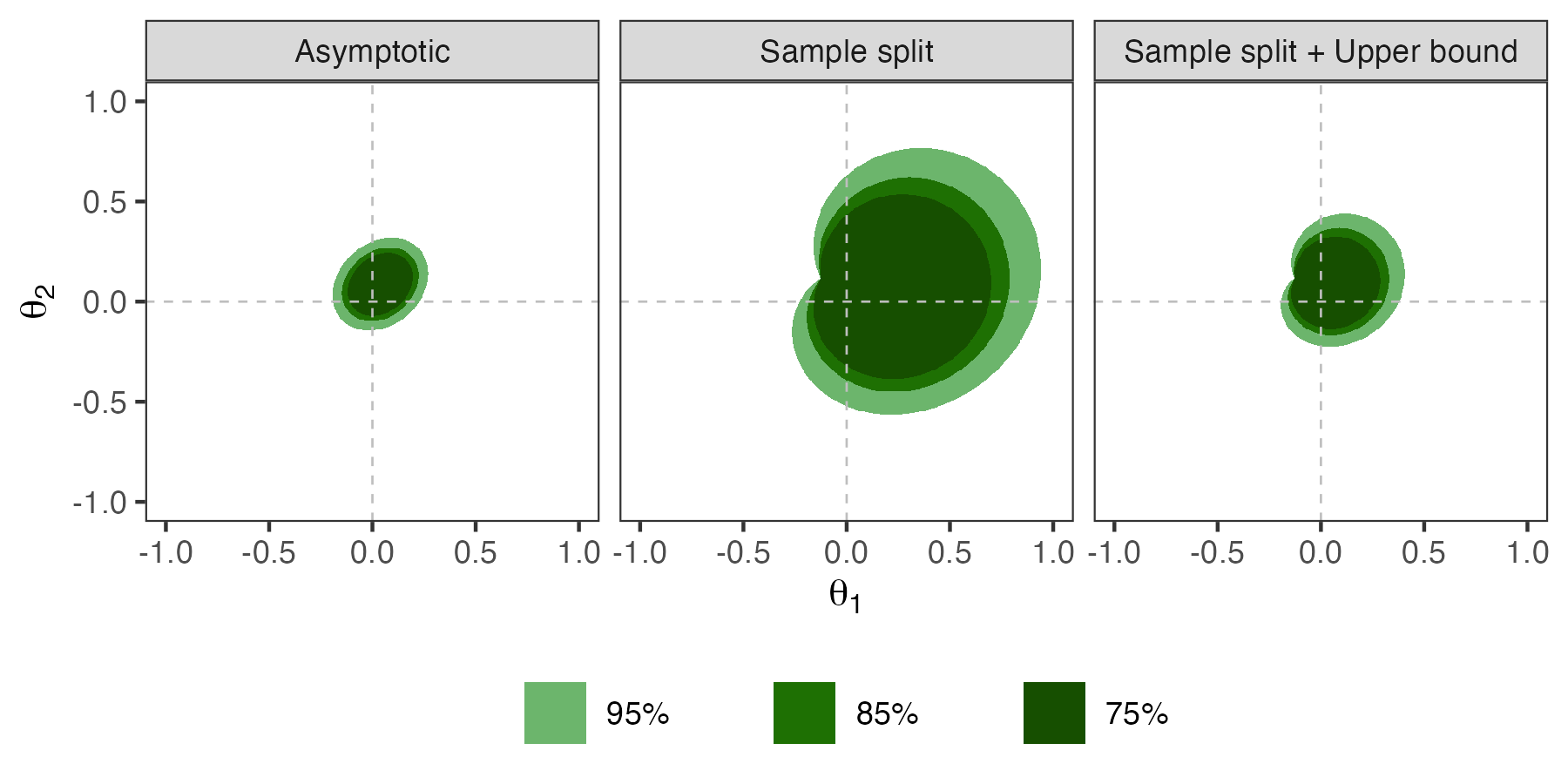}
\end{subfigure}
\caption{An illustration of confidence sets for the bivariate mean $\theta = (\theta_1, \theta_2)^\top$, where the true parameter corresponds to $\theta(P) = (0, 0)^\top$. Three confidence sets are shown at confidence levels of $95\%$, $85\%$, and $75\%$. The confidence set based on the asymptotic distribution (left) yields an elliptical region while the proposed confidence sets (middle and right) are non-convex. }\label{fig:demo}
\end{figure}
\subsection{High-dimensional linear regression}
The setup follows from \Cref{sec:ols}. For given sample size $N$ and $d$, observations $(X_i, Y_i)$ are generated independently as,
\begin{equation}
    X_i \sim \mathcal{N}(0, \Sigma) \quad \textrm{where} \quad \Sigma_{i,j} = 0.1^{|i-j|},
\end{equation}
with $\beta_0 = (1/\sqrt{d}, \ldots, 1/\sqrt{d})^\top$ and heteroskedastic errors
\begin{equation}
    Y_i = \beta_0^\top X_i + \varepsilon_i \quad \textrm{where} \quad \varepsilon_i \sim \mathcal{N}(0, |\beta_0^\top x| + 0.5).
\end{equation}

Seven methods are compared. The first method, \texttt{Wald}, is based on the asymptotic distributions of the ordinary least square with sandwich variance estimator:
\begin{equation}
    \widehat{\mathrm{CI}}^{\mathtt{Wald}, \mathtt{LR}}_{N,\alpha} := \left\{\theta \in \mathbb{R}^d : (\theta - \theta_{\mathtt{OLS}})^\top \widehat{\Sigma}_N^{-1}(\theta - \theta_{\mathtt{OLS}}) \le N^{-1}\chi^2_{d,\alpha}\right\},
\end{equation}
where $\mathtt{OLS}$ is ordinary least square and $\widehat{\Sigma}_N$ is the sandwich variance estimator. Two proposed methods use an even split of the data: \texttt{CLT}, based on \eqref{eq:CI-CLT}, and \texttt{CLT+UCB}, which combines \eqref{eq:CI-CLT} with the upper confidence bound derived in \Cref{example:LR-UCB} as in \eqref{eq:LCB-set-UCB}. These are compared against nonparametric bootstrap (\texttt{Bootstrap}), multiplier bootstrap (\texttt{Multiplier}), wild bootstrap (\texttt{Wild}) and residual bootstrap (\texttt{Resid}); see Section~\ref{supp:baseline} for implementation details. Confidence sets are constructed at levels $\alpha \in \{0.1, 0.5\}$. 

\paragraph{Validity} \Cref{fig:lr_coverage} displays the empirical coverage of $90\%$ and $50\%$ confidence sets over $500$ replications. The $X$-axis displays the dimension $d \in \{10, \ldots, 120\}$. The $Y$-axis displays the estimated coverage over $500$ replications, with $N=300$ fixed. As in the mean inference setting, \texttt{Wald} suffers from severe undercoverage as $d$ increases. Both proposed methods maintain validity across all dimensions examined. The coverage of \texttt{CLT} approaches one as $d$ increases, reflecting the resulf of \Cref{thm:coverage-anti-conservative-confidence-set-empirical-risk}. The \texttt{CLT+UCB} method stays closer to the nominal level but becomes moderately conservative at extreme dimensions, coming from overestimation of the variance term as discussed by \citet{takatsu2025precise}. At $\alpha=1/2$, where no variance estimation is required, the coverage of \texttt{CLT+UCB} remains at the nominal level across all dimensions. In contrast to the mean inference setting, aslmost all bootstrap variants undercover as $d$ increases, though less severely than \texttt{Wald}. The exception is \texttt{Bootstrap}, which remains above the nominal level; for both $\alpha = 1/2$
and $\alpha = 0.1$, it is more conservative than \texttt{CLT+UCB} but less conservative than \texttt{CLT}.

\paragraph{Width Analysis} We next compare the size of the proposed confidence sets against the \texttt{Wald} interval. As establish in \Cref{thm:geometry-LR}, all proposed confidence sets are $d$-dimensional ellipsoids with respect to the $\widehat{\Gamma}$-norm. For two ellipsoids with semi-axes, $\bar{r} = (r_1, \ldots, r_d)$ and $\bar{s} =(s_1, \ldots, s_d)$, size comparison reduces to the geometric mean of the axes ratio, which equals the $d$th root of the volume ratio:
\begin{align*}
    \left(\frac{\mathrm{Vol}(\mathcal{E}_d(c_1; \bar{r}))}{\mathrm{Vol}(\mathcal{E}_d(c_2; \bar{s}))}\right)^{1/d} = \left(\prod_{i=1}^d r_i\right)^{1/d}/\left(\prod_{i=1}^d s_i\right)^{1/d}.
\end{align*}
Since bootstrap procedures yield $d$-dimensional balls rather than ellipsoids, their radius is compared against the geometric mean of the semi-axes of the \texttt{Wald} ellipsoid. For $\alpha = 0.1$, the three-way data split of \Cref{sec:computation} is used. For \texttt{CLT} and \texttt{CLT+UCB}, volume ratios are not reported for $d>100$, though the method itself remains valid since the initial estimator can incorporate ridge regularization.

Results are displayed in \Cref{fig:lr_volume}. The \texttt{CLT} method yields the largest sets across all dimensions and both levels. The bootstrap variants \texttt{Wild}, \texttt{Multiplier}, and \texttt{Resid} produce smaller confidence sets, but, they fail to achieve nominal coverage and so the size comparison is not meaningful. The comparison between \texttt{Bootstrap} and \texttt{CLT+UCB} is more informative, as both achieve valid coverage. At $\alpha = 0.1$, where the three-way split is used, \texttt{CLT+UCB} is larger than \texttt{Bootstrap}; at $\alpha = 1/2$, where no variance estimation is required, \texttt{CLT+UCB} yields a smaller set than \texttt{Bootstrap}. As with the mean inference setting, a practical advantage of \texttt{CLT+UCB} over all bootstrap procedures is that Theorem~\ref{thm:geometry-LR} provides an exact closed-form expression for the confidence set, requiring no resampling.
\begin{figure}
\centering
\begin{subfigure}{0.9\textwidth}
  \centering
  \includegraphics[width=\linewidth]{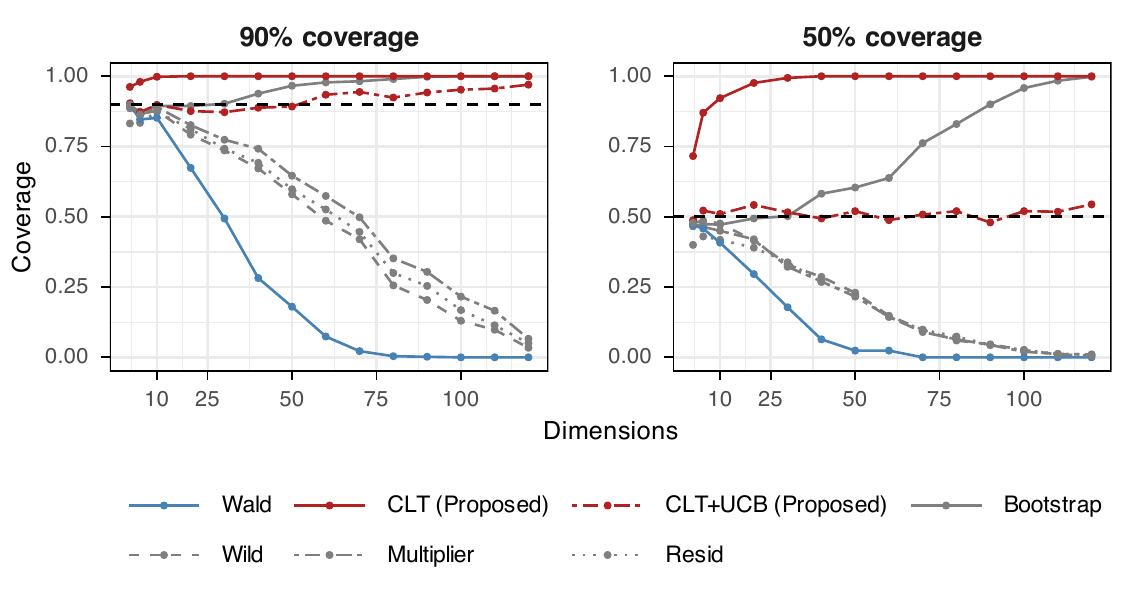}
\end{subfigure}
\caption{Estimated coverage of seven confidence set methods for high-dimensional linear regression under heteroskedastic errors, targeted at the $90\%$
and $50\%$ nominal levels. The $X$-axis displays the dimension $d \in \{10, \ldots, 120\}$ and the $Y$-axis displays the estimated coverage over $500$ replications, with $N=300$ fixed. The methods compared are the Wald interval with sandwich variance estimator (\texttt{Wald}), the CLT-based proposed method (\texttt{CLT}), the CLT-based method with upper confidence bound (\texttt{CLT+UCB}), nonparametric bootstrap (\texttt{Bootstrap}), multiplier bootstrap (\texttt{Multiplier}), wild bootstrap (\texttt{Wild}), and residual bootstrap (\texttt{Resid}). The \texttt{Wald} method undercovers severely as $d$ increases. Both proposed methods maintain validity across all dimensions; \texttt{CLT} becomes increasingly conservative as $d$ grows, while \texttt{CLT+UCB} remains close to the nominal level, with moderate conservativeness at extreme dimensions due to variance overestimation. At $\alpha = 0.5$, where no variance estimation is required, \texttt{CLT+UCB} achieves coverage at the nominal level across all dimensions. All bootstrap variants undercover as $d$ increases, with the exception of \texttt{Bootstrap}, which remains above the nominal level and is more conservative than \texttt{CLT+UCB} but less conservative than \texttt{CLT}.}
\label{fig:lr_coverage}
\end{figure}
\begin{figure}
\centering
\begin{subfigure}{0.9\textwidth}
  \centering
  \includegraphics[width=\linewidth]{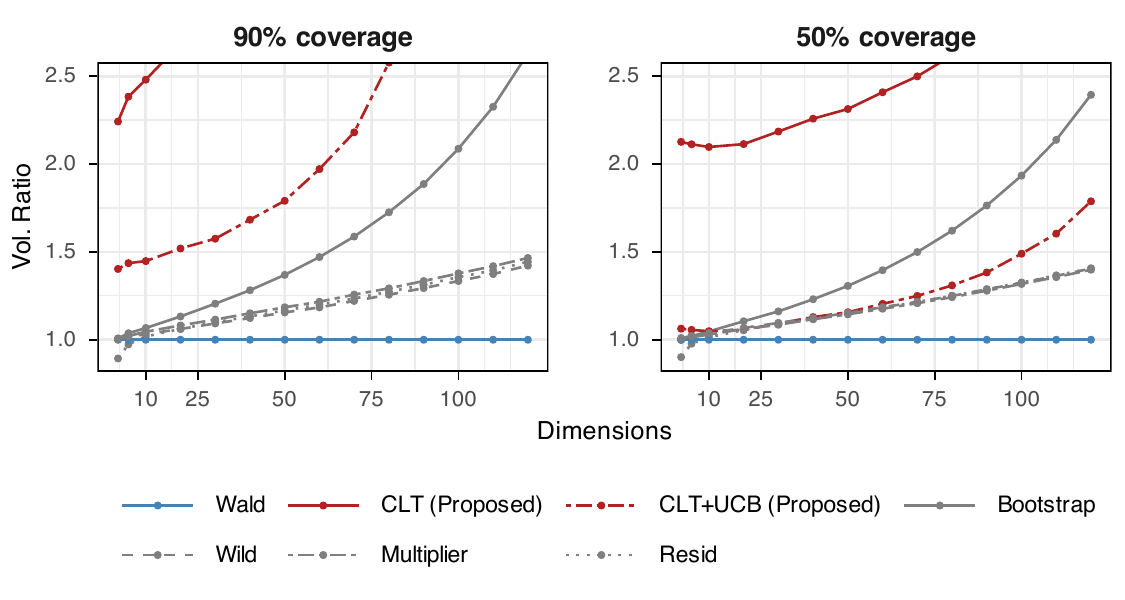}
\end{subfigure}
\caption{Average volume ratio of each confidence set relative to the \texttt{Wald} for high-dimensional linear regression computed over $500$ replications, with $N=300$ fixed. The $X$-axis displays the dimension $d \in \{10, \ldots, 100\}$ and the $Y$-axis displays the average $d$-th root of the volume ratio.For $\alpha = 0.1$, the proposed methods use the three-way data split of \Cref{sec:computation}. The \texttt{CLT} method produces the largest sets. Among valid methods, \texttt{CLT+UCB} is larger than \texttt{Bootstrap} at $\alpha = 0.1$ but smaller at $\alpha = 0.5$. The proposed methods \texttt{CLT} and \texttt{CLT+UCB} admit exact closed-form expressions by \Cref{thm:geometry-LR} and require no resampling.}
\label{fig:lr_volume}
\end{figure}

\subsection{Manski's Discrete Choice Model}\label{supp:manski-algorithm}
\paragraph{Diameter estimation for high-dimension}
We provide the approximation algorithm employed in the numerical study to estimate the diameter of the confidence set for Manski's problem. The algorithm is agnostic to the choice of confidence set among \eqref{eq:anti-conservative-confidence-set}, \eqref{thm:general_LCB} or their improvements considered in \Cref{sec:improvements}. We denote a generic confidence set by $\widehat{\mathrm{CI}}$. The key properties we use are (1) for Manski's problem, $\theta(P^N) \in \mathbb{S}^d$ and (2) for all confidence sets in this manuscript, the initial estimator $\widehat\theta_1$ is always contained in the set. 

The general idea is as follows. First, draw a random vector $u \in \mathbb{S}^{d-1}$ and project onto the tangent space at $\widehat\theta_1$:
\begin{equation}
    v = \frac{u - (u^\top \widehat\theta_1)\widehat\theta_1}{\|u - (u^\top \widehat\theta_1)\widehat\theta_1\|}.
\end{equation}
Then, define a geodesic through $\widehat\theta_1$ in the direction $v$, 
\begin{equation}
    \gamma(\phi) = \cos(\phi)\widehat\theta_1 + \sin(\phi)v \quad \textrm{where} \quad \phi \in [-2\pi, 2\pi].
\end{equation}
A univariate bisection search finds the largest angle $\phi^*$ such that $\gamma(\phi^*) \in \widehat{\mathrm{CI}}$. We repeat this procedure via Gram-Schmidt, which generates an orthonormal basis of the tangent space at $\widehat\theta_1$ with $v$ as the first component. We keep monitoring the maximum $\phi^*$ as we search across all directions. this is repeated until $\phi^*$ stabilizes. The diameter is estimated as then $2\phi^*$.

We do not perform a formal analysis of this algorithm. We expect that when $X$ is close to isotropic, $d-1$ evaluations suffice, as the confidence set is approximately a spherical cap and the boundary distance is nearly uniform across directions. With a more ill-conditioned $X$, additional repetitions may be needed. This procedure is well-suited to the confidence sets in this manuscript as it is fast to compute membership $\gamma(\phi) \in \widehat{\mathrm{CI}}$.
\subsection{Quantile without Positive Densities}
We report numerical results for median inference under non-standard rates of convergence. For a given sample size $N$, IID observations are generated as 
\begin{equation}
    X_i = \mathrm{sgn}(Z_i) \cdot |Z_i|^{1 / (\gamma + 1)} \quad \textrm{where} \quad Z_i \sim \mathcal{N}(0,1).
\end{equation}
The distribution of $X_i$ has a flat density near zero when $\gamma > 0$ so the sample median converges at rate $N^{-1/(2\gamma+2)}$. We consider $\gamma \in \{0, 1/2, 1\}$, corresponding to rates $N^{-1/2}, N^{-1/3}$, and $N^{-1/4}$, respectively. 
\paragraph{Validity}
Varying sample size over $N \in \{100, 200, \ldots, 1000\}$ is analyzed during the numerical study. Three methods are compared: the proposed CLT-based confidence set with $m_{\theta}(X) = |X-\theta|$ based on an even data split; nonparametric bootstrap; and subsampling with rate of convergence estimated via \citet{bertail1999subsampling}. Coverage is estimated over $500$ replications. The nominal level is $90\%$. 

Results are displayed in \Cref{fig:median-validity}, with sample size $N$ on the $X$-axis and estimated coverage on the $Y$-axis. Across all values of $\gamma$, the proposed method achieves coverage above $90\%$. A certain level of conservativeness is expected in view of \Cref{thm:coverage-anti-conservative-confidence-set-empirical-risk}. The performance of the resampling methods vary. Subsampling with estimated rate achieves nominal coverage only for $\gamma=0$ and large $N$, and fails for $\gamma = 1/2$ and $\gamma=1$. Nonparametric bootstrap fails for all values of $\gamma$. As its coverage falls below $60\%$, it is not visible in \Cref{fig:median-validity} at the reported scale. 

\paragraph{Width Analysis}
During the same replications, the diameter of each confidence set is recorded and averaged across replications. Results are displayed in \Cref{fig:median-width}, with sample size on a log scale on the $X$-axis and estimated average diameter on a log scale on the $Y$-axis. The slope of each line corresponds to the exponent in the rate of convergence, with theoretical values $-1/2, -1/3$ and $-1/4$ for $\gamma = 0,1/2,1$ respectively. The slope for the proposed method is also estimated via linear regression and reported in the figure. Although the proposed confidence sets are wider than those of the resampling methods, this comparison is not meaningful since the resampling methods fail to achieve the nominal coverage. the diameter of the proposed confidence set converges at a rate closely matching the theoretical values in each case, demonstrating that the method adapts to the unknown smoothness parameter $\gamma$ without requiring knowledge of it. 

\begin{figure}
\centering
\begin{subfigure}{0.9\textwidth}
  \centering
  \includegraphics[width=\linewidth]{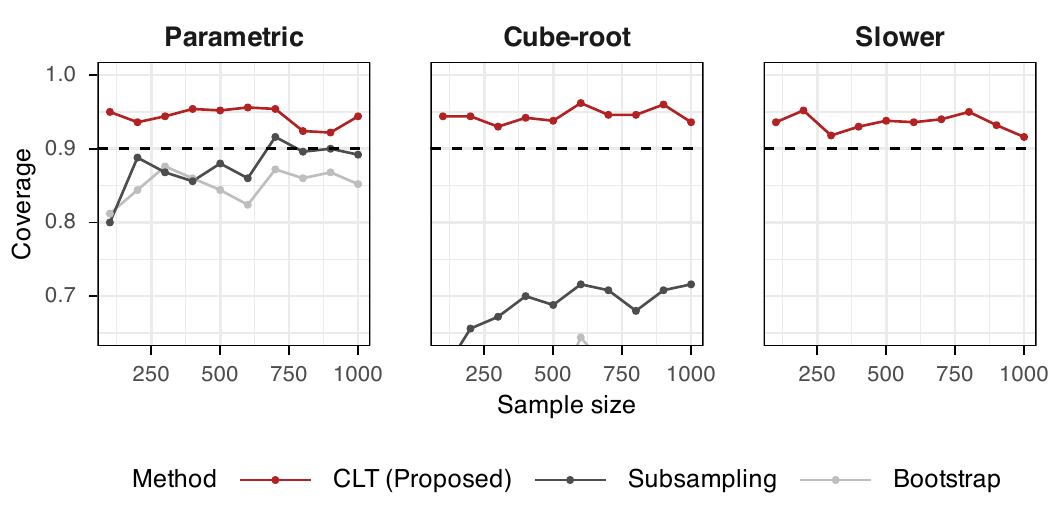}
\end{subfigure}
\caption{Estimated coverage of the proposed confidence set and two sampling methods, targeted at the $90\%$ nominal level. The $X$-axis displays the total sample size $N$ and the $Y$-axis displays the estimated coverage over $500$ replications. From left to right, the panels correspond to $\gamma=0,1/2$, and $1$, resulting in convergence rates $N^{-1/2}, N^{-1/3}$ and $N^{-1/4}$. The base estimator for the proposed method is the sample median, with an even split of observations. The proposed method achieves coverage above $90\%$ across all settings with a certain conservativeness agreeing with the theoretical result in \Cref{thm:coverage-anti-conservative-confidence-set-empirical-risk}. Subsampling with estimated rate achieves nominal coverage only for $\gamma=0$ and large $N$. Nonparametric bootstrap fails for all $\gamma$, with coverage below $60\%$ and outside of the visible range.}
\label{fig:median-validity}
\end{figure}
\begin{figure}
\centering
\begin{subfigure}{0.9\textwidth}
  \centering
  \includegraphics[width=\linewidth]{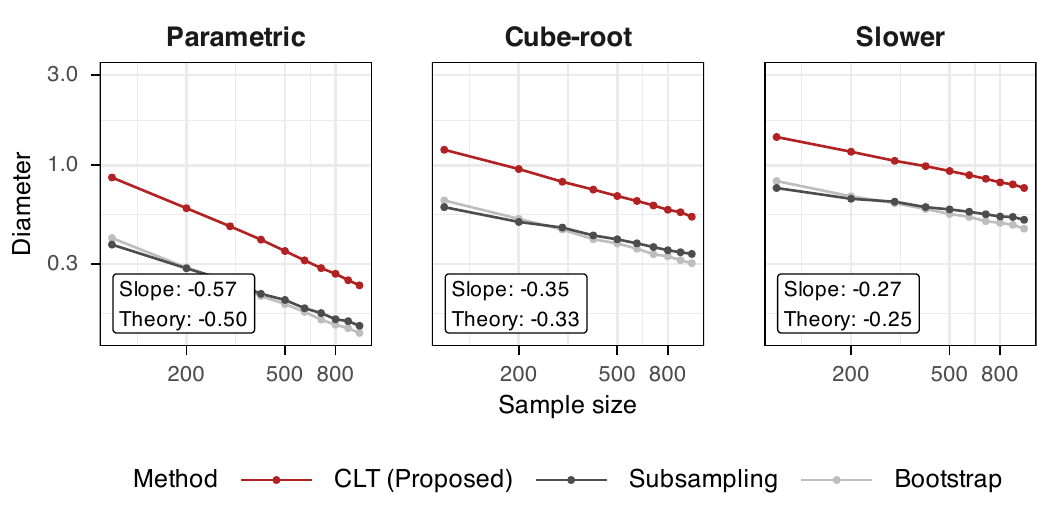}
\end{subfigure}
\caption{Average diameter of the proposed confidence set on a log--log scale. The $X$-axis displays sample size on a log scale the $Y$-axis displays the average diameter of the confidence sets on a log scale over $500$ replications. From left to right, the panels correspond to $\gamma = 0, 1/2$ and $1$, with theoretical rates correspond to $N^{-1/2}, N^{-1/3}$ and $N^{-1/4}$. The slope for the proposed method estimated by linear regression is reported in the figure. The observed slopes closely match the theoretical rates, demonstrating that the proposed confidence set adapts to the unknown smoothness parameter $\gamma$ without requiring prior knowledge of the convergence rate.}
\label{fig:median-width}
\end{figure}

\subsection{Implementation details on baseline methods}\label{supp:baseline}

This section describes the inferential methods implemented during the numerical studies for comparison. All methods take a parameter $B$, a bootstrap sample. For all simulations, we set $B=200$ for computational reasons, which might be considered small for some problems. 
\begin{enumerate}

\item \textbf{Bootstrap \citep{efron1979bootstrap}.} Given observations $D= \{Z_i\}_{i=1}^N$ and an estimator $\widehat\theta = \widehat\theta(D)$, draw $B$ bootstrap samples $D_b = \{Z_i^{(b)}\}_{i=1}^N$ for $1 \le b \le B$ by sampling with replacement from $D$, and recompute $\widehat\theta^{(b)}$ on each $D_b$. For a univariate parameter, bootstrap confidence interval is constructed as the empirical $(\alpha/2, 1-\alpha/2)$ quantile of $\{\widehat\theta^{(b)} - \widehat\theta\}_{b=1}^B$. For a multivariate parameter, bootstrap confidence set is constructed by inverting the $(1-\alpha)$ quantile of $\{\|\widehat\theta^{(b)} - \widehat\theta\|_2\}_{b=1}^B$. 

\item \textbf{Subsampling.} Given observations $D=\{Z_i\}_{i=1}^N$ and an estimator $\widehat\theta = \widehat\theta(D)$, draw $B$ subsamples $D_s$ of size $m = \lceil N^{2/3} \rceil$ without replacement for $1 \le s \le B$, and recompute $\widehat\theta^{(s)}$ on each $D_s$. The subsample size $m$ is an additional tuning parameter, but we fix $m = \lceil N^{2/3}\rceil$ for computational convenience. The convergence rate $\tau_N$ of the estimator is unknown in the settings we considered. Thus, we employ \citet{bertail1999subsampling} to estimate $\widehat\tau_m$. For a univariate parameter, confidence interval is constructed as the empirical $(\alpha/2, 1-\alpha/2)$ quantile of $\{\widehat\tau_m(\widehat\theta^{(s)} - \widehat\theta)\}_{s=1}^B$. For a multivariate parameter, the confidence set is constructed by inverting the $(1-\alpha)$ quantile of $\{\widehat\tau_m\|\widehat\theta^{(s)} - \widehat\theta\|_2\}_{s=1}^B$. 
\item \textbf{Multiplier Bootstrap.} For mean estimation and linear regression, we consider the following procedure. Let $\widehat\theta$ be an estimator; sample mean for mean estimation and OLS for linear regression. Define the estimated score $\widehat\psi_i$ as 
\begin{align}\nonumber
\widehat\psi_i=\begin{cases}
    Z_i-\widehat\theta & \textrm{mean estimation}\\
    (Y_i - X_i^\top\widehat\theta)X_i & \textrm{linear regression}
\end{cases}, \qquad
\widehat\Gamma=\begin{cases}
    I_d & \textrm{mean estimation}\\
    N^{-1}X^\top X & \textrm{linear regression.}
\end{cases}
\end{align}
For each $1 \le b \le B$, draw $W_1^{(b)}, \ldots, W_N^{(b)} \sim \mathcal{N}(0,1)$ independently, and compute 
\begin{align}\nonumber
    t^{(b)} = \widehat\Gamma^{-1} \cdot \frac{1}{\sqrt{N}}\sum_{i=1}^N W_i^{(b)} \widehat\psi_i.
\end{align}
The bootstrap quantile is taken as the empirical $(1-\alpha)$ quantile of $\{N^{-1/2}\|t^{(b)}\|_2\}_{b=1}^B$. 

\item \textbf{Residual Bootstrap.} For linear regression, first obtain the OLS estimator $\widehat\theta$ using the full data and compute the empirical residuals $\widehat\varepsilon_i = Y_i - X_i^\top\widehat\theta$. For each $1 \le b \le B$, draw a bootstrap sample $\{\widehat\varepsilon_i^{(b)}\}_{i=1}^N$ with replacement from $\{\widehat\varepsilon_i\}_{i=1}^N$, and construct bootstrap data $D_b = \{(X_i, X_i^\top \widehat\theta + \widehat\varepsilon_i^{(b)}\}_{i=1}^N$. A new OLS estimator $\widehat\theta^{(b)}$
is obtained from $D_b$, and the bootstrap quantile is taken as the empirical $(1-\alpha)$ quantile of $\{\|\widehat\theta^{(b)} - \widehat\theta\|_2\}_{b=1}^B$. 

\item \textbf{Wild Bootstrap \citep{wu1986jackknife}.} As in the residual bootstrap, compute the empirical residuals $\widehat\varepsilon_i = Y_i = X_i\widehat\theta$ from the full-data OLS estimator $\widehat\theta$. For each $1 \le b \le B$, draw $\epsilon^{(b)}_1, \ldots, \epsilon^{(b)}_N$ from independent Rademacher and construct bootstrap data by  $\{(X_i, X_i^\top \widehat\theta + \widehat\varepsilon_i \cdot \epsilon_i^{(b)}\}_{i=1}^N$. A new OLS estimator $\widehat\theta^{(b)}$ is obtained from $D_b$, and the bootstrap quantile is taken as the empirical $(1-\alpha)$ quantile of $\{\|\widehat\theta^{(b)} - \widehat\theta\|_2\}_{b=1}^B$.

\end{enumerate}

\section{Results based on Concentration Inequality}\label{suppsec:empirical-bernstein}

For some M-estimation problems, the loss difference $m_{\theta}-m_{\theta'}$ is uniformly bounded by definition, making concentration inequalities for bounded random variables directly applicable. For instance, when the observation is bounded uniformly, all applications, then $m_{\theta}-m_{\theta'}$ is also uniformly bounded for all applications considered in this manuscript. We emphasize that boundedness is required of the loss difference, not of the observations.
\begin{example}[Manski's Maximum Score Estimator]
    We consider independent observations $(X_1^\top, Y_1), \ldots, (X_N^\top, Y_N) \in \mathbb{R}^d \times \{-1, 1\}$ with loss function:
    \begin{equation}
        m_{\theta}(X, Y) = -Y\mathrm{sgn}(\theta^\top X) \quad \text{where} \quad \mathrm{sgn}(t) = 2\mathbf{1}\{t \ge 0\} - 1,
    \end{equation}
    corresponding to Manski's maximum score estimator \citep{manski1975maximum}. The loss difference $m_{\theta} - m_{\theta'}$ takes values in $\{-2, 0, 2\}$ and is therefore bounded by construction with $B=2$.
\end{example}
As a concrete illustration, we apply the one-sided empirical Bernstein inequality of \citet[Theorem~11]{maurer2009empirical}, which holds under independent but not necessarily identically distributed observations. Given $\{Z_i\, : \, i \in I_2\}$ with $n_2 = |I_2|$, suppose $\|m_{\theta} - m_{\theta'}\|_\infty \le B$. This yields the empirical Bernstein confidence set
\begin{equation}\label{eq:CI-EB}
    \widehat{\mathrm{CI}}^{\mathtt{EB}}_{N, \alpha} := \left\{\theta\in\Theta:\, \frac{1}{n_2}\sum_{i\in I_2}(m_{\theta}-m_{ \widehat{\theta}_1})(Z_i)\le \sqrt{\frac{2\widehat \sigma^2_{\theta, \widehat\theta_1} \log(2/\alpha)}{n_2}} + \frac{7B\log(2/\alpha)}{3(n_2-1)}\right\},
\end{equation}
where $\mathtt{EB}$ stands for empirical Bernstein. The following is an immediate consequence of Theorem~\ref{thm:general_LCB}.
\begin{theorem}\label{thm:lcb-empirical-bernstein}
    Let $Z_1, \ldots, Z_N$ be independent observations satisfying $\|m_{\theta}-m_{\theta'}\|_\infty \le B$ for all $\theta, \theta' \in \Theta$. Then for any $n_2\ge 2$,
    \begin{align*}
        \mathbb{P}_{P^N}\left(\theta(P^N) \not\in \widehat{\mathrm{CI}}^{\mathtt{EB}}_{N, \alpha}\right) \le \alpha.
    \end{align*}
\end{theorem}
\begin{proof}[\bfseries{Proof of \Cref{thm:lcb-empirical-bernstein}}]
    This result is a direct consequence of \Cref{thm:general_LCB} with $\beta(r)=0$ and Theorem 11 of \cite{maurer2009empirical}.
\end{proof}
\begin{theorem}\label{thm:eb-width}Assume $Z_1,\ldots, Z_N$ is independent and $B= \|m_{\theta} - m_{\theta'}\|_\infty$ for $\theta, \theta'\in\Theta$. Assume $\theta(P^N)$ is the unique solution of \eqref{eq:def-m-functional} that satisfies \ref{as:margin}, \ref{as:local-entropy}, \ref{as:square-process} and \ref{as:rate-initial-estimator3}. Define $r_{n_2}$ and $u_{n_2}$ as any values that satisfy
\begin{align}
    r_{n_2}^{-2} \phi_{n_2}(c_0^{-1/(1+\gamma)}r_{n_2}^{2/(1+\gamma)}) \le 1, \quad \textrm{and} \quad u_{n_2}^{-2} \omega_{\mathtt{pop}}(c_0^{-1/(1+\gamma)}u_{n_2}^{2/(1+\gamma)}) \le n_2^{1/2}. \label{rq:define-un-eb}
\end{align}
Then, for any $n_1\ge 1$, $n_2 \ge 2$ and $\varepsilon > 0$, 
\begin{align*}
\mathbb{P}^*_{P^N}\left(\mathrm{Diam}_{\|\cdot\|}\big(\widehat{\mathrm{CI}}^{\mathtt{EB}}_{N, \alpha}\big)\le C \left(\frac{1+\sqrt{\max\{1,B\}\log(2/\alpha)}}{\varepsilon}\right)^{1/(1+\gamma-q)} \mathrm{R}_N^{\mathtt{EB}}\right) \ge 1-\varepsilon - \widetilde \varepsilon_{\mathtt{init}},
\end{align*}
where 
\begin{align*}
    \mathrm{R}_N^{\mathtt{EB}} = c_0^{-1/(1+\gamma)}(r_{n_2}^{2/(1+\gamma)}  + u_{n_2}^{2/(1+\gamma)}+\widetilde s_{n_1, n_2}^{1/(1+\gamma)}+(B/n_2)^{1/(1+\gamma)}),
\end{align*}
and $C$ is a constant depending only on $\gamma, q$, and $\widetilde C_{\mathtt{init}}$. 
\end{theorem}
We remark that the term $u_{n_2}$ is only determined through $\omega_{n_2, \mathtt{pop}}$ while \Cref{thm:clt-width} involves both $\omega_{n_2, \mathtt{pop}}$ and $\omega_{n_2, \mathtt{emp}}$. When $\|m_{\theta}-m_{\theta'}\|_\infty$ is bounded, it follows that $\omega_{n_2, \mathtt{emp}} = \phi_{n_2}$ via the contraction inequality \citep[Theorem 4.12]{ledoux2013probability}. Consequently, the proof of \Cref{thm:eb-width} can be seen as a special case of \Cref{thm:clt-width}. Both $\widehat{\mathrm{CI}}_{N,\alpha}^{\mathtt{EB}}$ and $\widehat{\mathrm{CI}}_{N,\alpha}^{\mathtt{CLT}}$ can be used for bounded cases, but only $\widehat{\mathrm{CI}}_{N,\alpha}^{\mathtt{CLT}}$ extends to unbounded cases. 

While the diameter bound based on \Cref{thm:clt-width} is harder to establish due to $\omega_{n_2, \mathtt{emp}}$, it always results in a smaller confidence set. This is evident in the dependence on $\alpha$, made explicit in \Cref{thm:clt-width} and \Cref{thm:eb-width}. 

Although the convergence rates are comparable (up to the dependence on $\alpha$), the validity requirements differ: as discussed in \Cref{sec:coverage-slpha} and summarized in \Cref{tab:prescribed-level-confidence-sets}, the set  $\widehat{\mathrm{CI}}_{N,\alpha}^{\mathtt{CLT}}$ can fail for some bounded random variables. For instance, it is not valid for Bernoulli random variables with success probability $p$ such that $np\longrightarrow \lambda$.

Finally, we will not provide an analogous result to \Cref{thm:clt-width-local} where the relevant assumptions are imposed only on the neighborhood of $\theta_0$. This can be established without significant modification to the proof since the boundedness of $\|m_{\theta} - m_{\theta'}\|_\infty$ allows the peeling step in the proof to be over finite partitions, meaning that a remainder term is summable under weaker assumptions. While we do not write out the corresponding result, the almost identical proof as follows goes through. 
\begin{proof}[\bfseries{Proof of \Cref{thm:eb-width}}]
The proof is similar to that of \Cref{thm:clt-width}. First, observe that 
\begin{align*}
    \widehat t_{\alpha}(\theta, \widehat \theta_1) &\le 2\sqrt{2 \log(2/\alpha)} n_2^{-1/2} \sqrt{\left|\frac{1}{n_2}\sum_{i \in I_2} \{(m_{\theta}-m_{ \theta(P^N)})(Z_i)\}^2 - \E_{P_i}[\{(m_{\theta}-m_{\theta(P^N)})(Z)\}^2]\right|} \\
    &\quad + 2\sqrt{2 \log(2/\alpha)}  n_2^{-1/2}\sqrt{\left| \E_{P_i}[\{(m_{\theta}-m_{ \theta(P^N)})(Z)\}^2]\right|} \\
    &\quad + 2\sqrt{2 \log(2/\alpha)} n_2^{-1/2}\sqrt{\frac{1}{n_2}\sum_{i \in I_2} \{(m_{\theta(P^N)}-m_{ \widehat{\theta}_1})(Z_i)\}^2} + \frac{7/3B\log(2/\alpha)}{n_2-1}\\
    &= \mathfrak{R}_1 + \mathfrak{R}_2 + \mathfrak{R}_3.
\end{align*}
The proof follows analogously up to the definition of $\mathbf{I}$--$\mathbf{V}$. No modification is necessary for $\mathbf{I}$ and $\mathbf{II}$, and we obtain 
\begin{align*}
   \mathbf{I} + \mathbf{II} \lesssim  \widetilde C_{\mathtt{init}}2^{-M(1+\gamma)} + C_{q,\gamma}2^{-M(1+\gamma-q)} + \widetilde\varepsilon_{\mathtt{init}}. 
\end{align*}
For $\mathbf{III}$, we note that 
\begin{align*}
    &\left|\frac{1}{n_2}\sum_{i \in I_2} \{(m_{\theta}-m_{ \theta(P^N)})(Z_i)\}^2 - \E_{P_i}[\{(m_{\theta}-m_{\theta(P^N)})(Z)\}^2]\right|\\
    &\quad = 4B^2\left|\frac{1}{n_2}\sum_{i \in I_2} \left\{\frac{(m_{\theta}-m_{ \theta(P^N)})(Z_i)}{2B}\right\}^2 - \E_{P_i}\left[\left\{\frac{(m_{\theta}-m_{ \theta(P^N)})(Z)}{2B}\right\}^2\right]\right|
\end{align*}
where $(m_{\theta}-m_{ \theta(P^N)})(Z)/2B \in [-1,1]$ be the boundedness and $t \mapsto t^2$ is $2$-Lipschitz on $t \in [-1,1]$. 
Now let $\epsilon$ be independent Rademacher random variables. By symmetrization (see for instance, Lemma 2.3.1 of \citet{van1996weak}), the contraction inequality (for instance, Theorem 4.12 of \citet{ledoux2013probability} or Corollary 3.2.2 of \cite{gine2021mathematical}), and desymmetrization (Lemma 2.3.6 of \citet{van1996weak} for mean-zero processes), we arrive for $S \subseteq \Theta$
\begin{align*}
    &\E_{P^2}\left[\sup_{\theta \in S}\left|\frac{1}{n_2}\sum_{i \in I_2} \left\{\frac{(m_{\theta}-m_{ \theta(P^N)})(Z_i)}{2B}\right\}^2 - \E_{P_i}\left[\left\{\frac{(m_{\theta}-m_{ \theta(P^N)})(Z)}{2B}\right\}^2\right]\right|\right] \\
    &\quad \le 2\E_{P^2 \times \epsilon}\left[\sup_{\theta \in S}\left|\frac{1}{n_2}\sum_{i \in I_2} \epsilon_i\left\{\frac{(m_{\theta}-m_{ \theta(P^N)})(Z_i)}{2B}\right\}^2\right|\right]\\
    &\quad \le \frac{16}{B}\E_{P^2 \times \epsilon}\left[\sup_{\theta \in S}\left|\frac{1}{n_2}\sum_{i \in I_2} \epsilon_i(m_{\theta}-m_{ \theta(P^N)})(Z_i)\right|\right]\\
    &\quad \le \frac{32}{B}\E_{P^2 }\left[\sup_{\theta \in S}\left|(m_{\theta}-m_{ \theta(P^N)})(Z_i) - \E_{P_i}[m_{\theta}-m_{ \theta(P^N)}]\right|\right].
\end{align*} 
Hence, we have $ \omega_{n_2,\mathtt{emp}}^2(\delta) \le 128B \cdot \phi_{n_2}(\delta)$. Take 
\begin{align*}
    \bar{u}_{n_2}^{2/(1+\gamma)} =u^{2/(1+\gamma)}_{n_2} +  \left(\frac{B}{n_2}\right)^{1/{(1+\gamma)}}.
\end{align*}
By AM-GM inequality, we have 
\begin{align*}
    &u^{2/(1+\gamma)}_{n_2} +  \left(\frac{B}{n_2}\right)^{1/{(1+\gamma)}} \ge 2\sqrt{u^{2/(1+\gamma)}_{n_2}\left(\frac{B}{n_2}\right)^{1/{(1+\gamma)}}} \\
    &\quad \implies u^{4}_{n_2} \ge 2^{2(1+\gamma)}u^{2}_{n_2}\left(\frac{B}{n_2}\right).
\end{align*}
Using these results, we have 
\begin{align*}
    \mathbf{III} &\le \mathbb{P}^*_{P^2|\widetilde P^1}\left(c_0\|\theta - \theta(P^N)\|^{1+\gamma} \le 5\mathfrak{R}_1\right.\\
    &\quad\quad\quad\quad\bigg. \mbox{ for } \|\theta-\theta(P^N)\| \ge 2^{M} c_0^{-1/(1+\gamma)}\bar{u}_{n_2}^{2/(1+\gamma)}  \bigg)\\
    &=\mathbb{P}^*_{P^2|\widetilde P^1}\left(c_0^2\|\theta - \theta(P^N)\|^{2+2\gamma} \le 25\mathfrak{R}_1^2\right.\\
    &\quad\quad\quad\quad\bigg. \mbox{ for } \|\theta-\theta(P^N)\| \ge 2^{M} c_0^{-1/(1+\gamma)}\bar{u}_{n_2}^{2/(1+\gamma)}  \bigg)\\
    &\le 25 \cdot 4(2\log(2/\alpha)) n_2^{-1} \sum_{j=M}^\infty 2^{-2j(1+\gamma)} \bar{u}_{n_2}^{-4}\omega^2_{n_2}(2^{j+1}c_0^{-1/(1+\gamma)}\bar{u}_{n_2}^{2/(1+\gamma)})\\
    &\le 25 \cdot 4(2\log(2/\alpha)) n_2^{-1} \sum_{j=M}^\infty 2^{-2j(1+\gamma)} 2^{2q(j+1)}\bar{u}_{n_2}^{-4}128B \cdot \phi_{n_2}(c_0^{-1/(1+\gamma)}u_{n_2}^{2/(1+\gamma)})\\
    &\le 25 \cdot 8(2B\log(2/\alpha))\sum_{j=M}^\infty 2^{-2j(1+\gamma)} 2^{2q(j+1)} \le 200 (2\log(2/\alpha)) C_{q,\gamma}^2 2^{-2M(1+\gamma-q)}. 
\end{align*}
Meanwhile a similar derivation for $\mathbf{IV}$ gives 
\begin{align*}
    \mathbf{IV} &\le \mathbb{P}^*_{P^2|\widetilde P^1}\left(c_0\|\theta - \theta(P^N)\|^{1+\gamma} \le 5\mathfrak{R}_2\right.\\
    &\quad\quad\quad\bigg. \mbox{ for } \|\theta-\theta(P^N)\| \ge 2^{M} c_0^{-1/(1+\gamma)}u_{n_2}^{2/(1+\gamma)}  \bigg)\lesssim \log(2/\alpha)  \cdot C_{q,\gamma}^2 2^{-2M(1+\gamma-q)}. 
\end{align*}
Finally, for $\mathbf{V}$, we have 
\begin{align*}
    \mathbf{V} &= \mathbb{P}^*_{P^2|\widetilde P^1}\left(c_0\|\theta - \theta(P^N)\|^{1+\gamma} \le 5\mathfrak{R}_3 \cap B^c\right) \\
   &\le \mathbb{P}^*_{P^2|\widetilde P^1}\left(c_0\|\theta - \theta(P^N)\|^{1+\gamma} \le  \frac{20 \sqrt{2 \log(2/\alpha)}}{n_2}\sqrt{\sum_{i \in I_2} \{(m_{\theta(P^N)}-m_{ \widehat{\theta}_1})(Z_i)\}^2}\cap B^c\right) \\
   &\quad  + \mathbb{P}^*_{P^2|\widetilde P^1}\left(c_0\|\theta - \theta(P^N)\|^{1+\gamma} \le  \frac{7/3B\log(2/\alpha)}{n_2-1}\cap B^c\right) \\
   & \lesssim \widetilde C_{\mathtt{init}}\log(2/\alpha) 2^{-2M(1+\gamma)} + (B\log(2/\alpha)) 2^{-M(1+\gamma)} + \widetilde\varepsilon_{\mathtt{init}}.
\end{align*}
Hence, we obtain 
\begin{align*}
&\mathbb{P}^*_{P^N}\left(\mathrm{Diam}_{\|\cdot\|}\big(\widehat{\mathrm{CI}}^{\mathtt{EB}}_{N, \alpha}\big)> 2^{M}c_0^{-1/(1+\gamma)}\left\{r_{n_2}^{2/(1+\gamma)}  +u_{n_2}^{2/(1+\gamma)} +{s'}_{n_1,n_2}^{1/(1+\gamma)}+ \left(\frac{B}{n_2}\right)^{1/(1+\gamma)}\right\}\right) \\
&\quad \lesssim  \widetilde C_{\mathtt{init}} 2^{-M(1+\gamma)} + C_{q,\gamma}2^{-M(1+\gamma-q)} + BC_{q,\gamma}^2 \log(2/\alpha) 2^{-2M(1+\gamma-q)} \\
&\quad\quad+ \widetilde C_{\mathtt{init}}\log(2/\alpha) 2^{-2M(1+\gamma)} + (B\log(2/\alpha)) 2^{-M(1+\gamma)} + \widetilde\varepsilon_{\mathtt{init}}\\
&\quad \lesssim \mathfrak{C}(1+\max\{1,B\}\log(2/\alpha)2^{-M(1+\gamma-q)}) 2^{-M(1+\gamma-q)} + \widetilde\varepsilon_{\mathtt{init}},
\end{align*}
where $\mathfrak{C}$ only depends on $\gamma, q, \widetilde C_{\mathtt{init}}$. We thus conclude the claim by choosing $M$ to be 
\begin{align*}
    M &= \frac{\log ((1+\sqrt{\max\{1,B\}\log(2/\alpha)})\mathfrak{C}/\varepsilon)}{(1+\gamma-q) \cdot \log 2} \quad\textrm{and} \quad \\
    2^{M} &= \left(\frac{\mathfrak{C}(1+\sqrt{\max\{1,B\}\log(2/\alpha)})}{\varepsilon}\right)^{1/(1+\gamma-q)}.
\end{align*}
\end{proof}

We provide two remarks regarding statistical applications. 
\begin{remark}[Manski’s Discrete Choice Model]
For the Manski's problem introduced in \Cref{sec:manski}, the loss function is uniformly bounded with $B = 2$. Hence, the result in this section applies. \Cref{thm:manski-validity-consistent} requires $\widehat\theta_1$ to converge not too rapidly. The validity for the set developed in this section can be established without any assumptions. 
\end{remark}

\begin{remark}[Discrete Argmin Inference]
For the discrete inference problem introduced in \Cref{sec:argmin-inference}, the result in this section applies if it can be assumed that $e_j^\top X_i$ is almost surely uniformly bounded for all $1\le j \le d$ and $1 \le i \le n$ for some constant. The validity follows without the control on the remainder term \eqref{as:pairwise}.
\end{remark}

\end{document}